\documentclass[11pt]{amsart}

\usepackage{amsmath}
\usepackage{mathtools}
\usepackage{amsthm}
\usepackage{amsfonts}  
\usepackage{amssymb}
\usepackage[margin=1.25in]{geometry}
\usepackage[english]{babel}
\usepackage{hyperref}
\usepackage[shortlabels]{enumitem}
\usepackage{pdfsync}
\usepackage[dvipsnames]{xcolor}
\usepackage{csquotes}
\usepackage[backend=biber,style=alphabetic,doi=false,isbn=false,url=false]{biblatex} %Imports biblatex package
\addbibresource{bibfile.bib} %Import the bibliography file
\renewbibmacro{in:}{}
\usepackage[capitalize,noabbrev]{cleveref}
\usepackage{thmtools}
 
%preamble
\newtheorem{theorem}{Theorem}[section]
\newtheorem*{theorem*}{Theorem}
\newtheorem{lemma}[theorem]{Lemma}
\newtheorem{proposition}[theorem]{Proposition}
\newtheorem*{proposition*}{Proposition}

\newtheorem{corollary}[theorem]{Corollary}
\theoremstyle{definition}
\newtheorem{example}[theorem]{Example}

\newtheorem{definition}[theorem]{Definition}
\newtheorem{remark}[theorem]{Remark}
\newtheorem*{remark*}{Remark}

\numberwithin{equation}{section}

% General Commands
\newcommand{\R}{\mathbb{R}}
\newcommand{\N}{\mathbb{N}}
\newcommand{\Z}{\mathbb{Z}}

\newcommand{\cat}{\mathbf}
\newcommand{\isom}{\cong}

\newcommand{\supp}{\textup{supp}}

\newcommand{\xto}{\xrightarrow}
\newcommand{\tensor}{\otimes}
\newcommand{\isomto}{\xto{\isom}}

\newcommand{\lipnorm}[1]{L(#1)}
\newcommand{\eps}{\varepsilon}
\newcommand{\pBorel}{\mathcal{B}^+(X,A)}

\newcommand{\pBorelXX}{\mathcal{B}^+(X^2,A^2)}

\newcommand{\pRadon}{\hat{\mathcal{M}}^+_1(X,A)}

\newcommand{\pfRadon}{\mathcal{M}_1^+(X,A)}

\newcommand{\pfRadonA}{\mathcal{M}_1^+(X,d,A)}

\newcommand{\pfRadonXXAA}{\mathcal{M}_1^+(X^2,A^2)}

\newcommand{\Radon}{\hat{\mathcal{M}}_1(X,A)}

\newcommand{\fRadon}{\mathcal{M}_1(X,A)}

\newcommand{\pBorelY}{\mathcal{B}^+(Y,B)}
\newcommand{\pBorelXY}{\mathcal{B}^+(X \times Y,A \times B)}
\newcommand{\pBorelXonly}{\mathcal{B}^+(X)}
\newcommand{\pBorelXXonly}{\mathcal{B}^+(X^2)}
\newcommand{\pBorelAonly}{\mathcal{B}^+(A)}
\newcommand{\lpfpRadon}{\mathcal{\hat{M}}_p^+(X,A)}
\newcommand{\pfpRadon}{\mathcal{M}_p^+(X,A)}
\newcommand{\fpRadon}{\mathcal{M}_0^+(X,A)}
\newcommand{\lpfqRadon}{\mathcal{\hat{M}}_q^+(X,A)}
\newcommand{\pfqRadon}{\mathcal{M}_q^+(X,A)}
\newcommand{\lpfrvRadon}{\mathcal{\hat{M}}_p(X,A)}
\newcommand{\pfrvRadon}{\mathcal{M}_p(X,A)}

\DeclareMathOperator{\op}{op}
\DeclareMathOperator{\Lip}{Lip}
\DeclareMathOperator{\KR}{KR}

\newcommand{\LipX}{\Lip(X,A)}
\newcommand{\LipcX}{\Lip_c(X,A)}
\newcommand{\LipOneX}{\Lip_1(X,A)}
\newcommand{\LipcOneX}{\Lip_{c,1}(X,A)}
\newcommand{\LipLX}{\Lip_L(X,A)}
\newcommand{\LipcLX}{\Lip_{c,L}(X,A)}
\newcommand{\LipXpos}{\Lip^+(X,A)}
\newcommand{\LipcXpos}{\Lip_c^+(X,A)}
\newcommand{\LipLXpos}{\Lip_L^+(X,A)}
\newcommand{\LipOneXpos}{\Lip_1^+(X,A)}
\newcommand{\LipcLXpos}{\Lip_{c,L}^+(X,A)}
\newcommand{\LipcOneXpos}{\Lip_{c,1}^+(X,A)}
\newcommand{\LipY}{\Lip(Y,B)}
\newcommand{\LipOneXXpos}{\Lip_1^+(X^2,A^2)}

\newcommand{\LipXXAA}{\Lip(X^2,A^2)}
\newcommand{\LipXXAApos}{\Lip^+(X^2,A^2)}

\newcommand{\LipcXXAA}{\Lip_c(X^2,A^2)}

%\definecolor{darkgreen}{rgb}{0.0, 0.5, 0}

% \newcommand{\alex}[1]{{\color{OliveGreen}{\textbf{Alex: }{#1}}}}
% \newcommand{\new}[1]{{\color{BrickRed}#1}}
% %\newcommand{\questions}[1]{{\color{blue}#1}}
% \newcommand\pb[1]{\textcolor{Blue}{\textbf{Peter: }{#1}}}
% \newcommand{\remove}[1]{\textcolor{Mahogany}{\textbf{Remove(?):}{#1}}}
% \newcommand{\update}[1]{\textcolor{Plum}{\textbf{Update:}{#1}}}

% \makeatletter
% \def\moverlay{\mathpalette\mov@rlay}
% \def\mov@rlay#1#2{\leavevmode\vtop{%
% 		\baselineskip\z@skip \lineskiplimit-\maxdimen
% 		\ialign{\hfil$\m@th#1##$\hfil\cr#2\crcr}}}
% \newcommand{\charfusion}[3][\mathord]{
% 	#1{\ifx#1\mathop\vphantom{#2}\fi
% 		\mathpalette\mov@rlay{#2\cr#3}
% 	}
% 	\ifx#1\mathop\expandafter\displaylimits\fi}
% \makeatother

\DeclarePairedDelimiter{\abs}{\lvert}{\rvert}
\DeclarePairedDelimiter{\norm}{\lVert}{\rVert}

\newcommand{\KRnorm}[1]{\norm{#1}_{\KR}}

\newcommand{\join}{\vee}
\newcommand{\meet}{\wedge}

\newlist{thmenum}{enumerate}{1}
\setlist[thmenum, 1]{label=(\arabic*), ref=\thetheorem ~(\arabic*)}

% \newcommand\pb[1]{\textcolor{darkgreen}{\textbf{Peter: }#1}}

%% Function Restriction
\newcommand\restr[2]{{% we make the whole thing an ordinary symbol
\left.\kern-\nulldelimiterspace % automatically resize the bar with \right
#1 % the function
\vphantom{\big|} % pretend it's a little taller at normal size
\right|_{#2} % this is the delimiter
}}
%%

% section/subsection status

%\newcommand{\questions}[1]{{\color{blue}#1}}

\begin{document}
	
\title{Relative Optimal Transport}

%\author{Peter Bubenik and Alex Elchesen}
\author[P. Bubenik]{Peter Bubenik}
\address{Department of Mathematics, University of Florida,
PO Box 118105, Gainesville, FL 32611-8105, USA}
\email{peter.bubenik@ufl.edu}

\author[A. Elchesen]{Alex Elchesen}
\address{Google, 1600 Amphitheatre Parkway, Mountain View, CA 94043, USA}
\email{alexelchesen@gmail.com}
%\date{February 17, 2025}

\begin{abstract}
  We develop a theory of optimal transport relative to a distinguished subset, which acts as a reservoir of mass, allowing us to compare measures of different total variation. This relative transportation problem has an optimal solution and we obtain relative versions of the Kantorovich-Rubinstein norm, Wasserstein distance, Kantorovich-Rubinstein duality and Monge-Kantorovich duality.
  We also prove relative versions of the Riesz-Markov-Kakutani theorem, which connect the spaces of measures arising from the relative optimal transport problem to spaces of Lipschitz functions. 
  For a boundedly compact Polish space, we show that our relative 1-finite real-valued Radon measures with relative Kantorovich-Rubinstein norm coincide with the sequentially order continuous dual of relative Lipschitz functions with the operator norm. As part of our work we develop a theory of Riesz cones that may be of independent interest.
\end{abstract}

\subjclass{Primary: 49Q22, 28C05; Secondary: 51F99, 46A40, 55N31}

\maketitle

\tableofcontents
 
\section{Introduction}

A standard setting for optimal transport consists of a metric space $(X,d)$ 
together with two finite measures $\mu, \nu$ on $X$ with $\mu(X) = \nu(X)$.
%In relative optimal transport, 
Following work of Figalli and Gigli~\cite{figalli2010new} and Divol and Lacombe~\cite{divol2019understanding},
we consider a metric space $(X,d)$ together with a distinguished subset $A \subset X$ 
(we call $(X,d,A)$ a metric pair)
and two measures $\mu$ and $\nu$ on $X$.
We seek an optimal transportation plan from $\mu$ to $\nu$ 
%of minimum cost
relative to $A$, which acts as a reservoir to which we can transport mass or from which we may borrow mass.
Unlike the classical setting, we neither require that $\mu(X) = \nu(X)$, 
nor do we require  that $\mu$ and $\nu$ be finite.
In fact, it will be important to allow measures that need not be locally finite.
Furthermore, we will consider differences of measures, $\mu^+ - \mu^-$, 
%and $\nu^+ - \nu^-$
for which there may exist disjoint Borel sets $U$ and $V$ with 
both $\mu^+(U) = \infty$ and $\mu^-(V) = \infty$.
%Because of these issues, we need to proceed with some care in our development.

Previous work focused on the on the case $(X,d,A)$ where $\Omega = X \setminus A$ was a bounded domain in Euclidean space and $A = \partial \Omega$~\cite{figalli2010new}, and the case where $\Omega$ may be unbounded and $d$ is given by the $q$-norm for $1 \leq q \leq \infty$~\cite{divol2019understanding}. 
However, many application of classical optimal transport extend beyond the Euclidean setting. For example, optimal transport has been used to compare word embeddings \cite{kusnerb2015wordembeddings} and distributions of gene expression profiles \cite{schiebinger2019optimal-transport}. In order to apply optimal transport relative to a subset in such settings, it is therefore necessary to develop the theory for general metric pairs.
We are led to consider both the $p$-finite relative Radon measures and the locally $p$-finite relative Radon measures, which may be of independent interest.
%The latter need not be Radon measures and do not seem to have been previously studied. 
On of our main results, a relative version of Kantorovich-Rubinstein duality \cref{thm:main}, has been stated without proof in the Euclidean case~\cite[Remark 5.3]{Divol:2019b}.

\subsection*{Summary of our results}

Given a metric pair $(X,d,A)$, we define relative Borel measures to be elements of the quotient monoid $\mathcal{B}^+(X)/\mathcal{B}^+(A)$, where $\mathcal{B}^+(X)$ denotes the commutative monoid of Borel measures on $X$ (\cref{sec:borel-pointed}).
There is an obvious bijection between this set of relative Borel measures and the Borel measures on $X \setminus A$.
For relative Borel measures, tightness, that is, inner regularity with respect to compact sets, is well defined.

To facilitate the generalization from possibly infinite measures to differences of such measures, we develop a theory of Riesz cones, analogs of Riesz spaces that are equipped with an $\R^+$-action rather than an $\R$-action (\cref{sec:riesz-cone}). Taking the Grothendieck group of a Riesz cone produces a Riesz space.

We say that a relative Borel measure $\mu$ is $p$-finite if it has finite $p$-th  moment about $A$, that is, $\mu(d_A^p) < \infty$, where $d_A$ is the function on $X$ that gives the distance to $A$ (\cref{sec:finiteness-conditions}).
Given $p \leq q$, %we prove that 
for measures with support at least some distance away from $A$, 
$q$-finite implies $p$-finite, 
and
for measures with support within some distance of $A$, 
$p$-finite implies $q$-finite.

We define the set of
$p$-finite relative Radon measures, $\pfpRadon$, to be the $p$-finite, tight, relative Borel measures (\cref{sec:weak-radon}).
Then there is an obvious bijection between $\pfpRadon$ and the set of $p$-finite Radon measures on $X \setminus A$. We also define
%and
the set of locally $p$-finite relative Radon measures, $\lpfpRadon$, to be the locally $p$-finite, tight, relative Borel measures.
These
do not seem to have been previously considered. 
These arise naturally (in the case $p=1$) as a dual space to the space of compactly supported Lipschitz functions on $(X,d,A)$ (\cref{thm:1.2}).
%\alex{A reviewer might just say "so?". We should say why these are crucial for our duality results even though there isn't an interpretation as Radon measures on the complement of $A$}. 
They restrict to Radon measures on $X \setminus A$, but they
are not Radon measures on $X$, since they need not be locally finite.
As in the classical case, for $p \neq q$, $\pfpRadon \neq \pfqRadon$.
Unlike the classical case, we also have $\lpfpRadon \neq \lpfqRadon$.
We prove that $\pfpRadon$ and $\lpfpRadon$ are Riesz cones and that $\pfpRadon$ is an ideal in $\lpfpRadon$.
We define $\pfrvRadon$ and $\lpfrvRadon$ to be the Riesz spaces corresponding to $\pfpRadon$ and $\lpfpRadon$ respectively,
and call their elements (locally) $p$-finite real-valued relative Radon measures (\cref{sec:signed}).
$\pfrvRadon$ is an ideal in Riesz space $\lpfrvRadon$.

Consider the Riesz space, $\LipX$, of real-valued Lipschitz functions on $X$ that vanish on $A$ and its ideal $\LipcX$ of compactly supported functions.
$\LipX$ is a Banach space with norm given by the Lipschitz number, $L(-)$, but it is not a Banach lattice or a normed Riesz space.
We show that
the $1$-finite relative Radon measures and the locally $1$-finite Radon measures are the tight relative Borel measures $\mu$ such that $\mu(f) < \infty$ for all $f \in \LipX$ and for all $f \in \LipcX$, respectively (\cref{sec:1-finite}).
In fact, they are positive linear functional functionals on $\LipX$ and $\LipcX$ respectively.
By the monotone convergence theorem,
they are sequentially order continuous.
Furthermore, elements of $\pfRadon$ are exhausted by compact sets (\cref{def:exhausted}).
The following representation theorems provide converse statements.
They may be viewed as relative versions of the Riesz-Markov-Kakutani representation theorem.

\begin{theorem}[\cref{thm:representation-lipc}] \label{thm:1.1}
    Assume that $X$ is locally compact.
    Let $T$ be a sequentially order continuous positive linear functional on $\LipcX$. 
    Then $T$ is represented by a unique $\mu \in \pRadon$.
\end{theorem}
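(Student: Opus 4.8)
The plan is to reduce the statement to the classical Riesz--Markov--Kakutani theorem on the locally compact space $U := X \setminus \bar A$, using the Lipschitz structure only to supply test functions and to control behavior near $A$. First I would observe that every $f \in \LipX$ is continuous and vanishes on $A$, hence vanishes on $\bar A$; so $\LipcX$ is unchanged if we replace $A$ by $\bar A$, and I may assume $A$ is closed and $U = X \setminus A$ is open. The guiding principle is that, because each test function vanishes on $A$, the functional $T$ is blind to any mass placed on $A$; this is precisely why the representing object is a \emph{relative} measure, i.e.\ a class in $\pBorelXonly/\pBorelAonly$, determined on $U$ but free on $A$. A key simplification is that on $U$ local $1$-finiteness coincides with ordinary local finiteness: if $K \subset U$ is compact then $d_A \ge \delta > 0$ on $K$, so $\mu(K) \le \delta^{-1}\!\int_K d_A \, d\mu < \infty$. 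Thus the measure I must build on $U$ is an honest Radon measure, and the full machinery of the classical theorem becomes available.

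For the construction I would run the standard content argument on $U$. For an open $V \subset U$ with compact closure I set
\[
  \mu(V) = \sup\{\, T(f) : f \in \LipcX,\ 0 \le f \le 1,\ \supp f \subset V \,\},
\]
noting that local compactness yields a Lipschitz Urysohn function $\phi \in \LipcX$ with $\phi = 1$ on $\bar V$, whence $T(f) \le T(\phi) < \infty$ and the supremum is finite. (In a metric space such Urysohn functions are explicit, built from $d$, so the replacement of $C_c$ by Lipschitz functions costs nothing here.) Extending to all open subsets of $U$, passing to the induced outer measure, and checking Carath\'eodory measurability of Borel sets produces a Borel measure $\mu$ on $U$; inner regularity with respect to compact sets, hence tightness, follows from local compactness exactly as in the classical proof. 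I then regard $\mu$ as a relative Borel measure on $X$ by leaving its restriction to $A$ unspecified.

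The representation $T(f) = \mu(f)$ I would prove first for $f \in \LipcX$ whose support is a compact subset of $U$: for these the classical comparison of $T(f)$ with $\int f\, d\mu$ goes through verbatim. For general $f \in \LipcX$, writing $f = f^+ - f^-$ (both lie in the ideal $\LipcX$) reduces matters to $f \ge 0$. Here I would choose Lipschitz cutoffs $0 \le \chi_n \le 1$ with $\chi_n = 0$ on $\{d_A \le \tfrac{1}{2n}\}$ and $\chi_n = 1$ on $\{d_A \ge \tfrac{1}{n}\}$, increasing in $n$, and set $f_n := \chi_n f$. Each $f_n$ lies in $\LipcX$ with support a compact subset of $U$, and $f_n \nearrow f$ in order. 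The already-established case gives $T(f_n) = \mu(f_n)$; the right-hand side converges to $\mu(f)$ by monotone convergence, while the left-hand side converges to $T(f)$ by sequential order continuity of $T$. Since $f$ vanishes on $A$, $\mu(f)$ is unaffected by the undetermined mass on $A$, so $T(f) = \mu(f)$ as a relative pairing. Local $1$-finiteness of $\mu$ is immediate: for compact $K \subset X$ pick a Lipschitz cutoff $\psi$ with $\psi = 1$ on $K$ and compact support, so that $g := \psi\, d_A \in \LipcX$ dominates $d_A \mathbf 1_K$; then $\int_K d_A\, d\mu \le \mu(g) = T(g) < \infty$ because $T$ is finite-valued. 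Thus $\mu \in \pRadon$.

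For uniqueness, suppose $\mu, \mu' \in \pRadon$ both represent $T$. Their restrictions to $U$ are genuine Radon measures agreeing on every $f \in \LipcX$ supported in $U$, a class rich enough (again by Lipschitz Urysohn functions) to determine a tight Radon measure on the locally compact space $U$; hence $\mu|_U = \mu'|_U$. The two therefore differ only by mass supported on $A$ and define the same class in $\pBorelXonly/\pBorelAonly$, giving uniqueness in $\pRadon$. I expect the main obstacle to be the passage to functions whose support meets $A$, carried out in the third paragraph: this is the only point where sequential order continuity is indispensable. Without it, $T$ could concentrate extra mass on $\bar A$ that no function supported in $U$ detects, destroying representability by a measure on $U$; order continuity forces $T$ to be the supremum of its values on such functions, and it is this rigidity that makes the relative Riesz--Markov--Kakutani correspondence exact.
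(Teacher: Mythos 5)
Your route is genuinely different from the paper's: you run the classical Riesz--Markov--Kakutani content construction on the locally compact open set $U=X\setminus A$ and then extend to all of $\LipcX$ by Lipschitz cutoffs $\chi_n$ supported away from $A$, invoking sequential order continuity exactly once, at the monotone limit $T(\chi_n f)\uparrow T(f)$. The paper instead feeds $T$ directly into an abstract representation theorem of Pollard and Tops{\o}e, which builds the measure from the compact subsets of $X\setminus A$ inward via $\mu(K)=\inf\{Th \mid h\geq 1_K,\ h\in\LipcXpos\}$. Your cutoff extension, your identification of where order continuity is indispensable, and your verification of local $1$-finiteness at points of $A$ (which is where local compactness of all of $X$, not merely of $X\setminus A$, enters, exactly as in \cref{prop:measures_spaces_equiv_def_1}) are all correct.

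The genuine gap is the sentence asserting that ``inner regularity with respect to compact sets, hence tightness, follows from local compactness exactly as in the classical proof.'' It does not. The Carath\'eodory/outer-measure construction yields a measure that is outer regular on all Borel sets but inner regular only on open sets and on sets of finite measure; on a locally compact metric space that is not $\sigma$-compact this can genuinely fail to give tightness. (Standard example: $U=D\times[0,1]$ with $D$ uncountable discrete and $T(f)=\sum_{d\in D}\int_0^1 f(d,t)\,dt$; the Borel set $D\times\{0\}$ has infinite outer-regular measure, yet every compact subset of it is finite and has measure zero.) Since tightness is part of the definition of $\pRadon$ and is exactly what powers your uniqueness argument (via \cref{prop:uniqueness}), this step must be repaired: either pass to the inner regularization $\mu_\bullet(E)=\sup\{\mu(K)\mid K\subset E,\ K\ \text{compact}\}$ and verify both that it is a Borel measure and that it still represents $T$ (it does, because each $f\in\LipcX$ is integrated only over a compact set of finite measure, where $\mu$ and $\mu_\bullet$ agree), or construct the measure from compact sets inward from the outset --- which is precisely what the Pollard--Tops{\o}e theorem used in the paper accomplishes, its $\tau$-smoothness hypothesis at $\emptyset$ being the price paid for obtaining tightness directly.
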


\begin{theorem}[\cref{thm:representation-lipc-non-positive}] \label{thm:1.2}
    If $X$ is locally compact then $\Radon$ is the sequentially order continuous dual of $\LipcX$.
\end{theorem}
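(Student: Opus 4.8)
The strategy is to bootstrap from the positive case \cref{thm:1.1} using the Riesz space structure present on both sides. Write $E = \LipcX$ and let $E^\sim_c$ denote its sequentially order continuous dual. Integration gives a natural map $\Phi \colon \Radon \to E^\sim_c$, $\mu \mapsto (f \mapsto \mu(f))$, and the claim amounts to $\Phi$ being an isomorphism. To see $\Phi$ is well defined, recall that $\Radon$ is by construction the Riesz space (Grothendieck group) of the Riesz cone $\pRadon$, so any $\mu \in \Radon$ may be written as $\mu = \mu^+ - \mu^-$ with $\mu^\pm \in \pRadon$. Each $\mu^\pm$ is, as recorded in \cref{sec:1-finite}, a positive linear functional on $E$ that is sequentially order continuous by the monotone convergence theorem; hence $\Phi(\mu) = \Phi(\mu^+) - \Phi(\mu^-) \in E^\sim_c$.

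Injectivity of $\Phi$ follows from \cref{thm:1.1}: the restriction of $\Phi$ to $\pRadon$ is precisely the representation map, which that theorem asserts is a bijection onto the positive cone of $E^\sim_c$, and in particular injective. If $\Phi(\mu) = 0$ with $\mu = \mu^+ - \mu^-$, then $\Phi(\mu^+) = \Phi(\mu^-)$, so $\mu^+ = \mu^-$ in $\pRadon$ and thus $\mu = 0$ in the Grothendieck group, the Riesz cone being cancellative. The whole problem therefore reduces to surjectivity: given $T \in E^\sim_c$, I must realize it as $\Phi(\mu)$ for some $\mu \in \Radon$.

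For surjectivity I would decompose $T$ into positive and negative parts. Setting $T^+(f) = \sup\{ T(g) : 0 \le g \le f \}$ for $f \ge 0$ and extending by linearity, I obtain positive functionals $T^+$ and $T^- = T^+ - T$ with $T = T^+ - T^-$. Provided $T^\pm$ are again sequentially order continuous, \cref{thm:1.1} furnishes unique $\mu^+, \mu^- \in \pRadon$ representing them, and $\mu := \mu^+ - \mu^- \in \Radon$ satisfies $\Phi(\mu) = T$. Finally, since $\Phi$ is a bijection carrying the positive cone $\pRadon$ exactly onto the positive cone of $E^\sim_c$, it is automatically an isomorphism of Riesz spaces, not merely of vector spaces.

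I expect the decomposition step to be the main obstacle, for two reasons. First, the formula for $T^+$ produces a finite value only when $T$ is order bounded; thus one must either build order boundedness into the definition of the sequentially order continuous dual or verify it for functionals on $\LipcX$. Second, one must check that $T^+$ (equivalently $|T|$) is itself sequentially order continuous, which is the statement that $E^\sim_c$ is a band, hence a Riesz subspace and ideal, of the order dual $E^\sim$, so that sequential order continuity is inherited by the lattice operations. Both facts are standard in Riesz space theory once order boundedness is in hand, so the essential point to pin down is order boundedness of sequentially order continuous functionals on $\LipcX$; with that secured, \cref{thm:1.1} does all the remaining analytic work and the present statement is a formal consequence.
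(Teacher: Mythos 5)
Your proposal is correct and follows essentially the same route as the paper: decompose $T$ into $T^+ - T^-$ via the Riesz--Kantorovich construction in the order dual, use that $E^\sim_c$ is a band in $E^\sim$ so that $T^\pm$ inherit sequential order continuity, and apply the positive representation theorem to each part. The order-boundedness issue you flag is resolved exactly as you anticipate: the paper defines the sequentially order continuous dual as a subset of $\mathcal{L}_{\textup{b}}(E,\R)$, so order boundedness is built in by convention.
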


\begin{theorem}[\cref{thm:representation-lip}] \label{thm:1.3}
    Let $T$ be a sequentially order continuous positive linear functional on $\LipX$ that is exhausted by compact sets.
    Then $T$ is represented by a unique $\mu \in \pfRadon$.
\end{theorem}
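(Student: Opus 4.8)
The plan is to deduce the statement from the compactly supported representation theorem (\cref{thm:1.1}) by restriction, and then to use the exhaustion hypothesis to upgrade both the finiteness of the representing measure and the validity of the representation from $\LipcX$ to all of $\LipX$. First I would restrict $T$ to the ideal $\LipcX \subseteq \LipX$. Positivity is immediate, and sequential order continuity is inherited: if $f_n \downarrow 0$ in $\LipcX$ then $f_n \downarrow 0$ in $\LipX$ as well, since any $g \in \LipX$ with $0 \le g \le f_n$ for all $n$ lies in the ideal $\LipcX$ (as $g \le f_1$) and hence is dominated by the infimum $0$. Thus $T|_{\LipcX}$ is a sequentially order continuous positive linear functional on $\LipcX$, and \cref{thm:1.1} yields a unique locally $1$-finite $\mu \in \pRadon$ with $T(f) = \mu(f)$ for every $f \in \LipcX$.

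Next I would show that $\mu$ is in fact $1$-finite, i.e.\ $\mu(d_A) < \infty$, so that $\mu \in \pfRadon$ (recall that $\pfRadon$ is an ideal in $\pRadon$). The distance function $d_A$ belongs to $\LipXpos$, and the exhaustion hypothesis (\cref{def:exhausted}) supplies an increasing sequence of compactly supported cutoffs $\psi_n$ for which $g_n := d_A \psi_n \in \LipcXpos$ and $g_n \uparrow d_A$ in the (pointwise) Riesz order of $\LipX$. Sequential order continuity of $T$ on $\LipX$ then gives $T(g_n) \uparrow T(d_A)$, while the monotone convergence theorem for the measure $\mu$ gives $\mu(g_n) \uparrow \mu(d_A)$. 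Since $T(g_n) = \mu(g_n)$ for each $n$ (as $g_n \in \LipcX$) and $T(d_A)$ is a finite real number, I conclude $\mu(d_A) = T(d_A) < \infty$.

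It remains to extend the representation to all of $\LipX$ and to record uniqueness. Because $\mu$ is now $1$-finite and $\abs{f} \le L(f)\, d_A$ for $f \in \LipX$, every $f \in \LipX$ is $\mu$-integrable. For $f \in \LipXpos$ the same exhaustion/monotone-convergence argument gives $T(f) = \lim_n T(f\psi_n) = \lim_n \mu(f\psi_n) = \mu(f)$, and decomposing a general $f = f^+ - f^-$ into its positive and negative parts (both in $\LipXpos$, since $\LipX$ is a Riesz space) yields $T(f) = \mu(f)$ on all of $\LipX$ by linearity. For uniqueness, any representing $\mu' \in \pfRadon \subseteq \pRadon$ agrees with $\mu$ on $\LipX$, hence on $\LipcX$, so $\mu = \mu'$ by the uniqueness clause of \cref{thm:1.1}.

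I expect the crux to be the second step, which is the only place where the ``exhausted by compact sets'' hypothesis is truly indispensable. The locally $1$-finite measure produced by \cref{thm:1.1} may have infinite first moment about $A$ (mass escaping to infinity), and the exhaustion condition is precisely what forces the approximants $d_A\psi_n \in \LipcXpos$ to increase to $d_A$ in a way that lets sequential order continuity of $T$ and monotone convergence for $\mu$ be matched term by term. The technical care needed is in confirming that the cutoffs furnished by \cref{def:exhausted} really produce products lying in $\LipcX$ (compact support on which $d_A$ is bounded, Lipschitz, and vanishing on $A$) and that the convergence $g_n \uparrow d_A$ holds as an upward convergence in the Riesz order, not merely pointwise.
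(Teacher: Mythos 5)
There is a genuine gap: your argument imports hypotheses that \cref{thm:representation-lip} does not assume. Your first step invokes \cref{thm:representation-lipc}, which requires $X$ to be locally compact, whereas \cref{thm:representation-lip} is stated for an arbitrary metric pair. This is not a cosmetic issue. If $X$ is nowhere locally compact (e.g.\ $X$ an infinite-dimensional Hilbert space with $A=\{0\}$), then no compact subset of $X$ has nonempty interior, so $\LipcX=\{0\}$ and the restriction $T|_{\LipcX}$ carries no information; yet $T=\delta_x$ for $x\notin A$ is a sequentially order continuous positive functional on $\LipX$, exhausted by compact sets (take $K=\{x\}$ in \cref{def:exhausted}), and is represented by $\delta_x\in\pfRadon$. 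Your route cannot recover this. Relatedly, your second step asserts that \cref{def:exhausted} ``supplies an increasing sequence of compactly supported cutoffs $\psi_n$ with $d_A\psi_n\uparrow d_A$.'' It does not: the definition only bounds $T(g)$ for $g\in\LipXpos$ vanishing on a compact set $K$, and producing an element of $\LipcXpos$ agreeing with $d_A$ on $K$ requires a compact offset $K^\delta$ (\cref{lem:compact-offset}, which needs $X\setminus A$ locally compact), while $d_A\psi_n\uparrow d_A$ pointwise on all of $X\setminus A$ additionally needs $\sigma$-compactness (compare \cref{lem:loc-compact-sigma-compact}). So your proof establishes the theorem only under local compactness and $\sigma$-compactness assumptions that the statement deliberately omits.

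The paper's proof avoids $\LipcX$ entirely: it applies the Pollard--Tops{\o}e representation theorem directly to the cone $\LipXpos$, where the hypothesis ``$T$ is exhausted by compact sets'' is precisely the ``$\mathcal{K}$ exhausts $T$'' condition of that theorem (with $\mathcal{K}$ the compact subsets of $X\setminus A$), and the finiteness $\mu(d_A)\leq T(d_A)<\infty$ then gives $\mu\in\pfRadon$ without any detour through locally $1$-finite measures. If you are willing to add the hypotheses that $X$ is locally compact and $X\setminus A$ is $\sigma$-compact, your restriction-and-extension argument can be repaired along the lines you describe (and the matching of $T(g_n)=\mu(g_n)$ via monotone convergence is then sound), but as a proof of \cref{thm:representation-lip} as stated it does not go through.
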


\begin{theorem}[\cref{cor:representation-lip-non-positive}] \label{thm:1.4}
    If $X \setminus A$ is locally compact and $\sigma$-compact
    then $\fRadon$ is the sequentially order continuous dual of $\LipX$.
\end{theorem}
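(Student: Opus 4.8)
The plan is to obtain the statement from the positive representation theorem \cref{thm:representation-lip} by the same device that produces \cref{thm:representation-lipc-non-positive} from \cref{thm:representation-lipc}, supplemented by one argument using the topological hypotheses. Since $\LipX$ is a Riesz space, general Riesz space theory tells us that its order dual is Dedekind complete and that the sequentially order continuous functionals form a band in it. In particular, any sequentially order continuous functional $T$ on $\LipX$ is order bounded and splits as $T = T^+ - T^-$, where $T^+ = T \vee 0$ and $T^- = (-T)\vee 0$ are positive and themselves sequentially order continuous. This reduces the existence half of the statement to representing an arbitrary positive sequentially order continuous functional by an element of $\pfRadon$.

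The hypotheses of \cref{thm:representation-lip} demand in addition that the functional be exhausted by compact sets (\cref{def:exhausted}); verifying that this is automatic under local compactness and $\sigma$-compactness of $X\setminus A$ is the step I expect to be the main obstacle. First I would choose an exhaustion $X\setminus A=\bigcup_n K_n$ by compact sets with $K_n\subseteq\operatorname{int}K_{n+1}$, available from $\sigma$-compactness together with local compactness. Local compactness then lets me manufacture Lipschitz cutoffs $\chi_n\in\LipcXpos$ with $0\le\chi_n\le 1$, $\chi_n=1$ on $K_n$, compact support, and $\chi_n$ increasing to $1$ pointwise on $X\setminus A$. Given $f\in\LipXpos$, set $c_n:=\sup_{K_n}f<\infty$ and $g_n:=f\wedge(c_n\chi_n)$; then $g_n\in\LipcXpos$, $0\le g_n\le f$, and $g_n$ increases pointwise to $f$. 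As the lattice operations on $\LipX$ are pointwise, this means $g_n\uparrow f$ in $\LipX$, so sequential order continuity of the positive functional gives $T^\pm(g_n)\uparrow T^\pm(f)$ with each $g_n$ compactly supported, which is exactly the defining condition of \cref{def:exhausted}.

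With exhaustion in hand, \cref{thm:representation-lip} applies to $T^+$ and $T^-$ and yields unique $\mu^+,\mu^-\in\pfRadon$ representing them; I then take $\mu:=\mu^+-\mu^-\in\fRadon$, so that $\mu(f)=T^+(f)-T^-(f)=T(f)$ for every $f\in\LipX$. For the reverse inclusion, each $\mu=\mu^+-\mu^-\in\fRadon$ already defines a sequentially order continuous functional on $\LipX$: for $f\in\LipX$ one has $\abs{f}\in\LipXpos$, whence $\mu^\pm(\abs{f})<\infty$ by $1$-finiteness, so $\mu(f):=\mu^+(f)-\mu^-(f)$ is well defined, and sequential order continuity follows from the monotone convergence theorem, just as recorded before the theorem statements. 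Finally, uniqueness follows from the positive case: if $\mu=\mu^+-\mu^-$ and $\nu=\nu^+-\nu^-$ in $\fRadon$ induce the same functional, then $\mu^++\nu^-$ and $\nu^++\mu^-$ are elements of $\pfRadon$ inducing the same positive functional, so they coincide by the uniqueness in \cref{thm:representation-lip}, whence $\mu=\nu$. Altogether this gives a linear bijection between $\fRadon$ and the sequentially order continuous dual of $\LipX$ that is positive with positive inverse, hence an isomorphism of Riesz spaces, which is the assertion.
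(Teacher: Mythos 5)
Your proof is correct and follows essentially the same route as the paper: the paper proves \cref{lem:loc-compact-sigma-compact} (positive sequentially order continuous functionals are automatically exhausted by compact sets when $X\setminus A$ is locally compact and $\sigma$-compact) via the same exhaustion-and-cutoff device you use, and combines it with \cref{thm:representation-lip-non-positive}, which performs exactly your decomposition $T=T^+-T^-$ and applies \cref{thm:representation-lip} to each part. The only quibble is that ``$T^\pm(g_n)\uparrow T^\pm(f)$ with $g_n$ compactly supported'' is not literally the condition of \cref{def:exhausted}; you still need the one-line observation that any $g\in\LipXpos$ with $g\leq f$ and $g|_{K_{n+1}}=0$ satisfies $g\leq f-g_n$ (compare on and off $K_{n+1}$), so that $T(g)\leq T(f)-T(g_n)<\eps$.
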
 

%Unlike the classical case, the sequentially order continuous requirement is essential.

For additional variants of these results see \cref{thm:representation-lip-non-positive,thm:representation-lip-nonpositive-exhausted,thm:representation-proper,thm:representation-proper-not-positive}.

Since the sequentially order continuous dual on $\LipX$ is an ideal in the order dual of $\LipX$, and since $\pfRadon$ separates points of $\LipX$, we have the following corollary to \cref{thm:1.4}.

\begin{corollary} \label{cor:1.5}
    If $X \setminus A$ is locally compact and $\sigma$-compact then
    $\LipX$ embeds as a Riesz subspace of the order continuous dual of $\fRadon$ by mapping $f$ to $\hat{f}:\mu \mapsto \mu(f)$.
    Since $\LipcX$ is an ideal in $\LipX$,  this mapping also embeds $\LipcX$ as a Riesz subspace of the order continuous dual of $\fRadon$.
\end{corollary}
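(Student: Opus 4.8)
The plan is to recognize $f \mapsto \hat{f}$ as the canonical evaluation of $\LipX$ into its order bidual, restricted along the inclusion $\fRadon \incl (\LipX)^\sim$ of the order dual. Two inputs drive everything, both already available: by \cref{thm:1.4} the space $\fRadon$ is the sequentially order continuous dual of $\LipX$, which is an ideal in the order dual $(\LipX)^\sim$; and $\pfRadon \subseteq \fRadon$ separates the points of $\LipX$. Write $I := \fRadon$ for brevity; the task is to show $f \mapsto \hat{f}$ is an injective lattice homomorphism of $\LipX$ into the order continuous dual $I^\sim_n = (\fRadon)^\sim_n$, and then to transfer this to the ideal $\LipcX$.

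First I would check that each $\hat{f}$ lies in $I^\sim_n$. It suffices to treat $f \geq 0$, since $\hat{f} = \widehat{f^+} - \widehat{f^-}$ and a difference of order continuous functionals is order continuous. Let $\mu_\alpha \downarrow 0$ in $I$. Because $I$ is an ideal in $(\LipX)^\sim$, solidity lets me upgrade this to order convergence in the full order dual: any lower bound $\psi$ of $\{\mu_\alpha\}$ in $(\LipX)^\sim$ satisfies $\psi^+ \le \mu_\alpha$ for every $\alpha$ (as each $\mu_\alpha \ge 0$), so $0 \le \psi^+ \le \mu_{\alpha_0} \in I$ forces $\psi^+ \in I$ and hence $\psi^+ \le \inf_I \mu_\alpha = 0$; thus $\mu_\alpha \downarrow 0$ already in $(\LipX)^\sim$. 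The standard fact that order limits of monotone nets in an order dual are computed pointwise on positive elements (a consequence of the Riesz--Kantorovich formulas) then gives $\hat{f}(\mu_\alpha) = \mu_\alpha(f) \downarrow 0$, so $\hat{f}$ is order continuous.

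Next I would verify that $f \mapsto \hat{f}$ is a lattice homomorphism into $I^\sim_n$, for which it is enough to show $\widehat{f^+}\,|_I = (\hat{f}\,|_I)^+$, the positive part computed in $I^\sim$. By the Riesz--Kantorovich formula, for $\mu \in I_+$ one has $(\hat{f}\,|_I)^+(\mu) = \sup\{\psi(f) : \psi \in I,\ 0 \le \psi \le \mu\}$, whereas the identity in the full bidual reads $\mu(f^+) = \sup\{\psi(f) : \psi \in (\LipX)^\sim,\ 0 \le \psi \le \mu\}$. Solidity of $I$ makes the two index sets coincide once $\mu \in I$, so the suprema agree and $\widehat{f^+}\,|_I = (\hat{f}\,|_I)^+$. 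Hence $f \mapsto \hat{f}$ is a lattice homomorphism, and it is injective because $\pfRadon$ separates points: if $\hat{f} = 0$ then $\mu(f) = 0$ for all $\mu \in \pfRadon$, so $f = 0$. An injective lattice homomorphism is a Riesz embedding, and since $I^\sim_n$ is a band in $I^\sim$ the image is a Riesz subspace of $(\fRadon)^\sim_n$.

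Finally, as $\LipcX$ is an ideal, hence a Riesz subspace, of $\LipX$, the restriction of the embedding $f \mapsto \hat{f}$ to $\LipcX$ is again an injective lattice homomorphism, hence a Riesz embedding into $(\fRadon)^\sim_n$. The main obstacle is the bookkeeping at the two points where solidity of the ideal $\fRadon \subseteq (\LipX)^\sim$ is invoked---transferring order convergence from $I$ to the full order dual, and matching the two Riesz--Kantorovich suprema. Once \cref{thm:1.4} supplies the identification of $\fRadon$ as an ideal in the order dual, the remainder is exactly the standard canonical-embedding argument for Riesz spaces.
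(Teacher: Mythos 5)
Your proof is correct and follows essentially the same route as the paper: the paper deduces the corollary from \cref{thm:1.4} together with the general fact recorded in \cref{sec:riesz} that if $A$ is an ideal in $E^{\sim}$ separating the points of $E$, then $x \mapsto \hat{x}$ embeds $E$ as a Riesz subspace of $A_{\textup{n}}^{\sim}$. You simply prove that background fact in full (via solidity and the Riesz--Kantorovich formulas) rather than citing it, and the details you supply are accurate.
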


Similarly, we have the following corollary to \cref{thm:1.2}.

\begin{corollary} \label{cor:1.6}
    If $X$ is locally compact then
    $\LipcX$ embeds as a Riesz subspace of the order continuous dual of $\Radon$ by mapping $f$ to $\hat{f}:\mu \mapsto \mu(f)$.
\end{corollary}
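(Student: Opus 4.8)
The plan is to mirror the reasoning used for \cref{cor:1.5}, with the pair $\LipcX$, $\Radon$ in place of $\LipX$, $\fRadon$ and with \cref{thm:1.2} in place of \cref{thm:1.4}. By \cref{thm:1.2}, under the hypothesis that $X$ is locally compact, $\Radon$ is exactly the sequentially order continuous dual of $\LipcX$; in particular each $\mu \in \Radon$ pairs with $f \in \LipcX$ via $\mu(f)$, so $f \mapsto \hat f$ with $\hat f(\mu) = \mu(f)$ is a well-defined linear map from $\LipcX$ into the algebraic dual of $\Radon$. The two structural inputs I would isolate are: (i) the sequentially order continuous dual of a Riesz space is an ideal in its order dual, so $\Radon$ is an ideal in $(\LipcX)^{\sim}$; and (ii) $\Radon$ separates the points of $\LipcX$. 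Granting these, the goal is to check that $f \mapsto \hat f$ is an injective Riesz homomorphism with image in the order continuous dual of $\Radon$.

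Injectivity is (ii), which I would establish using Dirac measures: for $x \in X \setminus A$ the point mass $\delta_x$ is tight and satisfies $\delta_x(d_A) = d_A(x) < \infty$, hence is $1$-finite, so it represents an element of $\Radon$ with $\delta_x(f) = f(x)$. If $f \in \LipcX$ is nonzero then, as $f$ vanishes on $A$, some $x \in X \setminus A$ has $f(x) \neq 0$, so $\hat f(\delta_x) \neq 0$. For order continuity it suffices, by linearity, to treat $f \ge 0$: if $0 \le \mu_\alpha \uparrow \mu$ in $\Radon$ then by (i) this supremum is also computed in $(\LipcX)^{\sim}$, and the standard formula for the supremum of an upward-directed family of positive functionals yields $\mu(f) = \sup_\alpha \mu_\alpha(f)$; hence $\hat f(\mu) = \sup_\alpha \hat f(\mu_\alpha)$, so $\hat f$ lies in the order continuous dual of $\Radon$.

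It remains to show $\widehat{f \vee g} = \hat f \vee \hat g$. Monotonicity of the embedding gives $\hat f \vee \hat g \le \widehat{f \vee g}$ immediately. For the reverse inequality I would use the concrete description of $\Radon$ as measures: fix $\mu \ge 0$ in $\Radon$, partition $X \setminus A$ into the Borel sets $\{f \ge g\}$ and $\{f < g\}$, and set $\nu = \mu|_{\{f \ge g\}}$ and $\rho = \mu|_{\{f < g\}}$. Both are dominated by $\mu$ and so, by (i), lie in $\Radon$; they satisfy $\nu + \rho = \mu$ and $\nu(f) + \rho(g) = \mu(f \vee g)$, since $f \vee g$ equals $f$ on $\{f \ge g\}$, equals $g$ on $\{f < g\}$, and vanishes on $A$. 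By the Riesz--Kantorovich formula in $(\Radon)^{\sim}$, $(\hat f \vee \hat g)(\mu)$ is the supremum of $\nu'(f) + \rho'(g)$ over decompositions $\mu = \nu' + \rho'$ with $\nu', \rho' \in \Radon^{+}$, so it is at least $\nu(f) + \rho(g) = \widehat{f \vee g}(\mu)$. Thus $\widehat{f \vee g} \le \hat f \vee \hat g$, and $f \mapsto \hat f$ is the desired injective Riesz homomorphism, embedding $\LipcX$ as a Riesz subspace of the order continuous dual of $\Radon$.

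The step I expect to be the main obstacle is the inequality $\widehat{f \vee g} \le \hat f \vee \hat g$: it is the only place requiring a decomposition of $\mu$ that realizes $\mu(f \vee g)$, and it is exactly here that input (i) is needed, to guarantee that the restrictions $\mu|_{\{f \ge g\}}$ and $\mu|_{\{f < g\}}$ remain in $\Radon$. Injectivity and order continuity are comparatively routine once \cref{thm:1.2} is in hand. Alternatively, one could avoid the explicit decomposition altogether by invoking the general fact that the canonical embedding of a Riesz space into the order continuous dual of any point-separating ideal of its order dual is a Riesz embedding, and then simply verifying (i) and (ii).
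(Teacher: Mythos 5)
Your proposal is correct and takes essentially the same route as the paper: the paper derives \cref{cor:1.6} from \cref{thm:1.2} together with exactly the two facts you isolate — that the sequentially order continuous dual is a band (hence an ideal) in the order dual of $\LipcX$, and that $\Radon$ separates the points of $\LipcX$ — and then invokes the general statement from its background section that a Riesz space embeds as a Riesz subspace of $A_{\textup{n}}^{\sim}$ for any point-separating ideal $A$ of its order dual. Your explicit verification via Dirac measures and the Riesz--Kantorovich formula is a correct unwinding of that cited fact, which the paper itself leaves unproved.
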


For the remainder of this section, assume that $(X,d)$ is complete and separable.
For $\mu,\nu \in \pfRadon$, define the set of couplings, $\Pi(\mu,\nu)$, to consist of relative Borel measures in the product metric pair $(X,d,A) \times (X,d,A)$ whose marginals are $\mu$ and $\nu$.
Define the \emph{relative $1$-Wasserstein distance} to be given by
\begin{equation} \label{eq:relative-wasserstein}
  W_1(\mu,\nu) = \inf_{\sigma \in \Pi(\mu,\nu)} 
  \int_{X \times X} \bar{d}(x,y) d\sigma, 
  \quad \text{where} \quad 
  \bar{d}(x,y) = \min(d(x,y), d_A(x) + d_A(y)).
\end{equation}

Say that $X$ is boundedly compact if all closed and bounded subsets are compact.

\begin{theorem}[\cref{thm:metric,cor:optimal_coupling}]
  $W_1$ is a metric on $\pfRadonA$ and
  there is an isometric embedding of the quotient metric space $X/A$ into $\pfRadonA$ given by $x \mapsto \delta_{x}$ if $x \not\in A$ and $A \mapsto 0$.
  If $X$ is boundedly compact then there exists an optimal coupling for \eqref{eq:relative-wasserstein}.
\end{theorem}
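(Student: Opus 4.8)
The plan is to verify the metric axioms for $W_1$ directly, compute $W_1$ on Dirac masses to obtain the isometric embedding, and finally run the direct method of the calculus of variations to produce an optimal coupling. Nonnegativity is immediate from $\bar{d} \geq 0$, and finiteness follows from the \emph{reservoir coupling} that ships all of $\mu$ into $A$ and draws all of $\nu$ out of $A$: since $\bar{d}(x,y) \leq d_A(x) + d_A(y)$ and $\mu,\nu$ are $1$-finite, this coupling has cost at most $\mu(d_A) + \nu(d_A) < \infty$, so $W_1(\mu,\nu) < \infty$. Symmetry comes from pushing couplings forward along the coordinate swap $(x,y) \mapsto (y,x)$, a cost-preserving bijection between $\Pi(\mu,\nu)$ and $\Pi(\nu,\mu)$. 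For the triangle inequality I would first record that $\bar{d}$ is itself a pseudometric, i.e. $\bar{d}(x,z) \leq \bar{d}(x,y) + \bar{d}(y,z)$, by a short case analysis using that $d_A$ is $1$-Lipschitz (this also identifies $\bar{d}$ with the quotient metric of $X/A$). Then, given couplings of $(\mu,\rho)$ and $(\rho,\nu)$, I would glue them over the common marginal $\rho$ and push the result forward to the outer factors; the disintegration needed for gluing is available since $(X,d)$ is Polish and a representative of $\rho$ is $\sigma$-finite away from $A$. Combining gluing with the triangle inequality for $\bar{d}$ gives subadditivity of the cost, hence of $W_1$.

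Positive-definiteness is the one axiom I would prove by duality rather than by hand, and the same duality drives the embedding. Using the relative Kantorovich--Rubinstein duality established earlier, $W_1(\mu,\nu) = \sup\{(\mu-\nu)(f) : f \in \LipOneX\} = \KRnorm{\mu - \nu}$; since the relative $\KR$ norm is a genuine norm on the $1$-finite real-valued relative Radon measures and $\mu - \nu$ lies in that Riesz space, $W_1(\mu,\nu) = 0$ forces $\mu = \nu$. For the embedding, the upper bound $W_1(\delta_x,\delta_y) \leq \bar{d}(x,y)$ uses the deterministic coupling $\delta_{(x,y)}$, and $W_1(\delta_x, 0) \leq d_A(x)$ uses a coupling $\delta_{(x,a)}$ with $a \in A$ nearly realizing $d_A(x)$. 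The matching lower bound again uses duality: by a McShane--Whitney extension I would build $f \in \LipOneX$ with $f(x) - f(y) = \bar{d}(x,y)$, splitting into the cases $\bar{d}(x,y) = d(x,y)$ and $\bar{d}(x,y) = d_A(x) + d_A(y)$ and setting $f(y) = -d_A(y)$ with $f(x)$ accordingly, so that $W_1(\delta_x,\delta_y) \geq f(x) - f(y) = \bar{d}(x,y)$; the basepoint case uses $f = d_A$. As $\bar{d}$ on representatives is exactly the quotient metric on $X/A$, the map $x \mapsto \delta_x$, $A \mapsto 0$ is an isometry, in particular injective.

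The existence of an optimal coupling I would obtain by the direct method, and this is where I expect the real work. Fix a minimizing sequence $\sigma_n \in \Pi(\mu,\nu)$. The crux is a \emph{relative} tightness estimate: for each $\eps > 0$ the marginal constraint gives $\sigma_n(\{d_A(x) \geq \eps\} \times X) \leq \mu(\{d_A \geq \eps\}) \leq \mu(d_A)/\eps < \infty$ uniformly in $n$, and symmetrically in the second coordinate, while on $\{d_A \geq \eps\}$ the marginals are finite measures, which are inner regular by compact sets because bounded compactness makes $X$ proper. Hence the restrictions of the $\sigma_n$ to the region away from $A \times A$ form a tight family, while the mass accumulating near $A \times A$ contributes negligibly to the cost (there $\bar{d}$ is small) and is quotiented out in the relative framework. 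Extracting a weakly convergent subsequence $\sigma_n \to \sigma$, I would check that the marginal constraints are preserved in the limit and that, since $\bar{d} \geq 0$ is lower semicontinuous, $\int \bar{d}\, d\sigma \leq \liminf_n \int \bar{d}\, d\sigma_n = W_1(\mu,\nu)$, so that $\sigma$ is optimal. The main obstacle is making this compactness argument rigorous for possibly non-locally-finite relative measures, reconciling the infinite mass near $A$ with a Prokhorov-type extraction; this is precisely the step at which bounded compactness and $1$-finiteness are indispensable.
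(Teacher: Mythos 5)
Your overall architecture (diagonal/transpose couplings, McShane extension for the Dirac lower bound, gluing for the triangle inequality) matches the paper in spirit, but three steps have genuine problems. First, your positive-definiteness argument is circular as written: you invoke the fact that $\KRnorm{-}$ is a genuine norm, but in the paper that fact (\cref{prop:KR-norm}) is \emph{derived from} $W_1$ being a metric, and the full Kantorovich--Rubinstein duality you cite requires $(X,A)$ boundedly compact, whereas \cref{thm:metric} is asserted for any complete separable metric pair. What is actually needed is only the easy inequality $W_1(\mu,\nu) \geq \mu(f)-\nu(f)$ for $f \in \LipOneX$ (from \cref{lem:lipschitz-dbar} and \cref{lem:coupling_characterization_forward}) together with the separation statement that elements of $\pfRadon$ are determined by their values on $\LipXpos$ (\cref{cor:lip_determines_pfRadon}, a consequence of the representation theorem) --- that is how the paper closes this step without bounded compactness. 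Second, your gluing step for the triangle inequality glosses over the fact that couplings live in the quotient $\pBorelXX$ and may have infinite mass near $A\times A$: the middle marginals of the two couplings agree only modulo measures supported on $A$, so before one can glue one must (i) retract the mass in a small offset $A^\delta$ into $A$ at an $\eps$ cost (\cref{lem:r-pi-coupling,prop:r-pi-cost}) to make everything finite, (ii) choose honest representatives in $\pBorelXXonly$ whose middle marginals literally coincide (\cref{lem:representative-coupling}), and only then (iii) normalize and apply the gluing lemma for probability measures. ``$\sigma$-finite away from $A$'' does not by itself produce matching representatives, and this is where your sketch would fail if executed naively.

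Third, for existence of an optimal coupling you propose the direct method and then explicitly leave open the key difficulty --- a Prokhorov-type extraction for relative measures that need not be locally finite near $A$. That is precisely the step that must be supplied, so as it stands this part is a gap rather than a proof. The paper takes a genuinely different route: it defines the sublinear functional $\omega_{\mu,\nu}$, extends a linear functional dominated by it via the order-theoretic Hahn--Banach theorem (\cref{thm:positive_hahn_banach_strong}), and uses the representation theorem \cref{thm:riesz_coupling} to realize the extension as a coupling attaining the infimum; \cref{cor:optimal_coupling} is then the case $h=\bar d$ of Monge--Kantorovich duality (\cref{thm:MK-duality}). This duality route avoids weak compactness of minimizing sequences entirely, which is exactly the point at which the non-local-finiteness near $A$ makes the direct method delicate. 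If you want to keep the direct method, you would need to carry out the truncation-near-$A$ and diagonal-extraction argument you allude to, and also verify that marginal constraints (in the quotient sense) pass to the limit; none of that is routine here.
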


For the remainder of this section, assume that $X$ is boundedly compact.
We prove the following relative version of Kantorovich-Rubinstein duality.

\begin{theorem}[\cref{thm:KR-duality}] \label{thm:KR-duality-intro}
%    Assume that $X$ is boundedly compact.
    Let $\mu,\nu \in \fRadon$. Then
    \[ W_1(\mu,\nu) =  \sup \Bigl\{ \int_X f d(\mu-\nu) \ | \ f \in \LipX, L(f) \leq 1 \Bigr\}
    .\]
    Hence, viewing $\mu -\nu$ as a linear functional on $\LipX$, we have 
    $W_1(\mu,\nu) = \norm{\mu-\nu}_{\textup{op}}$.
\end{theorem}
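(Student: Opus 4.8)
The plan is to prove the two inequalities separately: $\sup_f \int_X f\,d(\mu-\nu) \le W_1(\mu,\nu)$ is elementary and comes from a coupling estimate, whereas the reverse inequality rests on the relative Monge--Kantorovich duality established earlier in the paper together with a $\bar{d}$-concavification argument. First I would reduce to positive measures and record finiteness. Writing $\mu = \mu^+ - \mu^-$, $\nu = \nu^+ - \nu^-$ and setting $\alpha = \mu^+ + \nu^-$, $\beta = \nu^+ + \mu^- \in \pfRadon$, we have $\alpha - \beta = \mu - \nu$, and $W_1$ on $\fRadon$ is defined so that $W_1(\mu,\nu) = W_1(\alpha,\beta)$; hence neither side of the asserted identity changes and we may assume $\mu,\nu \in \pfRadon$. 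For $f \in \LipX$ with $L(f) \le 1$ and any $a \in A$ we have $\abs{f(x)} = \abs{f(x) - f(a)} \le d(x,a)$, so $\abs{f(x)} \le d_A(x)$; since $\mu,\nu$ are $1$-finite this gives $\int_X \abs{f}\,d\mu, \int_X \abs{f}\,d\nu < \infty$ and $\int_{X \times X} \bar{d}\,d\sigma \le \mu(d_A) + \nu(d_A) < \infty$ for every $\sigma \in \Pi(\mu,\nu)$, so all relevant quantities are finite. For the easy inequality, fix such an $f$ and a coupling $\sigma$: from $\abs{f(x)} \le d_A(x)$ we get $f(x) - f(y) \le d_A(x) + d_A(y)$, and $f(x) - f(y) \le d(x,y)$ by $1$-Lipschitzness, so $f(x) - f(y) \le \bar{d}(x,y)$. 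Since $f$ vanishes on $A$, the relative marginal conditions give $\int_{X \times X} f(x)\,d\sigma = \int_X f\,d\mu$ and $\int_{X \times X} f(y)\,d\sigma = \int_X f\,d\nu$ (the ambiguity in the marginals is carried by mass on $A$, on which $f$ vanishes), whence $\int_X f\,d(\mu-\nu) = \int_{X \times X}(f(x) - f(y))\,d\sigma \le \int_{X \times X} \bar{d}\,d\sigma$. Taking the infimum over $\sigma$ and then the supremum over $f$ gives $\sup_f \int_X f\,d(\mu-\nu) \le W_1(\mu,\nu)$.

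For the reverse inequality I would invoke the relative Monge--Kantorovich duality, which expresses $W_1(\mu,\nu)$ as the supremum of $\int_X \phi\,d\mu + \int_X \psi\,d\nu$ over pairs $(\phi,\psi)$ of functions vanishing on $A$ with $\phi(x) + \psi(y) \le \bar{d}(x,y)$; vanishing on $A$ is automatic in the relative setting, since $\mu$ and $\nu$ are defined only modulo mass on $A$ and so $\int_X \phi\,d\mu$ is representative-independent exactly when $\phi|_A = 0$. The goal is to replace an arbitrary admissible pair by one of the form $(f, -f)$ with $f \in \LipX$ and $L(f) \le 1$ without decreasing the objective. The structural fact that makes this work is that $\bar{d}$ is precisely the quotient pseudometric on $X$ obtained by collapsing $A$, so a function vanishing on $A$ satisfies $L(f) \le 1$ if and only if it is $1$-Lipschitz for $\bar{d}$ (the same equivalence underlies the easy direction). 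Given admissible $(\phi,\psi)$, set $f := \psi^{\bar{d}}$ where $\psi^{\bar{d}}(x) = \inf_y(\bar{d}(x,y) - \psi(y))$. Then $f \ge \phi$ pointwise, so the objective does not decrease; $f$ is an infimum of functions that are $1$-Lipschitz for $\bar{d}$, hence $1$-Lipschitz for $\bar{d}$; and at $a \in A$ one has $f(a) = \inf_y(d_A(y) - \psi(y))$, which is $\ge \phi(a) = 0$ and $\le d_A(a') - \psi(a') = 0$ for $a' \in A$, so $f|_A = 0$ and $f \in \LipX$. Because $\bar{d}$ is a pseudometric and $f$ is $1$-Lipschitz for it, $f^{\bar{d}} = -f$, so $(f, -f)$ is admissible and $\int_X f\,d(\mu-\nu) = \int_X f\,d\mu + \int_X f^{\bar{d}}\,d\nu \ge \int_X \phi\,d\mu + \int_X \psi\,d\nu$. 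Taking suprema yields $W_1(\mu,\nu) \le \sup_f \int_X f\,d(\mu-\nu)$, which with the easy direction gives the equality. The operator-norm statement is then immediate: $\norm{\mu-\nu}_{\textup{op}} = \sup\{\abs{\int_X f\,d(\mu-\nu)} : L(f) \le 1\}$, and since $-f \in \LipX$ whenever $f \in \LipX$ and the unsigned supremum is nonnegative, the absolute value may be dropped.

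I expect the reverse inequality to be the main obstacle, concentrated in two places. First, one must have the relative Monge--Kantorovich dual available in exactly the form above, with continuous (or Borel) potentials vanishing on $A$ and constraint governed by $\bar{d}$. Second, the $\bar{d}$-concavification must be controlled despite the measures being only $1$-finite and not locally finite: one has to verify that every integral that appears is finite---this is exactly what the bound $\abs{f} \le d_A$ combined with $1$-finiteness provides---and, crucially, that the transform $\psi^{\bar{d}}$ lands inside $\LipX$ rather than merely producing a $\bar{d}$-Lipschitz function carrying a nonzero constant on $A$, which would spoil the objective when $\alpha$ and $\beta$ have unequal mass. The compatibility of the relative marginal conditions with functions vanishing on $A$ in the easy direction is a secondary technical point, routine but requiring care.
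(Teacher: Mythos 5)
Your proof is correct and follows essentially the same route as the paper: the easy inequality via couplings together with $f\oplus(-f)\le\bar d$ for $f\in\LipOneX$ (\cref{lem:lipschitz-dbar,lem:coupling_characterization_forward}), and the reverse inequality by applying relative Monge--Kantorovich duality (\cref{thm:MK-duality}) with $h=\bar d$ and then upgrading an admissible pair $(\phi,\psi)$ to one of the form $(f,-f)$ with $f\in\LipOneX$ via the $\bar d$-transform. The only differences are cosmetic: you use a single transform (justified by checking $\bar d$-Lipschitzness directly) where the paper performs the standard double transform, and you spell out the reduction from $\fRadon$ to $\pfRadon$, which the paper handles by stating \cref{thm:KR-duality} for positive measures and relying on translation invariance.
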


We also prove the following relative version of Monge-Kantorovich duality.

\begin{theorem}[\cref{thm:MK-duality}]
%    Assume that $X$ is boundedly compact.
    Let $\mu,\nu \in \pfRadon$ and $h \in \LipXXAApos$. Then
    \[ \min_{\pi\in \Pi(\mu,\nu)}\pi(h) = \sup \{ \mu(f) + \nu(g) \ | \ f,g \in \LipX, f(x) + g(y) \leq h(x,y), \forall x,y \in X\}.\]
\end{theorem}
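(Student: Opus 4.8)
The plan is to prove the two inequalities separately — the easy direction also showing both sides are finite — and to obtain the hard direction by reducing to the classical Monge--Kantorovich duality on an augmented space carrying a single reservoir point. First I would establish the inequality ``$\geq$'' (weak duality) together with attainment of the minimum. Fix admissible $f,g \in \LipX$ with $f(x)+g(y)\leq h(x,y)$ and any $\sigma \in \Pi(\mu,\nu)$. Because $f$ and $g$ vanish on $A$, the quantities $\mu(f)$ and $\nu(g)$ are well defined on the relative measures $\mu,\nu$, insensitive to the choice of representative, and $\mu(f)=\int f(x)\,d\sigma(x,y)$, $\nu(g)=\int g(y)\,d\sigma(x,y)$ since the marginals of $\sigma$ agree with $\mu,\nu$ off $A$. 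Adding, $\mu(f)+\nu(g)=\sigma(f\oplus g)\leq \sigma(h)$, so the right-hand supremum is at most the left-hand infimum. Finiteness of the infimum comes from the ``route through $A$'' coupling $\sigma_0 = (\mathrm{id}\times a)_*\mu + (a\times\mathrm{id})_*\nu$ built from an approximate nearest-point selection $a(\cdot)$ into $A$, whose cost is bounded by $L(h)\,(\mu(d_A)+\nu(d_A))<\infty$ using $1$-finiteness and the bound $h(x,y)\leq L(h)\,(d_A(x)+d_A(y))$, valid since $h\geq 0$ vanishes on $A\times A$. Attainment of the minimum then follows exactly as in \cref{cor:optimal_coupling}: $\sigma\mapsto\sigma(h)$ is lower semicontinuous and $\Pi(\mu,\nu)$ is relatively compact by tightness of $\mu,\nu$ and bounded compactness of $X$.

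For the reverse inequality I would reduce to the classical Monge--Kantorovich duality theorem. Introduce a reservoir point $*$ and form $X^* = (X\setminus A)\cup\{*\}$, collapsing $A$ to $*$, so that a relative coupling becomes an ordinary coupling on $X^*$ whose free $A$-marginals are recorded as mass at $*$. Define $c^*(x,y)=\min\bigl(h(x,y),\,h_A(x)+h'_A(y)\bigr)$, $c^*(x,*)=h_A(x)$, $c^*(*,y)=h'_A(y)$, $c^*(*,*)=0$, where $h_A(x)=\inf_{a\in A}h(x,a)$ and $h'_A(y)=\inf_{a\in A}h(a,y)$; these are $L(h)$-Lipschitz, so $c^*$ is lower semicontinuous and $\sigma(h)$ equals the $c^*$-cost of the induced coupling. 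I would first treat finite measures, replacing $\mu,\nu$ by the truncations $\mu_\eps=\mu|_{\{d_A\geq\eps\}}$, $\nu_\eps=\nu|_{\{d_A\geq\eps\}}$, which are finite since $\mu(\{d_A\geq\eps\})\leq\eps^{-1}\mu(d_A)$, and padding with mass at $*$ to equalize total mass. Classical Kantorovich duality then yields potentials $\phi,\psi$ with $\phi\oplus\psi\leq c^*$ realizing the value; normalizing $\phi(*)=\psi(*)=0$ gives $\phi\leq h_A$, $\psi\leq h'_A$, and $\phi\oplus\psi\leq h$ on $X\times X$, and since $h_A(x)\leq L(h)\,d_A(x)\to 0$ as $x\to A$, the $c^*$-transforms $\phi,\psi$ are $L(h)$-Lipschitz and extend by $0$ across $A$, producing admissible $f,g\in\LipX$ that realize the truncated dual value.

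Finally I would remove the truncation. The removed mass has first moment $\int_{\{d_A<\eps\}}d_A\,d\mu\to 0$, so the bound $h\leq L(h)\,(d_A(x)+d_A(y))$ shows that truncation perturbs the optimal cost by at most $L(h)\cdot o(1)$, whence the truncated optimal costs converge to $\min_\pi\pi(h)$. The potentials $f_\eps,g_\eps$ are uniformly $L(h)$-Lipschitz and vanish on $A$, so — pinned at $A$ and equicontinuous — they lie in a compact set for uniform convergence on bounded sets by Arzel\`a--Ascoli, using bounded compactness of $X$; extracting a subsequential limit $f,g\in\LipX$ and passing to the limit in $\mu_\eps(f_\eps)+\nu_\eps(g_\eps)$ by dominated convergence (dominating by $L(h)\,d_A$, integrable by $1$-finiteness) yields admissible $f,g$ with $\mu(f)+\nu(g)=\min_\pi\pi(h)$. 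I expect the main obstacle to be precisely this interchange of limits: controlling the dual potentials uniformly and justifying passage to the limit of $\mu_\eps(f_\eps)$ against possibly infinite measures, where the uniform Lipschitz bound, the normalization on $A$, and the $1$-finiteness domination must be orchestrated together; a secondary subtlety is checking that the reservoir-point reduction is an exact, cost-preserving bijection between relative couplings and balanced couplings on $X^*$, so that no optimality is lost in either direction.
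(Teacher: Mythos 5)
Your overall route---weak duality, then reduction to classical Monge--Kantorovich duality on the one-point collapse $X^{*}=(X\setminus A)\cup\{*\}$ with truncation to the finite measures $\mu_{\eps},\nu_{\eps}$ and a limit $\eps\to 0$---is genuinely different from the paper's argument, which never invokes classical duality: the paper builds the monotonic sublinear functional $\omega_{\mu,\nu}(h)=\inf\{\mu(f)+\nu(g) \mid h\leq f\oplus g\}$, extends the functional $\alpha k\mapsto\alpha\,\omega_{\mu,\nu}(k)$ (with $k=-h$) by the positive Hahn--Banach theorem (\cref{thm:positive_hahn_banach_strong}) to a positive linear functional dominated by $\omega_{\mu,\nu}$, and then uses the representation theorem \cref{thm:riesz_coupling} to realize that functional as an actual coupling $\sigma\in\Pi(\mu,\nu)$. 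The key structural difference is that in the paper the optimal coupling is \emph{produced} by the Hahn--Banach/representation step, so attainment of the minimum comes for free.

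This exposes the main gap in your proposal: you assert attainment of the minimum by saying that $\sigma\mapsto\sigma(h)$ is lower semicontinuous and that ``$\Pi(\mu,\nu)$ is relatively compact by tightness of $\mu,\nu$ and bounded compactness of $X$, exactly as in \cref{cor:optimal_coupling}.'' That is circular as a citation --- \cref{cor:optimal_coupling} is deduced \emph{from} \cref{thm:MK-duality} in the paper by taking $h=\bar{d}$, not proved independently --- and, more importantly, the compactness claim is not available in this setting. Elements of $\pfRadon$ need only satisfy $\mu(d_A)<\infty$; they may have infinite total mass and fail to be locally finite at $A$, so couplings in $\Pi(\mu,\nu)$ may have infinite total mass concentrating at $A\times A$. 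Prokhorov-type compactness does not apply directly, no topology on $\pBorelXX$ is specified for which $\Pi(\mu,\nu)$ is compact and the cost is lower semicontinuous, and nothing of this kind is developed in the paper. A related weakness sits at the end of your limit argument: even granting that the truncated optimal values converge to $\inf_{\pi}\pi(h)$ and that the potentials $f_{\eps},g_{\eps}$ converge, this yields equality of the \emph{values} $\sup=\inf$ but still does not produce an optimal $\pi$ for the untruncated problem without extracting a limit of the truncated optimal couplings --- which runs into the same missing compactness. Secondary (but repairable) issues are the ones you flag yourself: the cost-preserving correspondence between relative couplings and balanced couplings on $X^{*}$ requires a measurable approximate selection into $A$ and a rerouting argument, and you need a version of classical duality in which the dual supremum is \emph{attained} for the padded finite problem (available since $c^{*}\leq L(h)(d_A\oplus d_A)$ with $d_A$ integrable, but it must be invoked explicitly). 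To make your approach complete you would either need to develop the compactness of $\Pi(\mu,\nu)$ in a suitable weak topology on relative measures, or follow the paper in extracting the optimizer from a Hahn--Banach extension together with \cref{thm:riesz_coupling}.
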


We now strengthen \cref{thm:1.4} as follows.

\begin{theorem}[\cref{thm:main_result}]
%    Assume that $X$ is boundedly compact.
    Let $T$ be an element of the sequentially order continuous dual of $\LipX$.
    Then $T$ is represented by $\mu \in \fRadon$.
    Furthermore, $\norm{T}_{\textup{op}} = W_1(\mu^+,\mu^-)$,
\end{theorem}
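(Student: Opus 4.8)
The plan is to obtain the representation from \cref{thm:1.4} and the norm identity from Kantorovich--Rubinstein duality \cref{thm:KR-duality-intro}, with the only connective tissue being the Riesz-space (Jordan-type) decomposition of $\mu$. First I would check that the hypotheses of \cref{thm:1.4} hold under the standing assumptions: since $X$ is boundedly compact it is locally compact, and, being a separable metric space, it is second countable and $\sigma$-compact (writing $X$ as a countable union of closed balls); the subset $X \setminus A$ then inherits local compactness and $\sigma$-compactness. Hence \cref{thm:1.4} identifies the sequentially order continuous dual of $\LipX$ with $\fRadon$, so $T$ is represented by a unique $\mu \in \fRadon$. This already settles the first assertion, leaving only the norm equality.

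Next I would pass to the Riesz-space structure of $\fRadon$ developed in \cref{sec:riesz-cone} and the identification of $\fRadon$ as the Grothendieck group of $\pfRadon$. Setting $\mu^+ = \mu \join 0$ and $\mu^- = (-\mu) \join 0$, the Riesz-cone theory guarantees $\mu^+, \mu^- \in \pfRadon$ with $\mu = \mu^+ - \mu^-$, so that $T(f) = \mu(f) = \mu^+(f) - \mu^-(f) = \int_X f\, d(\mu^+ - \mu^-)$ for every $f \in \LipX$, both terms being finite because $\mu^\pm$ are $1$-finite. Since the unit ball $\{\,f \in \LipX : L(f) \leq 1\,\}$ is invariant under $f \mapsto -f$ (as $L(-f) = L(f)$ and $-f$ again vanishes on $A$), the operator norm can be computed without the absolute value, and I would conclude
\[
  \norm{T}_{\textup{op}} = \sup_{L(f) \leq 1} \abs{T(f)} = \sup_{L(f) \leq 1} \int_X f\, d(\mu^+ - \mu^-) = W_1(\mu^+,\mu^-),
\]
where the last equality is \cref{thm:KR-duality-intro} applied to the pair $(\mu^+,\mu^-) \in \pfRadon \subseteq \fRadon$.

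The topological hypothesis check and the symmetry argument for the operator norm are routine. The one point that carries genuine content beyond directly quoting the two cited theorems is that the lattice decomposition $\mu = \mu^+ - \mu^-$ takes place inside the positive cone $\pfRadon$: this is what makes $W_1(\mu^+,\mu^-)$ the honest relative $1$-Wasserstein distance between two $1$-finite positive relative Radon measures (to which the coupling-based definition \eqref{eq:relative-wasserstein} and its duality apply) rather than a merely formal expression. I therefore expect the main obstacle to be bookkeeping the Riesz-cone results of \cref{sec:riesz-cone} and \cref{sec:signed} carefully enough to certify $\mu^\pm \in \pfRadon$ and the pairing $T(f) = \int_X f\, d(\mu^+ - \mu^-)$; once that is in hand, the equality with $\norm{T}_{\textup{op}}$ is immediate.
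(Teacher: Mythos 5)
Your proposal is correct and follows essentially the same route as the paper, which obtains the result by combining \cref{cor:representation-lip-non-positive} (the representation of the sequentially order continuous dual of $\LipX$ as $\fRadon$) with the relative Kantorovich--Rubinstein duality of \cref{thm:KR-duality}; your extra care in verifying the topological hypotheses and in placing $\mu^{\pm}$ inside $\pfRadon$ via the Riesz-cone structure just makes explicit what the paper leaves implicit. The only cosmetic point is that the standing hypothesis is that the \emph{pair} $(X,A)$ is boundedly compact, which by definition already includes that $X$ is locally compact and $\sigma$-compact, so your derivation of these from bounded compactness of the metric space $X$ is not needed.
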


We also have the following.

\begin{theorem}[\cref{thm:main_result_compact}]
%    Assume that $X$ is boundedly compact.
    Let $T$ be an element of the sequentially order continuous dual of $\LipcX$
    such that both $T$ and $\abs{T}$ are bounded.
    Then $T$ is represented by $\mu \in \fRadon$.
    Furthermore, $\norm{T}_{\textup{op}} = W_1(\mu^+,\mu^-)$,
\end{theorem}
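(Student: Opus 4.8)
The plan is to reduce to the representation theorem \cref{thm:representation-lipc-non-positive} already in hand, upgrade local $1$-finiteness to $1$-finiteness using the boundedness of $\abs{T}$, and then identify the operator norm with $W_1$ via relative Kantorovich--Rubinstein duality (\cref{thm:KR-duality}); the only genuinely new work is a pair of truncation arguments that pass between compactly supported test functions and the unbounded function $d_A$. Since $X$ is boundedly compact it is locally compact and $\sigma$-compact, so \cref{thm:representation-lipc-non-positive} represents $T$ by a unique $\mu \in \Radon$, with $\mu \mapsto (f \mapsto \mu(f))$ a Riesz isomorphism onto the sequentially order continuous dual of $\LipcX$. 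As that dual is an ideal in the order dual, the modulus $\abs{T}$ is again sequentially order continuous, and under the isomorphism it corresponds to $\abs{\mu} = \mu^+ + \mu^-$; in particular $\mu^+,\mu^- \in \pRadon$ and $\abs{T}(g) = \abs{\mu}(g)$ for all $g \in \LipcX$.

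Next I would fix a basepoint $x_0 \in X$ and set $\phi_n(x) = \min(d_A(x), \max(0, n - d(x,x_0)))$. Each $\phi_n$ is $1$-Lipschitz, vanishes on $A$, satisfies $0 \le \phi_n \le d_A$, is supported in the closed ball $\bar{B}(x_0,n)$ — which is compact by bounded compactness — and $\phi_n \uparrow d_A$ pointwise. Hence $\phi_n \in \LipcX$ with $L(\phi_n) \le 1$, and by the monotone convergence theorem applied to the positive measure $\abs{\mu}$ we get $\abs{\mu}(d_A) = \lim_n \abs{\mu}(\phi_n) = \lim_n \abs{T}(\phi_n) \le \norm{\abs{T}}_{\textup{op}} < \infty$. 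Therefore $\mu^+(d_A) + \mu^-(d_A) = \abs{\mu}(d_A) < \infty$, so $\mu^\pm \in \pfRadon$ and $\mu = \mu^+ - \mu^- \in \fRadon$, which proves the first assertion.

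For the norm identity, since $\mu^\pm \in \fRadon$, \cref{thm:KR-duality} gives $W_1(\mu^+,\mu^-) = \sup\{\mu(f) : f \in \LipX,\ L(f) \le 1\}$, and replacing $f$ by $-f$ shows this equals $\norm{\mu}_{\textup{op}}$ computed over $\LipX$. Because $\LipcX \subseteq \LipX$ with the same norm $L$ and $T = \mu$ on $\LipcX$, we immediately get $\norm{T}_{\textup{op}} \le W_1(\mu^+,\mu^-)$. For the reverse inequality I would clamp: given $f \in \LipX$ with $L(f) \le 1$, put $f_n = (f \vee (-\phi_n)) \wedge \phi_n \in \LipcX$, so that $L(f_n) \le 1$, $\abs{f_n} \le \phi_n \le d_A$, and $f_n \to f$ pointwise. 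Since $d_A \in L^1(\mu^+) \cap L^1(\mu^-)$ by $1$-finiteness, dominated convergence yields $\mu(f) = \lim_n \mu(f_n) = \lim_n T(f_n) \le \norm{T}_{\textup{op}}$; taking the supremum over $f$ gives $W_1(\mu^+,\mu^-) \le \norm{T}_{\textup{op}}$, hence equality.

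The main obstacle is the construction of the truncations $\phi_n$ and $f_n$: they must simultaneously be compactly supported, $1$-Lipschitz, vanish on $A$, and be dominated by $d_A$. Controlling the support is exactly where bounded compactness (rather than mere local compactness) enters, since the $n$-neighborhood of $A$ need not be bounded when $A$ is unbounded; this forces the extra spatial cutoff $\max(0, n - d(x,x_0))$ in $\phi_n$, and one must then verify that intersecting it with $d_A$ preserves the Lipschitz constant and vanishing on $A$ while still increasing monotonically to $d_A$.
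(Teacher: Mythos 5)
Your proof is correct and follows essentially the same route as the paper, which obtains the result by combining \cref{thm:representation-proper-not-positive} (whose underlying argument, via \cref{thm:representation-proper}, upgrades local $1$-finiteness to $1$-finiteness by testing the bounded functionals $T^{\pm}$ against an increasing sequence of compactly supported $1$-Lipschitz functions converging to $d_A$, exactly the role of your $\phi_n$) with \cref{cor:wasserstein_compact_op_norm} (whose proof is the same clamping-and-convergence argument as your $f_n$). The only substantive difference is that your truncations rely on compactness of closed balls, which is fine under the standing hypothesis that $X$ is boundedly compact but slightly less general than the paper's hypothesis that the pair $(X,A)$ is boundedly compact.
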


%Assume that $X$ is boundedly compact.
We show that the metric $W_1$ gives $\pfRadon$ the structure of a normed convex cone (\cref{sec:convex-cones}). 
From this, we obtain the following.

\begin{proposition}[\cref{prop:KR-norm}]
%    Assume that $X$ is boundedly compact.
    $\fRadon$ is a normed vector space with norm $\KRnorm{-}$ given by 
    \begin{equation*}
        \KRnorm{\mu} = W_1(\mu^+,\mu^-),
    \end{equation*}
    which we call the \emph{relative Kantorovich-Rubinstein norm}.
\end{proposition}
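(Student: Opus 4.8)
The plan is to deduce this from the normed convex cone structure on $\pfRadon$ established in \cref{sec:convex-cones}, by viewing $\fRadon$ as the Grothendieck group of the Riesz cone $\pfRadon$. I would first record the three properties of $W_1$ that do all the work: it is a metric on $\pfRadonA$ (\cref{thm:metric}); it is positively homogeneous, $W_1(c\mu,c\nu) = c\,W_1(\mu,\nu)$ for $c \geq 0$; and it is translation invariant, $W_1(\mu+\rho,\nu+\rho) = W_1(\mu,\nu)$ for all $\rho \in \pfRadon$. The latter two are exactly the normed convex cone axioms. Since $\fRadon$ is the Riesz space corresponding to the Riesz cone $\pfRadon$, every $\mu \in \fRadon$ can be written as $\mu = \mu^+ - \mu^-$ with $\mu^+,\mu^- \in \pfRadon$, so $\KRnorm{\mu} := W_1(\mu^+,\mu^-)$ is at least defined.

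The first genuine step is well-definedness: $W_1(a,b)$ must depend only on the class $a - b \in \fRadon$, not on the representatives $a,b \in \pfRadon$. Suppose $a - b = a' - b'$, that is, $a + b' = a' + b$ in $\pfRadon$. Applying translation invariance twice gives $W_1(a,b) = W_1(a+b',\, b+b') = W_1(a'+b,\, b'+b) = W_1(a',b')$, where the middle equality uses $a+b' = a'+b$ and $b+b' = b'+b$. In particular $\KRnorm{\mu} = W_1(\mu^+,\mu^-)$ coincides with $W_1(a,b)$ for \emph{any} representation $\mu = a - b$, which is what makes the subsequent computations independent of the chosen decomposition.

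With well-definedness in hand, the three norm axioms follow formally. Definiteness is immediate from $W_1$ being a metric: $\KRnorm{\mu} = 0$ iff $\mu^+ = \mu^-$ in $\pfRadonA$, i.e.\ iff $\mu = 0$ in $\fRadon$. For homogeneity, if $c \geq 0$ then $c\mu = c\mu^+ - c\mu^-$ and positive homogeneity gives $\KRnorm{c\mu} = c\,\KRnorm{\mu}$, while for $c < 0$ I would use $c\mu = \abs{c}\mu^- - \abs{c}\mu^+$ together with symmetry of the metric to get $\KRnorm{c\mu} = \abs{c}\,W_1(\mu^-,\mu^+) = \abs{c}\,\KRnorm{\mu}$. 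For the triangle inequality, represent $\mu+\nu = (\mu^+ + \nu^+) - (\mu^- + \nu^-)$, so that by well-definedness $\KRnorm{\mu+\nu} = W_1(\mu^+ + \nu^+,\, \mu^- + \nu^-)$; inserting the intermediate point $\mu^- + \nu^+$, applying the metric triangle inequality, and cancelling by translation invariance yields $W_1(\mu^+,\mu^-) + W_1(\nu^+,\nu^-) = \KRnorm{\mu} + \KRnorm{\nu}$.

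The main obstacle is not really located in this proposition: essentially all of the analytic content has already been spent in establishing that $W_1$ is a metric and that it is homogeneous and translation invariant on the cone. The only point requiring care here is the well-definedness step and its reliance on translation invariance; the norm axioms are then purely algebraic manipulations in the Grothendieck group. As a cross-check (and an alternative route) one can instead invoke \cref{thm:KR-duality-intro} to identify $\KRnorm{\mu} = W_1(\mu^+,\mu^-) = \norm{\mu}_{\textup{op}}$, the operator norm of $\mu$ acting on $\LipX$, whereupon the norm axioms reduce to the standard ones for an operator norm, with definiteness coming from the fact that a nonzero element of $\fRadon$ cannot annihilate all of $\LipX$.
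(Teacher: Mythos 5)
Your proposal is correct and follows essentially the same route as the paper: establish that $W_1$ is $\R^+$-homogeneous and translation invariant on $\pfRadon$, so that $(\pfRadon,W_1)$ is a normed convex cone, and then pass to the Grothendieck group $\fRadon$. The only difference is that the paper delegates the final step to the general fact from \cref{sec:convex-cones} that the Grothendieck group of a normed convex cone is a normed vector space with $\norm{x-y}=\rho(x,y)$, whereas you unfold that verification (well-definedness via translation invariance and cancellativity, then the three norm axioms) explicitly.
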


We now restate our relative version of Kantorovich-Rubinstein duality (\cref{thm:KR-duality-intro}).
\begin{theorem}[\cref{thm:KR-duality2}] \label{thm:main}
  $\fRadon = \LipX_c^{\sim}$, and for $\mu \in \fRadon$, $\norm{\mu}_{\op} = \norm{\mu}_{\KR}$.
%\end{theorem}
%
That is, $(\fRadon, \KRnorm{-})$ embeds isometrically in $\LipcX'$ and its image is the sequentially order continuous dual.
\end{theorem}

Furthermore, for $p \geq 1$, we define a relative $p$-Wasserstein distance and show that it satisfies the triangle inequality (\cref{sec:p-wasserstein}).

\subsection*{Application to topological data analysis}

We were motivated to undertake this work by problems in topological data analysis (TDA).
In particular, let $X$ be a set of parameters for objects in some abelian category (e.g. indecomposables or projectives in a category of persistence modules) with some metric relevant to an application of interest, and a distinguished subset $A$ of parameters corresponding to trivial or ephemeral objects. 
In the classical case of persistence modules consisting of functors from the poset $\R$ to a category of vector spaces, we have the set $\R_\leq^2 = \{(x,y) \in \R^2 \ | \ x \leq y\}$, which parametrizes interval modules, with some metric $d$, and the subset $\Delta = \{(x,y) \in \R^2 \ | \ x=y\}$.
Invariants of interest consist of (signed) formal sums on the metric pair $(X,d,A)$, which are finitely-supported integer-valued relative Radon measures on $(X,d,A)$.
Taking limits, we obtain (locally) $1$-finite relative Radon measures.
To analyze these measures, we want a good class of continuous linear functionals. These are provided by \cref{cor:1.5,cor:1.6}.
Our work provides a framework for optimization of multiparameter persistence~\cite{Scoccola:2024}.

We remark that for persistence modules arising from stationary point processes (e.g. Poisson, binomial), the persistent Betti numbers are asymptotically normal and the persistence diagrams converge to finite Radon measures~\cite{Yogeshwaran:2017,Trinh:2019,Hiraoka:2018,Divol:2019b,Krebs:2024,Botnan:2024}.
However, for persistence modules arising from almost-surely continuous stochastic processes (e.g. Brownian motion with drift), the persistent Betti numbers for $x < x+\eps$ approach $\infty$ as $\eps \to 0$~\cite{chazal2019density,Divol:2019b,Perez:2023,Baryshnikov:2019,Perez:thesis}.
%\pb{Added the first two citations here.}

Other cases where persistent Betti numbers diverge include the energy functional on the free loop space of a closed Riemannian manifold~\cite{Ginzburg:2024} and the Floer complex under iterations of a Hamiltonian diffeomorphism~\cite{Ginzburg:2024a}.

\subsection*{Related work}

The idea of relative optimal transport goes back to at least Cohen-Steiner, Edelsbrunner, Harer, and Mileyko \cite{cohen2007stability,cohen2010lipschitz}
They used ideas from optimal transport to introduce the bottleneck and Wasserstein distances for topological summaries called persistence diagrams.
These distances play fundamental roles in the stability theory of persistent homology.
Figalli and Gigli first introduced and studied the relative transport problem in its own right in the setting of measures defined on bounded subsets of Euclidean space \cite{figalli2010new}. They showed that the gradient flow with respect to the relative $2$-Wasserstein distance of a certain entropy functional on measures gives rise to weak solutions of the heat equation with Dirichlet boundary conditions.
In order to develop a theory of optimal transport that included the bottleneck distance as a special case, Divol and Lacombe extended the relative transport problem of \cite{figalli2010new} to measures defined on unbounded subsets of Euclidean space \cite{divol2019understanding}. 
Expectations of distributions of persistence diagrams, which are not themselves persistence diagrams but rather measures supported on the plane, were studied in \cite{chazal2019density}. These are motivating examples of the relative Radon measures in the present paper. 
A framework for performing learning tasks on spaces of Radon measures equipped with the relative $\infty$-Wasserstein distance was developed in \cite{elchesen2022learning}.
Relative optimal transport was recently used for optimization in multiparameter persistent homology~\cite{Scoccola:2024}.

Topological properties of spaces of discrete measures equipped with relative transport distances were studied in \cite{BubenikHartsock, che2021metric}.
The authors of the present paper have studied universality properties of the space of persistence diagrams equipped with the relative Wasserstein distances \cite{bubenik2022universality, bubenik2022virtual}. 

The related study of unbalanced optimal transport has a well-developed theory~\cite{Hanin:1992,Hanin:1999,Guittet:2002,Benamou:2003,Savare:2024,
Piccoli:2014,Piccoli:2016,Piccoli:2023,Liero:2018,Liero:2016,Liero:2023,chizat2018unbalanced,chizat2018interpolating,Laschos:2019}.
%Ryu:2018,Gangbo:2019,Schmitzer:2019,
%Several other reformulations and generalization of the transport problem have been proposed, including the partial optimal transport problem \cite{caffarelli2010free,figalli2010optimal} and the unbalanced optimal transport problem \cite{chizat2018interpolating,chizat2018unbalanced}. 
A related but distinct problem is studied under the name partial optimal transport~\cite{caffarelli2010free,figalli2010optimal}. We note that the problem that we study here has also been referred to as partial optimal transport. We prefer the term relative optimal transport to distinguish it from the already established partial transport problem.

The interaction of cones, norms, and Riesz theory was also studied by Subramanian \cite{Subramanian:2012}.

In the late stages of preparing this paper we became aware of an independent work by Mauricio Che on optimal transport for metric pairs~\cite{Che:2024}.
Che restricts to measures on metric pairs $(X,d,A)$ whose support is contained in $X \setminus A$.
We note that it is easy to construct sequences of such measures that converge to our more general relative Radon measures.

%\alex{Add reference to Mauricio Che's Partial Optimal transport paper. He suggested we cite "Bi-Lipschitz embeddings of the space of unordered m-tuples with a partial transportation metric" as well. Maybe reference the other "partial" optimal transport literature, which is distinct (partial optimal transport is concerned with transporting only a subset of the mass). 
%Add reference to Luis's paper "Differentiability and Optimization of Multiparameter Persistent Homology". Luis suggested that it is an example of why a general theory is necessary.
%}

\section{Background}
\label{sec:background}

  In this section we collect well known or elementary results that we will use in the sequel.
  We also use this section to fix notation.
All vector spaces will be real vector spaces.

\subsection{Ordered vector spaces} 
\label{sec:ordered-vector-spaces}

Let $V$ be a vector space.
A \emph{cone} in V is a subset $C\subset V$ such that $C+C\subset C$ and $a C\subset C$ for all $a \geq 0$. 
A cone $C$ is \emph{salient} if $C \cap -C = \{0\}$.
A cone $C$ is \emph{generating} if $C - C = V$.

A \emph{preordered vector space} is a vector space $V$ equipped with a preorder $\leq$ such that,
for all $x,y \in V$ with $x \leq y$, we have $x+z \leq y+z$ for all $z \in V$, and $a x \leq a y$ for all $a \geq 0$.
%For $x \in V$ with $x \geq 0$ and $a \leq \beta$, $a x \leq \beta x$.
The set
$V^+ = \{x\in V \ | \ x\geq 0\}$
is a cone in $V$ called the \emph{positive cone}.
Conversely, given a cone $C$ in $V$,
$V$ is a preordered vector space under the preorder given by
$x \leq y$ if there exists $z \in C$ such that $x + z = y$,
and the positive cone of $(V,\leq)$ is $C$.
An \emph{ordered vector space} is a preordered vector space in which the preorder is a partial order.
A preordered vector space is an ordered vector space if and only if its positive cone is salient.
Let $V$ be an ordered vector space $V$, $A \subset V$ and $x \in V$.
Since addition by $x$ is an isomorphism of partially ordered sets,
$\sup_{a \in A}(x+a) = x + \sup A$ if either side is defined.

An \emph{operator} between preordered vector spaces $V$ and $W$ is a linear map $T:V\to W$.
Such an operator %$T:V\to W$ 
is \emph{positive} if for all $x\in V^+$,  $T(x)\in W^+$.
An operator is order preserving if and only if is positive.
The vector space of all operators from $V$ to $W$ will be denoted $\mathcal{L}(V,W)$. 
This becomes an preordered vector space with the 
%ordering given by $T\leq S$ if and only if $S-T$ is a positive operator.
preorder given by the cone of positive operators.
If both $V$ and $W$ are ordered vector spaces then so is $\mathcal{L}(V,W)$.
%, i.e., if $S(x)\geq T(x)$ for all $x\in E^+$.
%
%An operator need not be order preserving, but positive operators are order preserving.
A \emph{morphism} between preordered vector spaces 
%$(E,\leq)$ and $(E',\leq')$ 
is
% a linear transformation $f:E\to E'$ for which $x\leq y \implies f(x)\leq' f(y)$ for all $x,y\in E$.
%an order preserving linear transformation.
a positive operator, or, equivalently, an order preserving linear map.
% The set of such positive operators together with pointwise addition and scalar multiplication is a cancellative convex cone (\cref{sec:convex-cones}).
A subset $A\subset V$
is \emph{order bounded} if there exists $x,y\in E$ such that $x\leq a \leq y$ for all $a\in A$. 
An operator $T:V\to W$
is said to be \emph{order bounded} if it maps ordered bounded subsets of $V$ to order bounded subsets of $W$. The operator $T$ is said to be \emph{regular} if it can be written as the difference of two positive operators.
Let $\mathcal{L}_{\textup{b}}(V,W)$ and $\mathcal{L}_{\textup{r}}(E,F)$ denote the subsets of $\mathcal{L}(V,W)$ consisting of the order bounded and regular operators, respectively.
Since positive operators are order-preserving, they are order bounded.
Therefore regular operators are likewise order bounded,
giving the inclusions $\mathcal{L}_{\textup{r}}(V,W) \subseteq \mathcal{L}_{\textup{b}}(V,W) \subseteq \mathcal{L}(V,W)$.

\subsection{Riesz spaces}
\label{sec:riesz}

A \emph{Riesz space} is a ordered vector space $E$ in which the poset structure forms a lattice. 
A Riesz space is also called a vector lattice.
That is, every pair $x,y\in E$
%, there exists a \emph{meet} (or \emph{least upper bound}) $x\vee y$ and a \emph{join} (or \emph{greatest lower bound}) $x\wedge y$. 
has a supremum $x\vee y$ and an infimum $x\wedge y$.
%Note that if $E$ has meets then it also has joins since $x\wedge y = -(-x\vee -y)$, and vice versa. 
If $V$ is an ordered vector space for which $x \vee 0$ exists for each $x \in V$ then $V$ is a Riesz space,
since for $x,y \in V$, $x \vee y = y + (x-y) \vee 0$ and $x\wedge y = -(-x\vee -y)$.
A Riesz space is a distributive lattice.
A Riesz space is said to be \emph{Dedekind complete} (also called order complete) if every nonempty subset which is bounded above has a supremum. 
Equivalently, every nonempty subset which is bounded below has an infimum.
The real numbers form a Dedekind complete Riesz space under the usual ordering. 
%Let $\cat{Riesz}$ denote the full subcategory of $\cat{OVect}$ whose objects are Riesz spaces. 
A vector subspace $G$ of a Riesz space $E$ is a \emph{Riesz subspace} if for all $x,y \in G$, $x \join y \in G$.

For the rest of this section let $E$ and $F$ be Riesz spaces.
For $x\in E$, we define $x^+= x\vee 0$, $x^- = (-x)\vee 0$, and $|x| = x\vee(-x)$. 
Then $x^+, x^-, |x| \in E^+$,
$x = x^+ - x^-$, $|x| = x^+ + x^-$, and $x^+ \wedge x^- = 0$. 
The decomposition $x = x^+ - x^-$ is minimal in the sense that if $x = y-z$ for some $y,z \in E^+$ then $y\geq x^+$ and $z \geq x^-$. This decomposition is unique in the sense that if $x = y-z$ with $y\wedge z = 0$ then $y = x^+$ and $z = x^-$.
For $x,y\in E$, we write $[x,y] = \{z\in E \ | \ x\leq z\leq y\}$.
An element $e \in E^+$ is an \emph{order unit} of $E$ if for every $x \in E$, there is an $n \in \N$ such that $\abs{x} \leq ne$.

A map $T:E^+ \to F^+$ is \emph{additive} if for all $x,y \in E^+$, $T(x+y) = T(x) + T(y)$.
  The Riesz space $F$ is \emph{Archimedean} if for all $x \in E^+$, $\inf_{n \in \N}  \frac{1}{n} x  = 0$.
%  All of our Riesz spaces will be Archimedean.
  It is a theorem of Kantorovich \cite[Theorem~1.10]{aliprantis2006positive_operators} that if
  $T:E^+ \to F^+$ is additive and
  $F$ is Archimedean,
  then $T$ has a unique extension to a positive operator $T:E \to F$ given by $T(x) = T(x^+) - T(x^-)$ for all $x \in E$.
%  This result will be used implicitly.
From now on, we will assume that \emph{all of our Riesz spaces are Archimedean}.

A theorem of Riesz and Kantorovich \cite[Theorem~1.18]{aliprantis2006positive_operators} says that if $F$ is Dedekind complete then $\mathcal{L}_{\textup{b}}(E,F)$ is a Dedekind complete Riesz space.
Its lattice operations are given by
$(S\vee T)(x)  = \sup\{S(y) + T(z) \ | \ y + z = x, \, y,z\in E^+\}$ and
$(S\wedge T)(x) = \inf\{S(y) + T(z) \ | \ y + z = x, \, y,z\in E^+\}$
for all $x\in E^+$.
It follows that
if $F$ is Dedekind complete then 
$\mathcal{L}_{\textup{b}}(E,F) = \mathcal{L}_{\textup{r}}(E,F)$. 

A net $\{x_\alpha\}$ in %a Riesz space
$E$ is \emph{decreasing} if $\alpha \succeq \beta$ implies $x_\alpha \leq x_\beta$. The notation $x_\alpha \downarrow x$ means that $\{x_\alpha\}$ is decreasing and $x = \inf \{x_\alpha\}$. A net $\{x_\alpha\}$ in
% a Riesz space
$E$ is said to be \emph{order convergent} to $x\in E$, denoted $x_\alpha\stackrel{o}{\to} x$, if there exists a net $\{y_\alpha\}$ with the same index set satisfying $|x_\alpha- x|\leq y_\alpha$ and $y_\alpha \downarrow 0$.
A subset $A \subset E$ is \emph{solid} if for all $a \in A$ and all $x \in E$ with $\abs{x} \leq \abs{a}$, we have $x \in A$. 
An \emph{ideal} in $E$ is a solid linear subspace.
The identity $x \vee y = \frac{1}{2}(x + y + \abs{x-y})$ shows that an ideal is a Riesz subspace.
A subset $A \subset E$ is \emph{order closed} if whenever $\{x_{\alpha}\} \subset A$ and $x_{\alpha} \stackrel{o}{\to} x$ then $x \in A$.
A \emph{band} in $E$ is an order closed ideal.
An operator $T:E\to F$
% between Riesz spaces $E$ and $F$
is said to be  \emph{order continuous} if for any net $\{x_\alpha\}$ in $E$ with $x_\alpha\stackrel{o}{\to}  0$ we have $T(x_\alpha)\stackrel{o}{\to}  0$ in $F$. The operator $T$ is said to be 
\emph{sequentially order continuous} if for any sequence $(x_n)$ in $E$ with $x_n \stackrel{o}{\to} 0$ we have $T(x_n) \stackrel{o}{\to} 0$ in $F$.
If $T$ is positive then it is sequentially order continuous iff $x_n \downarrow 0$ implies $T x_n \downarrow 0$.
If $T$ is order bounded and $F$ is Dedekind complete, then the following are equivalent:
  $T$ is sequentially order continuous;
  for any sequence $(x_n)$ with $x_n\downarrow 0$, we have $T(x_n)\stackrel{o}{\to} 0$;
  $T^+$ and $T^-$ are both sequentially order continuous; and
  $\abs{T}$ is sequentially order continuous.
  Let
  $\mathcal{L}_{\textup{n}}(E,F)$ and
  $\mathcal{L}_{\textup{c}}(E,F)$ denote the subsets of
  $\mathcal{L}_{\textup{b}}(E,F)$ consisting of operators that are order continuous and sequentially order continuous, respectively. Thus
  $\mathcal{L}_{\textup{n}}(E,F) \subset
  \mathcal{L}_{\textup{c}}(E,F) \subset
  \mathcal{L}_{\textup{b}}(E,F)$.
  % It is a theorem of Ogasawara \cite[Theorem~1.57]{aliprantis2006positive_operators} that if $F$ is Dedekind complete then
  % $\mathcal{L}_{\textup{n}}(E,F)$ and
  % $\mathcal{L}_{\textup{c}}(E,F)$
  % are bands in $\mathcal{L}_{\textup{b}}(E,F)$.

  The \emph{order dual} of $E$ is given by $E^{\sim} = \mathcal{L}_{\textup{b}}(E,\R)$.
  Since $\R$ is a Dedekind complete Riesz space, it is the vector space generated by the positive linear functionals on $E$.
  The \emph{order continuous dual} of $E$ is given by
  $E_{\textup{n}}^{\sim} = \mathcal{L}_{\textup n}(E,\R)$.
  The \emph{sequentially order continuous dual} of $E$ is given by
  $E_{\textup{c}}^{\sim} = \mathcal{L}_{\textup c}(E,\R)$.
  We have
  $E_{\textup{n}}^{\sim} \subset
  E_{\textup{c}}^{\sim} \subset E^{\sim}$,
  and furthermore, both 
  $E_{\textup{n}}^{\sim}$ and
  $E_{\textup{c}}^{\sim}$ are bands in $E^\sim$.
  
  Say that $E^{\sim}$ \emph{separates the points} of $E$ if for each nonzero $x \in E$ there exists $f \in E^{\sim}$ with $f(x) \neq 0$.
  Since the order dual is a Riesz space, we have the \emph{second order dual} $E^{\sim \sim} = (E^{\sim})^{\sim}$.
  For each $x \in E$, we have the order bounded linear functional $\hat{x}: f \mapsto f(x)$.
  In fact, this linear functional is order continuous.
  If $E^{\sim}$ separates the points of $E$ then the mapping $x \to \hat{x}$ is one-to-one and embeds $E$ as a Riesz subspace of its second order dual.
  Furthermore, if $A$ is an ideal in $E^{\sim}$ that separates the points of $E$, then the mapping $x \to \hat{x}$ embeds $E$ as a Riesz subspace of $A_{\textup{n}}^{\sim}$.

\subsection{Monoids and the Grothendieck group completion}
\label{sec:monoids}

A \emph{commutative monoid} $M = (M,+,0)$ is a set $M$ together with an associative commutative binary operation $+:M\times M\to M$ for which there exists an element $0\in M$ satisfying $m+0 = m$ for all $m\in M$, 
called the \emph{neutral} element. $M$ is \emph{cancellative} if $a + c = b+ c$ implies $a = b$.
%A monoid is \emph{commutative} if $m+n = n+m$ for all $m,n\in M$.
$M$ is \emph{zero-sum-free} if $a+b=0$ implies that $a=b=0$.
A \emph{monoid homomorphism} between commutative monoids $M = (M,+_M,0_M)$ and $N = (N,+_N,0_N)$ is a map $f:M\to N$ such that 
$f(a +_M b) = f(a) +_N f(b)$ for all $a,b\in M$ and
$f(0_M) = 0_N$.
% Unlike for group homomorphisms, the former axiom for monoid homomorphisms does not imply the latter.
A subset $P \subset M$ is a \emph{submonoid} if it contains $0$ and $+$ restricts to a binary operation on $P$.
%In this case, if $M$ is commutative then so is $N$.
%

A metric $\rho$ on a commutative monoid $M$ is \emph{translation invariant} if $\rho(a + c, b+c) = \rho(a,b)$ for all $a,b,c\in M$. Note that if $M$ is equipped with such a metric then $M$ is automatically cancellative.

An equivalence relation $\sim$ on a commutative monoid $M$ is called a \emph{congruence} if $a\sim b$ and $c\sim d$ implies $a+c \sim b + d$. If $\sim$ is a congruence then there is a well-defined commutative monoid structure on the set of equivalence classes $M/\!\!\sim$ given by $[a] + [b] = [a+b]$. 
%Note that if $M$ is commutative then so is $M/\sim$.
%	
Let $M$ be a commutative monoid and $P \subseteq M$ any submonoid. Define a relation $\sim$ on $M$ by 
$a\sim b$ iff there exist $x,y \in P$ such that $a+x = b+y$.
%It is easily verified that $\sim$ is a congruence. %In this case,
Then $\sim$ is a congruence and we denote the commutative monoid $M/\!\!\sim$ by $M/P$ and refer to it as the \emph{quotient of $M$ by $P$}. 

Given a commutative monoid $M = (M,+,0)$, the \emph{Grothendieck group} of $M$, denoted $K(M)$, is the abelian group defined as follows. Define an equivalence relation $\sim$ on $M\times M$ by $(a,b)\sim (a',b')$ if and only if there exists some $k\in M$ such that $a + b' + k = a' + b + k$. As a set, we define $K(M)= (M\times M)/\!\!\sim$. We denote the equivalence class of $(a,b)$ under $\sim$ by $a-b$. The binary operation on $K(M)$ is also denoted by $+$ and is defined by $(a-b) + (a'-b') = (a+a') - (b+b')$. This operation makes $K(M)$ into an abelian group with identity element $0 = 0-0$ and with the inverse of $a-b$ given by $b-a$. Note that if $M$ is a cancellative monoid then $a-b = a'-b'$ in $K(M)$ if and only if $a + b' = a' + b$ in $M$.
There is a canonical monoid homomorphism $i:M\to K(M)$ given by $m\mapsto m - 0$. If $M$ is cancellative then this map is injective and hence defines an embedding of $M$ into $K(M)$. The Grothendieck group is universal in the following sense. Given any abelian group $A$ and monoid homomorphism $f:M\to A$, there exists a unique group homomorphism $\tilde{f}:K(M)\to A$ such that $\tilde{f}\circ i = f$. Equivalently, the Grothendieck group construction gives rise to a functor $K:\cat{CMon}\to \cat{Ab}$ from the category of commutative monoids to the category of abelian groups, and this functor is left adjoint to the corresponding forgetful functor.

If $M$ is equipped with a translation invariant metric $\rho$, then $K(M)$ can be equipped with a canonical translation invariant metric $d$ given by $d(a-b,a'-b') = \rho(a+b',a'+b)$. In this case, $M$ is cancellative and $d$ restricts to $\rho$ on the image of $M$ in $K(M)$ under the canonical inclusion $i:M\hookrightarrow K(M)$. In categorical language, the functor $K$ restricts to a functor $K:\cat{CMon^{\textup{ti}}}\to\cat{Ab^{\textup{ti}}}$ from the full subcategories of $\cat{CMon}$ and $\cat{Ab}$ of commutative monoids and abelian groups, respectively, equipped with translation invariant metrics, and this functor is left adjoint to the corresponding forgetful functor \cite{bubenik2022virtual}.

\subsection{Convex cones} \label{sec:convex-cones}

Recall that $\R^+ = \{\alpha \in \R \ | \ \alpha \geq 0\}$.
A \emph{convex cone}~\cite{ConvexCones} is a 
%cancellative 
commutative monoid $(C,+,0_C)$ together with a binary operation $\cdot:\R^+\times C\to C$ which satisfies, for all $\alpha,\beta\geq 0$ and $x,y,z\in C$,
\[\textup{(1)}\,\,  \alpha\cdot(x+y) = \alpha\cdot x + \alpha\cdot y, \quad \textup{(2)} \,\, (\alpha + \beta)\cdot x = \alpha\cdot x + \beta\cdot x, \quad \textup{(3)} \,\, (\alpha\beta)\cdot x = \alpha\cdot(\beta\cdot x),\]
\[\textup{(4)} \,\, 1\cdot x = x, \quad (5) \,\,
0\cdot x = 0_C.\]
A \emph{cone homomorphism} between convex cones $C, C'$ is a function $f:C\to C'$ such that $f(\alpha x + \beta y) = \alpha f(x) + \beta f(y)$ for all $x,y\in C$ and $\alpha,\beta \geq 0$. 
%A \emph{cone isomorphism} is a bijective cone homomorphism. 

\begin{remark}
    We may define a vector space to be an $\R$-module and a convex cone to be an %cancellative 
    $\R^+$-module, which are instances of the definition of a module over a commutative monoid internal to a  symmetric monoidal category. 
    In the former case, $\R$ is a commutative ring, i.e. a commutative monoid internal to $(\cat{Ab},\tensor,1)$, where $\tensor$ is Hassler Whitney's tensor product on abelian groups~\cite{Whitney:1938}. Similarly, $\R^+$ is a commutative semiring/rig, i.e. a commutative monoid internal to $(\cat{CMon},\tensor,1)$. In fact, these definitions are special cases of the definition of a module over a monad.
    Modules over semirings are also called semimodules~\cite{golan1999semirings}.
    % In other terminology, a cone is a \emph{cancellative $\R^+$-semivector space}. Semivector spaces are a special example of semimodules defined over semirings when the base semiring is a semifield. The interested reader can refer to \cite{golan1999semirings} for a thorough introduction to semimodule theory.
\end{remark}

% There is a bijection between preordered vector spaces $(V,\leq)$ and vector spaces $V$ with a cone $C \subset V$ given by letting $C = V^+$ and by defining $x \leq y$ if and only there exists $z \in C$ such that $x+c=y$.
% This bijection restricts to a bijection between ordered vector space and vector spaces with a salient cone.
%
%Similarly, 
There is a bijection between ordered vector spaces with generating positive cones and zero-sum-free cancellative convex cones.
Indeed, given an ordered vector space $(V,\leq)$, the positive cone $V^+$ is a zero-sum-free cancellative convex cone.
Given a zero-sum-free cancellative convex cone $(C,+)$, we have the vector space $K(C)$ and the partial order corresponding to the cone $C$.
Furthermore, this bijection respects sub-ordered vector spaces and sub-cones.
Note that ordered vector spaces with generating positive cones are the same as ordered vector spaces with a directed order.
% Indeed, if $x,y \in V$ and $V$ has a generating positive cone then there exist $x_1,x_2,y_1,y_2 \in V^+$ such that $x=x_1 + x_2$, and $y=y_1 + y_2$.
% It follows that $x,y \leq x_1+y_1$.
% In the other direction, given a directed ordered vector space $V$ and $x\in V$, let $x^+$ be any upper bound of $0$ and $x$. %($0\vee x$ need not exist, but any upper bound will work, and an upper bound exists because the order is directed). 
% Then $x^+\geq x$ so that $x^+-x\geq 0$. Let $x^- = x^+ - x$. Then $x = x^+ - x^-$ and $x^+, x^- \in V^+$.

An \emph{ordered convex cone} is an convex cone $C$ together with a partial order $\leq$ such that for $w,x,y,z \in C$ and $\alpha \in \R^+$, if $x \leq y$ and $w \leq z$ then $x+w \leq y+z$ and $\alpha x \leq \alpha y$.
A \emph{lattice cone} \cite{ConvexCones} is an ordered convex cone $(C,\leq)$ such that 
for all $x,y \in C$ there is a supremum $x \vee y$ and
for all $x,y,z \in C$, $(x \vee y) + z = (x+z) \vee (y+z)$.
A lattice cone need not be a lattice.
% A lattice cone is \emph{complete} if whenever $A \subset C$ has an upper bound then $A$ has a supremum $\sup(A)$ and for all such $A$ and for all $x \in C$, $\sup(A) + x = \sup\{a+x \ | \ a \in A\}$.
A convex cone $C$ has a \emph{natural preorder} $\leq$ given by $x \leq y$ iff there exists $z \in C$ such that $x+z = y$.
If $C$ is cancellative and zero-sum-free then $\leq$ is a partial order.

A \emph{norm} on a convex cone $C$ is a metric $\rho$ on $C$ satisfying, for all $x,y\in C$ and $\alpha \geq 0$,
%	\begin{enumerate}
%		\item 
$\rho(\alpha x, \alpha y) = \alpha\rho(x,y)$ ($\R^+$-homogeneity), and
%	\item 
$\rho(x+z,y+z) = \rho(x,y)$ (translation invariance).
%	\end{enumerate}
% \begin{lemma}[Subadditivity] \label{lem:subadditivity} Let $\rho$ be a norm on a cone $C$. Then $\rho(x+y,x'+y') \leq \rho(x,x') + \rho(y,y')$ for all $x,x',y,y'\in C$.
% \begin{proof} By triangle inequality and translation invariance,
%     \[\rho(x+y,x'+y')\leq \rho(x + y,x' + y) + \rho(x' + y, x'+y') = \rho(x,x') + \rho(y,y').\qedhere\]
% \end{proof}
% \end{lemma}
Such a norm is \emph{subadditive}: by the triangle inequality and translation invariance,
$\rho(x+y,x'+y')\leq \rho(x + y,x' + y) + \rho(x' + y, x'+y') = \rho(x,x') + \rho(y,y')$.
 A pair $(C,\rho)$, where $C$ is a convex cone and $\rho$ is a norm on $C$ is called a \emph{normed convex cone}. 
Such a convex cone is a cancellative.
 Indeed, if $x+z = y+z$ then, by translation invariance of $\rho$, we have $0 = \rho(x+z, y+z) = \rho(x,y)$ and hence $x = y$.
  A cone homomorphism between normed convex cones $\Phi:(C,\rho)\to (C',\rho')$ is said to be \emph{bounded} if $\Phi$ is Lipschitz. 
  % In this case, the Lipschitz number of $\Phi$ will be referred to as the operator norm and is denoted $\norm{\Phi}_\textup{op}$.
 %, in analogy with the operator norm of a bounded linear operator between normed vector spaces.

\begin{remark} %\hfill
	% \begin{enumerate}
	% 	\item If $(C,\rho)$ is a normed convex cone then cancellativity of the commutative monoid $(C,+,0)$ follows from translation invariance of $\rho$. Indeed, if $x+z = y+z$ then, by translation invariance of $\rho$, we have $0 = \rho(x+z, y+z) = \rho(x,y)$ and hence $x = y$.
%		\item 	
  A norm on a convex cone resembles a vector space norm in the following sense. Given a normed convex cone $(C,\rho)$, define $\norm{\cdot}_\rho:C\to \R$ by $\norm{x}_\rho = \rho(x,0_C)$ for all $x\in C$. Then $\norm{\cdot}_\rho$ is positive definite, $\R^+$-homogeneous, %(with respect to non-negative scalar multiplications), 
		and satisfies the triangle inequality, analogous to a vector space norm. However, it is not possible, in general, to recover $\rho$ from $\norm{\cdot}_\rho$ as is the case for vector space norms. 
%		\item This definition of a {normed convex cone} guarantees that $(C,\rho)$ can be embedded in a normed vector space in such a way that $\rho$ can be recovered from this vector space norm (see the discussion below).
%	\end{enumerate}
\end{remark}

% Let $C$ be a cone.
% A nonempty subset $D\subset C$ is a \emph{subcone} of $C$ if $D$ is a submonoid of $C$ and $\alpha D\subset D$ for all $\alpha \geq 0$.
% If $D$ is a subcone of $C$, then the quotient monoid $C/D$ is a cone, with scalar multiplication defined by $\alpha [x] = [\alpha x]$ for all $\alpha \geq 0$. We refer to the cone $C/D$ as the \emph{quotient cone}.

Let $V$ be a vector space.
% A cone $C$ is \emph{generating} if $V = C - C$.
% If $E$ is a Riesz space, then the positive cone $E^+$ is a proper generating cone in $E$.
Given a cone $C$ in $V$, the vector space operations define a cancellative convex cone structure on $C$.
If $V$ is equipped with a norm $\norm{\cdot}$, then $C$ becomes a normed convex cone when equipped with the restriction of the metric induced by $\norm{\cdot}$. 

Conversely, 
%a convex cone can be embedded in a vector space in such a way that the cone structure is obtained by restriction of the vector space operations. Indeed, 
given a convex cone $C$, let $K(C)$ denote its Grothendieck group. Then $K(C)$ can be equipped with a vector space structure by defining scalar multiplication by $\alpha  (x-y) = \alpha x - \alpha  y$ if $\alpha \geq 0$ and $\alpha (x-y) = |\alpha| y - |\alpha| x$ otherwise. 
%We will thus sometimes refer to $K(C)$ as the \emph{Grothendieck vector space}. 
%The cancellative property of $C$ guarantees that 
If $C$ is cancellative then
the canonical inclusion $C\to K(C)$ is an injective cone homomorphism and hence the convex cone operations on (the image of) $C$ are obtained by restriction. Moreover, if $C$ is 
%equipped 
a normed cone,
%with $\R^+$-homogeneous, translation invariant metric $\rho$, 
then $\rho$ extends canonically to an absolutely $\R$-homogeneous, translation invariant metric $d$ on $K(C)$ \cite{bubenik2022virtual}. 
%Since a metric on a vector space is induced by a norm if and only if it is $\R$-homogeneous and translation invariant, we see that $\rho$ is the restriction of the metric induced by a norm on $K(C)$.
These are exactly the conditions needed for a metric to be induced by a norm $\norm{-}$.
These are related as follows, $\norm{x-y} = d(x-y,0) = \rho(x,y)$.

Combining the above results, we have a bijection between normed ordered vector spaces with generating positive cones and zero-sum-free normed convex cones, given by sending a vector space to its positive cone, and by sending a zero-sum-free normed convex cone $(C,\rho)$ to the vector space $K(C)$ with norm given by $\norm{x-y} = \rho(x,y)$.
Furthermore, this bijection respects sub-ordered vector spaces and sub-cones.

 \subsection{Metric pairs and Lipschitz functions}
\label{sec:lipschitz}

Let $(X,d)$ be a metric space with a closed subset $A$.
We call this a \emph{metric pair} and denote it by $(X,d,A)$, or more by simply by $(X,A)$.
We will consider $\R$ to be a metric space with the usual metric given by $d(x,y) = \abs{x-y}$ and a metric pair with the subset $\{0\}$.
We also have the sub-metric pair $\R^+$.

Given a metric space $(X,d)$, a subset $A\subset X$, and 
$\eps \geq 0$, we define the \emph{$\eps$-offset} of $A$ by $A^{\eps} = \{x\in X \ | \ d(x,A) \leq \eps\}$. Let $A^{\infty}$ denote $X$.

A metric space $(X,d)$ is \emph{boundedly compact} if it has the Heine-Borel property: every closed and bounded subset is compact. Equivalently, a metric space is boundedly compact if every closed ball is compact.
Such metric spaces are also called proper.
A $\sigma$-compact space is Lindel\"of.
For a metric space, the properties Lindel\"of, separable, and second-countable are equivalent.

For $L\geq 0$, a function $f:(X,d)\to (Y,e)$ between metric spaces is said to be \emph{$L$-Lipschitz} 
if $e(f(x),f(x')) \leq L d(x,x')$ for all $x,x'\in X$.
The function $f$ is said to be \emph{Lipschitz} if it is $L$-Lipschitz for some $L \geq 0$.
Call the smallest such constant the \emph{Lipschitz number} of $f$ and  denote it by $L(f)$.
Given functions
$f:X \to \R$ and $y: Y \to \R$, define $f \oplus g: X \times Y \to \R$ by
$f \oplus g = f \circ p_1 + g \circ p_2$, where $p_1:X \times Y \to X$ and $p_2:X \times Y \to Y$ denote the canonical projections. 
That is, for $(x,y) \in X \times Y$,
\begin{equation} \label{eq:oplus}
  (f \oplus g)(x,y) = f(x) + g(y).
\end{equation}
A function $f:X \to \R$ is $L$-Lipschitz if and only if $f \oplus (-f) \leq L d$.

A morphism of metric pairs $f:(X,d,A) \to (Y,e,B)$ is a Lipschitz function $f:(X,d) \to (Y,e)$ such that $f(A) \subset B$.
%Let $\cat{Met_{pair}}$ denote the category of metric pairs and morphisms of metric pairs.
Given metric pairs
$(X,d,A)$ and $(Y,e,B)$,
% and $p \in [1,\infty]$,
define the
% \emph{$p$-product metric}, $(X \times Y, d \times_p e)$ by $(d \times_p e)((x,y),(x',y')) = \norm{(d(x,x'),e(y,y'))}_p
product of these metric spaces to be the metric pair $(X \times Y, d + e, A \times B)$, where $(d+e)((x,y),(x',y')) = d(x,x') + e(y,y')$.
This is the categorical product in 
%$\cat{Met_{pairs}}$ 
the category of metric pairs and morphisms of metric pairs,
and the canonical projection morphisms are $1$-Lipschitz. 
  
%Let $(X,d,x_0)$ be a pointed metric space.
Let $(X,d,A)$ be a metric pair.
Let $\LipX$ denote the set of morphisms of metric pairs from $(X,d,A)$ to $\R$. 
If $A = \emptyset$ then $\LipX$ is the set of Lipschitz functions from $X$ to $\R$.
The Lipschitz number is only a semi-norm on Lipschitz functions
on $X$, but
if $A \neq \emptyset$ then
it is a norm on $\LipX$. 

If $A \neq \emptyset$ then
the vector space $\LipX$ together with the Lipschitz number $(\LipX, \lipnorm{\cdot})$ is a Banach space~\cite[Proposition 2.3(b)]{weaver}. 
Denote the collection of all $f\in \LipX$ with compact support by $\LipcX$.
The vector space $\LipcX$ with the Lipschitz number is a normed vector space but need not be a Banach space.
For $L \geq 0$, we define $\LipLX = \{f\in \LipX \ | \ \lipnorm{f}\leq L\}$ and $\LipcLX = \LipcX\cap \LipLX$. 
Then $\LipOneX$ and $\LipcOneX$ are the closed unit balls of $\LipX$ and $\LipcX$, respectively.
Similarly, we denote the set of morphisms of metric pairs from $(X,d,A)$ to $\R^+$ by $\LipXpos$.
It has the subsets $\LipLXpos$, $\LipcXpos$, and $\LipcLXpos$.

If $f,g:X\to \R$ are bounded Lipschitz functions then their product $fg$ is Lipschitz as well. If $h,k:X\to \R$ are Lipschitz and one of $h$ or $k$ is compactly supported, then $hk$ is also compactly supported and Lipschitz.
For a function $f:X\to \R$, we define $f^+ = \max(0,f)$ and $f^- = -\min(0,f) = (-f)^+$. Then $f = f^+ - f^-$, and if $f$ is Lipschitz then so are $f^+,f^-$ with $\lipnorm{f^+},\lipnorm{f^-}\leq \lipnorm{f}$.
Given a 
%(not necessarily closed) 
subset $B \subset (X,d)$, 
we define $d_B:X\to \R$ by $d_B(x) = d(x,B)$. 
Then $d_B$ is $1$-Lipschitz and $d_B(x) = 0$ for all $x\in B$. 
Moreover, $d_B \geq d_C$ whenever $B\subset C$ and $d_{\overline{B}} = d_B$ for all $B\subset X$.
For $x \in X$, denote $d_{\{x\}}$ by $d_x$.
%For a metric pair $(X,d,A)$
If $A \neq \emptyset$ then
$d_A \in \LipOneXpos$. If $A = \emptyset$, then we adopt the convention that $d_A(x) = \infty$ for all $x\in X$.

We equip $\LipX$ with the \emph{pointwise partial order}, i.e., for $f,g\in \LipX$, we have $f\leq g$ iff $f(x)\leq g(x)$ for all $x\in X$. This partial order makes $\LipX$ into a Riesz space, with $f\vee g = \max(f,g)$ and $f\wedge g = \min(f,g)$.
% With this notation, the fuctions $f^+, f^-$ can be written as $f^+ = f\vee 0$ and $f^- = - f\wedge 0 = (-f)\vee 0$.
Note that $\lipnorm{f\vee g}\leq \max(\lipnorm{f},\lipnorm{g})$ and $\lipnorm{f\wedge g}\leq \max(\lipnorm{f},\lipnorm{g})$.
The positive cone of $\LipX$ is $\LipXpos$.
Recall that the Riesz space $\LipX$ together with the Lipschitz number is also a Banach space. However, this norm is not a \emph{lattice norm}, since $\abs{f} \leq \abs{g}$ does not imply that $\lipnorm{f} \leq \lipnorm{g}$.
So $\LipX$ is not a Banach lattice or a normed Riesz space.
If $f,g$ are compactly supported then so is $f\vee g$, and hence $\LipcX$ is a Riesz subspace of $\LipX$.
If $f$ is compactly supported and $\abs{g} \leq \abs{f}$ then $g$ is compactly supported.
  Therefore $\LipcX$ is an ideal in $\LipX$.
  However, $\LipcX$ need not be a band in $\LipX$.
Assume $A \neq \emptyset$.
For $f \in \LipX$, 
%since $f(x_0) = 0$, $\abs{f} \leq \lipnorm{f} d_{x_0}$.
since $f(A) = 0$, $\abs{f} \leq \lipnorm{f} d_A$.
Therefore the function $d_A$ is an order unit for $\LipX$.

\subsection{Measure theory}
\label{sec:measure-theory}

A \emph{measure} is a countably additive set function on a $\sigma$-algebra with values in $[0,\infty]$.
A measure is \emph{finite} if it has values in $[0,\infty)$.
Let $X$ be a Hausdorff topological space and let $\mu$ be a Borel measure on $X$.
The measure $\mu$ is \emph{tight} if it is inner regular with respect to compact sets, i.e. for all Borel sets $E$,
%  Let $E$ be a Borel set in $X$.
%  The measure $\mu$ is \emph{inner regular on $E$} if
  $\mu(E) = \sup\{\mu(K) \ | \ K \text{ compact and } K\subset E\}$.
%  and $\mu$ is \emph{tight} if it is inner regular on all Borel sets.
%  The measure $\mu$ is \emph{outer regular} if
%  for all Borel sets $E$, $\mu(E) = \inf\{\mu(U) \ | \ U \text{ open and } E \subset U\}$.
Equivalently, for all Borel sets $E$ and $\eps > 0$
there is a compact set $K_{\eps} \subset E$ such that $\mu(E \setminus K_{\eps}) < \eps$.
The measure $\mu$ is 
%\emph{locally finite at $x\in X$} if $x$ 
\emph{locally finite} if each $x \in X$
has some neighborhood $A$ with $\mu(A)<\infty$. 
%If $\mu$ is locally finite at every $x\in X$ then we say that $\mu$ is \emph{locally finite}. 
The %Borel 
measure $\mu$ 
%on the Hausdorff space $X$ 
is a \emph{Radon measure} if it is tight and locally finite.
The measure $\mu$ is \emph{$\tau$-additive} if whenever $\{U_{\alpha}\}$ is an upwards-directed family of open sets then $\mu(\bigcup_{\alpha} U_{\alpha}) = \sup_{\alpha} \mu(U_{\alpha})$.
If $\mu$ is tight then $\mu$ is $\tau$-additive~\cite[414E]{fremlin4}.

If $E$ is a Borel subset of $X$ then the Borel subsets of $E$ with respect to the subspace topology are exactly the Borel subsets of $X$ that are contained in $E$. Therefore the restriction of $\mu$ to these Borel sets, denoted $\mu_E$ is a Borel measure on $E$.
This Borel measure on $E$ has a canonical extension to Borel measure on $X$, which we will also denote by $\mu_E$, given by $\mu_E(B) = \mu_E(B \cap E) = \mu(B \cap E)$, for any Borel subset $B$ of $X$.
%
% Every Radon
A Borel
measure $\mu$ on 
% a pointed metric space
$X$ defines a positive linear functional on
%$\LipcX$ % or $\LipX$ 
any vector space of $\mu$-integrable functions
given by $f\mapsto \int_Xfd\mu$. 
We also denote this linear functional by $\mu$, so that $\mu(f) = \int_X fd\mu$.

A \emph{signed measure} is a countably additive set function on a $\sigma$-algebra with values in $\R$.
A signed measure $\mu$ has a Jordan decomposition $\mu = \mu^+ - \mu^-$, where $\mu^+$ and $\mu^-$ are finite measures. 
The \emph{variation} of $\mu$ is given by $\abs{\mu} = \mu^+ + \mu^-$,
which is finite.
A signed Borel measure $\mu$ is a \emph{signed Radon measure} if $\abs{\mu}$ is tight.

%\subsection{\nearly{Hahn-Banach theorems for ordered vector spaces}}
\subsection{Extensions of positive linear functionals}
\label{sec:hahn-banach}

We will use the following extension theorems.
The first is a classical result of Kantorovich and the second is a consequence of the Hahn-Banach theorem.

\begin{theorem}[\cite{Kantorovich:1937}] \label{prop:positive_hahn_banach_weak}
%  Let $V$ be a vector space equipped with a cone $C\subset V$.
Let $V$ be an ordered vector space.
  Let $W\subset V$ be a subspace with the property that 
%  for all $v\in V$, there exists $w\in W$ with $w-v \in C$.
for all $v\in V$, there exists $w\in W$ with $v \leq w$.
%Equivalently, for all $v \in V$ there exist $w \in W$ with $w \leq v$.
%  Then there exists a monotonic sublinear functional $\rho:V\to \R$ with $I(w) \leq \rho(w)$ for all $w\in W$.
Then any positive linear functional $T:W \to \R$ has an extension to a positive linear functional $T':V \to \R$.
\end{theorem}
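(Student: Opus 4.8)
The plan is to prove this by transfinite extension via Zorn's lemma, enlarging the domain one dimension at a time, in the manner of the Hahn--Banach theorem. First I would consider the collection $\mathcal{E}$ of all pairs $(U,S)$, where $U$ is a subspace with $W \subseteq U \subseteq V$ and $S \colon U \to \R$ is a positive linear functional extending $T$, partially ordered by declaring $(U_1,S_1) \leq (U_2,S_2)$ when $U_1 \subseteq U_2$ and $S_2$ restricts to $S_1$. This set is nonempty since it contains $(W,T)$, and every chain has an upper bound obtained by taking the union of the domains together with the common extension of the functionals, which is again linear and positive. Zorn's lemma then produces a maximal element $(U,S)$.

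The crux is to show that $U = V$, which I would establish by contradiction. Suppose there exists $v_0 \in V \setminus U$, and set $U' = U + \R v_0$. Any linear extension $S'$ of $S$ to $U'$ is determined by the single value $c = S'(v_0)$ via $S'(u + t v_0) = S(u) + tc$. Analyzing the positivity requirement by cases on the sign of $t$, I would reduce the condition that $S'$ be positive to the single inequality
\[
\sup\{S(w) \ | \ w \in U,\ w \leq v_0\} \ \leq\ c \ \leq\ \inf\{S(w') \ | \ w' \in U,\ w' \geq v_0\}.
\]
The decisive point, and the place where the domination hypothesis on $W$ is used, is that both of these sets are nonempty and the displayed supremum does not exceed the displayed infimum. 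Applying the hypothesis to $v_0$ yields some $w' \in W \subseteq U$ with $v_0 \leq w'$, so the upper set is nonempty; applying it to $-v_0$ yields some $w \in W \subseteq U$ with $-w \leq v_0$, so the lower set is nonempty. For any $w \leq v_0 \leq w'$ in $U$ we have $w \leq w'$, hence $S(w) \leq S(w')$ by positivity of $S$; taking the supremum over such $w$ and the infimum over such $w'$ gives the displayed inequality and shows both bounds are finite. Thus a valid $c$ exists, and the resulting $S'$ is a positive linear extension of $S$ whose domain properly contains $U$, contradicting the maximality of $(U,S)$. Therefore $U = V$, and $T' = S$ is the desired extension.

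The main obstacle is precisely the one-dimensional extension step: checking that the constraints imposed by positivity are simultaneously satisfiable. This is where the assumption that every element of $V$ is dominated from above by an element of $W$ becomes indispensable. Without it one of the two sets in the display could be empty, forcing the corresponding bound to be infinite so that no finite choice of $c$ exists; the hypothesis guarantees that at every stage both sets are nonempty (using that $W \subseteq U$ throughout), and positivity of $S$ then supplies the order relation between the supremum and the infimum. Once this step is secured, the Zorn's lemma scaffolding is entirely routine.
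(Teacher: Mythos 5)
Your proof is correct and complete: the Zorn's lemma scaffolding, the reduction of positivity of the one-dimensional extension to the inequality $\sup\{S(w) \mid w \leq v_0\} \leq c \leq \inf\{S(w') \mid w' \geq v_0\}$, and the use of the majorizing hypothesis (applied to both $v_0$ and $-v_0$) to show both sets are nonempty and the bounds compatible are all exactly right. The paper does not prove this statement --- it is quoted as a classical theorem of Kantorovich with only a citation --- and your argument is the standard proof of that cited result, so there is nothing in the paper to diverge from.
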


%It may be stated equivalently for either ordered vector spaces or for vector spaces with salient cones. 

\begin{theorem}[{\cite[Theorem 1.27]{aliprantis2006positive_operators}}] \label{thm:positive_hahn_banach_strong}
    Let $E$ be a Riesz space with Riesz subspace $G$ and let $T:G \to \R$ be a positive linear functional.
    Then $T$ has an extension to a positive linear functional $T':E \to \R$ if and only if 
    there exists a monotonic sublinear functional $\rho:E \to \R$ such that for all $x \in G$, $T(x) \leq \rho(x)$.
\end{theorem}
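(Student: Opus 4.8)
The plan is to prove the two implications separately, with the forward implication being essentially immediate and the reverse being a direct application of the Hahn-Banach dominated extension theorem followed by a short monotonicity argument.

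For the necessity direction, suppose $T$ extends to a positive linear functional $T':E \to \R$. Then I would simply take $\rho = T'$. Every linear functional is sublinear, since subadditivity holds with equality and positive homogeneity holds with equality (indeed $\rho(\lambda x) = \lambda \rho(x)$ for $\lambda \geq 0$). Moreover a positive linear functional is monotonic: if $x \leq y$ then $y - x \in E^+$, so $T'(y-x) \geq 0$, whence $T'(x) \leq T'(y)$. Finally $T(x) = T'(x) = \rho(x)$ for all $x \in G$, so the required domination $T \leq \rho$ on $G$ holds trivially.

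For the sufficiency direction, suppose $\rho:E \to \R$ is a monotonic sublinear functional with $T(x) \leq \rho(x)$ for all $x \in G$. By the Hahn-Banach theorem in its dominated-extension form, with $\rho$ playing the role of the dominating sublinear functional, $T$ extends to a linear functional $T':E \to \R$ satisfying $T'(x) \leq \rho(x)$ for all $x \in E$. It remains to verify that $T'$ is positive, and this is where the monotonicity of $\rho$ is used. First note that $\rho(0) = 0$, since positive homogeneity gives $\rho(0) = \rho(0 \cdot x) = 0$. Now take any $x \in E^+$. Then $-x \leq 0$, so by monotonicity $\rho(-x) \leq \rho(0) = 0$, and combining this with the domination bound yields $-T'(x) = T'(-x) \leq \rho(-x) \leq 0$, that is, $T'(x) \geq 0$. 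Hence $T'$ is positive, which completes the proof.

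The only real content is packaged inside the Hahn-Banach theorem, which supplies the dominated linear extension, so there is no genuine obstacle beyond invoking it. The one place demanding a little care is the positivity verification, where it is essential that $\rho$ be \emph{monotonic} and not merely sublinear: monotonicity together with $\rho(0) = 0$ forces $\rho \leq 0$ on the negative cone $-E^+$, and it is precisely this sign information that converts the upper bound $T' \leq \rho$ into the lower bound $T' \geq 0$ on $E^+$.
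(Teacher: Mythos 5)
Your proof is correct. The paper gives no proof of this statement---it is quoted directly from Aliprantis--Burkinshaw (Theorem 1.27)---and your argument (take $\rho = T'$ for necessity; for sufficiency, apply the dominated Hahn--Banach extension and use monotonicity of $\rho$ together with $\rho(0)=0$ to get $T'(x) = -T'(-x) \geq -\rho(-x) \geq 0$ on $E^+$) is exactly the standard proof of that cited result.
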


\section{Riesz cones} % and normed convex cones}} 
\label{sec:riesz-cone}
%\label{sec:cones-riesz-normed}

In this section we develop analogs of Riesz spaces %and normed ordered vector spaces 
whose underlying structure is a convex cone rather than a vector space.
Taking the Grothendieck group we obtain Riesz spaces. % and normed ordered vector spaces.

\begin{definition}
    A \emph{Riesz cone} is a 
    %zero-sum-free 
    cancellative convex cone that it is a lattice cone with respect to the natural partial order.
    % for all $x,y \in C$ there is a supremum $x \vee y$ such that for all $x,y,z \in C$, 
    % $(x \vee y) + z = (x+z) \vee (y+z)$. 
%    Since $\leq$ is determined by $C$, we denote this Riesz cone by $C$.
%    A \emph{complete Riesz cone} is a Riesz cone that is complete as a lattice cone.
\end{definition}

Recall that a lattice cone has pairwise suprema but need not have pairwise infima. However, we will show that Riesz cones do indeed have pairwise infima (\cref{prop:riesz-cone-meet}).

\begin{lemma}
    Let $C$ be a Riesz cone and let $x,y \in C$ such that $x \leq y$.
    Then there exists a unique $z \in C$ such that $x+z = y$.
\end{lemma}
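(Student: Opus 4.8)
The plan is to read off both claims almost directly from the definitions, since the structure built into a Riesz cone is exactly what the statement requires.

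First I would establish existence, which is immediate. By definition a Riesz cone is a cancellative convex cone that is a lattice cone \emph{with respect to the natural partial order}, and the natural preorder on a convex cone $C$ declares $x \leq y$ precisely when there exists $z \in C$ with $x + z = y$. Thus the hypothesis $x \leq y$ already furnishes an element $z$ witnessing $x + z = y$; no construction is needed.

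Next I would handle uniqueness, which is where cancellativity enters. Suppose $z, z' \in C$ both satisfy $x + z = y$ and $x + z' = y$. Then $x + z = x + z'$, and since a Riesz cone is cancellative by definition, cancelling $x$ yields $z = z'$.

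There is essentially no obstacle here: the only point to be careful about is that the order in the statement is the natural one coming from the cone structure (as built into the definition of a Riesz cone), so that $x \leq y$ literally means solvability of $x + z = y$. Although elementary, this lemma is worth isolating because it licenses treating $z$ as a well-defined difference $y - x$ within the cone for comparable elements, which is then available in the subsequent development, for instance in constructing pairwise infima (\cref{prop:riesz-cone-meet}).
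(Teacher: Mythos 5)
Your proof is correct and is essentially identical to the paper's: existence comes directly from the definition of the natural partial order, and uniqueness from cancellativity. No issues.
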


\begin{proof}
    By the definition of the natural partial order, there is a $z \in C$ such that $x+z = y$.
    Assume there exists $w \in C$ such that $x+w=y$.
    Then $x+w = x+z$, and since $C$ is cancellative, $w=z$.
\end{proof}

\begin{lemma}
    Let $C$ be a Riesz cone and let $x,y,z \in C$ such that $x+z \leq y+z$. Then $x \leq y$.
\end{lemma}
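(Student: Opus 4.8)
The plan is to unwind the definition of the natural partial order and then invoke cancellativity, which is part of the definition of a Riesz cone; the lattice structure is not needed. First, from the hypothesis $x + z \leq y + z$, the definition of the natural preorder on $C$ produces a witness $w \in C$ with $(x+z) + w = y + z$. Using associativity and commutativity of the monoid operation, I would regroup the left-hand side as $(x + w) + z = y + z$. Since $C$ is cancellative, cancelling the common summand $z$ yields $x + w = y$, which exhibits $w$ as a witness that $x \leq y$ in the natural order, completing the argument.

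I expect no genuine obstacle here: the statement in fact holds for any cancellative convex cone, and the only ingredients are cancellativity together with the associativity and commutativity of $+$. The sole point requiring care is the bookkeeping involved in translating between the ``there exists a witness'' formulation of $\leq$ and the additive identities, which is the same manipulation already carried out in the preceding lemma on uniqueness of witnesses.
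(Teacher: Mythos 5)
Your proposal is correct and is essentially identical to the paper's own proof: both obtain a witness $w$ with $x+z+w = y+z$ from the definition of the natural order and then cancel $z$ using cancellativity. Your observation that the lattice structure is not needed is also accurate.
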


\begin{proof}
    By the definition of the natural partial order, 
    there is a $w \in C$ such that $x+z+w=y+z$.
    Since $C$ is cancellative,
    $x+w=y$.
    Therefore $x \leq y$.
\end{proof}

\begin{proposition} \label{prop:riesz-cone-meet}
    Let $C$ be a Riesz cone.
    For each $x,y \in C$ there is exists an infimum $x \wedge y$ such that for all $x,y,z \in C$,
    $(x \wedge y) + z = (x+z) \wedge (y+z)$.
    In addition, for all $x,y \in C$, $x \vee y + x \wedge y = x + y$.
\end{proposition}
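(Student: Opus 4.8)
The plan is to construct $x \wedge y$ directly from the supremum, which is available because $C$ is a lattice cone. First I would observe that $x \leq x+y$ (the equation $x + y = x+y$ exhibits the required witness $y \in C$) and symmetrically $y \leq x + y$, so $x+y$ is an upper bound of $\{x,y\}$ and hence $x \vee y \leq x+y$. By the first of the two preceding lemmas there is then a unique $m \in C$ with $(x \vee y) + m = x+y$, and I would \emph{define} $x \wedge y := m$. This builds the identity $x \vee y + x \wedge y = x+y$ into the definition, so that statement becomes immediate once $m$ is shown to be the infimum.

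Next I would verify that $m$ is a lower bound. Since $x \leq x \vee y$, write $x \vee y = x + b$ with $b \in C$; substituting into $(x \vee y) + m = x+y$ and cancelling $x$ yields $b + m = y$, so $m \leq y$. The symmetric argument, using $y \leq x \vee y$, gives $m \leq x$.

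The main obstacle is showing $m$ is the \emph{greatest} lower bound, since the cone has no subtraction and the argument must be purely additive. Given any lower bound $w \leq x$, $w \leq y$, I want $w \leq m$. By the second preceding lemma (cancellation of inequalities), it suffices to prove $w + (x \vee y) \leq (x \vee y) + m = x + y$. To obtain this, write $x = w + p$ and $y = w + q$ with $p,q \in C$. Then $x = w + p \leq w + p + q$ and $y = w + q \leq w + p + q$, so $w + p + q$ is an upper bound of $\{x,y\}$ and therefore $x \vee y \leq w + p + q$. Adding $w$ and using $w + (w+p+q) = 2w + p + q = x+y$, I conclude $w + (x \vee y) \leq x + y$, as required. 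This establishes $m = x \wedge y$ as the infimum and, simultaneously, the identity $x \vee y + x \wedge y = x + y$.

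Finally, for the translation law I would apply the same construction to the pair $(x+z, y+z)$: its infimum is the unique $m'$ with $[(x+z) \vee (y+z)] + m' = (x+z) + (y+z)$. Using the lattice cone identity $(x+z) \vee (y+z) = (x \vee y) + z$ together with the identity just proved, one checks that $(x \wedge y) + z$ satisfies this defining equation, since
\[
[(x \vee y) + z] + [(x \wedge y) + z] = (x \vee y + x \wedge y) + 2z = (x+y) + 2z = (x+z) + (y+z).
\]
By uniqueness (cancellativity), $(x+z) \wedge (y+z) = (x \wedge y) + z$, completing the proof.
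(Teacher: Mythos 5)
Your proposal is correct and follows essentially the same route as the paper: both define $x \wedge y$ as the unique $m$ with $(x \vee y) + m = x + y$, verify that $m$ is the greatest lower bound using cancellativity and the lattice-cone identity, and obtain the translation law by the same cancellation computation. The only differences are cosmetic (you manipulate explicit witnesses $b,p,q$ where the paper invokes $(w+x)\vee(w+y) = w + (x\vee y)$ directly), so no changes are needed.
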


\begin{proof}
    Let $x,y \in C$.
    Since $x,y \leq x+y$, %by the definition of supremum, 
    $x\vee y \leq x+y$.
    Therefore, there exists $z \in C$ such that $x \vee y + z = x+y$.
    That is, $(x+z) \vee (y+z) = x + y$.
    Thus $x+z \leq x+y$ and hence $z \leq y$.
    Similarly $z \leq x$ and
    hence $z$ is a lower bound for $\{x,y\}$.
    Let $w$ be a lower bound for $\{x,y\}$.
    Then $w+y \leq x+y$ and $w+x \leq x+y$.
    Thus $w + x \vee y = (w+x) \vee (w+y) \leq x+y = x \vee y + z$.
    Hence $w \leq z$.    
    Therefore $z = x \wedge y$ and $x \vee y + x \wedge y = x+y$.

    Let $x,y,z \in C$.
    We have that 
    $x \vee y +z + (x+z) \wedge (y+z) = (x+z) \vee (y+z) +(x+z) \wedge (y+z) = x + y + 2z = x \vee y + x \wedge y + 2z$.
    Thus $(x+z) \wedge (y+z) = x \wedge y + z$.
\end{proof}

Note that $\vee$ and $\wedge$ are monotone in either coordinate.

\begin{proposition} \label{prop:riesz-cone-minus}
    Let $C$ be a Riesz cone.
    For each $x,y \in C$, there is a unique element in $C$ denoted $x \setminus y$ such that $y + x \setminus y = x \join y$. In addition, for all $x,y \in C$, $ x \meet y + x \setminus y = x$.
    Furthermore, $\setminus$ is monotone in the first coordinate.
\end{proposition}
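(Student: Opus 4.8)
The plan is to define $x \setminus y$ directly as the unique complement of $y$ beneath $x \vee y$, and then to read off both the asserted identity and the monotonicity from \cref{prop:riesz-cone-meet} together with cancellativity. So the first step is existence and uniqueness: since $y \leq x \vee y$, I apply the first lemma of this section (for $x \leq y$ there is a unique $z$ with $x + z = y$) to the pair $y \leq x \vee y$. This produces a unique $z \in C$ with $y + z = x \vee y$, and I set $x \setminus y := z$. This is exactly the element asserted, and its uniqueness is immediate from that lemma.

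Next I would derive the identity $x \wedge y + (x \setminus y) = x$ by adding $x \wedge y$ to both sides of $y + (x \setminus y) = x \vee y$:
\[
  (x \wedge y) + y + (x \setminus y) = (x \wedge y) + (x \vee y) = x + y,
\]
where the last equality is the final identity of \cref{prop:riesz-cone-meet}. Cancelling $y$ (using that $C$ is cancellative) yields $x \wedge y + (x \setminus y) = x$, as required.

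Finally, for monotonicity in the first coordinate, suppose $x \leq x'$. Since $\vee$ is monotone in either coordinate, $x \vee y \leq x' \vee y$, i.e. $y + (x \setminus y) \leq y + (x' \setminus y)$. The second lemma of this section (if $a + c \leq b + c$ then $a \leq b$) then gives $x \setminus y \leq x' \setminus y$.

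I do not expect a genuine obstacle here: once $x \setminus y$ is defined as the complement of $y$ in $x \vee y$, every step is forced. The only point requiring care is that all manipulations remain inside the cone $C$ rather than tacitly passing to the Grothendieck group $K(C)$. This is guaranteed because each ``difference'' I invoke has the form ``subtract a smaller element from a larger one,'' which the first lemma realizes within $C$, and because cancellativity lets me cancel common summands without leaving $C$.
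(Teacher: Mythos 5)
Your proposal is correct and follows essentially the same route as the paper: define $x \setminus y$ via the unique complement of $y$ beneath $x \vee y$, derive $x \wedge y + x \setminus y = x$ from the identity $x \vee y + x \wedge y = x + y$ together with cancellativity, and get monotonicity from monotonicity of $\vee$ plus order cancellation. The only cosmetic difference is that the paper introduces an auxiliary $w$ with $x \wedge y + w = x$ and cancels $x+y$, whereas you add $x \wedge y$ to the defining equation and cancel $y$ directly; both are the same argument.
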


\begin{proof}
    Let $x,y \in C$.
    Since $y \leq x \vee y$, there is a unique $z \in C$ such that $y+z = x \vee y$.
    Denote $z$ by $x \setminus y$.
    Since $x \wedge y \leq x$, there is a unique $w \in C$ such that $x \wedge y + w = x$.
    Then $x + y + x \setminus y 
    = x \wedge y + w  + x \vee y
    = x+y+w$.
    Therefore $w = x \setminus y$.

    Let $x,x',y \in C$ with $x \leq x'$.
    Then $x \vee y \leq x' \vee y$.
    That is, $y + x \setminus y \leq y + x' \setminus y$.
    Therefore $x \setminus y \leq x' \setminus y$.
\end{proof}

\begin{proposition}
    Let $C$ be a Riesz cone.
    Then $C$ is a distributive lattice.
\end{proposition}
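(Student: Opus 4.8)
The plan is to verify a single distributive law, say $x \wedge (y \vee z) = (x \wedge y) \vee (x \wedge z)$ for all $x,y,z \in C$; since it is standard that in any lattice the two distributive laws are equivalent, this suffices to conclude that $C$ is distributive. The inequality $(x \wedge y) \vee (x \wedge z) \le x \wedge (y \vee z)$ is automatic: $x \wedge y \le x$ and $x \wedge y \le y \le y \vee z$ give $x \wedge y \le x \wedge (y \vee z)$, and likewise $x \wedge z \le x \wedge (y \vee z)$, so their supremum is bounded by $x \wedge (y \vee z)$ (using the monotonicity of $\vee$ and $\wedge$ noted after \cref{prop:riesz-cone-meet}). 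All the work is therefore in the reverse inequality.

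Since $C$ is only a cone, we cannot subtract; the governing idea is to clear every supremum by adding a suitable element and then argue with cancellation and translation invariance alone. First I would apply the modular identity $u \vee v + u \wedge v = u + v$ from \cref{prop:riesz-cone-meet} to $u = x \wedge y$ and $v = x \wedge z$, whose meet is $x \wedge y \wedge z$, obtaining $(x \wedge y) \vee (x \wedge z) + (x \wedge y \wedge z) = (x \wedge y) + (x \wedge z)$. By cancellativity, the reverse inequality is then equivalent to the purely additive statement $x \wedge (y \vee z) + (x \wedge y \wedge z) \le (x \wedge y) + (x \wedge z)$.

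Next I would rewrite both sides as meets of sums, using translation invariance of $\wedge$ (\cref{prop:riesz-cone-meet}) and the modular law. Repeated expansion gives $(x \wedge y) + (x \wedge z) = 2x \wedge (x+y) \wedge (x+z) \wedge (y+z)$; and, writing $p = x \wedge (y \vee z)$ and $q = y \wedge z$, the modular law for $y,z$ yields $p + q = (x+y) \wedge (x+z) \wedge (y+z)$, while translation invariance gives $p + (x \wedge y \wedge z) = (p+x) \wedge (p+q)$. Thus the target inequality becomes $(p+x) \wedge (p+q) \le 2x \wedge (p+q)$. This holds for a trivial reason: $p = x \wedge (y \vee z) \le x$ forces $p + x \le 2x$, so the left-hand meet lies below both $2x$ and $p+q$, hence below $2x \wedge (p+q)$. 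Cancelling $x \wedge y \wedge z$ recovers the reverse inequality, completing the proof.

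The main obstacle is precisely the absence of subtraction: each clean group-theoretic identity used to prove distributivity of lattice-ordered groups must be replaced by an additive identity valid in a cone, and one must check that every cancellation is licensed by \cref{prop:riesz-cone-meet} rather than by silently forming differences. An alternative would be to embed $C$ into its Grothendieck group $K(C)$ and invoke the classical distributivity of lattice-ordered abelian groups, but this presupposes that the lattice operations extend to $K(C)$, which is not yet available at this stage; the direct argument above avoids that dependence.
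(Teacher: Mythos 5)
Your proof is correct, and every step is licensed by results already available at this point in the paper: the existence and translation invariance of binary meets and the modular identity $u\vee v + u\wedge v = u+v$ from \cref{prop:riesz-cone-meet}, order cancellation ($a+c\le b+c$ implies $a\le b$, the second lemma of the section), and the monotonicity of $\vee$ and $\wedge$. It does, however, take a genuinely different route from the paper's. The paper argues order-theoretically: it shows that $x\wedge(y\vee z)$ is the \emph{least} upper bound of $\{x\wedge y,\,x\wedge z\}$ by taking an arbitrary upper bound $w$ and using the relative complement of \cref{prop:riesz-cone-minus} --- specifically $y = x\wedge y + y\setminus x \le w + (y\vee z)\setminus x$ together with monotonicity of $\setminus$ in its first argument --- to conclude $x\wedge(y\vee z)\le w$ after cancelling. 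Your argument never introduces an arbitrary upper bound or the operation $\setminus$; it is purely equational, clearing the joins with the modular law and reducing everything to the single inequality $(p+x)\wedge(p+q)\le 2x\wedge(p+q)$ between meets of sums, which is immediate from $p\le x$. The paper's proof is shorter and reuses the machinery of \cref{prop:riesz-cone-minus}; yours is self-contained modulo \cref{prop:riesz-cone-meet} and makes the ``replace subtraction by addition plus cancellation'' strategy completely explicit, which is the same strategy one would use to transfer other lattice-ordered-group identities to the cone setting.
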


\begin{proof}
    Let $x,y,z \in C$.
    We show that $x \meet (y \join z) = (x \meet y) \join (x \meet z)$.
    Since $y,z \leq y \join z$,
    $x \meet y \leq x \meet (y \join z)$ and $x \meet z \leq x \meet (y \join z)$.
    Thus $x \meet (y \join z)$ is an upper bound for $\{x \meet y, x \meet z\}$.
    Let $w$ be an upper bound for $\{x \meet y, x \meet z\}$.
    % Then $x = x \meet y + x \setminus y \leq w + x \setminus y$.
    By \cref{prop:riesz-cone-minus}, $y = x \meet y + y \setminus x \leq w + (y \join z) \setminus x$ and similarly for $z$.
    Thus $y \join z \leq w + (y \join z) \setminus x$.
    Therefore $y \join z + x \meet (y \join z) \leq w + y \join z$ and 
    hence $x \meet (y \join z) \leq w$.
    Therefore $x \meet (y \join z)$ is the desired supremum.
    It follows that $x \join (y \meet z) = (x \join y) \meet (x \join z)$.
\end{proof}

\begin{proposition}
\label{lem:Riesz-space-cone}
    There is a bijection between Riesz spaces 
    %with generating positive cones 
    and zero-sum-free Riesz cones given by sending a Riesz space to its positive cone, and by sending a zero-sum-free Riesz cone $C$ to the ordered vector space $K(C)$ with partial order given by the cone $C$ and defining for $x-y \in K(C)$, $(x-y) \vee 0 = x \vee y - y$.
\end{proposition}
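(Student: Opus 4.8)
The plan is to deduce this from the bijection between ordered vector spaces with generating positive cone and zero-sum-free cancellative convex cones already recorded in \cref{sec:convex-cones}. A Riesz space $E$ is in particular such an ordered vector space: its order is a partial order (so $E^+$ is salient, equivalently zero-sum-free), and $E^+$ is generating since $x = x^+ - x^-$ for every $x$. Conversely, a zero-sum-free Riesz cone is by definition a zero-sum-free cancellative convex cone. So it suffices to check that the two constructions of the existing bijection carry Riesz spaces to zero-sum-free Riesz cones and vice versa. Since these constructions are mutually inverse on the larger classes, restricting them will then yield the asserted bijection, and the lattice operations will correspond automatically because suprema are determined by the order.

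For the first construction, let $E$ be a Riesz space and $C = E^+$; we already know $C$ is a zero-sum-free cancellative convex cone. The natural partial order on $C$ agrees with the restriction to $C$ of the order on $E$, since for $a,b \in C$ we have $a \le b$ in $E$ iff $b - a \in E^+ = C$ iff there is $z \in C$ with $a + z = b$. For $x,y \in C$, the supremum $x \vee y$ taken in $E$ satisfies $x \vee y \ge x \ge 0$, so it lies in $C$ and is also the supremum in $C$; moreover the identity $(x \vee y) + z = (x+z) \vee (y+z)$ holds in $E$, in particular for $z \in C$. Hence $C$ is a lattice cone, and being cancellative it is a Riesz cone.

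For the second construction, let $C$ be a zero-sum-free Riesz cone. By the existing bijection, $K(C)$ is an ordered vector space whose (generating) positive cone is the image of $C$. By the criterion recalled in \cref{sec:riesz} that an ordered vector space in which $v \vee 0$ exists for every element is a Riesz space, it is enough to produce $v \vee 0$ for each $v \in K(C)$. Write $v = x - y$ with $x,y \in C$ and set $s = x \vee y - y$, the supremum being taken in the lattice cone $C$. Since $x \vee y \ge x$ and $x \vee y \ge y$ in $C$, both $x \vee y - x$ and $x \vee y - y$ lie in $C$; the first gives $s - v = x \vee y - x \in C$, so $s \ge v$, and the second gives $s \ge 0$ in $K(C)$. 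Thus $s$ is an upper bound for $\{v, 0\}$. For minimality, suppose $u \in K(C)$ satisfies $u \ge v$ and $u \ge 0$. From $u \ge 0$ we get $u \in C$, hence $u + y \in C$; and $u + y \ge x$ (from $u \ge v = x-y$) and $u + y \ge y$ (from $u \ge 0$). Since the order of $K(C)$ restricts to the natural order of $C$, the element $u + y \in C$ is an upper bound of $\{x,y\}$ in $C$, whence $u + y \ge x \vee y$ and therefore $u \ge x \vee y - y = s$. Thus $s = v \vee 0$, which is exactly the stated formula $(x-y)\vee 0 = x \vee y - y$; uniqueness of suprema also shows this value is independent of the representative $x - y$. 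Consequently $K(C)$ is a Riesz space.

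I expect the minimality step to be the main obstacle, since it requires carefully transferring the inequality $u \ge v$ from $K(C)$ back to the natural order of $C$ — specifically, recognizing that $u + y$ lands in $C$ and then invoking the universal property of $x \vee y$ as the supremum in the lattice cone. With both constructions verified, the mutual-inverse property inherited from \cref{sec:convex-cones}, together with the fact that each construction sends the relevant subclass into the corresponding subclass, completes the bijection.
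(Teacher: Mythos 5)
Your proposal is correct and follows essentially the same route as the paper: both directions reduce to the bijection between ordered vector spaces with generating positive cone and zero-sum-free cancellative convex cones, with the forward direction checking that $E^+$ is a lattice cone and the backward direction producing $(x-y)\vee 0 = x\vee y - y$ in $K(C)$ via translation-invariance of suprema. You simply supply the details (in particular the verification that the supremum taken in $C$ is also the supremum in $K(C)$) that the paper's terser proof leaves implicit.
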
 

\begin{proof}
    Let $E$ be a Riesz space.
    The positive cone of an ordered vector space is a zero-sum-free cancellative convex cone.
    Since $E$ is a Riesz space, $E^+$ is a Riesz cone.
    Let $C$ be a  zero-sum-free Riesz cone and let $x-y \in K(C)$.
    Since $K(C)$ is an ordered vector space, 
    $\sup(x-y,0) = \sup(x,y)-y$ if either side exists.
\end{proof}

\begin{definition}
    Let $C$ be a zero-sum-free Riesz cone. 
    A subset $A \subset C$ is \emph{solid} if for all $y \in A$ and $x \in C$ with $x \leq y$, $x \in A$.
    An \emph{ideal} in $C$ is a solid Riesz subcone.
\end{definition}

\begin{proposition} \label{prop:bijection-ideal}
    The bijection between Riesz spaces and zero-sum-free Riesz cones gives a bijection between Riesz spaces  and their ideals and zero-sum-free Riesz cones and their ideals.
\end{proposition}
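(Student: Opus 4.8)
The plan is to exhibit explicit mutually inverse maps between the ideals of a Riesz space $E$ and the ideals of its positive cone $C = E^+$, and to verify that each lands among ideals of the appropriate kind. Given an ideal $J \subseteq E$, I would send it to $J \cap E^+$; conversely, given an ideal $I \subseteq C$, I would send it to $I - I = \{x - y \mid x,y \in I\}$, viewed as a subset of $K(C) = E$. The bulk of the argument is checking solidity in each direction and confirming that the two assignments compose to the identity.

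For the forward direction, let $J$ be an ideal in $E$. That $J \cap E^+$ is a subcone is immediate since $J$ is a linear subspace and $E^+$ a cone, and it is a Riesz subcone because $J$ is a Riesz subspace and the join of two positive elements is positive. For solidity, suppose $y \in J \cap E^+$ and $x \in C$ with $x \leq y$ in the cone order; then $0 \leq x \leq y$ in $E$, so $\abs{x} = x \leq y = \abs{y}$, and solidity of $J$ forces $x \in J$, hence $x \in J \cap E^+$.

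For the reverse direction, let $I$ be an ideal in $C$. That $I - I$ is a linear subspace of $E$ is routine, using that $I$ is a subcone closed under addition and nonnegative scaling. The substantive step is solidity, and here I expect the main obstacle to lie, since it requires relating the absolute value in $E$ to the cone operations on $I$. Given $a = p - q \in I - I$ with $p,q \in I$, I would use the formula $(x-y) \join 0 = x \join y - y$ from \cref{lem:Riesz-space-cone} to compute $a^+ = p \join q - q$ and $a^- = p \join q - p$, so that $\abs{a} = a^+ + a^- = p \join q - p \meet q$ after invoking $p \join q + p \meet q = p + q$ from \cref{prop:riesz-cone-meet}. Since $p \meet q \geq 0$, this yields $\abs{a} \leq p \join q$, and $p \join q \in I$ because $I$ is a Riesz subcone, so solidity of $I$ gives $\abs{a} \in I$. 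Now for any $x \in E$ with $\abs{x} \leq \abs{a}$, solidity of $I$ gives $\abs{x} \in I$; since $x^+, x^- \leq \abs{x}$, both $x^+, x^-$ lie in $I$, whence $x = x^+ - x^- \in I - I$. Thus $I - I$ is a solid linear subspace, i.e.\ an ideal in $E$.

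Finally I would verify the two maps are mutually inverse. Starting from an ideal $J$, the composite produces $(J \cap E^+) - (J \cap E^+)$; the inclusion into $J$ is clear, and the reverse uses $a = a^+ - a^-$ with $a^+, a^- \in J \cap E^+$, since $J$ is a Riesz subspace. Starting from an ideal $I$, the composite produces $(I - I) \cap E^+$; the inclusion $I \subseteq (I-I)\cap E^+$ is immediate, and for the reverse, if $a \in C$ with $a = p - q$ and $p,q \in I$, then $a = a^+ = p \join q - q \leq p \join q \in I$, so solidity of $I$ forces $a \in I$. This establishes the desired bijection.
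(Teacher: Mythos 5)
Your proof is correct and follows essentially the same route as the paper: intersecting with the positive cone in one direction and passing to differences (the Grothendieck group of the cone ideal) in the other, with solidity as the substantive check. Your treatment of solidity of $I-I$ via $\abs{p-q} = p \join q - p \meet q \leq p \join q$ and the decomposition $x = x^+ - x^-$ is in fact a more explicit version of the paper's terser argument, which implicitly uses the canonical representatives $(w-z)^+, (w-z)^-$ at the corresponding step.
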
 

\begin{proof}
    Let $E$ be a Riesz space with ideal $A$. 
    By \cref{lem:Riesz-space-cone}, $E^+$ is a zero-sum-free Riesz cone with Riesz subcone $A^+$.
    Let $y \in A^+$ and let $x \in E^+$ such that $x \leq y$.
    That is, $\abs{x} \leq \abs{y}$. Therefore $x \in A$ and hence $x \in A^+$.

    Let $C$ be a zero-sum-free Riesz cone with ideal $A$.
    By \cref{lem:Riesz-space-cone}, $K(C)$ and $K(A)$ are Riesz spaces.
    Recall that $K(C) = (C \times C)/\sim_C$ and $K(A) = (A \times A)/\sim_A$.
    Since $C$ is cancellative, $\sim_C$ restricts to $\sim_A$ on $A \times A$.
    Therefore $K(A)$ is a Riesz subspace of $K(C)$.
    It remains to show that $K(A)$ is solid.
    Let $x-y \in K(A)$ and $w-z \in K(C)$ with $\abs{w-z} \leq \abs{x-y}$.
    That is, $w+z \leq x+y$.
    Thus $w,z \leq x+y \in A$. 
    Hence $w,z \in A$ and therefore $w-z \in K(A)$.
\end{proof}

\section{Measures for metric pairs}
\label{sec:measures}

In this section we define Radon measures for metric pairs and determine some of their structure.

\subsection{Borel measures on metric pairs}
\label{sec:borel-pointed}
 
Let $(X,d,A)$ be a metric pair.
That is, $(X,d)$ is a metric space and $A \subset X$ is a closed subspace.
Let $\pBorelXonly$ denote the set of Borel measures on $X$.
Addition and the zero measure give $\pBorelXonly$ the structure of a commutative monoid.
It is zero-sum-free, but not cancellative. 
Let $\mu \in \pBorelXonly.$
The Borel sets of $A$ are the Borel sets of $X$ that are contained in $A$ and $\mu$ restricts to $\mu_A \in \pBorelAonly$.
Furthermore, $\pBorelAonly$ is a submonoid of $\pBorelXonly$.
Similarly, $\mu$ restricts to $\mu_{X \setminus A}$ and $\mu = \mu_A + \mu_{X \setminus A}$.

\begin{definition}
    Let $\pBorel$ be the quotient monoid $\pBorelXonly / \pBorelAonly$.
    Call elements of $\pBorel$ \emph{relative Borel measures} on $(X,A)$, or, more simply, \emph{Borel measures} on $(X,A)$.
\end{definition}

\begin{lemma} \label{lem:borel-uniqueness}
    Let $\mu,\nu \in \pBorelXonly$.
    Then  $[\mu] = [\nu] \in \pBorel$ if and only if $\mu_{X \setminus A} = \nu_{X \setminus A}$.
\end{lemma}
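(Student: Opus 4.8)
The plan is to unwind the definition of the quotient monoid and exploit the canonical splitting $\mu = \mu_A + \mu_{X \setminus A}$. First I would recall that, by the definition of the quotient of a commutative monoid by a submonoid, $[\mu] = [\nu]$ holds in $\pBorel$ precisely when there exist $\alpha, \beta \in \pBorelAonly$ with $\mu + \alpha = \nu + \beta$ as Borel measures on $X$. Alongside this I would record the elementary observation that every $\lambda \in \pBorelAonly$, regarded as a Borel measure on $X$ via the canonical extension $\lambda(B) = \lambda(B \cap A)$, satisfies $\lambda(B) = 0$ for every Borel set $B \subset X \setminus A$, since then $B \cap A = \emptyset$.

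For the forward implication I would assume $[\mu] = [\nu]$ and fix witnesses $\alpha, \beta \in \pBorelAonly$ with $\mu + \alpha = \nu + \beta$. Evaluating both sides on a set of the form $B \cap (X \setminus A)$, where $B \subset X$ is an arbitrary Borel set, kills the contributions of $\alpha$ and $\beta$ by the observation above, so that $\mu(B \cap (X \setminus A)) = \nu(B \cap (X \setminus A))$. By the definition of the restriction $\mu_{X \setminus A}(B) = \mu(B \cap (X \setminus A))$, this is exactly the assertion $\mu_{X \setminus A} = \nu_{X \setminus A}$.

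For the converse I would assume $\mu_{X \setminus A} = \nu_{X \setminus A}$ and simply use $\mu_A$ and $\nu_A$ themselves as the witnesses in $\pBorelAonly$. It then suffices to verify the single identity $\mu + \nu_A = \nu + \mu_A$, which follows at once from the splittings $\mu = \mu_A + \mu_{X \setminus A}$ and $\nu = \nu_A + \nu_{X \setminus A}$, the hypothesis $\mu_{X \setminus A} = \nu_{X \setminus A}$, and commutativity of addition in $\pBorelXonly$.

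I expect no serious obstacle here; the argument is a direct unwinding of the quotient construction. The only points requiring care are the correct bookkeeping of the canonical extension, so that elements of $\pBorelAonly$ genuinely vanish on Borel subsets of $X \setminus A$, and the verification that the witnesses $\mu_A, \nu_A$ indeed lie in the submonoid $\pBorelAonly$, which is immediate since each is concentrated on $A$.
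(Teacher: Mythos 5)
Your proposal is correct and follows essentially the same route as the paper: the forward direction evaluates the witness identity on Borel subsets of $X \setminus A$, where the contributions from $\pBorelAonly$ vanish, and the converse uses $\mu_A$ and $\nu_A$ as the witnesses together with the splitting $\mu = \mu_A + \mu_{X\setminus A}$. No gaps.
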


\begin{proof}
    Assume $[\mu] = [\nu] \in \pBorel$.
    Then there exists $\sigma, \tau \in \pBorelAonly$ such that $\mu + \sigma = \nu + \tau$.
    Let $E$ be a Borel set in $X \setminus A$.
    Then $\mu(E) = (\mu + \sigma)(E) = (\nu + \tau)(E) =  \nu(E)$. 
    Therefore 
    $\mu_{X \setminus A} = \nu_{X \setminus A}$.

    Assume $\mu_{X \setminus A} = \nu_{X \setminus A}$.
    Then $\mu + \nu_{A}
    = \mu_{X \setminus A} + \mu_A + \nu_A
    = \nu_{X\setminus A} + \mu_A + \nu_A 
    = \nu + \mu_A$.
    Since $\mu_A, \nu_A \in \pBorelAonly$, $[\mu] = [\nu]$.
\end{proof}

Hence, there is a bijection between 
%$\pBorel$ and the 
Borel measures on $(X,A)$ and
Borel measures on $X \setminus A$ that sends $[\mu]$ to $\mu_{X \setminus A}$.
We will use this bijection implicitly.
If $f \in \LipX$ then $\int_X f d\mu = \int_{X \setminus A} f d\mu_{X \setminus A}$.
Thus, for $[\mu] \in \pBorel$ and $f \in \LipX$, $\int_X f d\mu \in [0,\infty]$ is well defined.
If a Borel measure $\mu$ on $X$ is tight then so is $\mu_{X \setminus A}$.
%Therefore, for $[\mu] \in \pBorel$, the property of being tight is well defined.
We say that $[\mu] \in \pBorel$ is \emph{tight} if $\mu|_{X \setminus A}$ is tight, which is well defined by \cref{lem:borel-uniqueness}.
Similarly, for $[\mu] \in \pBorel$, the property of being locally finite at 
%$x \notin A$, 
$x \in X \setminus A$,
  is well defined. 
For $[\mu] \in \pBorel$, let the \emph{support} of $[\mu]$, be defined by
\begin{equation*}
    \supp([\mu]) = \{ x \in X \ | \ \text{for each open neighborhood $U$ of $x$, } \mu(U \cap (X \setminus A)) \neq 0\}.
\end{equation*}
Note that $\supp([\mu]) \supset \supp(\mu|_{X \setminus A})$ but these need not be equal, since the former may contain points of $A$. If we view $\supp(\mu|_{X\setminus A})$ as a subset of $X$, then $\supp([\mu]) = \overline{\supp(\mu|_{X\setminus A})}$.

For simplicity, from now on we will use $\mu$ instead of $[\mu]$ to denote elements of $\pBorel$.
We will need the following uniqueness result.

\begin{lemma} \label{prop:uniqueness}
  Let $\mu,\nu \in \pBorel$ such that
  $\mu$ and $\nu$ are tight and
%  $\mu(\xi_K^r) = \nu(\xi_K^r)$ 
$\mu(K) = \nu(K)$
  for all compact subsets %$K \subset X$ with 
  $K \subset X \setminus A$.
  Then $\mu = \nu$.
\end{lemma}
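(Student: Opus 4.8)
The plan is to reduce the claim $\mu = \nu$ in $\pBorel$ to the statement that $\mu$ and $\nu$ agree as Borel measures on $X \setminus A$, using the bijection established just above between $\pBorel$ and Borel measures on $X \setminus A$. After passing to this bijection, I need to show that two tight Borel measures on the locally compact-free space $X \setminus A$ that agree on all compact subsets of $X \setminus A$ must be equal. Since tightness means inner regularity with respect to compact sets, this should be essentially immediate.

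First I would fix an arbitrary Borel set $E \subset X \setminus A$ and use tightness of $\mu$. By inner regularity, $\mu(E) = \sup\{\mu(K) \mid K \subset E,\ K \text{ compact}\}$. Every such compact $K$ is a compact subset of $X \setminus A$, so by hypothesis $\mu(K) = \nu(K)$. Therefore the supremum defining $\mu(E)$ is taken over the same values as the corresponding supremum for $\nu$, giving $\mu(E) = \sup\{\nu(K) \mid K \subset E,\ K \text{ compact}\} \leq \nu(E)$. By symmetry (using tightness of $\nu$), $\nu(E) \leq \mu(E)$, so $\mu(E) = \nu(E)$. Since $E$ was an arbitrary Borel subset of $X \setminus A$, the measures $\mu_{X \setminus A}$ and $\nu_{X \setminus A}$ coincide.

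Finally I would invoke the preceding lemma, which states that $[\mu] = [\nu]$ in $\pBorel$ if and only if $\mu_{X \setminus A} = \nu_{X \setminus A}$, to conclude $\mu = \nu$ as elements of $\pBorel$. One subtlety to record carefully is that tightness of the representative on $X$ descends to tightness of the restriction to $X \setminus A$ — but this was already noted in the excerpt (``If a Borel measure $\mu$ on $X$ is tight then so is $\mu_{X \setminus A}$''), so the property of being tight is well defined on $\pBorel$ and I may apply inner regularity directly to the restricted measures.

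I expect no serious obstacle here; the argument is a routine application of inner regularity. The only point requiring mild care is bookkeeping: making sure I compare the suprema over the \emph{same} indexing family of compact sets for both measures, which works precisely because any compact $K \subset E \subset X \setminus A$ is admissible in the hypothesis $\mu(K) = \nu(K)$. The hypothesis that compact sets lie in $X \setminus A$ (rather than merely in $X$) is exactly what makes the restricted measures on $X \setminus A$ determined, which is why the statement is phrased relative to $X \setminus A$.
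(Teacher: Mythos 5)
Your argument is correct and is essentially the paper's own proof: the paper likewise notes that compact subsets of $X \setminus A$ coincide whether viewed in $X$ or in the subspace, and then concludes $\mu_{X \setminus A} = \nu_{X \setminus A}$ directly from tightness before invoking the characterization of equality in $\pBorel$. Your write-up merely spells out the inner-regularity supremum comparison that the paper leaves implicit.
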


\begin{proof}
A subset $U \subset X \setminus A$ is compact in $X \setminus A$ if and only if it is compact as a subset of $X$.
    Since $\mu$ and $\nu$ agree on compact subsets of $X \setminus A$, it follows from the definition of tightness that 
    $\mu_{X \setminus A} = \nu_{X \setminus A}$.
    Therefore $\mu = \nu$.
\end{proof}

\subsection{Finiteness conditions on Borel measures}
\label{sec:finiteness-conditions}

Let $(X,d,A)$ be a metric pair.
Assume that $A \neq \emptyset$.
For $\eps \geq 0$, let $A^{\eps} = \{x \in X \ | \ d_A(x) \leq \eps\}$ and $A^{\infty} = X$.
Let $0 \leq \eps \leq \delta \leq \infty$. 
Let $A_\eps^\delta = A^\delta \setminus A^\eps$ and let $A_\eps = A_\eps^\infty$.
In particular, $A^0 = A$ and $A_0^\infty = X \setminus A$.
Let $\mu$ be a Borel measure on $X$.
Define a Borel measure $\mu_\eps^\delta$ on $X$ by setting
\begin{equation*}
    \mu_\eps^\delta(E) = \mu( E \cap A_\eps^\delta),
\end{equation*}
for each Borel set $E$.
Also let 
$\mu^{\eps}$ denote $\mu_{0}^{\eps}$ and let
$\mu_{\eps}$ denote $\mu_{\eps}^{\infty}$.
The measure $\mu_\eps^\delta$ is well defined for $[\mu] \in \pBorel$.
Also there is a bijection between $[\mu] \in \pBorel$ and measures of the form $\mu_0^{\infty} \in \pBorelXonly$.
Recall that we will often abuse notation and refer to both $[\mu]$ and $\mu_0^{\infty}$ by $\mu$.
Note that for $a<b<c$, $A_a^b \cup A_b^c = A_a^c$ and $\mu_a^b + \mu_b^c = \mu_a^c$.

Let $\mu \in \pBorel$ and let $0 \leq p < \infty$.
Say the $\mu$ is \emph{upper $p$-finite} if for all $\eps > 0$, $\mu_{\eps}(d_A^p) < \infty$.
Say that $\mu$ is \emph{upper finite} if $\mu$ is upper $0$-finite, i.e. for all $\eps>0$, $\mu_{\eps}(X) < \infty$.
Say that $\mu$ is \emph{upper $\infty$-finite} if $\mu$ is upper finite and if there exists $\delta>0$ such that $\mu_{\delta} = 0$. 
That is, $\mu$ is upper finite and the essential supremum of $d_A$ with respect to $\mu$ is finite.

\begin{lemma} \label{lem:upper-p-finite}
    If $\mu \in \pBorel$ is upper $p$-finite, then as $\eps \to \infty$, $\mu_\eps(d_A^p) \downarrow 0$.
\end{lemma}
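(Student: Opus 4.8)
The plan is to write the quantity explicitly as an integral and reduce the claim to a standard convergence theorem. Since $A \neq \emptyset$, the function $d_A$ is finite everywhere and (being $1$-Lipschitz, hence continuous) Borel measurable, so $d_A^p$ is a nonnegative Borel function. Recall that $\mu_\eps$ is the restriction of $\mu$ to $A_\eps = \{x \in X \mid d_A(x) > \eps\}$, so that
\[
  \mu_\eps(d_A^p) = \int_{A_\eps} d_A^p \, d\mu .
\]
For $\eps \leq \eps'$ we have $A_{\eps'} \subseteq A_\eps$, and since the integrand is nonnegative this yields $\mu_{\eps'}(d_A^p) \leq \mu_\eps(d_A^p)$. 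Thus $\eps \mapsto \mu_\eps(d_A^p)$ is decreasing, which is the monotonicity half of the assertion $\downarrow 0$.

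It remains to show the limit is $0$. First I would fix any $\eps_0 > 0$. By upper $p$-finiteness, $\mu_{\eps_0}(d_A^p) = \int_{A_{\eps_0}} d_A^p \, d\mu < \infty$, so $d_A^p$ is $\mu$-integrable on $A_{\eps_0}$. Because $d_A$ is finite at every point, for each $x$ we have $x \notin A_\eps$ as soon as $\eps > d_A(x)$; hence the nonnegative functions $d_A^p \cdot \mathbf{1}_{A_\eps}$ decrease pointwise to $0$ as $\eps \to \infty$, while for $\eps \geq \eps_0$ they are dominated by the integrable function $d_A^p \cdot \mathbf{1}_{A_{\eps_0}}$. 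Evaluating along $\eps = n \to \infty$ and invoking the dominated convergence theorem (equivalently, monotone convergence for a decreasing sequence with integrable first term), I obtain $\mu_n(d_A^p) \to 0$. Since $\mu_\eps(d_A^p)$ is monotone in $\eps$, its limit as $\eps \to \infty$ equals this sequential limit, namely $0$.

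The argument is essentially routine, and I do not expect a genuine obstacle. The one point requiring care is that upper $p$-finiteness guarantees integrability of $d_A^p$ only on the sets $A_\eps$ with $\eps > 0$, not on all of $X = A_0^\infty$; consequently one cannot take $X$ itself as the domain of the dominating function and must instead fix a threshold $\eps_0 > 0$. As we are only concerned with the behaviour as $\eps \to \infty$, restricting attention to $\eps \geq \eps_0$ is harmless, so this causes no difficulty.
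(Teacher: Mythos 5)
Your proof is correct and takes essentially the same route as the paper: both arguments fix a positive threshold (the paper uses $\eps_0 = 1$) where upper $p$-finiteness gives integrability of $d_A^p$, and then apply a standard convergence theorem — the paper applies monotone convergence to the increasing family $\mu_1^\eps(d_A^p)$ and subtracts, while you apply dominated convergence directly to the decreasing family, which is the same idea. Your closing remark about why one must work on $A_{\eps_0}$ rather than all of $X$ is exactly the point the paper's choice of $\mu_1^\infty$ encodes.
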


\begin{proof}

    Since $\mu$ is upper $p$-finite, $\mu_1^{\infty}(d_A^p) < \infty$. 
    By the monotone convergence theorem, as $\eps \to \infty$, $\mu_1^\eps(d_A^p)$ increases to $\mu_1^\infty(d_A^p)$.
    Since $\mu_1^\infty = \mu_1^\eps + \mu_\eps$, the result follows.
\end{proof}

\begin{lemma} \label{lem:upper-finite-q-p}
    Let $\mu \in \pBorel$ and let $0 \leq p \leq q \leq \infty$.
    If $\mu$ is upper $q$-finite then $\mu$ is upper $p$-finite.
\end{lemma}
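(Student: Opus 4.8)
The plan is to reduce the statement to a pointwise comparison of the integrands $d_A^p$ and $d_A^q$ on the truncation sets $A_\eps$, split according to whether $d_A$ is at most $1$ or larger, and treat the case $q = \infty$ separately since upper $\infty$-finiteness is defined differently. First I would dispose of trivial cases: if $p = \infty$ then $p = q$ and there is nothing to prove, so I may assume $p < \infty$. The key preliminary observation is that upper $q$-finiteness always entails upper finiteness. Indeed, for $0 < q < \infty$, a Chebyshev-type estimate does the job: on $A_\eps = A_\eps^\infty$ we have $d_A > \eps$, so $\mu_\eps(X) = \mu_\eps(1) \leq \eps^{-q}\,\mu_\eps(d_A^q) < \infty$. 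For $q = 0$ the condition $p \le q$ forces $p = 0$ and the claim is immediate, while for $q = \infty$ upper finiteness is part of the definition.

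For the main case $0 \le p \le q < \infty$, I would fix $\eps > 0$ and bound $\mu_\eps(d_A^p) = \int_{A_\eps} d_A^p\,d\mu$. When $\eps \le 1$, the additivity $\mu_\eps^1 + \mu_1^\infty = \mu_\eps^\infty$ lets me split the integral over $A_\eps^1$ and over $A_1 = A_1^\infty$. On $A_\eps^1$ we have $d_A \le 1$, hence $d_A^p \le 1$, so $\mu_\eps^1(d_A^p) \leq \mu_\eps^1(X) \leq \mu_\eps(X) < \infty$ by upper finiteness. On $A_1$ we have $d_A \ge 1$, and since $p \le q$ this gives $d_A^p \le d_A^q$, whence $\mu_1^\infty(d_A^p) \leq \mu_1^\infty(d_A^q) < \infty$ by upper $q$-finiteness applied at the parameter $1$. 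Summing yields $\mu_\eps(d_A^p) < \infty$. When $\eps > 1$ we instead have $A_\eps \subseteq A_1$ with $d_A > 1$ throughout, so directly $\mu_\eps(d_A^p) \leq \mu_\eps(d_A^q) < \infty$.

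The case $q = \infty$ and $p < \infty$ uses the extra structure of upper $\infty$-finiteness: there is a $\delta > 0$ with $\mu_\delta = 0$, so $\mu$ is essentially concentrated on $A^\delta = \{d_A \le \delta\}$. For any $\eps > 0$ this gives $\mu_\eps(d_A^p) = \mu_\eps^\delta(d_A^p) \leq \delta^p\,\mu_\eps^\delta(X) \leq \delta^p\,\mu_\eps(X) < \infty$, again invoking upper finiteness. I do not expect a genuine obstacle here; the only thing to watch is that the comparison between $d_A^p$ and $d_A^q$ reverses depending on whether $d_A$ is below or above $1$, which is precisely why I split the domain at $d_A = 1$ — near $A$ the lower power dominates but is bounded and controlled by finiteness of the measure, while far from $A$ the higher power dominates and is controlled by upper $q$-finiteness.
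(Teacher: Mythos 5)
Your proof is correct and follows essentially the same route as the paper's: split the truncation at $d_A=1$, bound $d_A^p$ by a constant near $A$ and by $d_A^q$ away from $A$, control the measure of the annulus by a Chebyshev estimate, and handle $q=\infty$ via the $\delta$ with $\mu_\delta=0$. The only cosmetic difference is that you extract ``upper $q$-finite implies upper finite'' as a preliminary step, whereas the paper inlines the same Chebyshev inequality into the $\eps<1$ case.
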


\begin{proof}
    We start with the case that $\mu$ is upper $\infty$-finite and $0 \leq p < \infty$.
    Let $\eps > 0$.
    Since $\mu$ is upper $\infty$-finite, there is a $\delta \geq \eps$ such that $\mu_{\delta} = 0$.
    For all $x \in A_\eps^\delta$, $d_A(x) \leq \delta$.
    Then $\mu_{\eps}(d_A^p) = \mu_{\eps}^{\delta}(d_A^p) \leq \delta^p \mu_{\eps}^{\delta}(X) \leq \delta^p \mu_{\eps}(X) < \infty$ since $\mu$ is upper finite.

    Assume that $\mu$ is upper $q$-finite for $q < \infty$ and let $0 \leq p \leq q$.
    Let $\eps \geq 1$. 
    For all $x \in A_1^\infty$, $d_A^p(x) \leq d_A^q(x)$.
    Therefore $\mu_{\eps}(d_A^p) \leq \mu_{\eps}(d_A^q) < \infty$.
%    That is, $\mu$ is upper $p$-finite.
%
    Let $0 < \eps < 1$.
    We have that $\mu_{\eps} = \mu_{\eps}^1 + \mu_1$.
%    and $\mu_{\eps}^1(d_A^p) = \int_{A_\eps^1} d_A^p d\mu$.
    Let $0 \leq r < \infty$.
    For $x \in A_\eps^1$, $\eps < d_A(x) \leq 1$.
    Thus $\eps^r \mu(A_\eps^1) \leq \mu_{\eps}^1(d_A^r) \leq \mu(A_\eps^1)$. 
    Hence $\mu_\eps^1(d_A^r) \leq \mu(A_1^\eps) \leq \frac{1}{\eps^r} \mu_\eps^1(d_A^r)$.
    Therefore
    $\mu_{\eps}^1(d_A^p) \leq \mu(A_\eps^1) \leq \frac{1}{\eps^q} \mu_{\eps}^1(d_A^q) < \infty$.
    Hence $\mu_{\eps}(d_A^p)  = \mu_{\eps}^1(d_A^p) + \mu_1(d_A^p) < \infty$.
 %   Therefore $\mu$ is upper $p$-finite.    
\end{proof}

Let $\mu \in \pBorel$ and let $0 \leq p < \infty$.
Say that $\mu$ is \emph{lower $p$-finite} if for all $\eps > 0$, $\mu^{\eps}(d_A^p) < \infty$.
Say that $\mu$ is \emph{lower finite} if $\mu$ is lower $0$-finite, i.e. for all $\eps > 0$, $\mu^{\eps}(X) < \infty$.
Say that $\mu$ is \emph{lower $\infty$-finite} if for all $0 < \delta < \eps$, $\mu_{\delta}^{\eps}(X) < \infty$.

\begin{lemma} \label{lem:lower-p-finite}
    Let $0 \leq p < \infty$.
    If $\mu \in \pBorel$ is lower $p$-finite, then as $\eps \downarrow 0$, $\mu^\eps(d_A^p) \downarrow 0$.
\end{lemma}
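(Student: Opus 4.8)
The plan is to mirror the structure of the proof of \cref{lem:upper-p-finite}, the ``upper'' analog proved just above, but reversed in the direction of the offsets. The statement asserts that for a lower $p$-finite $\mu$, the quantity $\mu^\eps(d_A^p)$ decreases to $0$ as $\eps \downarrow 0$. First I would observe monotonicity: for $0 < \eps' \leq \eps$ we have $A^{\eps'} \subset A^{\eps}$, so $A_0^{\eps'} \subset A_0^{\eps}$, and since $\mu^{\eps}(d_A^p) = \mu(d_A^p \cdot \mathbf{1}_{A_0^\eps})$ with $d_A^p \geq 0$, the net $\eps \mapsto \mu^\eps(d_A^p)$ is indeed decreasing as $\eps \downarrow 0$. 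The content of the lemma is therefore that the infimum of these values is exactly $0$.

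To get the limit, I would fix some reference scale, say $\eps_0 = 1$, and use lower $p$-finiteness to guarantee $\mu^1(d_A^p) < \infty$. The key decomposition, analogous to $\mu_1^\infty = \mu_1^\eps + \mu_\eps$ in the upper case, is $\mu^1 = \mu^\eps + \mu_\eps^1$ for $0 < \eps < 1$, which follows from the additivity relation $A_0^\eps \cupdot A_\eps^1 = A_0^1$ and the corresponding $\mu_0^\eps + \mu_\eps^1 = \mu_0^1$ noted in \cref{sec:finiteness-conditions}. Applying this to the (finite, nonnegative) function $d_A^p$ gives $\mu^1(d_A^p) = \mu^\eps(d_A^p) + \mu_\eps^1(d_A^p)$. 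Now as $\eps \downarrow 0$, the sets $A_\eps^1$ increase up to $A_0^1 = A^1 \setminus A$ (the union over $\eps \downarrow 0$ recovers everything with $0 < d_A \leq 1$, since $d_A = 0$ exactly on $A$), so by the monotone convergence theorem $\mu_\eps^1(d_A^p) \uparrow \mu_0^1(d_A^p) = \mu^1(d_A^p)$, this last quantity being finite by hypothesis. Subtracting, $\mu^\eps(d_A^p) = \mu^1(d_A^p) - \mu_\eps^1(d_A^p) \downarrow 0$.

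The one point requiring care — and the likely main obstacle — is the passage to the limit via monotone convergence, because it is phrased over the continuous parameter $\eps \downarrow 0$ rather than a sequence. I would handle this by noting that the map $\eps \mapsto \mu_\eps^1(d_A^p)$ is monotone, so its limit as $\eps \downarrow 0$ equals its limit along any sequence $\eps_n \downarrow 0$; choosing such a sequence, the sets $A_{\eps_n}^1$ form an increasing sequence whose union is $A_0^1$, and the integrand $d_A^p \mathbf{1}_{A_{\eps_n}^1}$ increases pointwise to $d_A^p \mathbf{1}_{A_0^1}$, so the ordinary monotone convergence theorem applies directly. The finiteness of $\mu^1(d_A^p)$ is exactly what licenses the subtraction and forces the decreasing limit to be $0$ rather than merely finite. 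The whole argument is short and essentially a transcription of the upper case, so I would keep the write-up to a few lines.
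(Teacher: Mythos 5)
Your proposal is correct and follows essentially the same route as the paper: the paper's proof likewise uses the decomposition $\mu^1(d_A^p) = \mu^\eps(d_A^p) + \mu_\eps^1(d_A^p)$ for $0 < \eps < 1$, invokes monotone convergence to get $\mu_\eps^1(d_A^p) \uparrow \mu^1(d_A^p) < \infty$, and subtracts. Your extra remarks on monotonicity in $\eps$ and on reducing the continuous limit to a sequence are fine elaborations of details the paper leaves implicit.
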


\begin{proof}
    For $0 < \eps < 1$, $\mu^1(d_A^p) = \mu^\eps(d_A^p) + \mu_\eps^1(d_A^p)$.
    As $\eps \downarrow 0$, $\mu_\eps^1(d_A^p) \uparrow \mu^1(d_A^p) < \infty$.
    Therefore, as $\eps \downarrow 0$, $\mu^\eps(d_A^p) \downarrow 0$.
\end{proof}

\begin{lemma}
    Let $\mu \in \pBorel$ and $0 \leq p \leq q \leq \infty$. 
    If $\mu$ is lower $p$-finite then $\mu$ is lower $q$-finite.
\end{lemma}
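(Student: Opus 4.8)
The plan is to exploit the fact that, in contrast to \cref{lem:upper-finite-q-p}, lower finiteness is entirely a statement about the behavior of $\mu$ near $A$. On the region $A_0^{\eps} = A^{\eps} \setminus A$ where $\mu^{\eps}$ is supported, the function $d_A$ is bounded above by $\eps$, so the monotonicity of $t \mapsto t^s$ lets us dominate higher powers of $d_A$ by lower powers --- the reverse of the phenomenon that occurs far from $A$. I would split the argument into the two cases $q < \infty$ and $q = \infty$, matching the two separate definitions of lower finiteness.

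For the case $p \leq q < \infty$, I would fix $\eps > 0$ and work on $A_0^{\eps}$, where $d_A \leq \eps$. Since $q - p \geq 0$, on this set $d_A^{q} = d_A^{q-p} d_A^{p} \leq \eps^{q-p} d_A^{p}$, and integrating against $\mu^{\eps}$ gives $\mu^{\eps}(d_A^{q}) \leq \eps^{q-p}\, \mu^{\eps}(d_A^{p})$. The right-hand side is finite because $\mu$ is lower $p$-finite, so $\mu^{\eps}(d_A^q) < \infty$ for every $\eps > 0$; that is, $\mu$ is lower $q$-finite.

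For the case $q = \infty$, I must instead verify $\mu_{\delta}^{\eps}(X) < \infty$ for all $0 < \delta < \eps$. Here I would use that $A^{\delta} \supseteq A$, so that $A_{\delta}^{\eps} \subseteq A_0^{\eps}$ and hence $\mu_{\delta}^{\eps} \leq \mu^{\eps}$ as measures, together with the fact that on $A_{\delta}^{\eps}$ we have $d_A > \delta$, whence $d_A^{p} \geq \delta^{p}$. This yields $\mu_{\delta}^{\eps}(X) \leq \delta^{-p}\, \mu_{\delta}^{\eps}(d_A^{p}) \leq \delta^{-p}\, \mu^{\eps}(d_A^{p}) < \infty$, again by lower $p$-finiteness.

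I do not anticipate a genuine obstacle: both estimates are one-line consequences of the boundedness of $d_A$ near $A$, which is precisely why the lower implication runs in the opposite direction to the upper one in \cref{lem:upper-finite-q-p} and is strictly easier. The only points requiring minor care are the boundary value $p = 0$ (where $d_A^{0} = 1$, so lower $0$-finiteness is just $\mu^{\eps}(X) < \infty$ and the same inequalities apply verbatim) and recording the inclusion $A_{\delta}^{\eps} \subseteq A_0^{\eps}$ used in the infinite case.
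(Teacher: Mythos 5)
Your proof is correct and follows essentially the same approach as the paper: dominate $d_A^q$ pointwise by a constant multiple of $d_A^p$ on $A^\eps$ for the finite-$q$ case, and use $d_A > \delta$ on $A_\delta^\eps$ for the $q=\infty$ case. Your single inequality $d_A^q \leq \eps^{q-p} d_A^p$ on $A^\eps$ is in fact slightly cleaner than the paper's argument, which instead splits into the subcases $\eps \leq 1$ and $\eps > 1$.
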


\begin{proof}
    Assume that $\mu$ is lower $p$-finite and $p < \infty$.
    First we show that $\mu$ is lower $\infty$-finite.
    Suppose $0 < \delta < \eps$.
    Let $x \in A_\delta^\eps$.
    Then $d_A(x) > \delta$, which implies that $1 \leq \frac{1}{\delta^p} d_A^p(x)$.
    Therefore $\mu_{\delta}^{\eps}(X) \leq \frac{1}{\delta^p} \mu_\delta^\eps(d_A^p) \leq \frac{1}{\delta^p} \mu^\eps(d_A^p) < \infty$.

    Now assume that $q<\infty$.
    Suppose $0 < \eps \leq 1$.
    For $x \in A^{\eps}$, $d_A^q(x) \leq d_A^p(x)$.
    Thus $\mu^{\eps}(d_A^q) \leq \mu^{\eps}(d_A^p) < \infty$.
    Suppose $\eps > 1$.
    Then $\mu^{\eps} = \mu^1 + \mu_1^{\eps}$.
    For $x \in A_1^\eps$, $1 < d_A(x) \leq \eps$.
    Therefore $\mu(A_1^\eps) < \mu_1^{\eps}(d_A^p) \leq \eps^{p} \mu(A_1^\eps)$.
    Hence $\mu_1^\eps(d_A^q) \leq \eps^q \mu(A_1^\eps) < \eps^q \mu_1^\eps(d_A^p) \leq \eps^q \mu^\eps(d_A^p) < \infty$.
\end{proof}

The strongest combination of these conditions is that $\mu$ is lower $0$-finite and upper $\infty$-finite, which is equivalent to saying that $\mu$ is finite and has bounded support.
The weakest combination of these conditions is that $\mu$ is lower $\infty$-finite and upper $0$-finite, which is equivalent to saying that $\mu$ is upper finite.
Also note that for any $0 \leq p \leq \infty$ either upper $p$-finite or lower $p$-finite imply lower $\infty$-finite.

Let $\mu \in \pBorel$ and $0 \leq p \leq \infty$.
Say that $\mu$ is \emph{$p$-finite} if $\mu$ is lower $p$-finite and upper $p$-finite.
Since $\mu = \mu^\eps + \mu_\eps$,
for $p < \infty$, $\mu$ is $p$-finite if and only if $\mu(d_A^p) < \infty$.
That is, the indefinite integral measure~\cite[234J]{fremlin2} $d_A^p \mu$ is finite. 
In other words, $\mu$ has finite $p$-th central moment about $A$.
In particular, $\mu$ is $0$-finite if and only if $\mu$ is finite.
Furthermore, $\mu$ is $\infty$-finite if and only if for all $\eps > 0$, $\mu_\eps(X) < \infty$ and there exists $\delta>0$ such that $\mu_\delta = 0$.

Let $\mu \in \pBorel$ and let $0 \leq p \leq \infty$.
Say that $\mu$ \emph{locally $p$-finite} at $x \in X$ if there exists a neighborhood $U$ of $x$ such that $\mu_U$ is $p$-finite.
Say that $\mu$ is locally $p$-finite if it is locally $p$-finite at all $x \in X$. 
In particular, $\mu$ is locally $0$-finite if and only if $\mu$ is locally finite.

\begin{lemma} \label{lem:locally-finite}
    Let $\mu \in \pBorel$, $x \in X \setminus A$, and $0 \leq p \leq \infty$.
    Then $\mu$ is locally $p$-finite at $x$ if and only if $\mu$ is locally finite at $x$.
\end{lemma}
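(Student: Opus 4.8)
The plan is to exploit that $x$ lies at a positive distance from $A$, so that on a small neighborhood of $x$ the weight $d_A^p$ is pinched between two positive constants and hence cannot affect whether a restricted measure is finite. Set $r = d_A(x)$. Since $A$ is closed, nonempty, and $x \notin A$, we have $r \in (0,\infty)$. As $d_A$ is $1$-Lipschitz, on the open ball $W = \{y \in X \ | \ d(x,y) < r/2\}$ every $y$ satisfies $r/2 < d_A(y) < 3r/2$; in particular $W$ is a bounded neighborhood of $x$ disjoint from $A$.

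First I would record the elementary monotonicity that $p$-finiteness passes to restrictions: if $V \subseteq U$ and $\mu_U$ is $p$-finite, then so is $\mu_V$. For $p < \infty$ this is immediate from $\mu_V(d_A^p) \leq \mu_U(d_A^p)$, and for $p = \infty$ it follows by comparing $(\mu_V)_\eps(X) \leq (\mu_U)_\eps(X)$ for all $\eps$ and observing that $(\mu_U)_\delta = 0$ forces $(\mu_V)_\delta = 0$. With this in hand, the clause ``there exists a neighborhood $U$ with $\mu_U$ $p$-finite'' may be replaced by ``there exists a neighborhood $V \subseteq W$ with $\mu_V$ $p$-finite'', since any witness $U$ can be intersected with $W$.

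The core of the argument is the claim that for every neighborhood $V \subseteq W$ of $x$, the measure $\mu_V$ is $p$-finite if and only if $\mu(V) < \infty$ (equivalently, $\mu_V$ is finite, i.e.\ $0$-finite). For $0 \leq p < \infty$ this is immediate: on $V$ we have $(r/2)^p \leq d_A^p \leq (3r/2)^p$, so integrating gives $(r/2)^p \mu(V) \leq \mu_V(d_A^p) \leq (3r/2)^p \mu(V)$, and since $(r/2)^p > 0$ and $\mu_V$ is $p$-finite iff $\mu_V(d_A^p) < \infty$, finiteness of $\mu_V(d_A^p)$ is equivalent to finiteness of $\mu(V)$. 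Granting the claim, local $p$-finiteness at $x$ reduces to ``there is a neighborhood $V \subseteq W$ with $\mu(V) < \infty$'', which is precisely local finiteness at $x$ (again reducing any witness for local finiteness to one inside $W$), completing the proof.

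The one step requiring care, and the main obstacle, is the case $p = \infty$ of the claim, since $\infty$-finiteness is governed by two separate conditions rather than by the single integral $\mu(d_A^p)$. Here I would argue directly on $V \subseteq W$. The containment $V \subseteq \{d_A < 3r/2\}$ gives $(\mu_V)_{3r/2} = 0$, so the second defining condition of $\infty$-finiteness holds automatically. For the first condition, $V \subseteq W \subseteq \{d_A > r/2\}$ yields $(\mu_V)_{r/2}(X) = \mu(V)$, while $(\mu_V)_\eps(X) \leq \mu(V)$ for every $\eps > 0$; hence $(\mu_V)_\eps(X) < \infty$ for all $\eps$ holds exactly when $\mu(V) < \infty$. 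Thus $\mu_V$ is $\infty$-finite iff $\mu(V) < \infty$, matching the finite-$p$ case and closing the argument.
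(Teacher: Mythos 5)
Your proof is correct and follows essentially the same approach as the paper's: since $x \notin A$, the function $d_A$ is pinched between positive constants on a small neighborhood of $x$, which makes $\mu_V(d_A^p)$ comparable to $\mu(V)$ for $p<\infty$ and trivializes both defining conditions of $\infty$-finiteness. The only cosmetic difference is that you package the lower and upper bounds on $d_A$ into a single ball of radius $d_A(x)/2$, where the paper invokes them separately in the two directions.
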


\begin{proof}
    First consider that case that $p=\infty$.
    Suppose that $\mu$ is locally $\infty$-finite at $x$.
    Since $x \in X \setminus A$, there is a neighborhood $U$ of $x$ such that 
    $\mu_U$ is $\infty$-finite and
    for all $y \in U$, $d_A(y) > \eps$ for some $\eps > 0$. 
    Then $\mu(U) = \mu_U(X) = (\mu_U)_\eps(X) < \infty$.
    Suppose $\mu$ is locally finite at $x$.
    Then $x$ has a neighborhood $U$ such that $\mu(U) < \infty$
    and $U \in A^\delta$ for some $\delta > 0$.
    Let $\eps > 0$.
    Then $(\mu_U)_\eps(X) \leq \mu_U(X) = \mu(U) < \infty$.
    Also, $(\mu_U)_\delta(X) = \mu_\delta(U) = 0$.
    
    Now assume that $p< \infty$.
    Suppose $\mu$ is locally $p$-finite at $x$.
    Then there is a neighborhood $V$ of $x$ such that $d_A^p\mu(V) < \infty$ and 
    for all $y \in V$, $d_A(y) \geq \eps$ for some $\eps > 0$.
    Then $\eps^p \mu(V) \leq d_A^p\mu(V) < \infty$.

    Suppose $\mu$ is locally finite at $x$.
    Then $x$ has a neighborhood $V$ such that $\mu(V) < \infty$ and for all $y \in V$, $d_A(y) \leq M$ for some $M \geq 0$. 
    Then $d_A^p \mu(V) \leq M^p \mu(V) < \infty$.
\end{proof}

\begin{lemma}
    Let $\mu \in \pBorel$, $x \in A$, and $0 \leq p \leq \infty$. 
    Then $\mu$ is locally $p$-finite at $x$ if and only if $x$ has a neighborhood $U$ such that $\mu_U$ is lower $p$-finite.
\end{lemma}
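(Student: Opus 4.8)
The plan is to prove the two implications separately, the forward direction being immediate from the definitions and the reverse direction resting on a geometric observation that exploits the hypothesis $x \in A$. For the forward direction, if $\mu$ is locally $p$-finite at $x$ then by definition there is a neighborhood $U$ of $x$ with $\mu_U$ being $p$-finite, hence in particular lower $p$-finite; this $U$ is the required neighborhood. So the content is entirely in the converse.

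For the reverse direction, suppose $x$ has a neighborhood $U$ with $\mu_U$ lower $p$-finite. The key point is that since $A$ is closed and $x \in A$ we have $d_A(x) = 0$, so for any $r > 0$ and any $y$ in the open ball $B(x,r)$ we get $d_A(y) \leq d(y,x) < r$. Setting $V = U \cap B(x,r)$, which is again a neighborhood of $x$, the restriction $\mu_V$ then assigns no mass outside $\{d_A < r\}$; in particular $V \cap A_r = \emptyset$, so $(\mu_V)_r = 0$. Since $V \subseteq U$ gives $\mu_V \leq \mu_U$ as set functions, lower $p$-finiteness passes from $\mu_U$ to $\mu_V$ by monotonicity of the relevant integrals. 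It then remains to check that $\mu_V$ is also upper $p$-finite, which together with lower $p$-finiteness shows $\mu_V$ is $p$-finite and hence that $\mu$ is locally $p$-finite at $x$.

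Verifying upper $p$-finiteness of $\mu_V$ splits according to whether $p$ is finite. For $p < \infty$ I would invoke the earlier characterization that $\mu_V$ is $p$-finite if and only if $\mu_V(d_A^p) < \infty$; since $d_A^p$ vanishes on $A$ and $(\mu_V)_r = 0$, one has $\mu_V(d_A^p) = (\mu_V)^r(d_A^p)$, which is finite directly from lower $p$-finiteness of $\mu_V$. For $p = \infty$ I would instead verify the two defining conditions of $\infty$-finiteness: the witness $\delta = r$ is supplied by $(\mu_V)_r = 0$, and for $0 < \eps < r$ the bound $(\mu_V)_\eps(X) = (\mu_V)_\eps^r(X) \leq (\mu_U)_\eps^r(X) < \infty$ follows from the lower $\infty$-finiteness of $\mu_U$, while $(\mu_V)_\eps = 0$ for $\eps \geq r$.

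The main obstacle is really just the bookkeeping in the $p = \infty$ case, since there $p$-finiteness is not encoded by a single finite moment $\mu(d_A^p)$ but by a separate pair of conditions, so the two cases must be handled in parallel rather than uniformly. The essential idea throughout, however, is the elementary fact that a neighborhood of a point of $A$ can be shrunk to one on which $d_A$ is bounded; this is precisely what converts mere lower $p$-finiteness into the upper $p$-finiteness needed for full local $p$-finiteness, and it is exactly the feature that distinguishes the case $x \in A$ treated here from the case $x \in X \setminus A$ of \cref{lem:locally-finite}.
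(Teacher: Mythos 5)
Your proof is correct and follows essentially the same route as the paper: shrink the given neighborhood to one on which $d_A$ is bounded (the paper takes $V$ with $d_A < \delta$ on $V$, you take $V = U \cap B(x,r)$), observe $(\mu_V)_\delta = 0$, and deduce upper $p$-finiteness from lower $p$-finiteness, splitting into the cases $p < \infty$ and $p = \infty$ exactly as the paper does. The only cosmetic difference is that for $p<\infty$ you invoke the single-moment characterization $\mu_V(d_A^p)<\infty$ rather than verifying upper $p$-finiteness separately, which is equivalent.
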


\begin{proof}
    The forward direction follows from the definitions. It remains to show the reverse direction.
    First consider the case that $p= \infty$. 
    Suppose that there is a neighborhood $U$ of $x$ such that $\mu_U$ is lower $\infty$-finite.
    Then there is a neighborhood $V$ of $x$ such that $\mu_V$ is lower $\infty$-finite and for all $y \in V$, $d_A(y) < \delta$ for some $\delta > 0$.
    We claim that $\mu_V$ is upper $\infty$-finite.
    Indeed, for all $\eps \geq \delta$, $(\mu_V)_\eps(X) = 0$ and for $0 < \eps < \delta$, $(\mu_V)_\eps(X) = (\mu_V)_\eps^\delta(X) < \infty$.

    Now assume that $p< \infty$ and that $x$ has a neighborhood $U$ such that $\mu_U$ is lower $p$-finite.
    Then there is a neighborhood $V$ of $x$ such that $\mu_V$ is lower $p$-finite and for all $y \in V$, $d_A(y)< \delta$ for some $\delta>0$.
    We claim that $\mu_V$ is upper $p$-finite.
    Indeed, for $\eps \geq \delta$, $(\mu_V)_\eps(X) = 0$ and for $0 < \eps < \delta$, $(\mu_V)_\eps(d_A^p) = (\mu_V)_\eps^\delta(d_A^p) \leq (\mu_V)^\delta(X) < \infty$.
\end{proof}

\subsection{Radon measures on metric pairs}
\label{sec:weak-radon}

Let $0 \leq p \leq \infty$.

\begin{definition} \label{def:weak-radon}
Let $\mu \in \pBorel$. % and $0 \leq p \leq \infty$.
Say that $\mu$ is a \emph{$p$-finite Radon measure on $(X,A)$} if it is tight and $p$-finite.
Say that $\mu$ is a \emph{locally $p$-finite Radon measure on $(X,A)$} if it is tight and locally $p$-finite.
Let $\pfpRadon \subset \pBorel$ denote the subset of $p$-finite Radon measures.
Let $\lpfpRadon \subset \pBorel$ denote the subset of locally $p$-finite Radon measures.
\end{definition}

For example, for $x \notin A$, the Dirac measure $\delta_x$ is a $p$-finite Radon measure.
Since $p$-finite implies locally $p$-finite, $\pfpRadon \subset \lpfpRadon$.
Since $\mu \in \lpfpRadon$ is tight it is $\tau$-additive.
For the case that $p=0$,
$\mu$ is a locally $0$-finite Radon measure on $(X,A)$ if and only if $\mu_{X \setminus A}$ is a Radon measure on $X$
and
$\mu$ is a $0$-finite Radon measure on $(X,A)$ if and only if $\mu_{X \setminus A}$ is a finite Radon measure on $X$.

Note that for $p \in (0,\infty]$ and $[\mu] \in \lpfpRadon$, there may be $x \in A$ such that every open neighborhood $U$ of $x$, $\mu(U \cap (X \setminus A)) = \infty$. So $[\mu]$ may not have a representative $\mu \in \pBorelXonly$ such that $\mu$ is Radon.

\begin{lemma} \label{lem:mu-eps-finite}
    Let $0 \leq p \leq \infty$.
    Let $\mu \in \pfpRadon$ and let $\eps > 0$.
    Then $\mu_\eps \in \fpRadon$.
    %\mathcal{M}_0^+(X,A)$.
\end{lemma}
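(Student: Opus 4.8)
The plan is to verify directly that $\mu_\eps$ satisfies the two defining properties of a $0$-finite Radon measure on $(X,A)$ in the sense of \cref{def:weak-radon}: tightness and $0$-finiteness, the latter being equivalent to finiteness. Recall that $\mu_\eps = \mu_\eps^\infty$ is the restriction of $\mu$ to the set $A_\eps = \{x \in X \mid d_A(x) > \eps\}$, which is open (hence Borel) since $d_A$ is continuous, and that this restriction is well defined on $\pBorel$.

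First I would establish tightness. I would use the general measure-theoretic fact that the restriction of a tight measure to a Borel set is again tight: given a Borel set $B$, tightness of $\mu$ produces compact sets $K \subset B \cap A_\eps$ with $\mu(K)$ approximating $\mu(B \cap A_\eps) = \mu_\eps(B)$; each such $K$ is a compact subset of $B$ satisfying $\mu_\eps(K) = \mu(K \cap A_\eps) = \mu(K)$, so $\mu_\eps$ is inner regular with respect to compact sets. Since $\mu$ is tight by hypothesis, $\mu_\eps$ is tight.

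Next I would prove finiteness, i.e.\ $\mu_\eps(X) = \mu(A_\eps) < \infty$, splitting on the value of $p$. For $0 < p < \infty$, on $A_\eps$ we have $d_A > \eps$, so $1 \leq \eps^{-p} d_A^p$ pointwise; integrating yields $\mu(A_\eps) \leq \eps^{-p}\,\mu(d_A^p) < \infty$, where $\mu(d_A^p) < \infty$ is precisely the $p$-finiteness of $\mu$ (as recorded in the discussion preceding \cref{def:weak-radon}). For $p = 0$, $p$-finiteness of $\mu$ means $\mu$ is finite, so $\mu_\eps(X) \leq \mu(X) < \infty$. For $p = \infty$, the definition of $\infty$-finiteness directly gives $\mu_\eps(X) < \infty$ for every $\eps > 0$. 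In all cases $\mu_\eps$ is finite, so together with tightness it is a $0$-finite Radon measure; that is, $\mu_\eps \in \fpRadon$.

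There is no substantive obstacle here: the lemma follows directly from the definitions. The only points requiring minor care are the preservation of tightness under restriction to a Borel set (a standard fact worth stating explicitly), and the need to treat $p = \infty$ separately, since there finiteness of $\mu_\eps$ comes from the defining condition of $\infty$-finiteness rather than from a moment bound.
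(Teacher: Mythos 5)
Your proof is correct and follows essentially the same route as the paper: finiteness of $\mu_\eps$ via the pointwise bound $1 \leq \eps^{-p} d_A^p$ on $A_\eps$ (which is exactly the content of the paper's appeal to \cref{lem:upper-finite-q-p}, reducing upper $p$-finiteness to upper $0$-finiteness), and tightness by restriction. The only difference is that you inline the moment estimate and the case analysis on $p$ rather than citing the earlier lemma, which the paper does in one line.
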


\begin{proof}
    Since $\mu$ is upper $p$-finite, by \cref{lem:upper-finite-q-p}, $\mu$ is upper $0$-finite.
    Thus $\mu_\eps$ is finite.
    By the definitions, if $\mu$ is tight then so is $\mu_\eps$.
\end{proof}

\begin{remark}
    Let $0 \leq p < \infty$.
    Recall that there is a bijection between $\pBorel$ and Borel measures on $X \setminus A$
    given by $[\mu] \mapsto \mu_{X \setminus A}$.
    Since $d_A^p$ is continuous and positive on $X \setminus A$, 
    we have a bijection of Borel measures on $X \setminus A$
    given by $\mu \mapsto d_A^p \mu$ and $\nu \mapsto \frac{1}{d_A^p} \nu$.
    This bijection preserves tightness~\cite[412Q]{fremlin4}.
    By definition, $[\mu] \in \pfRadon$ is $p$-finite if and only if
    $d_A^p \mu$ is finite.
    So we have a bijection between $\pfpRadon$ and finite Radon measures on $X \setminus A$. 

    On the other hand, for $\mu \in \lpfpRadon$, $d_A^p \mu$
    %_{X \setminus A}$ 
    is a Radon measure on $X \setminus A$. 
    However, given a Radon measure $\nu$ on $X \setminus A$,
    $[\frac{1}{d_A^p} \nu] \in \pBorel$ is tight and locally $p$-finite for all $x \in X \setminus A$, but need not be locally $p$-finite at $x \in A$. 

    Furthermore, given $\mu \in \lpfpRadon$, $d_A^p \mu$ is a Radon measure on $X$. If $\mu \in \pfpRadon$ then $d_A^p \mu$ is a finite Radon measure on $X$.
\end{remark}

\begin{lemma} \label{lem:finite-on-compact-sets}
    Let $\mu \in \lpfpRadon$ and let $K \subset X \setminus A$ be a compact set.
    Then $\mu(K) < \infty$.
\end{lemma}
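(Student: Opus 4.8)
The plan is to reduce the weighted local finiteness condition to ordinary local finiteness via \cref{lem:locally-finite} and then run a routine compactness argument. First, I note that since $K \subset X \setminus A$, the quantity $\mu(K)$ is well defined for the class $\mu \in \pBorel$ (it equals $\mu_{X \setminus A}(K)$), so the statement is meaningful.

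Next I would argue pointwise. Fix $x \in K$. Since $\mu \in \lpfpRadon$ it is locally $p$-finite, hence locally $p$-finite at $x$; and because $x \in X \setminus A$, \cref{lem:locally-finite} converts this into ordinary local finiteness of $\mu$ at $x$. This yields an open neighborhood $U_x$ of $x$ with $\mu(U_x) < \infty$, and since $X \setminus A$ is open and contains $x$ I may shrink $U_x$ so that $U_x \subset X \setminus A$, which makes $\mu(U_x)$ unambiguous for $[\mu]$. The collection $\{U_x\}_{x \in K}$ is then an open cover of the compact set $K$, so I can extract a finite subcover $U_{x_1}, \dots, U_{x_n}$. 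By monotonicity and finite subadditivity of the measure,
\[
  \mu(K) \leq \mu\Bigl(\bigcup_{i=1}^n U_{x_i}\Bigr) \leq \sum_{i=1}^n \mu(U_{x_i}) < \infty.
\]

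I do not expect any genuine obstacle here: the whole argument rests on \cref{lem:locally-finite}, which is precisely what trades the $d_A^p$-weighted finiteness defining local $p$-finiteness for plain local finiteness away from $A$ (intuitively, on a compact $K \subset X \setminus A$ the continuous function $d_A$ is bounded above and below by positive constants, so the weight is harmless). The only points that merit a word of care are that the neighborhoods furnished by local finiteness can be taken open, so that they assemble into an open cover amenable to the Heine--Borel property of $K$, and that they can be taken inside $X \setminus A$, so that evaluating $\mu$ on them is well defined for the equivalence class; both are immediate from the openness of $X \setminus A$.
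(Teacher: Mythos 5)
Your proof is correct and follows the paper's argument exactly: both invoke \cref{lem:locally-finite} to convert local $p$-finiteness at each $x \in K \subset X \setminus A$ into ordinary local finiteness, then extract a finite subcover by compactness and conclude by subadditivity. Your extra remarks about taking the neighborhoods open and inside $X \setminus A$ are sensible refinements of detail, not a different route.
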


\begin{proof}
    By \cref{lem:locally-finite}, each $x \in K$ has an open neighborhood $U_x$ such that $\mu(U_x) < \infty$.
    Since $K$ is compact, it has an open cover $U_{x_1},\ldots,U_{x_n}$.
    Therefore $\mu(K) < \infty$.
\end{proof}

Observe that $\lpfpRadon$ is a commutative monoid under addition, with neutral element the zero measure.
Also, there is an obvious $\R^+$ action, $a\cdot \mu = a\mu$,
for which $\lpfpRadon$ is a convex cone.
Furthermore, $\lpfpRadon$ has a elementwise partial order given by $\mu \leq \nu$ if $\mu(E) \leq \nu(E)$ for all Borel sets $E \subset X \setminus A$.
This order is compatible with addition and the $\R^+$ action, making $\lpfpRadon$ an ordered convex cone.

\begin{proposition} \label{prop:pRadon-convex-cone}
  $\lpfpRadon$ is a zero-sum-free Riesz cone.
\end{proposition}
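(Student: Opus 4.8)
The plan is to verify the four ingredients in the definition of a zero-sum-free Riesz cone: that $\lpfpRadon$ is a convex cone, that it is zero-sum-free, that it is cancellative, and that it is a lattice cone for the natural partial order. The convex cone structure, together with the compatibility of the elementwise partial order with addition and the $\R^+$-action, was already recorded in the paragraph preceding the statement, so I would begin from there and dispose of the algebraic conditions first. Zero-sum-freeness is immediate: if $\mu+\nu=0$ then $\mu(E)+\nu(E)=0$ for every Borel $E\subseteq X\setminus A$, forcing $\mu=\nu=0$. For cancellativity, suppose $\mu+\sigma=\nu+\sigma$; by \cref{lem:finite-on-compact-sets} all three measures are finite on compact subsets of $X\setminus A$, so for each compact $K\subseteq X\setminus A$ I may cancel the finite quantity $\sigma(K)$ to get $\mu(K)=\nu(K)$, and then \cref{prop:uniqueness} yields $\mu=\nu$. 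Being cancellative and zero-sum-free, the natural preorder on $\lpfpRadon$ is a genuine partial order.

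Since the Riesz-cone axioms refer to the natural order, I would next check that it agrees with the elementwise order. One direction is clear. For the converse, given $\mu\le\nu$ elementwise, I would produce $\rho\in\lpfpRadon$ with $\mu+\rho=\nu$. As $\mu$ and $\nu$ are finite on compact subsets of $X\setminus A$ by \cref{lem:finite-on-compact-sets} and $\mu\le\nu$, the set function $\rho(E)=\sup\{\nu(K)-\mu(K)\mid K\subseteq E \text{ compact}\}$ is monotone and well defined on $X\setminus A$, and I would verify that it is a tight, locally $p$-finite Borel measure with $\mu+\rho=\nu$ (tightness comes from inner regularity of $\mu$ and $\nu$, and local $p$-finiteness from $\rho\le\nu$). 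Hence the two orders coincide and I may construct suprema in the elementwise order.

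The heart of the argument is constructing the lattice operations and showing they remain in $\lpfpRadon$. For $\mu,\nu\in\lpfpRadon$ I would use the standard lattice structure on the Borel measures of $X\setminus A$, namely
\[ (\mu\vee\nu)(E)=\sup_{F}\bigl[\mu(E\cap F)+\nu(E\setminus F)\bigr],\qquad (\mu\wedge\nu)(E)=\inf_{F}\bigl[\mu(E\cap F)+\nu(E\setminus F)\bigr], \]
the supremum and infimum being taken over Borel sets $F$. These are Borel measures, they are respectively the supremum and infimum in the elementwise order, and they satisfy $\mu\vee\nu+\mu\wedge\nu=\mu+\nu$. Since $\mu\vee\nu,\mu\wedge\nu\le\mu+\nu$, both are locally $p$-finite, so the only genuine issue is tightness, which I expect to be the main obstacle. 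I would establish it using \cref{lem:finite-on-compact-sets} — which makes $\mu$, $\nu$, and hence $\mu\wedge\nu\le\mu$ semifinite on $X\setminus A$ — together with an inner-regularity argument: for a Borel set $E$ with $(\mu\wedge\nu)(E)<\infty$ and $\eps>0$, pick a near-optimal partition $F$ so that $\mu(E\cap F)+\nu(E\setminus F)$ is within $\eps$ of the infimum, approximate $E\cap F$ and $E\setminus F$ from inside by compacts via tightness of $\mu$ and of $\nu$, and take their union; the case $(\mu\wedge\nu)(E)=\infty$ is covered by semifiniteness. Tightness of $\mu\vee\nu$ then follows by the same scheme or from the identity $\mu\vee\nu=(\mu+\nu)-\mu\wedge\nu$. (Alternatively, one may write $\mu\vee\nu$ and $\mu\wedge\nu$ as indefinite-integral measures with density in $[0,1]$ with respect to the tight measure $\mu+\nu$ and invoke \cite[412Q]{fremlin4}.)

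Finally, the lattice-cone distributive law $(\mu\vee\nu)+\sigma=(\mu+\sigma)\vee(\nu+\sigma)$ falls directly out of the partition formula: since $\sigma(E\cap F)+\sigma(E\setminus F)=\sigma(E)$ for every $F$, the $\sigma$ term pulls outside the supremum, giving $(\mu+\sigma)\vee(\nu+\sigma)(E)=(\mu\vee\nu)(E)+\sigma(E)$, and this identity holds even when the values are infinite. Together with the existence of suprema, this shows $\lpfpRadon$ is a lattice cone for the natural partial order, completing the verification that it is a zero-sum-free Riesz cone. I expect the tightness of the lattice operations to be the only step requiring real work; everything else is bookkeeping built on \cref{lem:finite-on-compact-sets} and \cref{prop:uniqueness}.
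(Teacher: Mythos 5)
Your proposal is correct and follows essentially the same route as the paper's proof: zero-sum-freeness and cancellativity via \cref{lem:finite-on-compact-sets} and \cref{prop:uniqueness}, identification of the natural order with the elementwise order by exhibiting an explicit difference measure, and construction of the join by the partition supremum formula together with the additivity identity that gives the lattice-cone law. The only cosmetic differences are that you also construct the meet explicitly (the paper derives meets abstractly for any Riesz cone in \cref{prop:riesz-cone-meet}) and you spell out the tightness verification in more detail, where the paper simply invokes $\mu\vee\nu\leq\mu+\nu$.
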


\begin{proof}
    First, we show that the commutative monoid $\lpfpRadon$ is zero-sum-free and cancellative.
    If $\mu,\nu \in \lpfpRadon$ and $\mu + \nu =0$ then $\mu=\nu=0$.
    Let $\lambda, \mu, \nu \in \lpfpRadon$ such that $\mu + \lambda = \nu + \lambda$.
    Let $K \subset X \setminus A$ be a compact set.
    By assumption $\mu(K) + \lambda(K) = \nu(K) + \lambda(K)$.
    By \cref{lem:finite-on-compact-sets}, $\mu(K) = \nu(K)$.
    Therefore, by \cref{prop:uniqueness}, $\mu = \nu$.
  % Let $K \subset X \setminus A$ and $r \in (0,d(K,x_0)$.
  % Then $\xi_K^r \in \LipcX$.
  % By \cref{prop:measures_spaces_equiv_def_1},
  % $\lambda(\xi_K^r), \mu(\xi_K^r), \nu(\xi_K^r) < \infty$.
  % Therefore
  % $\mu(\xi_K^r) + \lambda(\xi_K^r) =
  % (\mu+\lambda)(\xi_K^r) = 
  % (\nu+\lambda)(\xi_K^r) = 
  % \nu(\xi_K^r) + \lambda(\xi_K^r)$
  % implies that $\mu(\xi_K^r) = \nu(\xi_K^r)$.
  % By \cref{prop:uniqueness}, this implies that $\mu = \nu$.

  Next, we show that the elementwise partial order coincides with the natural partial order.
  That is, for $\mu,\nu \in \lpfpRadon$, $\mu(E) \leq \nu(E)$ for all Borel sets $E \subset X \setminus A$ if and only if
  there exists $\lambda \in \lpfpRadon$ such that $\mu + \lambda = \nu$.
  Let $\mu, \nu \in \lpfpRadon$.
  Assume that there exists $\lambda \in \pfpRadon$ such that $\mu + \lambda = \nu$. 
  Let $E$ be a Borel set in $X \setminus A$. 
  Then $\mu(E) + \lambda(E) = \nu(E)$. 
  Hence $\mu(E) \leq \nu(E)$.
  For the converse assume that for all Borel sets $E \subset X \setminus A$, $\mu(E) \leq \nu(E)$.
  Define the set function $\lambda$ on Borel sets $E \subset X \setminus A$
  by 
  $\lambda(E) = \sup\{\nu(E')-\mu(E')\}$, where the supremum is taken over all Borel sets $E' \subset E$ such that $\mu(E')<\infty$.
  Since $\mu,\nu$ are countably additive, so is $\lambda$.
 % \pb{See for example, \url{https://math.stackexchange.com/questions/4487630/difference-between-two-measures-and-the-existence-of-a-smaller-one?rq=1}}
  Therefore $\lambda \in \pBorel$ and $\mu + \lambda = \nu$.    
  Since $\mu$ and $\nu$ are locally finite, so is $\lambda$.
  Since $\nu$ is tight and $\lambda \leq \nu$, $\lambda$ is tight.
  % Use epsilon definition of tightness.
  Therefore $\lambda \in \lpfpRadon$.

   It remains to show that $\lpfpRadon$ is a lattice cone.
  Let $\mu,\nu \in \lpfpRadon$. 
  For a Borel set $E \subset X \setminus A$, we define
  $(\mu \vee \nu)(E) = \sup \{ \mu(E_1) + \nu(E_2) \}$, 
  where the supremum is taken over all partitions of $E$ into Borel sets $E_1$ and $E_2$.
  To see that $\mu \vee \nu$ is countably additive, consider a sequence $\{E_k\}_{k=1}^{\infty}$ of pairwise disjoint Borel sets.
  Observe that there is a bijection between
  partitions
  of $\bigcup_{k=1}^{\infty} E_k$
  into two disjoint Borel sets
  and disjoint partitions of each $E_k$ into two Borel sets. 
  Since $\mu$ and $\nu$ are countably additive, so is $\mu \vee \nu$.
  Next, note that $\mu \vee \nu \leq \mu + \nu$. 
  Since $\mu$ and $\nu$ are locally finite and tight, so is $\mu \vee \nu$. 
  We need to check that $\mu\vee\nu$ is indeed the supremum of $\mu$ and $\nu$. 
  If $\mu\leq \kappa$ and $\nu\leq \kappa$, then for any Borel set $E$ and for any partition of $E$ into two disjoint sets $E_1$ and $E_2$, we have $\mu(E_1) + \nu(E_2) \leq \kappa(E_1) + \kappa(E_2) = \kappa(E)$ and hence $(\mu\vee \nu)(E)\leq \kappa(E)$. Thus $\mu\vee\nu\leq \kappa$.
  Finally, for $\lambda \in \lpfpRadon$, $(\mu \vee \nu) + \lambda = (\mu+\lambda) \vee (\nu+\lambda)$ since $\lambda$ is additive.
\end{proof}

\begin{proposition} \label{prop:ideal}
    $\pfpRadon$ is an ideal in $\lpfpRadon$.
\end{proposition}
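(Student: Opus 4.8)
The plan is to verify directly that $\pfpRadon$ satisfies the defining properties of an ideal in the zero-sum-free Riesz cone $\lpfpRadon$ (\cref{prop:pRadon-convex-cone}): that it is a \emph{solid Riesz subcone}. I would organize this as three checks — that $\pfpRadon$ is a subcone, that it is solid, and that it is closed under finite suprema — and I would aim to derive the last from the first two. Throughout I would use that $\pfpRadon \subset \lpfpRadon$, already noted after \cref{def:weak-radon}, so that the ambient Riesz cone structure is available.

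First I would check that $\pfpRadon$ is a subcone. The zero measure is tight and $p$-finite, so $0 \in \pfpRadon$. Tightness is plainly preserved under sums and nonnegative scaling, so it remains to handle $p$-finiteness. When $p < \infty$ I would use the characterization from \cref{sec:finiteness-conditions} that $\mu$ is $p$-finite iff $\mu(d_A^p) < \infty$; since $\mu \mapsto \mu(d_A^p)$ is additive and $\R^+$-homogeneous, $(\mu+\nu)(d_A^p) = \mu(d_A^p) + \nu(d_A^p) < \infty$ and $(a\mu)(d_A^p) = a\,\mu(d_A^p) < \infty$. When $p = \infty$ I would argue directly from the definition: upper finiteness is stable under sums and scaling, and if $\mu_\delta = 0$ and $\nu_{\delta'} = 0$ then $(\mu+\nu)_{\max(\delta,\delta')} = 0$.

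The main step — the one I expect to carry the weight of the proof — is solidity: if $\nu \in \pfpRadon$, $\mu \in \lpfpRadon$, and $\mu \leq \nu$ in the natural partial order, then $\mu \in \pfpRadon$. Since $\mu \in \lpfpRadon$ it is already tight, so only $p$-finiteness is at issue. By \cref{prop:pRadon-convex-cone} the natural order agrees with the elementwise order, so $\mu(E) \leq \nu(E)$ for all Borel $E \subset X \setminus A$, and monotone approximation by simple functions gives $\mu(f) \leq \nu(f)$ for every nonnegative Borel $f$. For $p < \infty$ this immediately yields $\mu(d_A^p) \leq \nu(d_A^p) < \infty$, so $\mu$ is $p$-finite. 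For $p = \infty$ I would use that $\infty$-finiteness of $\nu$ supplies a $\delta > 0$ with $\nu_\delta = 0$, whence $\mu_\delta \leq \nu_\delta = 0$, together with $\mu_\eps(X) \leq \nu_\eps(X) < \infty$ for all $\eps > 0$; this gives upper $\infty$-finiteness and hence, as noted in \cref{sec:finiteness-conditions}, $\infty$-finiteness.

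Finally, closure under finite suprema follows formally: for $\mu,\nu \in \pfpRadon$, the supremum $\mu \vee \nu$ taken in $\lpfpRadon$ satisfies $\mu \vee \nu \leq \mu + \nu$, as observed in the proof of \cref{prop:pRadon-convex-cone}; since $\mu + \nu \in \pfpRadon$ by the subcone property and $\mu \vee \nu \in \lpfpRadon$, solidity forces $\mu \vee \nu \in \pfpRadon$. Hence $\pfpRadon$ is a solid Riesz subcone of $\lpfpRadon$, i.e.\ an ideal. The only genuine obstacle is making the $p = \infty$ bookkeeping in the solidity step precise; everything else reduces to monotonicity of measures and moments under the elementwise order.
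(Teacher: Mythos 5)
Your proposal is correct and follows essentially the same route as the paper's proof: verify the subcone property via additivity of $\mu \mapsto \mu(d_A^p)$, get closure under suprema from $\mu \vee \nu \leq \mu + \nu$, and get solidity from monotonicity of $\mu \mapsto \mu(d_A^p)$ under the elementwise order. The only difference is that you spell out the $p = \infty$ bookkeeping that the paper dismisses with ``follows similarly,'' which is a reasonable addition but not a different argument.
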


\begin{proof}
    We only treat the case $p < \infty$. The case $p = \infty$ follows similarly.
    First, we show that $\pfpRadon$ is a sub-Riesz cone.
    Let $\mu,\nu \in \pfpRadon$ and let $a \in \R^+$.
    Then $(\mu + \nu)(d_A^p) = \mu(d_A^p) + \nu(d_A^p) < \infty$ and $(a\mu)(d_A^p) = a \mu(d_A^p) < \infty$.
    Also, $0 \in \pfpRadon$.
    Furthermore, since $\mu \vee \nu \leq \mu + \nu$, $(\mu \vee \nu)(d_A^p) < \infty$.
    Finally, let $\nu \in \pfpRadon$ and $\mu \in \lpfpRadon$ with $ \mu \leq \nu$. 
    Then $\mu(d_A^p) \leq \nu(d_A^p) < \infty$.
\end{proof}

%\subsection{The classes \texorpdfstring{$\mathcal{L}(\gamma)$}{Lg}}

%\subsection{\texorpdfstring{$1$}{1}-finite Radon measures on metric pairs} 
\subsection{1-finite Radon measures on metric pairs} 
\label{sec:1-finite}

Recall that the $1$-finite Radon measures on $(X,A)$
are given by
    $\pfRadon = \{ \mu \in \pBorel \ | \ \mu \text{ is tight and } \mu(d_A) < \infty\}$.
They have the following equivalent functional analytic definition.

\begin{proposition}\label{prop:measures_spaces_equiv_def_2}
    Let $\mu\in \pBorel$. % such that $\mu$ is tight. 
    Then 
    %$\mu\in \pfRadon$ 
    $\mu(d_A) < \infty$
    if and only if $\mu(f)<\infty$ for all $f\in \LipXpos$. 
    
    Thus, 
    if $\mu$ is $1$-finite, then $\mu$ is a positive linear functional on $\LipX$. Furthermore,
\begin{equation} \label{eq:pfRadon-lip}
    \pfRadon = \{\mu\in \pBorel \ | \ \mu \textup{ is tight and } \mu(f) <\infty \textup{ for all $f\in \LipXpos$}\}.
\end{equation}
\end{proposition}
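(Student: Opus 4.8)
The plan is to reduce the entire equivalence to a single comparison, namely the order unit estimate $f \le \lipnorm{f}\, d_A$ valid for every $f \in \LipXpos$. This inequality was already recorded in \cref{sec:lipschitz}: if $f$ vanishes on $A$ and is $\lipnorm{f}$-Lipschitz, then for each $x \in X$ and $a \in A$ we have $f(x) = \abs{f(x) - f(a)} \le \lipnorm{f}\, d(x,a)$, and taking the infimum over $a \in A$ gives $0 \le f \le \lipnorm{f}\, d_A$. Since every $f \in \LipXpos$ vanishes on $A$, the integral $\mu(f) = \int_X f\, d\mu \in [0,\infty]$ is well defined independently of the chosen representative of $[\mu] \in \pBorel$, so all the quantities below make sense at the level of the quotient monoid.

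For the forward direction I would assume $\mu(d_A) < \infty$ and, given $f \in \LipXpos$, apply monotonicity of the integral to $0 \le f \le \lipnorm{f}\, d_A$, obtaining $\mu(f) \le \lipnorm{f}\, \mu(d_A) < \infty$. The converse is immediate by specialization: with $A \ne \emptyset$ standing throughout, $d_A \in \LipOneXpos \subseteq \LipXpos$, so the hypothesis that $\mu(f) < \infty$ for all $f \in \LipXpos$ applied to $f = d_A$ yields $\mu(d_A) < \infty$. This settles the stated equivalence.

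To obtain the positive linear functional, I would observe that once $\mu(d_A) < \infty$, every $f \in \LipX$ satisfies $\abs{f} \le \lipnorm{f}\, d_A$ (using $f^+, f^- \in \LipXpos$ with Lipschitz numbers at most $\lipnorm{f}$), so $f$ is $\mu$-integrable and $\mu(f) = \int_X f\, d\mu$ is finite and well defined. Linearity and positivity of $f \mapsto \mu(f)$ are then inherited from the standard properties of the Lebesgue integral. Finally, the set identity \eqref{eq:pfRadon-lip} follows at once: by definition $\pfRadon$ consists of the tight $\mu \in \pBorel$ that are $1$-finite, i.e.\ satisfy $\mu(d_A) < \infty$, and the equivalence just proved replaces this condition by the requirement that $\mu(f) < \infty$ for all $f \in \LipXpos$.

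I do not expect a serious obstacle here; the whole argument rests on the dual role of $d_A$ as both an upper bound for every element of $\LipXpos$ and itself a member of $\LipXpos$. The only points requiring a little care are the well-definedness of $\mu(f)$ on equivalence classes in $\pBorel$, which is handled by the vanishing of $f$ on $A$, and the passage from finiteness on the positive cone to integrability of an arbitrary $f \in \LipX$, both of which are routine.
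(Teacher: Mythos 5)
Your proof is correct and follows essentially the same route as the paper's: the forward direction via the order-unit bound $0 \le f \le \lipnorm{f}\,d_A$ and monotonicity of the integral, and the converse by specializing to $f = d_A \in \LipXpos$. The extra remarks on well-definedness over $\pBorel$ and on integrability of general $f \in \LipX$ via $\abs{f} \le \lipnorm{f}\,d_A$ are consistent with what the paper establishes elsewhere and do not change the argument.
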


\begin{proof}
    Let $\mu \in \pBorel$.  %such that $\mu$ is tight.
    In the forward direction, assume that $\mu(d_A) < \infty$.
    Let $f \in \LipXpos$. 
    Then $f \leq \lipnorm{f} d_A$. 
    Therefore $\mu(f) \leq \lipnorm{f} \mu(d_A) < \infty$.
    The reverse direction follows easily, since $d_A \in \LipXpos$.
\end{proof}

% \begin{corollary} \label{cor:pfRadon-lipschitz-finite}
%     Let $\mu \in \pfRadon$ and $f \in \LipX$. Then $\mu(f) \in (-\infty,\infty)$.
% \end{corollary}

For an equivalent functional analytic definition of Radon measure, we need to assume that $(X,d)$ is locally compact.

\begin{proposition}\label{prop:measures_spaces_equiv_def_1}
Let $\mu \in \pBorel$. % such that $\mu$ is tight.
\begin{enumerate}
    \item \label{it:3.26a} If $\mu$ is locally $1$-finite then for all $f \in \LipcXpos$, $\mu(f) < \infty$.
    Thus, $\mu$ is a positive linear functional on $\LipcX$.
    \item \label{it:3.26b} If $X$ is locally compact and for all $f \in \LipcXpos$, $\mu(f) < \infty$, then $\mu$ is locally $1$-finite.
\end{enumerate}
    Thus, if $(X,d)$ is locally compact then
\begin{equation*}
  \pRadon = \{ \mu \in \pBorel \ | \ \mu \text{ is tight and }
  \mu(f) <\infty \textup{ for all $f\in \LipcXpos$}\}.
\end{equation*}
\end{proposition}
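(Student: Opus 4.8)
The plan is to establish \eqref{it:3.26a} and \eqref{it:3.26b} separately and then read off the displayed description of $\pRadon$.

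For \eqref{it:3.26a}, I would fix $f \in \LipcXpos$ with compact support $K = \supp(f)$ and use that, since $f$ vanishes on $A$ and is $L(f)$-Lipschitz, $f \leq L(f)\,d_A$; hence $\mu(f) \leq L(f)\,(d_A\mu)(K)$, where $d_A\mu$ denotes the indefinite integral measure. It then suffices to bound $(d_A\mu)(K)$. Local $1$-finiteness provides, for each $x \in K$, a neighborhood $U_x$ with $\mu_{U_x}$ being $1$-finite, i.e. $(d_A\mu)(U_x) = \mu_{U_x}(d_A) < \infty$; covering the compact set $K$ by finitely many of these and adding gives $(d_A\mu)(K) < \infty$, so $\mu(f) < \infty$. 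Decomposing a general $f \in \LipcX$ as $f^+ - f^-$ with $f^\pm \in \LipcXpos$ then shows $\mu$ is a finite-valued positive linear functional on $\LipcX$. Note that this half does not use local compactness.

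For \eqref{it:3.26b}, assuming $X$ locally compact, I would show $\mu$ is locally $1$-finite at an arbitrary $x \in X$ by producing a single neighborhood $U$ with $\mu_U(d_A) < \infty$, which is exactly the condition that $\mu_U$ be $1$-finite (by the characterization for $p<\infty$) and which handles $x \in A$ and $x \notin A$ uniformly. Using local compactness, choose $\delta>0$ small enough that $\overline{B(x,\delta)}$ sits inside a compact neighborhood of $x$, and let $\phi(y) = \max\bigl(0,\,1 - \tfrac1\delta d(x,y)\bigr)$, a compactly supported Lipschitz bump with $\phi > \tfrac12$ on $B(x,\delta/2)$. The key object is the product $h = d_A\,\phi$: since $d_A$ is Lipschitz and $\phi$ is compactly supported Lipschitz, $h$ is compactly supported and Lipschitz, and as $h \geq 0$ vanishes on $A$ we get $h \in \LipcXpos$. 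On $B(x,\delta/2)$ we have $h \geq \tfrac12 d_A$, so the hypothesis gives
\[
  \mu_{B(x,\delta/2)}(d_A) = \int_{B(x,\delta/2)} d_A\,d\mu \leq 2\,\mu(h) < \infty,
\]
whence $\mu$ is locally $1$-finite at $x$. As $x$ was arbitrary, $\mu$ is locally $1$-finite.

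With both parts in hand, the displayed equality is immediate: membership in $\pRadon$ means tight and locally $1$-finite, so the inclusion $\subseteq$ is \eqref{it:3.26a} and the inclusion $\supseteq$ is \eqref{it:3.26b}, each applied to a tight measure. I expect the crux to be the construction in \eqref{it:3.26b}: the argument rests on manufacturing, near every point of $X$ including points of $A$, a nonnegative compactly supported Lipschitz test function comparable to $d_A$, and it is precisely here that local compactness is indispensable, being needed to keep the bump $\phi$ (and hence $h$) compactly supported.
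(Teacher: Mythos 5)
Your proposal is correct and follows essentially the same route as the paper: part (a) via $f \leq L(f)d_A$ plus a finite subcover of $\supp(f)$ by neighborhoods on which $d_A\mu$ is finite, and part (b) by multiplying $d_A$ by a compactly supported Lipschitz bump near $x$ (the paper phrases the bump abstractly via nested compact neighborhoods $N_2 \subset N_1$, while you write it explicitly as $\max(0,1-\tfrac{1}{\delta}d(x,\cdot))$, but the test function $d_A\cdot\phi \in \LipcXpos$ is the same device). No gaps.
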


\begin{proof}
%    Let $\mu \in \pBorel$ such that $\mu$ is tight.
 \eqref{it:3.26a}  
%    In the forward direction, 
    Assume that $\mu$ is locally $1$-finite.
    Then each $x \in X$ has a neighborhood $U_x$ such that $\mu_{U_x}$ is $1$-finite.
    That is $\mu_{U_x}(d_A) = (d_A \mu)(U_x) < \infty$.
    Let $f \in \LipcXpos$.
    Then $f \leq L(f)d_A$ and 
    there exists a compact set $K$ such that $\supp(f) \subset K$.
    Since $K$ is compact the cover $\{U_x\}_{x \in K}$ has a finite subcover $\{U_{x_1},\ldots,U_{x_n}\}$.
    Therefore $\mu(f) \leq L(f)\mu_K(d_A) = L(f)(d_A \mu)(K) \leq L(f) \sum_{i=1}^n (d_A \mu)(U_x) < \infty$.

\eqref{it:3.26b}
%    In the reverse direction, 
    Assume that $X$ is locally compact and that for all $f \in \LipcXpos$, $\mu(f) < \infty$.

    Let $x \in X$. 
    Since $X$ is locally compact, $x$ has a compact neighborhood $N_1$.
    Also, $x$ has a compact neighborhood $N_2$ contained in the interior of $N_1$. 
    Furthermore, there exists a Lipschitz function $f:X \to \R$ such that $f|_{N_2} = 1$ and $f|_{X \setminus N_1} = 0$. 
    Hence $d_A f \in \LipcXpos$.
    Since $\mu(d_A f)< \infty$, $d_A \mu(N_2)<\infty$, and thus $\mu$ is locally $1$-finite at $x$.
\end{proof}

\subsection{Real-valued Radon measures on metric pairs}
\label{sec:signed}

Let $0 \leq p \leq \infty$.
In this section, we consider differences of Radon measures on metric pairs.
We need to take care, since each of the two Radon measures may take the value $\infty$ for some of the Borel sets.
A key fact is that $\pfpRadon$ and $\lpfpRadon$ are cancellative commutative monoids (\cref{prop:pRadon-convex-cone}).

\begin{definition}
  Let $\pfrvRadon$ be the Grothendieck group of $\pfpRadon$
  and
  let $\lpfrvRadon$ be the Grothendieck group of $\lpfpRadon$ 
\end{definition}

Then $\pfrvRadon$ is a subspace of the vector space $\lpfrvRadon$.
Following L.\ Schwartz~\cite[p. 57]{Schwartz:1973}, 
we call elements of $\lpfrvRadon$ 
\emph{locally $p$-finite} real-valued Radon measures on $(X,A)$.
Note that for $\mu = \mu^+ - \mu^- \in \lpfrvRadon$, 
the set function 
on the Borel sets of $X \setminus A$
given by $\mu_{X \setminus A} = \mu^+_{X \setminus A} - \mu^-_{X \setminus A}$ 
is only guaranteed to be defined for relatively compact Borel sets of $X \setminus A$. However it is countably additive wherever it is defined.
We call elements of $\pfrvRadon$ \emph{$p$-finite real-valued Radon measures on $(X,A)$}.
For $\mu \in \pfrvRadon$,
%$\mu_{X \setminus A}$ is a signed Radon measure and its variation is $1$-finite.
%the variation of $\mu_{X \setminus A}$ is a $1$-finite Radon measure.
$d_A\mu$ is a signed Radon measure on $X$.

Combining \cref{prop:pRadon-convex-cone,lem:Riesz-space-cone}, we have the following.

\begin{corollary}
    $\lpfrvRadon$ is a Riesz space. % with Riesz subspace $\pfrvRadon$.
\end{corollary}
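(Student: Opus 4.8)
The plan is to apply the two cited results directly, since the substantive work has already been carried out in establishing them. By definition, $\lpfrvRadon$ is the Grothendieck group $K(\lpfpRadon)$ of the commutative monoid $\lpfpRadon$. So the task reduces to identifying this Grothendieck group as the Riesz space arising from a zero-sum-free Riesz cone.

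First I would invoke \cref{prop:pRadon-convex-cone}, which tells us that $\lpfpRadon$ is a zero-sum-free Riesz cone. The hypotheses of the bijection in \cref{lem:Riesz-space-cone} are then exactly met. Applying that bijection, the ordered vector space $K(\lpfpRadon)$ carries a Riesz space structure, with the lattice operation on $K(\lpfpRadon)$ determined by the cone via the formula $(x-y) \vee 0 = x \vee y - y$ for $x, y \in \lpfpRadon$. Since $\lpfrvRadon = K(\lpfpRadon)$ by definition, this gives that $\lpfrvRadon$ is a Riesz space.

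There is no genuine obstacle at this stage: the proof is a two-line consequence of the preceding structural results. The only point deserving a brief remark is that the Riesz space structure on $\lpfrvRadon$ is not merely abstract but is the one induced through \cref{lem:Riesz-space-cone}, so that the positive cone of $\lpfrvRadon$ is (the image of) $\lpfpRadon$ and the lattice operations restrict to the cone operations established in \cref{prop:pRadon-convex-cone}. I would state the proof simply as: by \cref{prop:pRadon-convex-cone}, $\lpfpRadon$ is a zero-sum-free Riesz cone, so by \cref{lem:Riesz-space-cone} its Grothendieck group $\lpfrvRadon$ is a Riesz space.
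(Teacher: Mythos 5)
Your proof is correct and is exactly the paper's argument: the corollary is stated there as an immediate consequence of combining \cref{prop:pRadon-convex-cone} (that $\lpfpRadon$ is a zero-sum-free Riesz cone) with the bijection of \cref{lem:Riesz-space-cone}. Nothing is missing.
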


Combining \cref{prop:ideal,prop:bijection-ideal}, we have the following.

\begin{corollary}
    $\pfrvRadon$ is an ideal of $\lpfrvRadon$.
\end{corollary}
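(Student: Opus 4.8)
The plan is to reduce the statement entirely to the structural results already established for Riesz cones, so that no new measure-theoretic argument is needed. By \cref{prop:pRadon-convex-cone}, $\lpfpRadon$ is a zero-sum-free Riesz cone, and by definition $\lpfrvRadon = K(\lpfpRadon)$ while $\pfrvRadon = K(\pfpRadon)$. Under the bijection of \cref{lem:Riesz-space-cone}, these Grothendieck groups are precisely the Riesz spaces associated to the cones $\lpfpRadon$ and $\pfpRadon$, respectively.

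First I would record that $\pfpRadon$ is an ideal in the Riesz cone $\lpfpRadon$, which is exactly the content of \cref{prop:ideal}. Then I would invoke \cref{prop:bijection-ideal}, which asserts that the bijection between zero-sum-free Riesz cones and Riesz spaces restricts to a bijection carrying ideals of a cone to ideals of the associated Riesz space. Applying this to the ideal $\pfpRadon \subset \lpfpRadon$ yields that $\pfrvRadon = K(\pfpRadon)$ is an ideal of $\lpfrvRadon = K(\lpfpRadon)$, which is the desired conclusion.

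There is essentially no genuine obstacle here, since both inputs are in hand; the only point that deserves a moment of care is the implicit identification of $K(\pfpRadon)$ with a Riesz subspace of $K(\lpfpRadon)$. This relies on $\pfpRadon$ being a submonoid of the cancellative monoid $\lpfpRadon$, so that the Grothendieck equivalence relation on $\pfpRadon \times \pfpRadon$ is the restriction of the one on $\lpfpRadon \times \lpfpRadon$; this compatibility, together with the verification that the resulting subspace is solid, is exactly what the proof of \cref{prop:bijection-ideal} supplies. Hence the corollary follows immediately by combining \cref{prop:ideal,prop:bijection-ideal}.
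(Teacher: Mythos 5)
Your proposal is correct and matches the paper's own argument exactly: the corollary is obtained by combining \cref{prop:ideal} with \cref{prop:bijection-ideal}, precisely as you describe. Your extra remark about the compatibility of the Grothendieck equivalence relations is a fair observation, and it is indeed handled inside the proof of \cref{prop:bijection-ideal}, so nothing further is needed.
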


\section{Linear functionals on Lipschitz functions on metric pairs}
\label{sec:representation}

In this section we prove that certain linear functionals on $\LipcX$ and $\LipX$ can be represented as integration with respect to $1$-finite and locally $1$-finite Radon measures on $(X,A)$. 

Let $(X,d,A)$ be a metric pair denoted by $(X,A)$.
Assume that $A \neq \emptyset$.
Recall that the $1$-finite Radon measures on $(X,A)$ are given by 
\begin{equation} \label{eq:pfRadon}
    \pfRadon = \{ \mu \in \pBorel \ | \ \mu(d_A) < \infty, \ \mu \text{ is tight}\} 
\end{equation}
and that the locally $1$-finite Radon measures on $(X,A)$ are given by 
\begin{equation*}
    \pRadon = \{ \mu \in \pBorel \ | \ \forall x \in X, \exists \text{ neighborhood } U \text{ with } \mu_U(d_A) < \infty, \ \mu \text{ is tight}\}.
\end{equation*}
Let $T$ be a positive linear functional on a Riesz space $E$. 
Then $T$ is order preserving and hence order bounded.
That is, $T \in E^{\sim} = \mathcal{L}_b(E,\R) = \mathcal{L}_r(E,\R)$.
Furthermore, $T$ is sequentially order continuous (i.e. $T \in E_c^{\sim})$ if and only if $x_n \downarrow 0$ implies $Tx_n \to 0$.

\subsection{Linear functionals on compactly supported Lipschitz functions}

\begin{lemma} \label{lem:sequentially-order-continuous}
    \begin{enumerate}
        \item Let $\mu \in \pBorel$ such that $\mu$ is $1$-finite.
        Then $\mu$ is a sequentially order continuous, positive linear functional on $\LipX$.
        Thus, $\mu \in \LipX_c^{\sim}$.
        \item 
        %If $X$ is locally compact, then 
        Let $\mu \in \pBorel$ such that $\mu$ is locally $1$-finite.
        Then $\mu$ is a sequentially order continuous, positive linear functional on $\LipcX$. 
        Thus, $\mu \in \LipcX_c^{\sim}$.
    \end{enumerate}
\end{lemma}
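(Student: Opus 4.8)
The plan is to handle both parts in parallel, since each reduces to the same convergence statement once positivity is known. In case (1), \cref{prop:measures_spaces_equiv_def_2} already gives that a $1$-finite $\mu$ satisfies $\mu(f) < \infty$ for all $f \in \LipXpos$ and hence defines a positive linear functional on $\LipX$; in case (2), \cref{prop:measures_spaces_equiv_def_1} does the same for a locally $1$-finite $\mu$ on $\LipcX$. So in both cases $\mu$ is a positive element of the order dual, and the only remaining point is sequential order continuity. Here I would invoke the criterion recorded just above the lemma (valid because the codomain $\R$ is Dedekind complete): a positive linear functional $T$ is sequentially order continuous if and only if $f_n \downarrow 0$ implies $T(f_n) \to 0$. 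Thus it suffices to fix a decreasing sequence with infimum $0$ -- $f_n \downarrow 0$ in $\LipX$ for (1), in $\LipcX$ for (2) -- and to show $\mu(f_n) \to 0$.

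Next I would set up a dominated-convergence argument. Since $\{f_n\}$ is decreasing with $f_n \geq 0$, each $f_n$ is dominated by $f_1$, and $f_1$ is $\mu$-integrable: in case (1), $\mu(f_1) \leq \lipnorm{f_1}\,\mu(d_A) < \infty$ by $1$-finiteness, while in case (2), $f_1 \in \LipcXpos$ is $\mu$-integrable by \cref{prop:measures_spaces_equiv_def_1} (equivalently, $\mu$ is finite on the compact support of $f_1$ by \cref{lem:finite-on-compact-sets}). Writing $g(x) = \lim_n f_n(x) = \inf_n f_n(x) \geq 0$ for the \emph{pointwise} limit, dominated convergence gives $\mu(f_n) \to \mu(g)$. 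It therefore remains to prove $\mu(g) = 0$, i.e. that $g = 0$ $\mu$-almost everywhere.

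This last reduction is the crux and, I expect, the main obstacle. The difficulty is that the lattice infimum of a decreasing sequence in a Lipschitz vector lattice need not coincide with its pointwise infimum: a sequence of increasingly narrow Lipschitz spikes can have pointwise limit a nonzero function concentrated on a small set while still having greatest lower bound $0$ in $\LipX$. So one cannot simply deduce $g = 0$ pointwise from $f_n \downarrow 0$. The one thing one \emph{can} extract cleanly is the following: for each $L \geq 0$ the largest $L$-Lipschitz minorant of $g$ vanishing on $A$, namely $x \mapsto \inf_y\bigl(g(y) + L\,d(x,y)\bigr)$, lies in $\LipLXpos$ and is $\leq g \leq f_n$ for all $n$; being a lower bound of a sequence with infimum $0$, it must equal $0$. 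Letting $L \to \infty$ shows that the lower semicontinuous envelope of $g$ is identically $0$.

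The genuine work is then to pass from ``the lower semicontinuous envelope of $g$ vanishes'' to ``$g = 0$ $\mu$-almost everywhere,'' i.e. to show that $\mu$ does not charge the set where $g$ exceeds its envelope. This is precisely the step where the hypotheses on $\mu$ -- tightness in case (1) and local finiteness in case (2) -- must be brought to bear, and where I would expect to spend essentially all of the effort, since without further control of $\mu$ on such small sets the implication is delicate. My approach would be to combine the minorant construction above with the inner regularity of $\mu$ to bound the $\mu$-mass of $\{g > 0\}$; carrying out this transfer carefully is the technical heart of the argument.
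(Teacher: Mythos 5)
Your opening moves coincide with the paper's: positivity of the functional comes from \cref{prop:measures_spaces_equiv_def_2} in case (1) and \cref{prop:measures_spaces_equiv_def_1} in case (2), and the problem reduces to showing $\mu(f_n)\to 0$ whenever $f_n\downarrow 0$. At that point the paper's entire argument is a one-line appeal to Beppo Levi's theorem; it does not perform, or even acknowledge, the reduction you identify as the crux. In effect the paper reads ``$f_n\downarrow 0$'' as \emph{pointwise} monotone convergence to zero, in which case monotone (or dominated) convergence finishes the proof immediately and your analysis of lower semicontinuous envelopes is unnecessary. (The same implicit pointwise reading recurs later, e.g.\ when Dini's theorem is applied to a sequence with $h_n\downarrow 0$ in the proofs of \cref{lem:exhausted-implies-seq-order-cont} and \cref{thm:riesz_coupling}.)

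Judged against the definitions actually recorded in \cref{sec:riesz}, however --- where $f_n\downarrow 0$ means that the infimum of $\{f_n\}$ \emph{in the Riesz space} $\LipX$ is $0$ --- your proposal has a genuine gap, and it is one you will not be able to close: the step you defer, namely that the pointwise limit $g$ satisfies $g=0$ $\mu$-almost everywhere, is false. Take $X=\R^+$, $A=\{0\}$, $\mu=\delta_1$ (tight and $1$-finite, indeed in $\Lip_c$-dual range), and $f_n(x)=\max(0,\,1-n\abs{x-1})$ for $n\geq 1$. These lie in $\Lip_c^+(\R^+,\{0\})$ and decrease; any Lipschitz lower bound $h$ satisfies $h(x)\leq\lim_n f_n(x)=0$ for $x\neq 1$, hence $h\leq 0$ everywhere by continuity, so $\inf_n f_n=0$ in the lattice and $f_n\downarrow 0$. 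Yet $\mu(f_n)=f_n(1)=1$ for every $n$, and $g=\chi_{\{1\}}$ has $\mu(g)=1$. So tightness and $1$-finiteness do not control the mass $\mu$ places on the set where $g$ exceeds its lower semicontinuous envelope, and no inner-regularity argument will rescue the transfer you propose. You have correctly located exactly where the paper's one-line proof is too quick; the resolution is not to ``carry out the transfer carefully'' but to recognize that either $\downarrow$ must be interpreted pointwise throughout (under which reading your dominated-convergence paragraph already completes the proof), or the lemma as literally stated fails for Dirac measures.
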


\begin{proof}
%    Let $\mu \in \pfRadon$.
    Let $\mu \in \pBorel$. If $\mu$ is $1$-finite then
    by \cref{prop:measures_spaces_equiv_def_2}, 
    $\mu$ is a positive linear functional on the Riesz space $\LipX$. 
    Furthermore, by %the monotone convergence theorem 
    Beppo Levi's theorem, 
    $\mu$ is sequentially order continuous.
    Since $\mu$ is a positive linear functional, $\mu$ is order bounded.
    Thus, $\mu \in \LipX_c^{\sim}$.
%    if $X$ is locally compact then
%    for $\mu \in \pRadon$, 
    If $\mu$ is locally $1$-finite, then the result following similarly, using
    \cref{prop:measures_spaces_equiv_def_1} instead of \cref{prop:measures_spaces_equiv_def_2}.
%    $\mu \in \LipcX_c^{\sim}$.
\end{proof}

It is a result of Lozanovsky~\cite{Wulich_2016,ConesAndDuality} that for an ordered Banach space $E$ with closed and generating positive cone, $E^{\sim} \subset E'$. We give a direct proof for our case.

\begin{lemma}\label{lem:order_bounded_implies_norm_bounded}
	Let $T:\LipX\to \R$ be an order-bounded linear functional. Then $T$ is %norm bounded,
 a bounded linear functional. That is, $\LipX^{\sim}\subset \LipX'$.
 It follows that $T$ restricts to a bounded linear functional on $\LipcX$.
	\begin{proof}
          Suppose that $T$ is order bounded. Let $S \subset \LipX$ be norm bounded. Then there exists $M> 0$ such that $\sup_{f\in S}\lipnorm{f} \leq M <\infty$. Hence %$|f| \leq Md_{x_0}$ 
          $|f|\leq Md_A$ for all $f\in S$ and thus 
          %$S\subset [-M d_{x_0}, M d_{x_0}]$. 
          $S\subset [-M d_A, M d_A]$. Since $T$ is order bounded, $T(S)$ is bounded in $\R$ in both the order-theoretic and metric senses, which are equivalent in $\R$.
Therefore $T$ is bounded.
	\end{proof}
      \end{lemma}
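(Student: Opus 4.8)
The plan is to exploit the fact, established at the end of \cref{sec:lipschitz}, that $d_A$ is an order unit for $\LipX$: for every $f \in \LipX$ we have $\abs{f} \leq \lipnorm{f}\, d_A$. This single inequality is the bridge between the norm (the Lipschitz number) and the order structure, and it is what converts norm-boundedness into order-boundedness, which is exactly the hypothesis we are given on $T$. So the whole argument reduces to the observation that norm-bounded subsets of $\LipX$ are order-bounded subsets, after which the order-boundedness of $T$ does the rest.

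Concretely, I would first recall that a linear functional $T$ is bounded precisely when it maps norm-bounded sets to bounded subsets of $\R$. So let $S \subset \LipX$ be norm bounded, say $\sup_{f \in S} \lipnorm{f} \leq M$ for some $M > 0$. Applying the order-unit inequality to each $f \in S$ gives $\abs{f} \leq M d_A$, hence $S \subset [-M d_A,\, M d_A]$; that is, $S$ is order bounded. Since $T$ is order bounded by hypothesis, $T(S)$ is an order-bounded subset of $\R$. In $\R$ the order-theoretic and metric notions of boundedness coincide, so $T(S)$ is bounded in the usual sense. As $S$ was an arbitrary norm-bounded set, $T$ is a bounded linear functional, which gives the inclusion $\LipX^{\sim} \subset \LipX'$.

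For the final assertion I would note that $\LipcX$ is a linear subspace of $\LipX$ carrying the restriction of the same norm $\lipnorm{\cdot}$, so the restriction of a bounded linear functional to it is automatically bounded, whence $T$ restricts to a bounded functional on $\LipcX$. The only genuinely substantive step is the first one: recognizing that the order unit $d_A$ turns norm balls into order intervals. Once that identification is in place the argument is essentially formal, relying only on the trivial equivalence of order and metric boundedness in $\R$; there is no real analytic obstacle, and the difficulty is entirely in locating the right structural fact about $\LipX$ to invoke.
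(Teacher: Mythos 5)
Your proposal is correct and follows essentially the same route as the paper's proof: both use the order-unit inequality $\abs{f} \leq \lipnorm{f}\, d_A$ to place a norm-bounded set inside the order interval $[-M d_A, M d_A]$, then invoke order-boundedness of $T$ and the coincidence of order and metric boundedness in $\R$. No gaps.
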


Since positive linear functionals are order bounded,
positive linear functionals  on $\LipX$
%they 
are bounded. 
In fact, their norm can be computed directly.

\begin{lemma}\label{lem:plf_automatically_bounded_on_Lip}
    Every positive linear functional $T: \LipX\to \R$ is bounded.
    If $A \neq \emptyset$, then
    $\|T\|_{\textup{op}} = T(d_A)$. 
\end{lemma}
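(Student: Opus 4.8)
The boundedness of $T$ is immediate from the preceding results: a positive linear functional is order preserving, hence order bounded, so by \cref{lem:order_bounded_implies_norm_bounded} it is norm bounded. This settles the first assertion, and it remains only to identify $\norm{T}_{\textup{op}}$.

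The plan is to prove the two inequalities $\norm{T}_{\textup{op}} \leq T(d_A)$ and $T(d_A) \leq \norm{T}_{\textup{op}}$ separately, exploiting the fact that $d_A$ is an order unit for $\LipX$. First I would record that since $A \neq \emptyset$ we have $d_A \in \LipOneXpos$, so that $d_A$ lies in the closed unit ball of $\LipX$ and $T(d_A) \geq 0$ by positivity. For the upper bound, take any $f \in \LipX$ with $\lipnorm{f} \leq 1$. Because $f$ vanishes on $A$, we have $\abs{f} \leq \lipnorm{f}\, d_A \leq d_A$, equivalently $-d_A \leq f \leq d_A$; applying the order-preserving functional $T$ gives $-T(d_A) \leq T(f) \leq T(d_A)$, so $\abs{T(f)} \leq T(d_A)$. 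Taking the supremum over the unit ball yields $\norm{T}_{\textup{op}} \leq T(d_A)$. For the reverse inequality, $d_A$ itself is an admissible test function, so $T(d_A) = \abs{T(d_A)} \leq \norm{T}_{\textup{op}}$. Combining the two gives $\norm{T}_{\textup{op}} = T(d_A)$.

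There is essentially no substantive obstacle here; the only point requiring care is the degenerate case $A = X$, in which $d_A \equiv 0$ and $\LipX = \{0\}$, so that $T = 0$ and both sides equal $0$. The real content of the lemma is the observation that the order unit $d_A$ simultaneously controls every unit-ball element from above and below via $\pm d_A$, while itself having norm at most one, so it is precisely the extremal function at which the operator norm of a positive functional is attained.
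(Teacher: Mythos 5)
Your proof is correct and follows essentially the same route as the paper: boundedness via order boundedness and \cref{lem:order_bounded_implies_norm_bounded}, the upper bound from $\abs{f} \leq \lipnorm{f}\,d_A$ together with positivity of $T$, and the lower bound from $d_A$ lying in the unit ball, with the degenerate case $A = X$ handled separately just as in the paper.
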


\begin{proof}
    Assume $A \neq \emptyset$.
    If $A=X$ then $d_A = 0$ and $\norm{T}_{\textup{op}} = 0 = T(d_A)$. Assume $A \neq X$.
%    Let $x_0 \in A$.
    For $f\in \LipX$ and $x\in X$ we have $|f(x)| %= |f(x)- f(x_0)| 
    \leq %\lipnorm{f}d(x,x_0) = 
    \lipnorm{f}d_A(x)$. 
    Hence $|T(f)| \leq T(|f|) \leq \lipnorm{f}T(d_A)$ so that 
    $\|T\|_\textup{op}\leq T(d_A)$. 
    On the other hand, $\lipnorm{d_A} = 1$ so that $T(d_A)\leq \|T\|_{\textup{op}}$. 
\end{proof}

\begin{theorem} \label{thm:representation-lipc}
    Let $(X,A)$ be a metric pair. Assume that $X$ is locally compact.
    Let $T$ be a sequentially order continuous positive linear functional on $\LipcX$.
    Then $T$ is represented by a unique %$\tau$-additive 
    $\mu \in \pRadon$,
    where for each compact set $K \subset X \setminus A$, 
    $\mu(K) = \inf \{ Th \ | \ h \geq 1_K, h \in \LipcXpos\}$.
\end{theorem}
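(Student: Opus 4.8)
The plan is to reduce to the classical Riesz--Markov construction on the locally compact Hausdorff space $X \setminus A$ and then to propagate the resulting representation across the reservoir $A$ using the sequential order continuity of $T$. First I would record that for every compact $K \subset X \setminus A$ one has $d(K,A) > 0$, so by local compactness there is a relatively compact open $V$ with $K \subset V \subset \overline{V} \subset X \setminus A$, and the truncated distance $h(x) = \max\bigl(0,\, 1 - \tfrac1\delta\, d(x,K)\bigr)$ for small enough $\delta$ lies in $\LipcXpos$ and satisfies $h \geq 1_K$; hence $\mu(K) = \inf\{Th \mid h \geq 1_K,\ h \in \LipcXpos\}$ is a finite nonnegative real number. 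Restricting $T$ to the Riesz subspace $\Lip_{cc} \subset \LipcX$ of functions whose support is a compact subset of $X \setminus A$, I would run the classical Riesz--Markov construction (the Lipschitz bump functions built above supply enough test functions) to produce a tight Borel measure $\mu$ on $X \setminus A$, equivalently an element of $\pBorel$, that is inner regular with respect to compact sets, agrees with the displayed formula on each compact $K \subset X \setminus A$, and satisfies $T(f) = \int_X f\, d\mu$ for all $f \in \Lip_{cc}$.

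The key new step, absent from the classical setting, is to extend this identity to functions whose support meets $A$. For $f \in \LipcXpos$ set $f_n = (f - \tfrac1n)^+$. Since $\abs{f} \leq L(f)\, d_A$, the function $f_n$ vanishes on the neighborhood $\{d_A < \tfrac{1}{n\,L(f)}\}$ of $A$, so $f_n \in \Lip_{cc}$, while $f - f_n = \min(f, \tfrac1n) \downarrow 0$ in $\LipcX$. Because $T$ is positive and sequentially order continuous, $x_n \downarrow 0$ forces $Tx_n \downarrow 0$, so $T(f_n) \uparrow T(f)$; on the measure side $f_n \uparrow f$ pointwise, and the monotone convergence theorem gives $\int_X f_n\, d\mu \uparrow \int_X f\, d\mu$. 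Combining these with $T(f_n) = \int_X f_n\, d\mu$ yields $T(f) = \int_X f\, d\mu$, and in particular $\int_X f\, d\mu = T(f) < \infty$. Writing a general $f \in \LipcX$ as $f^+ - f^-$ then extends the representation to all of $\LipcX$.

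Finally I would assemble the measure-theoretic conclusions. Finiteness of $\mu(f)$ for every $f \in \LipcXpos$, together with local compactness of $X$, gives local $1$-finiteness of $\mu$ by the second part of \cref{prop:measures_spaces_equiv_def_1}; combined with tightness from the construction, this shows $\mu \in \pRadon$. Uniqueness is immediate from \cref{prop:uniqueness}: any two representing measures are tight and, using the representation together with \cref{lem:finite-on-compact-sets}, assign the same finite value to every compact $K \subset X \setminus A$, so they coincide. I expect the main obstacle to be the construction in the first paragraph, namely verifying that the Lipschitz test functions are rich enough to reproduce the full strength of Riesz--Markov --- in particular additivity of the content on disjoint compacta (which relies on disjoint compact sets in a metric space having positive separation, so that their approximating bumps may be chosen with disjoint supports) and the passage from the content to a genuine inner-regular Borel measure --- rather than the softer convergence argument of the second paragraph, where sequential order continuity does the work cleanly.
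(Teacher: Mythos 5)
Your proof is correct, but it takes a genuinely different route from the paper's. The paper does not build the measure by hand: it verifies the hypotheses (A1--A6, condition (10), and $\tau$-smoothness at $\emptyset$ with respect to the compact subsets of $X \setminus A$) of a general representation theorem of Pollard and Tops{\o}e for positive functionals on cones of functions, which delivers in one step a tight, $\tau$-additive measure on $X \setminus A$ together with the stated formula for $\mu(K)$; sequential order continuity enters there only as condition (10), namely $T(h \wedge n) \uparrow T(h)$, and the $\tau$-smoothness condition is automatic because a net of compacta decreasing to $\emptyset$ must eventually contain $\emptyset$. You instead run the classical Riesz--Markov construction on the locally compact space $X \setminus A$ using only test functions supported away from $A$ --- where no continuity hypothesis on $T$ is needed at all --- and then invoke sequential order continuity exactly once, in the truncation step $f_n = (f - \tfrac{1}{n})^+ \uparrow f$, to carry the representation across the reservoir. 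This cleanly isolates where the hypothesis is used and explains the paper's example of a positive, non-sequentially-order-continuous functional on $\Lip_c(\R^+,0)$ that admits no representation. The price is the work you flag yourself: redoing the Urysohn and partition-of-unity steps of Riesz--Markov with Lipschitz bumps (routine in a metric space via truncated distance functions and the positive separation of a compact set from a disjoint closed set), plus one point you should make explicit --- the classical construction yields a measure inner regular only on open sets and on sets of finite measure, so when $X \setminus A$ is not $\sigma$-compact you must pass to the inner-regular representative $\tilde{\mu}(E) = \sup\{\mu(K) \mid K \subset E,\ K \text{ compact}\}$, which is tight and still represents $T$ because every test function has compact, hence $\sigma$-compact, support. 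With that sentence added, your concluding appeals to \cref{prop:measures_spaces_equiv_def_1} and \cref{prop:uniqueness} for membership in $\pRadon$ and for uniqueness coincide with the paper's.
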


% \pb{I removed $\tau$-additive from the statement of the theorem, since all tight measures are $\tau$-additive.}

\begin{proof}
    We will apply a representation theorem of Pollard and Tops{\o}e~\cite[Theorem 3]{PollardTopsoe:1975}.
    It is easy to check that the conditions are satisfied, as follows.
    Condition A1 is satisfied since $\LipcXpos$ is a zero-sum-free Riesz cone.
    A2 is satisfied since $T$ is a linear functional and since $T$ is positive it is order preserving.
    For A3, $\LipcXpos$ is a zero-sum-free Riesz cone and for $h \in \LipcXpos$, $h \meet 1 \in \LipcXpos$.
    Let $\mathcal{K}$ be the collection of compact subsets of $X \setminus A$.
    Then $\emptyset \in \mathcal{K}$ and $\mathcal{K}$ is closed under finite unions and intersections, giving A4.
    For $h \in \LipcXpos$, $h$ is continuous, so $h^{-1}[0,a]$ is closed for all $a \geq 0$ and A5 is satisfied.
    Since $X$ is a metric space, A6$'$ is satisfied. Hence A6 is satisfied.
    $\mathcal{K}$ is closed under arbitrary intersections.
    Each $h \in \LipcXpos$ is compactly supported, so 
    the ``$\mathcal{K}$ exhausts $T$'' condition is trivially satisfied.
    For $h \in \LipcXpos$, $h \meet n \uparrow h$.
    Since $T$ is sequentially order continuous, $T(h \meet n) \uparrow Th$, and thus $(10)$ is satisfied.
    We are left with showing that $T$ is \emph{$\tau$-smooth at $\emptyset$ with respect to $\mathcal{K}$}.
    That is, if a net $K_{\alpha} \downarrow \emptyset$ in $\mathcal{K}$
    then $\inf \{Th \ | \ h \geq 1_{K_{\alpha}} \text{for some } \alpha\} = 0$.
    Consider a net $K_{\alpha} \downarrow \emptyset$ in $\mathcal{K}$.
    Choose an element $K_{\beta}$ of this net. 
    Since $X$ is Hausdorff, $\{K_{\alpha}^c\}$ is an open cover of $K_{\beta}$.
    Since $K_{\beta}$ is compact, it has a finite subcover $K_{\alpha_1}^c,\ldots,K_{\alpha_n}^c$.
    Therefore the collection $\{K_\beta,K_{\alpha_1},\ldots,K_{\alpha_n}\}$ has empty intersection.
    Since $\{K_\beta,K_{\alpha_1},\ldots,K_{\alpha_n}\}$ 
    is contained in the net $\{K_{\alpha}\}$, it follows that its intersection is as well, and thus $K_\alpha = \emptyset$ for some $\alpha$.
    So, $\inf \{Th \ | \ h \geq 1_{K_{\alpha}} \text{for some } \alpha\} = T(0) = 0$. 
    Therefore, there is unique tight, $\tau$-additive Borel measure $\mu$ on $X \setminus A$ representing $T$,
    where for each $K \in \mathcal{K}$, %compact set $K \subset X \setminus A$, 
    $\mu(K) = \inf \{ Th \ | \ h \geq 1_K, \ h \in \LipcXpos\}$.
    % Since $\mu$ is tight, for each Borel set $E \subset X \setminus A$,
    % $\mu(E) = \sup \{ \mu(K) \ | \ K \subset E, K \in \mathcal{K}\}$.
    Since $T$ is finite on $\LipcXpos$, $\mu$ is finite on $\LipcXpos$.
    By \cref{prop:measures_spaces_equiv_def_1}, $\mu \in \pRadon$.
\end{proof}

Before stating some corollaries to this result, we give an example showing there exist positive linear functionals $\LipcX$ that are not sequentially continuous and hence cannot be represented by elements of $\pfRadon$.

\begin{example}
    Let $(\R^+,0)$ denote the metric pair $(\R^+,d,\{0\})$, where $d(a,b) = \abs{a-b}$.
    Here we show that there exists a positive linear functional on $\Lip_c(\R^+,0)$ 
    that is not sequentially order continuous. 
    For $k \in \Z$, 
    let $g_k = (d_{0} \meet (2^k - d_{0})) \join 0$.
    Then $B = \{g_k\}_{k \in \Z}$ 
    is a linearly independent subset of $\Lip_c(\R^+,0)$. 
    Let $W$ denote the subspace of $\Lip_c(\R^+,0)$ generated by $B$. 
    Then $W$ has the property that for all $f\in \Lip_c(\R^+,0)$, there exists $g\in W$ with $g\geq f$.
    Define $T:W\to \R$ by setting $T(g) = 1$ for all $g\in B$ and then extending linearly to $W$. 
    Consider $f = \sum_{j=1}^m c_j g_{k_j}$ in $W$.
    Then $T(f) = \sum_{j=1}^m c_j$,
    and for $x$ sufficiently close to $0$, 
    $f(x) = \sum_{j=1}^m c_j x =  x T(f)$. 
    Hence if $f \geq 0$ then $T(f)\geq 0$. 
    That is, $T$ is positive. 
    By \cref{prop:positive_hahn_banach_weak}, $T$ extends to a positive linear functional $\tilde{T}:\Lip_c(\R^+,0)\to \R$.
    As $k \to -\infty$, $g_k \downarrow 0$ % pointwise
    but $\tilde{T}(g_k) = T(g_k) = 1$, and hence $\tilde{T}$ is not sequentially order continuous.
\end{example}

% \pb{I moved the following here; instead of after \cref{thm:representation-lipc-non-positive}.}

\begin{corollary} \label{cor:cpt_lip_determines_pRadon}
    Assume that $X$ is locally compact. 
    Let $\mu,\nu \in \pRadon$. Then $\mu=\nu$ if and only if $\mu(f)=\nu(f)$ for all $f \in \LipcXpos$.
\end{corollary}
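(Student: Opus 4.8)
The forward implication is immediate: if $\mu = \nu$ as elements of $\pBorel$, then $\mu(f) = \nu(f)$ for every $f$. So all the content is in the converse, and the plan is to reduce it to the uniqueness clause already packaged into \cref{thm:representation-lipc}.

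First I would observe that, since $\mu$ and $\nu$ lie in $\pRadon$, they are in particular tight and locally $1$-finite, so by \cref{lem:sequentially-order-continuous} each defines a sequentially order continuous positive linear functional on $\LipcX$; call these $T_\mu$ and $T_\nu$. The hypothesis gives $T_\mu(f) = T_\nu(f)$ for all $f \in \LipcXpos$. Because $\LipcX$ is a Riesz space and every $f \in \LipcX$ decomposes as $f = f^+ - f^-$ with $f^+, f^- \in \LipcXpos$ (recall $\lipnorm{f^+}, \lipnorm{f^-} \leq \lipnorm{f}$ and that compact support is inherited), linearity upgrades the agreement on the positive cone to $T_\mu = T_\nu$ on all of $\LipcX$.

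Now both $\mu$ and $\nu$ represent this single functional $T := T_\mu = T_\nu$, and both belong to $\pRadon$. Invoking local compactness of $X$, \cref{thm:representation-lipc} asserts that the measure in $\pRadon$ representing a given sequentially order continuous positive linear functional on $\LipcX$ is \emph{unique}. Hence $\mu = \nu$. Equivalently, one can make the argument explicit rather than abstract: the same theorem records the value formula $\mu(K) = \inf\{\mu(h) \mid h \geq 1_K,\ h \in \LipcXpos\}$ for each compact $K \subset X \setminus A$, and identically for $\nu$; since $\mu$ and $\nu$ agree on $\LipcXpos$ the two infima coincide, giving $\mu(K) = \nu(K)$ for all such $K$, whereupon \cref{prop:uniqueness} (applicable since both measures are tight) forces $\mu = \nu$.

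I expect no genuine obstacle here: the substantive work—constructing the representing measure, verifying its tightness, and establishing uniqueness—has already been carried out in \cref{thm:representation-lipc}. The only points requiring care are the passage from $\LipcXpos$ to $\LipcX$ via the decomposition $f = f^+ - f^-$, and, in the explicit route, noting that the value formula holds for $\mu$ and $\nu$ precisely because they are the representing measures supplied by the theorem, so that the reduction to \cref{prop:uniqueness} is legitimate.
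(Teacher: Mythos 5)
Your proposal is correct and matches the paper's intended argument: the corollary is stated without proof precisely because it is the uniqueness clause of \cref{thm:representation-lipc} applied to the common functional $T_\mu = T_\nu$, with the passage from $\LipcXpos$ to $\LipcX$ via $f = f^+ - f^-$ being routine. Your alternative explicit route through the value formula on compact sets and \cref{prop:uniqueness} is also valid and amounts to unwinding the same uniqueness statement.
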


% \begin{corollary}
%     If $X$ is locally compact, then every $\mu \in \pRadon$ is $\tau$-additive.
% \end{corollary}

\begin{corollary}\label{cor:measure_spaces_equiv_def_3}
    Assume that $X$ is locally compact.
    Then \[\pRadon = \{ \mu \in \pBorel \ | \ \forall f \in \LipcXpos,\  \mu(f) < \infty \}.\]
\end{corollary}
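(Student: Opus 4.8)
The plan is to prove the two inclusions separately, the forward one being immediate and the reverse one carrying the substance. For $\pRadon \subseteq \{\mu \in \pBorel \mid \mu(f) < \infty \ \forall f \in \LipcXpos\}$, I would simply invoke \cref{prop:measures_spaces_equiv_def_1}: a locally $1$-finite $\mu$ satisfies $\mu(f) < \infty$ for every $f \in \LipcXpos$. For the reverse inclusion, suppose $\mu \in \pBorel$ with $\mu(f) < \infty$ for all $f \in \LipcXpos$. By \cref{prop:measures_spaces_equiv_def_1} again, $\mu$ is locally $1$-finite, so the only thing left to establish is that $\mu$ is tight; once tightness is known, $\mu \in \pRadon$ by definition.

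To access tightness I would run $\mu$ through the representation theorem. Integration against $\mu$ defines a positive linear functional $T = \restr{\mu}{\LipcX}$ on $\LipcX$, which is sequentially order continuous by Beppo Levi's theorem (as in \cref{lem:sequentially-order-continuous}). Since $X$ is locally compact, \cref{thm:representation-lipc} produces a unique $\nu \in \pRadon$ representing $T$, with $\nu(K) = \inf\{\mu(h) \mid h \geq 1_K, \ h \in \LipcXpos\}$ for every compact $K \subset X \setminus A$. The next step is to show $\mu$ and $\nu$ agree on compact sets. One inequality is free: any admissible $h \geq 1_K$ gives $\mu(h) \geq \mu(K)$, so $\nu(K) \geq \mu(K)$. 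For the reverse, I would use the explicit decreasing approximation $h_n = (1 - n\, d_K)^+$; since $K$ is compact and disjoint from the closed set $A$, these lie in $\LipcXpos$ for large $n$, and $h_n \downarrow 1_K$ pointwise. Dominated convergence, with dominating function $h_{n_0}$ whose $\mu$-integral is finite by hypothesis, then gives $\mu(h_n) \downarrow \mu(K)$, whence $\nu(K) \leq \mu(K)$. Thus $\mu(K) = \nu(K)$ for all compact $K \subset X \setminus A$.

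Finally I would upgrade this agreement on compacta to tightness of $\mu$. Since $\nu$ is tight, for every Borel $E \subset X \setminus A$ we have $\nu(E) = \sup\{\nu(K) \mid K \subset E \text{ compact}\} = \sup\{\mu(K) \mid K \subset E\} \leq \mu(E)$, so $\mu$ is tight exactly when $\mu = \nu$. The main obstacle is this last equality, that is, controlling $\mu$ globally rather than only on compacta. I would resolve it using that $X \setminus A$, being an open subset of a locally compact metric space, is itself locally compact, metric, and hence paracompact, so it decomposes as a disjoint union of $\sigma$-compact open subspaces. On each such piece $\mu$ is finite on relatively compact sets, and a finite Borel measure on a compact metric space is tight since such a space is Polish; exhausting each $\sigma$-compact piece by compacta then shows $\mu$ is inner regular there. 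Assembling these local tightness statements yields $\mu(E) = \sup_{K \subset E}\mu(K) = \nu(E)$ for all Borel $E$, so $\mu = \nu$ is tight and lies in $\pRadon$; \cref{prop:uniqueness} confirms the identification once both measures are known to be tight. The only delicate point is this passage through the paracompact decomposition, everything else being routine once the representation theorem is in hand.
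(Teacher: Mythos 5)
Your route coincides with the paper's own proof through the main step: the forward inclusion is \cref{prop:measures_spaces_equiv_def_1}, and the reverse inclusion views integration against $\mu$ as a positive linear functional on $\LipcX$, gets sequential order continuity from Beppo Levi, and invokes \cref{thm:representation-lipc}. You actually go further than the paper in one respect: the paper's proof simply concludes ``$\mu \in \pRadon$'' from the representation theorem, whereas you correctly observe that \cref{thm:representation-lipc} only produces some tight $\nu \in \pRadon$ representing the functional, so one must still identify $\mu$ with $\nu$, i.e.\ prove that $\mu$ itself is tight. Your verification that $\mu(K)=\nu(K)$ for compact $K \subset X\setminus A$ is essentially correct, with the small caveat that $h_n = (1-n\,d_K)^+$ has compact support only because of local compactness (via \cref{lem:compact-offset}), not merely because $K$ is disjoint from the closed set $A$.

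The genuine gap is the final assembly step. The paracompact decomposition of $X\setminus A$ into disjoint open $\sigma$-compact pieces may consist of \emph{uncountably} many pieces, and a Borel measure is only countably additive, so you cannot conclude $\mu(E)=\sum_i \mu(E\cap U_i)$ — which is exactly what ``assembling'' the piecewise tightness statements requires. This is not a repairable technicality within your framework: take $X$ an uncountable discrete space, $A$ a single point, and $\mu(E)=0$ for countable $E$, $\mu(E)=\infty$ for uncountable $E$. This is a countably additive Borel measure; every $f\in\LipcXpos$ is finitely supported, so $\mu(f)=0<\infty$; the restriction of $\mu$ to each (singleton) piece of the decomposition is tight; $\mu$ agrees with the representing measure $\nu=0$ on all compacta; yet $\mu$ is not tight and $\mu\neq\nu$. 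So agreement on compacta plus piecewise tightness does not yield $\mu=\nu$. The identification does go through if $X$ is separable, since then $X\setminus A$ is $\sigma$-compact and local finiteness together with Ulam's theorem on each compact piece gives tightness of $\mu$ directly, with no need for the representation theorem in this part of the argument; absent such a hypothesis, your last step fails at precisely the point you flagged as delicate.
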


\begin{proof}
    Let $\mu \in \pBorel$ such that for all $f \in \LipcXpos$, $\mu(f) < \infty$.
    Since integration is linear, $\mu$ is a positive linear functional on $\LipcX$.
    By Beppo Levi's lemma, $\mu$ is sequentially order continuous.
    By \cref{thm:representation-lipc}, $\mu \in \pRadon$.
    The reverse direction is given by \cref{prop:measures_spaces_equiv_def_1}.
\end{proof}

\begin{remark}
    If we only assume that $X \setminus A$ is locally compact then \cref{thm:representation-lipc} does not hold.
    Consider the following example arising in persistent homology.
    Let $Y = \{(x,y) \in \R^2 \ | \ x \leq y\}$ with the Euclidean metric and 
    let $\Delta = \{(x,x) \ | \ x \in \R\}$.
    Let $(X,d)$ be the quotient metric space $Y/\Delta$ and consider the metric pair $(X,d,A)$ where $A$ is the one-point set containing the equivalence class $\Delta$.
    Then $X \setminus A$ is locally compact but $X$ is not locally compact. 
    Thus, for all $f \in \LipcXpos$, $f$ vanishes in a neighborhood of the point $\Delta$.
    Let $\mu = \sum_{k=1}^{\infty} \delta_{(0,\frac{1}{k})}$.
    Then $\mu \in \pBorel$ and for all $f \in \LipcXpos$, $\mu(f) < \infty$. 
    However, $\mu$ is not locally $1$-finite at $A$,
    so $\mu \not\in \pRadon$.
\end{remark}

\begin{theorem} \label{thm:representation-lipc-non-positive}
    Let $(X,A)$ be a metric pair. 
    Assume that $X$ is locally compact.
    For any order bounded, sequentially order continuous linear functional $T:\LipcX\to \R$, there exists $\mu,\nu \in \pRadon$ such that $T(f) = \int_X f d(\mu-\nu)$ for all $f\in \LipcX$. 
    Moreover, $\mu$ and $\nu$ can be chosen uniquely such that, for all $f\in \LipcXpos$, 
	\[\textstyle \inf\{\int_X g d\mu + \int_X h d\nu \ | \ g+ h = f, \ g,h \in \LipcXpos\} = 0.\] 
    That is, $\LipcX_c^{\sim} = \Radon$.
\end{theorem}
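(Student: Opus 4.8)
My proof plan proceeds in three stages: first decompose $T$, then apply the positive-functional representation theorem to each piece, and finally establish uniqueness via a minimization condition.

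\textbf{Stage 1: Decomposition.} The plan is to invoke the Riesz–Kantorovich theorem. Since $\LipcX$ is a Riesz space and $\R$ is Dedekind complete, the order dual $\LipcX^{\sim} = \mathcal{L}_{\textup b}(\LipcX, \R)$ is itself a Dedekind complete Riesz space. An order bounded linear functional $T$ therefore has a canonical decomposition $T = T^+ - T^-$ into positive parts, where $T^+ = T \vee 0$ and $T^- = (-T) \vee 0$. The next step is to verify that $T^+$ and $T^-$ are each sequentially order continuous; this follows from the characterization recorded in \cref{sec:riesz}: for order bounded $T$ with $F = \R$ Dedekind complete, $T$ is sequentially order continuous if and only if both $T^+$ and $T^-$ are. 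Thus $T^{\pm} \in \LipcX_c^{\sim}$ are sequentially order continuous positive linear functionals.

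\textbf{Stage 2: Representation of each piece.} Now I would apply \cref{thm:representation-lipc} to $T^+$ and $T^-$ separately, using the hypothesis that $X$ is locally compact. This yields unique $\mu, \nu \in \pRadon$ with $T^+(f) = \int_X f\, d\mu$ and $T^-(f) = \int_X f\, d\nu$ for all $f$, and hence $T(f) = \int_X f\, d(\mu - \nu)$. This gives existence of the representation and shows the inclusion $\LipcX_c^{\sim} \subseteq \Radon$; the reverse inclusion $\Radon \subseteq \LipcX_c^{\sim}$ follows from \cref{lem:sequentially-order-continuous}(2) together with \cref{lem:order_bounded_implies_norm_bounded}, since every locally $1$-finite Radon measure defines a sequentially order continuous positive functional and differences of these are order bounded.

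\textbf{Stage 3: Uniqueness via the minimization characterization.} The representation $\mu - \nu$ is not unique as a pair $(\mu,\nu)$, since one may add a common measure to both. To pin down the canonical pair, I would recognize that the Riesz lattice structure forces $T^+$ and $T^-$ to be the \emph{minimal} positive parts, characterized by the Riesz–Kantorovich formula
\[
    T^+(f) = (T \vee 0)(f) = \sup\{ T(g) \ | \ 0 \leq g \leq f, \ g \in \LipcXpos\}
\]
for $f \in \LipcXpos$. The condition $T^+ \wedge T^- = 0$, expressed through the infimum formula $(S \wedge U)(f) = \inf\{S(g) + U(h) \mid g + h = f, \ g,h \in \LipcXpos\}$, is exactly the stated condition
\[
    \inf\Bigl\{ \int_X g\, d\mu + \int_X h\, d\nu \ \Big| \ g + h = f, \ g,h \in \LipcXpos \Bigr\} = 0.
\]
So the plan is to show that $(\mu,\nu)$ arising from $(T^+, T^-)$ satisfies this infimum condition, and conversely that any representing pair satisfying it must induce positive functionals with zero infimum, hence be the lattice parts $T^+, T^-$; uniqueness of the measures then follows from uniqueness in \cref{thm:representation-lipc}. \textbf{The main obstacle} I anticipate is the translation between the measure-theoretic infimum over decompositions $g + h = f$ in $\LipcXpos$ and the lattice-theoretic identity $T^+ \wedge T^- = 0$: one must confirm that the minimality of the Riesz decomposition transfers faithfully through the representation map, which requires that the correspondence $T^{\pm} \leftrightarrow (\mu,\nu)$ respects the cone structure and that no measure mass is lost when passing to the infimum, i.e.\ that distinct disjoint measures genuinely achieve the vanishing infimum rather than merely approaching it.
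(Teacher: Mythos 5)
Your proposal is correct and follows essentially the same route as the paper: decompose $T = T^+ - T^-$ in the Dedekind complete order dual, note that sequential order continuity passes to $T^\pm$, apply \cref{thm:representation-lipc} to each, and read the uniqueness condition as the Riesz--Kantorovich infimum formula for $T^+ \wedge T^- = 0$. The ``obstacle'' you flag at the end is not one: the stated condition only requires the infimum to equal zero, not to be attained, so it is literally the formula for $(T^+ \wedge T^-)(f)$ and no further argument about attainment is needed.
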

	
\begin{proof}
    Since $T$ is order bounded, 
    it is an element of the order dual $\LipcX^{\sim}$ of the Riesz space $\LipcX$.
    Hence, it has a unique decomposition
    $T = T^+ - T^-$, where $T^+$ and $T^-$ are positive linear operators with $T^+\wedge T^- = 0$.
    Since $T$ is sequentially order continuous, so are $T^+$ and $T^-$. By Theorem \ref{thm:representation-lipc}, there exists unique $\mu,\nu\in \pRadon$ such that $T^+(f) = \int_Xf d\mu$ and $T^-(f) = \int_X f d\nu$ for all $f\in \LipcX$. Hence $T(f) = \int_X fd(\mu-\nu)$ for all $f\in \LipcX$. 
    The uniqueness statement is simply a restatement of the uniqueness of the decomposition $T = T^+ - T^-$ with $T^+\wedge T^- = 0$, expressed in terms of $\mu$ and $\nu$.
\end{proof}

\subsection{Linear functionals on Lipschitz functions}

\begin{definition} \label{def:exhausted}
    Let $T: \LipX \to \R$ be a positive linear functional.
    Say that $T$ is \emph{exhausted by compact sets} 
    if for all $f \in \LipXpos$ and for all $\eps > 0$,
    there exists a compact set $K \subset X \setminus A$ such that 
    $\sup \{ T(g) \ | \ g|_K = 0, \ g \leq f, \ g \in \LipXpos \} < \eps$.
    % If $T^+,T^-:\LipX \to \R$ are positive linear functionals with $T^+ \meet T^- = 0$ then say that $T^+ - T^-$ is \emph{exhausted by compact sets} if $T^+$ and $T^-$ are exhausted by compact sets.
    For $T \in \LipX^\sim$, say that $T$ exhausted by compact sets if $\abs{T}$ is exhausted by compact sets.
\end{definition}

\begin{lemma} \label{lem:exhausted}
    Let $T: \LipX \to \R$ be a positive linear functional. 
    Then $T$ is exhausted by compact sets  if and only if
    %for all $L_1 > 0$ and 
    for all $\eps > 0$
    there is a compact set $K \subset X \setminus A$ such that
    %for all $L_2 > 0$, $T(L_1 d_A \meet L_2 d_K) < \eps$.
    for all $L> 0$, $T(d_A\meet Ld_K) <\eps$.
\end{lemma}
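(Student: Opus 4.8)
The plan is to prove the two implications separately, using that $d_A$ is an order unit for $\LipX$ and that a positive Lipschitz function vanishing on a set is dominated by a multiple of the distance function to that set. Throughout, recall that for $h \in \LipXpos$ we have $h \le L(h)\, d_A$ (since $h$ vanishes on $A$), and likewise that if $K \neq \emptyset$ and $g \in \LipXpos$ with $g|_K = 0$, then $g \le L(g)\, d_K$. Note also that for any compact $K \subset X \setminus A$ and any $L > 0$ the function $d_A \meet L d_K$ lies in $\LipXpos$ and vanishes on $K$: on $K$ we have $d_K = 0$, so the minimum is $0$, and on $A$ we have $d_A = 0$, so it again vanishes, confirming membership in $\LipXpos$.

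For the forward direction, I would specialize the definition of being exhausted by compact sets to the test function $f = d_A \in \LipXpos$. Given $\eps > 0$, this yields a compact $K \subset X \setminus A$ with $\sup\{T(g) \mid g|_K = 0,\ g \le d_A,\ g \in \LipXpos\} < \eps$. For every $L > 0$ the function $d_A \meet L d_K$ is an admissible competitor in this supremum, so $T(d_A \meet L d_K) < \eps$, which is exactly the asserted condition.

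For the reverse direction, fix $f \in \LipXpos$ and $\eps > 0$; if $L(f) = 0$ then $f = 0$ and the only admissible competitor is $0$, so set $c = L(f) > 0$. Apply the hypothesis with tolerance $\eps/c$ to obtain a compact $K \subset X \setminus A$ with $T(d_A \meet L d_K) < \eps/c$ for all $L > 0$. Now take any admissible $g$, i.e.\ $g \in \LipXpos$ with $g|_K = 0$ and $g \le f$. Combining $g \le f \le c\, d_A$ with $g \le L(g)\, d_K$ gives $g \le (c\, d_A) \meet (L(g)\, d_K) = c\bigl(d_A \meet (L(g)/c)\, d_K\bigr)$. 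Applying the positive functional $T$ together with the hypothesis at $L = L(g)/c$ yields $T(g) \le c\, T\bigl(d_A \meet (L(g)/c)\, d_K\bigr) < \eps$. As this bound is uniform over all admissible $g$, the supremum is at most $\eps$, proving $T$ is exhausted by compact sets. The degenerate cases are harmless: if $L(g) = 0$ then $g$ is constant and, vanishing on the nonempty $K$, equals $0$; and if the $K$ produced by the hypothesis is empty, then $d_A \meet L d_K = d_A$, so the hypothesis forces $c\,T(d_A) < \eps$ and hence $T(g) \le c\,T(d_A) < \eps$ directly.

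The main point to get right is the reverse direction, specifically the uniformity issue: the admissible competitors $g$ admit no common Lipschitz bound, since $g \le f$ constrains them pointwise but not in Lipschitz number, so the quantity $L(g)/c$ ranges over all of $(0,\infty)$. This is precisely why the hypothesis must quantify over \emph{all} $L > 0$ rather than a single $L$, and why the domination $g \le c\bigl(d_A \meet (L(g)/c)\, d_K\bigr)$ must be calibrated to each $g$ individually. Once this is arranged, positivity of $T$ and the scaling identity $(c\, d_A) \meet (L(g)\, d_K) = c\bigl(d_A \meet (L(g)/c)\, d_K\bigr)$ close the argument.
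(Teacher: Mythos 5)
Your proof is correct and follows essentially the same route as the paper's: the forward direction specializes to $f = d_A$ with competitor $g = d_A \meet L d_K$, and the reverse direction uses the domination $g \leq L(f)d_A \meet L(g)d_K$ rescaled to $L(f)\bigl(d_A \meet \tfrac{L(g)}{L(f)}d_K\bigr)$. Your explicit treatment of the degenerate cases ($L(f)=0$, $L(g)=0$, $K=\emptyset$) is a minor addition the paper leaves implicit.
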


\begin{proof}
    In the forward direction, let %$f = L_1 d_A$ 
    $f = d_A$ and let % $g = L_1 d_A \meet L_2 d_K$.
    $g = d_A\meet Ld_K$.
    In the reverse direction, 
%    let $L = L(g)$.
     fix $f \in \LipXpos$. Then $f \leq L(f)d_A$ and for $g \in \LipXpos$ with $g \leq f$ and $g|_K = 0$ for some $K \subset X \setminus A$, $g \leq L(f)d_A \meet L(g)d_K$.
    For $\eps > 0$, there is a compact set $K \subset X \setminus A$ such that for all $L > 0$,
    $T(d_A \meet L d_K) < \frac{\eps}{L(f)}$.
    Then for $g \in \LipXpos$ with $g \leq f$ and $g|_K = 0$, 
    $T(g) \leq T(L(f)d_A \meet L(g)d_K) = L(f)T(d_A \meet \frac{L(g)}{L(f)} d_K) < \eps$.
\end{proof}

\begin{lemma}\label{lem:pfRadon_exhausted_by_cpt_sets}
    $\mu \in \pfRadon$ is exhausted by compact sets.
\end{lemma}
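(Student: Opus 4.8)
The plan is to reduce to the quantitative criterion of \cref{lem:exhausted} and then exploit tightness of the finite measure $d_A\mu$. Since $\mu \in \pfRadon$ has $\mu(d_A) < \infty$, \cref{prop:measures_spaces_equiv_def_2} shows that $\mu$ is a positive linear functional on $\LipX$, so the notion of being exhausted by compact sets is defined for it. By \cref{lem:exhausted}, it therefore suffices to produce, for each $\eps > 0$, a compact set $K \subset X \setminus A$ such that $\mu(d_A \meet L d_K) < \eps$ for every $L > 0$.

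The key observation is that the relevant object is the finite indefinite-integral measure $\lambda = d_A\mu$. By the remark following \cref{lem:mu-eps-finite}, $\lambda$ is a finite Radon measure on $X \setminus A$; in particular it is tight. Applying tightness of $\lambda$ to the set $X \setminus A$ (which has finite $\lambda$-measure), I would obtain a compact set $K \subset X \setminus A$ with $\lambda\bigl((X \setminus A) \setminus K\bigr) < \eps$.

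With $K$ fixed, the estimate is uniform in $L$. Since $K$ is closed, $d_K$ vanishes exactly on $K$, so $d_A \meet L d_K$ vanishes on $A \cup K$ and is dominated by $d_A$ everywhere. Hence $\mu(d_A \meet L d_K) = \int_{(X\setminus A)\setminus K}(d_A \meet L d_K)\,d\mu \leq \int_{(X\setminus A)\setminus K} d_A\,d\mu = \lambda\bigl((X\setminus A)\setminus K\bigr) < \eps$, independently of $L$. By \cref{lem:exhausted}, this shows that $\mu$ is exhausted by compact sets.

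The only real subtlety is the bookkeeping with the quotient $\pBorel$: one works throughout with the representative of $\mu$ on $X \setminus A$, and the one nontrivial input is that $d_A\mu$ is not merely finite but tight, which is where the remark following \cref{lem:mu-eps-finite} (ultimately \cite[412Q]{fremlin4}) is used. Once that is granted, the uniform-in-$L$ bound is immediate, since $d_A \meet L d_K \leq d_A$ and is supported off $K$.
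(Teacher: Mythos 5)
Your proof is correct, but it takes a different route from the paper's. The paper verifies \cref{def:exhausted} directly for an arbitrary $f \in \LipXpos$: it approximates $\mu(f)$ from below by a simple function $\sum_i a_i \chi_{E_i}$, uses tightness of $\mu$ itself to shrink each $E_i$ to a compact $K_i$, sets $K = \bigcup_i K_i$, and then shows that any $g \leq f$ with $g|_K = 0$ satisfies $\mu(g) < \eps$ because $f - g \geq f\chi_K \geq \sum_i a_i \chi_{K_i}$. You instead pass through the equivalent quantitative criterion of \cref{lem:exhausted} and shift the tightness from $\mu$ to the auxiliary finite measure $\lambda = d_A\mu$, which the remark after \cref{lem:mu-eps-finite} (via \cite[412Q]{fremlin4}) identifies as a finite Radon measure on $X \setminus A$; a single compact $K$ with $\lambda((X\setminus A)\setminus K) < \eps$ then gives the bound uniformly in $L$ since $d_A \meet L d_K \leq d_A$ and vanishes on $K \cup A$. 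Your argument is shorter and avoids the simple-function bookkeeping, at the cost of invoking two extra inputs (\cref{lem:exhausted} and the tightness of the density measure $d_A\mu$); the paper's argument is self-contained at the level of the definition and uses only tightness of $\mu$. Both are valid, and all the inputs you cite are indeed established earlier in the paper.
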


\begin{proof}
    Let $\mu \in \pfRadon$. 
    Let $f \in \LipXpos$.
    Then $\mu(f) < \infty$.
    Let $\eps>0$.
    By the definition of the integral, there is a simple function $\sum_{i=1}^n a_i \chi_{E_i} \leq f$, with each $E_i$ a Borel set, such that $\sum_{i=1}^n a_i \mu(E_i) > \mu(f) - \frac{\eps}{2}$.
    Since $\mu$ is tight, 
    for each $i$ there is a compact set $K_i \subset E_i$ with $\mu(K_i) > \mu(E_i) - \frac{\eps}{2n}$.
    Thus, we have the simple function $\sum_{i=1}^n a_i \chi_{K_i} \leq f$
    with $\sum_{i=1}^n a_i \mu(K_i) > \mu(f) - \eps$.
    Let $K = \bigcup_{i=1}^n K_i$.
    Choose $g \in \LipXpos$ with $g \leq f$ and $g|_K = 0$.
    Then $f-g \geq (f-g)\chi_K = f \chi_K \geq \sum_{i=1}^n a_i \chi_{K_i}$.
    Thus $\mu(f) - \eps < \sum_{i=1}^n a_i \mu(K_i) \leq \mu(f-g) = \mu(f) - \mu(g)$.
    Therefore $\mu(g) < \eps$.
\end{proof}

\begin{lemma} \label{lem:loc-compact-sigma-compact}
    If $X \setminus A$ is locally compact and $\sigma$-compact then every sequentially order continuous positive linear functional $T: \LipX \to \R$ is exhausted by compact sets.
\end{lemma}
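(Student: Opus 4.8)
The plan is to verify the criterion of \cref{lem:exhausted}: it suffices to produce, for each $\eps > 0$, a single compact set $K \subseteq X \setminus A$ with $\sup_{L > 0} T(d_A \meet L d_K) < \eps$. The essential difficulty is the uniformity in $L$. A naive argument that fixes $L$, lets a compact set grow to exhaust $X \setminus A$, and applies order continuity breaks down, because the functions $d_A \meet L d_K$ have Lipschitz constants that blow up with $L$ and their suprema over $L$ need not lie in $\LipX$; indeed the relevant double limit over $L$ and over an exhaustion cannot be interchanged in general. My strategy is therefore to dominate the \emph{entire} family $\{d_A \meet L d_K\}_{L>0}$ by a single function independent of $L$, to which order continuity can be applied.

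First I would use that $X \setminus A$ is locally compact and $\sigma$-compact to choose an exhaustion by compact sets $K_1 \subseteq K_2 \subseteq \cdots$ with $K_n \subseteq \operatorname{int}(K_{n+1})$ and $\bigcup_n K_n = X \setminus A$, the interiors taken in the open set $X \setminus A$. Since $K_n$ is compact and $X \setminus \operatorname{int}(K_{n+1})$ is closed and disjoint from it, the distance $\delta_n = d\bigl(K_n, X \setminus \operatorname{int}(K_{n+1})\bigr)$ is positive, so $\theta_n := (1 - d_{K_n}/\delta_n) \join 0$ is a $(1/\delta_n)$-Lipschitz function valued in $[0,1]$, equal to $1$ on $K_n$ and with $\supp(\theta_n) \subseteq K_{n+1}$. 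Because one factor is compactly supported, $d_A \theta_n \in \LipcXpos$, with $0 \le d_A \theta_n \le d_A$. The containment $\supp(\theta_n) \subseteq K_{n+1}$ together with $\theta_{n+1}|_{K_{n+1}} = 1$ forces $\theta_n \le \theta_{n+1}$, so $(d_A\theta_n)_n$ is increasing; and since every point of $X \setminus A$ eventually lies in some $K_n$, the pointwise supremum is $d_A$, which is then also the supremum in $\LipX$. Hence $d_A \theta_n \uparrow d_A$, equivalently $d_A(1 - \theta_n) \downarrow 0$ in $\LipX$.

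Next I would invoke sequential order continuity of the positive functional $T$: from $d_A(1-\theta_n) \downarrow 0$ I get $T\bigl(d_A(1-\theta_n)\bigr) \to 0$. The observation that closes the gap is the $L$-independent domination $d_A \meet L d_{K_{n+1}} \le d_A(1 - \theta_n)$, valid for all $L > 0$: on $K_{n+1}$ the left-hand side vanishes since $d_{K_{n+1}} = 0$, while off $K_{n+1}$ we have $\theta_n = 0$ (as $\supp \theta_n \subseteq K_{n+1}$), so the right-hand side equals $d_A$, which dominates the left-hand side. Applying the positive, hence monotone, functional $T$ and taking the supremum over $L$ yields $\sup_{L>0} T(d_A \meet L d_{K_{n+1}}) \le T\bigl(d_A(1-\theta_n)\bigr)$. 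Given $\eps > 0$, I then choose $n$ with $T\bigl(d_A(1-\theta_n)\bigr) < \eps$ and take $K = K_{n+1}$; by \cref{lem:exhausted}, $T$ is exhausted by compact sets.

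The main obstacle is exactly the uniformity over $L$ flagged above, and the crux of the argument is its resolution: rather than attempting to pass to a limit in the badly behaved family $d_A \meet L d_{K_n}$ directly, one applies order continuity only to the tame increasing sequence $d_A \theta_n$ of compactly supported Lipschitz functions, and transfers the resulting estimate through the single dominating function $d_A(1-\theta_n)$. Local compactness and $\sigma$-compactness enter solely through the existence of the nested exhaustion $K_n \subseteq \operatorname{int}(K_{n+1})$, which in turn furnishes the cutoffs $\theta_n$ with the separation property that makes the domination hold.
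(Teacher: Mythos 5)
Your proof is correct and follows essentially the same route as the paper's: both arguments take a nested compact exhaustion $K_n \subseteq \operatorname{int}(K_{n+1})$ of $X \setminus A$, build cutoff approximations supported in $K_{n+1}$ that increase to the target function, and apply sequential order continuity to kill the tail. The only difference is presentational: you reduce to the single function $d_A$ via the criterion of \cref{lem:exhausted} and make the cutoffs $\theta_n$ and the domination $d_A \meet L d_{K_{n+1}} \leq d_A(1-\theta_n)$ explicit, whereas the paper argues directly from \cref{def:exhausted} for an arbitrary $f \in \LipXpos$, leaving the analogous cutoff construction implicit in the claim $a_n \uparrow T(f)$.
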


\begin{proof}
    Since $X \setminus A$ is locally compact and $\sigma$-compact, 
    $X \setminus A$ is exhausted by compact sets.
    That is, there is a sequence $(K_n)$ 
    %in $\mathcal{K}$ 
    of compact sets in $X \setminus A$
    such that for each $n$, $K_n$ is contained in the interior of $K_{n+1}$ and $X \setminus A \subset \bigcup_{n=1}^{\infty} K_n$.
    Let $T: \LipX \to \R$ be a sequentially order continuous positive linear functional.
    Let $f \in \LipXpos$.
    Let $a_n = \inf \{ T(g) \ | \ g \in \LipXpos, \ g|_{K_n} = f, \ g|_{K_{n+1}^c} = 0\}$.
    Let $b_n = \sup \{ T(h) \ | \ h \in \LipXpos, \ h|_{K_{n+1}} = 0, \ h \leq f\}$.
    Then for all $n$, $a_n + b_n \leq T(f)$.
    Since $T$ is sequentially order continuous, $a_n \uparrow T(f)$. 
    Therefore $b_n \downarrow 0$.
    Thus, $T$ is exhausted by compact sets.
\end{proof}

Combining \cref{lem:sequentially-order-continuous,lem:pfRadon_exhausted_by_cpt_sets}, we have the following.

\begin{proposition} \label{prop:pfRadon-sequentially-order-continuous-exhausted}
    Let $\mu \in \pfRadon$. Then $\mu$ is a sequentially order continuous positive linear functional on $\LipX$ that is exhausted by compact sets.
\end{proposition}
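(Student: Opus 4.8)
The plan is simply to conjoin the two immediately preceding lemmas, which between them supply both assertions. Recall from \eqref{eq:pfRadon} that an element $\mu \in \pfRadon$ is by definition a tight relative Borel measure with $\mu(d_A) < \infty$; in particular $\mu$ is $1$-finite.

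First I would apply part (1) of \cref{lem:sequentially-order-continuous}: since $\mu$ is $1$-finite, it gives that $\mu$ is a positive linear functional on $\LipX$ that is sequentially order continuous, so $\mu \in \LipX_c^{\sim}$. (Internally, positivity holds because integration against a positive measure is order-preserving; finiteness on $\LipXpos$, hence linearity on $\LipX$, comes from \cref{prop:measures_spaces_equiv_def_2}; and sequential order continuity is the monotone convergence theorem.) Then I would apply \cref{lem:pfRadon_exhausted_by_cpt_sets}, which states precisely that every $\mu \in \pfRadon$ is exhausted by compact sets. Combining the two conclusions yields the proposition. There is no substantive obstacle here: all of the content resides in the two cited lemmas, and the proposition merely records their combination.
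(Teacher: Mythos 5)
Your proposal is correct and coincides exactly with the paper's argument: the proposition is stated there as an immediate consequence of combining \cref{lem:sequentially-order-continuous} (part (1), for sequential order continuity and positivity) with \cref{lem:pfRadon_exhausted_by_cpt_sets} (for exhaustion by compact sets). Nothing further is needed.
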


The following result gives a converse to \cref{prop:pfRadon-sequentially-order-continuous-exhausted}.

\begin{theorem} \label{thm:representation-lip}
    Let $(X,A)$ be a metric pair.
    Let $T$ be a sequentially order continuous positive linear functional on $\LipX$.
    Then $T$ is represented by a unique %$\tau$-additive 
    $\mu \in \pfRadon$
    if and only if $T$ is exhausted by compact sets.
    If so, then for each compact set $K \subset X \setminus A$, 
    $\mu(K) = \inf \{ Th \ | \ h \geq 1_K, \ h \in \LipXpos\}$.
\end{theorem}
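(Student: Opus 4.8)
The plan is to treat the two implications separately, with essentially all of the work in the reverse direction. The forward implication is immediate: if $T$ is represented by some $\mu \in \pfRadon$, then $T = \mu$ as functionals on $\LipX$, and \cref{lem:pfRadon_exhausted_by_cpt_sets} shows that $\mu$, and hence $T$, is exhausted by compact sets.

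For the reverse implication, my approach is to mimic the proof of \cref{thm:representation-lipc}, but to apply the Pollard--Tops\o e representation theorem to the full positive cone $\LipXpos$ rather than to $\LipcXpos$, keeping $\mathcal{K}$ equal to the collection of compact subsets of $X \setminus A$. The verification of hypotheses A1--A6, of $\tau$-smoothness at $\emptyset$, and of condition (10) goes through verbatim: each uses only that $\LipXpos$ is a zero-sum-free Riesz cone closed under $h \mapsto h \meet 1$ and $h \mapsto h \meet n$, that its elements are continuous, that $X$ is a Hausdorff metric space, and that $T$ is a sequentially order continuous positive functional. Crucially, none of these uses compact support or local compactness, which is why no local compactness hypothesis is needed here. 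The one condition that was trivially satisfied in \cref{thm:representation-lipc} precisely because every $h \in \LipcXpos$ is compactly supported---namely that ``$\mathcal{K}$ exhausts $T$''---is here supplied exactly by the assumption that $T$ is exhausted by compact sets, in the form of \cref{def:exhausted} (equivalently \cref{lem:exhausted}). Identifying these two conditions is the heart of the argument.

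Pollard--Tops\o e then produces a unique tight, $\tau$-additive Borel measure $\mu$ on $X \setminus A$ with $T h = \mu(h)$ for all $h \in \LipXpos$ and with $\mu(K) = \inf\{Th \mid h \geq 1_K,\ h \in \LipXpos\}$ for $K \in \mathcal{K}$; this infimum is over a nonempty set, since with $c := \min_{x \in K} d_A(x) > 0$ the function $\min(1, d_A/c)$ lies in $\LipXpos$ and dominates $1_K$. Applying the representation to $d_A \in \LipXpos$ gives $\mu(d_A) = T(d_A) < \infty$, so $\mu$ is $1$-finite, and together with tightness this yields $\mu \in \pfRadon$. I would then extend the representation from the cone to all of $\LipX$ by writing $f = f^+ - f^-$ with $f^\pm \in \LipXpos$ and invoking linearity of $T$ and of integration; both $\mu(f^\pm)$ are finite by $1$-finiteness via \cref{prop:measures_spaces_equiv_def_2}, so $T(f) = \mu(f)$ for every $f \in \LipX$.

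For uniqueness and the stated formula, I would show that \emph{any} representing $\mu \in \pfRadon$ satisfies $\mu(K) = \inf\{Th \mid h \geq 1_K,\ h \in \LipXpos\}$. The inequality $\geq$ is clear, since $h \geq 1_K$ forces $\mu(h) \geq \mu(K)$. For $\leq$, take $h_\delta = (1 - \tfrac{1}{\delta} d_K)^+$ for $0 < \delta < d(K,A)$: these lie in $\LipXpos$, equal $1$ on $K$, and decrease pointwise to $1_K$ as $\delta \downarrow 0$, dominated by $h_{\delta_0}$, which is $\mu$-integrable because $K^{\delta_0}$ is bounded and bounded away from $A$, hence of finite $\mu$-measure by $1$-finiteness. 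Dominated convergence gives $\mu(h_\delta) \to \mu(K)$, and since $\mu(h_\delta) = T(h_\delta)$ the formula follows; thus $\mu$ is determined on every compact subset of $X \setminus A$, and uniqueness among tight measures is exactly \cref{prop:uniqueness}. The main obstacle is the correct invocation of Pollard--Tops\o e over the \emph{unbounded} cone $\LipXpos$ and the recognition that the exhaustion hypothesis is precisely the one missing condition; once that is secured, the moment and regularity estimates are routine consequences of $1$-finiteness.
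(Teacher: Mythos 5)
Your proposal is correct and follows essentially the same route as the paper: the paper also applies the Pollard--Tops{\o}e representation theorem to $\LipXpos$ with $\mathcal{K}$ the compact subsets of $X \setminus A$, notes that conditions A1--A6, (10), and $\tau$-smoothness at $\emptyset$ carry over verbatim from \cref{thm:representation-lipc}, identifies the ``$\mathcal{K}$ exhausts $T$'' condition with \cref{def:exhausted}, and concludes $\mu \in \pfRadon$ from $T(d_A) < \infty$. Your additional details (the forward direction via \cref{lem:pfRadon_exhausted_by_cpt_sets}, the direct verification of the formula for $\mu(K)$, and the extension from the cone to $\LipX$) are consistent with the paper's surrounding lemmas and do not change the argument.
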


\begin{proof}
    We will again apply the representation theorem of Pollard and Tops{\o}e~\cite[Theorem 3]{PollardTopsoe:1975}.
    Let $\mathcal{K}$ denote the compact subsets of $X \setminus A$.
    By the identical arguments as in the proof of \cref{thm:representation-lipc}, 
    A1-A5, A6$'$ and hence A6 hold, as well as (10), and also $T$ is $\tau$-smooth at $\emptyset$ with respect to $\mathcal{K}$.
    % It remains to show that $\mathcal{K}$ exhausts $T$.
    % Let $h \in \LipXpos$ and let $\eps > 0$.
    % Since $T$ is taut, there is a $K \in \mathcal{K}$ such that 
    % $\inf \{ T(f) \ | \ f|_{K^c} = 1, f \in \LipXpos\} < \eps$.
    % Let $h' \in \LipXpos$ with $h' \leq h$, and $h'=0$ on $K$.
    % Since $T$ is order preserving, 
    % $T(h') \leq \inf \{ T(f) \ | \ f|_{K^c} = 1, f \in \LipXpos\} < \eps$.
    Therefore, there is unique tight, $\tau$-additive Borel measure $\mu$ on $X \setminus A$ representing $T$,
    if and only if $T$ is exhausted by compact sets, and if so, 
    for each $K \in \mathcal{K}$, 
    $\mu(K) = \inf \{ Th \ | \ h \geq 1_K, h \in \LipcXpos\}$.
    Since $T(d_A) < \infty$, if there exists such a $\mu$, then $\mu \in \pfRadon$.
    % Assume that $T$ is exhausted by compact sets.
    % Since $T$ is finite on $\LipXpos$, $\mu$ is finite on $\LipXpos$.
    % By \cref{prop:measures_spaces_equiv_def_2}, $\mu \in \pfRadon$.
\end{proof}

Before stating some corollaries to this result, we give an example of a positive linear functional on $\LipX$ that is not sequentially order continuous and hence cannot be represented by an element of $\pfRadon$.

\begin{example}
    There exists a positive linear functional on $\Lip(\R^+,0)$ 
    that is not sequentially order continuous.
    Let $M\subset \Lip(\R^+,0)$ be the linear subspace generated by $\Lip_c(\R^+,0)$ and $d_{0}$, and define $T:M\to \R$ by setting $T(g) = 0$ for all $g\in \Lip_c(\R^+,0)$ and $T(d_{0}) = 1$ and then extending linearly. Since $d_{0}$ cannot be written as a finite linear combination of compactly supported functions, $T$ is well-defined. 
    By \cref{prop:positive_hahn_banach_weak}, $T$ has an extension to a positive linear functional $\tilde{T}:\Lip(\R^+,0)\to \R$.
    For $n\geq 1$, let $g_n = (d_{0} \meet (n-d_{0})) \join 0$.
    Since $g_n \in \Lip_c(\R^+,0)$, $\tilde{T}(g_n) = 0$ for all $n$. 
    However, $g_n\uparrow d_{0}$ 
    and $\tilde{T}(d_0)=1$.
    Thus $\tilde{T}$ is not sequentially order continuous. 
\end{example}

\begin{corollary} \label{cor:lip_determines_pfRadon} 
    Let $\mu,\nu\in \pfRadon$. Then $\mu = \nu$ if and only if $\mu(f) = \nu(f)$ for all $f\in \LipXpos$.
\end{corollary}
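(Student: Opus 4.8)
The forward implication is immediate from the definition, so the plan focuses on the converse. Suppose $\mu,\nu \in \pfRadon$ satisfy $\mu(f) = \nu(f)$ for all $f \in \LipXpos$. The strategy is to reduce this to the uniqueness clause of \cref{thm:representation-lip}, thereby avoiding any local compactness assumption (which is why the compactly supported analogue \cref{cor:cpt_lip_determines_pRadon} is not applicable here).

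First I would upgrade the agreement from the positive cone to all of $\LipX$. Given any $f \in \LipX$, write $f = f^+ - f^-$, where $f^+, f^-$ are again Lipschitz and vanish on $A$, so that $f^+, f^- \in \LipXpos$. By linearity of integration, $\mu(f) = \mu(f^+) - \mu(f^-) = \nu(f^+) - \nu(f^-) = \nu(f)$. Hence $\mu$ and $\nu$ determine the same linear functional $T$ on $\LipX$, given by $T(f) = \mu(f) = \nu(f)$.

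Next I would invoke \cref{prop:pfRadon-sequentially-order-continuous-exhausted}, which guarantees that each of $\mu$ and $\nu$ is a sequentially order continuous positive linear functional on $\LipX$ that is exhausted by compact sets. In particular $T$ satisfies exactly the hypotheses of \cref{thm:representation-lip}, so $T$ is represented by a \emph{unique} element of $\pfRadon$. Since both $\mu$ and $\nu$ represent $T$, uniqueness forces $\mu = \nu$, completing the proof.

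I expect no substantive obstacle: the essential analytic content has already been packaged into the representation theorem and \cref{prop:pfRadon-sequentially-order-continuous-exhausted}. The only step requiring any care is the elementary decomposition $f = f^+ - f^-$ used to pass from $\LipXpos$ to $\LipX$. As an alternative that bypasses the uniqueness clause, one could argue directly via \cref{prop:uniqueness}: for each compact $K \subset X \setminus A$, the explicit formula $\mu(K) = \inf\{\mu(h) \mid h \geq 1_K,\ h \in \LipXpos\}$ from \cref{thm:representation-lip} shows that agreement of $\mu$ and $\nu$ on $\LipXpos$ yields $\mu(K) = \nu(K)$ on all such $K$, and tightness then gives $\mu = \nu$.
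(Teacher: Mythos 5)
Your proof is correct and follows exactly the route the paper intends: the corollary is stated immediately after \cref{thm:representation-lip} precisely because both $\mu$ and $\nu$, being sequentially order continuous positive linear functionals on $\LipX$ exhausted by compact sets (\cref{prop:pfRadon-sequentially-order-continuous-exhausted}), represent the same functional and are therefore identified by the uniqueness clause of that theorem. The reduction from $\LipXpos$ to $\LipX$ via $f = f^+ - f^-$ and your alternative via \cref{prop:uniqueness} are both fine.
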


\begin{corollary} \label{cor:representation-lip}
    \begin{enumerate}
        % \item \label{it:lip_determines_pfRadon}
        % Let $\mu,\nu\in \pfRadon$. Then $\mu = \nu$ if and only if $\mu(f) = \nu(f)$ for all $f\in \LipXpos$.
        % \item 
        % $\mu \in \pfRadon$
        % is $\tau$-additive.
        \item \label{it:exhausted}
        Let $\mu \in \pBorel$ such that %for all 
        $\mu(d_A) < \infty$.
        Then $\mu$ is exhausted by compact sets if and only if $\mu$ is tight.
        \item \label{it:locally-compact-sigma-compact}
        If $X \setminus A$ is locally compact and $\sigma$-compact then
        $\pfRadon 
        = \{ \mu \in \pBorel \ | \ \mu(d_A) < \infty\}
        = \{ \mu \in \pBorel \ | \ \forall f \in \LipXpos, \ \mu(f) < \infty\}$.
    \end{enumerate}
\end{corollary}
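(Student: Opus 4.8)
The plan is to treat part~\eqref{it:exhausted} as the core of the statement and then obtain part~\eqref{it:locally-compact-sigma-compact} by assembling it with earlier results. Throughout, observe that since $\mu(d_A) < \infty$, the indefinite integral measure $d_A\mu$ is a \emph{finite} Borel measure (supported on $X \setminus A$, since $d_A$ vanishes on $A$), and that $\mu$ is automatically a positive linear functional on $\LipX$ by \cref{prop:measures_spaces_equiv_def_2}, so that ``exhausted by compact sets'' is meaningful for it.

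For part~\eqref{it:exhausted} I would establish the chain of equivalences
\[
  \mu \text{ is exhausted by compact sets} \iff d_A\mu \text{ is tight} \iff \mu \text{ is tight}.
\]
For the first equivalence, \cref{lem:exhausted} says $\mu$ is exhausted by compact sets exactly when, for every $\eps > 0$, there is a compact $K \subset X \setminus A$ with $\mu(d_A \meet L d_K) < \eps$ for all $L > 0$. Since $d_K$ vanishes on $K$ and is strictly positive off $K$, the functions $d_A \meet L d_K$ increase pointwise to $d_A \chi_{X \setminus K}$ as $L \to \infty$, so the monotone convergence theorem gives $\sup_L \mu(d_A \meet L d_K) = (d_A\mu)(X \setminus K)$. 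Hence the condition is equivalent to: for every $\eps > 0$ there is a compact $K \subset X \setminus A$ with $(d_A\mu)(X \setminus K) \leq \eps$, i.e. the finite measure $d_A\mu$ concentrates its mass on compact sets. For a finite Borel measure on a metric space this mass-concentration is equivalent to inner regularity with respect to compact sets: using the standard inner regularity of finite Borel measures with respect to closed sets, one intersects a closed set nearly filling a Borel set $E$ with a compact set carrying all but $\eps$ of the total mass, producing a compact subset of $E$ that nearly fills $E$. The second equivalence is the tightness-preserving bijection $\mu \mapsto d_A\mu$, valid because $d_A$ is continuous and strictly positive on $X \setminus A$ \cite[412Q]{fremlin4}. (The reverse direction, $\mu$ tight $\Rightarrow$ $\mu$ exhausted, is also immediate from \cref{lem:pfRadon_exhausted_by_cpt_sets}.)

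For part~\eqref{it:locally-compact-sigma-compact}, the equality $\{\mu \in \pBorel \mid \mu(d_A) < \infty\} = \{\mu \in \pBorel \mid \mu(f) < \infty \text{ for all } f \in \LipXpos\}$ is precisely \cref{prop:measures_spaces_equiv_def_2} and needs no hypothesis on $X$. The inclusion $\pfRadon \subseteq \{\mu \mid \mu(d_A) < \infty\}$ holds by definition of $\pfRadon$. For the reverse inclusion, suppose $\mu(d_A) < \infty$; then $\mu$ is a sequentially order continuous positive linear functional on $\LipX$ by \cref{lem:sequentially-order-continuous}, and since $X \setminus A$ is locally compact and $\sigma$-compact, \cref{lem:loc-compact-sigma-compact} shows $\mu$ is exhausted by compact sets. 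Part~\eqref{it:exhausted} then yields that $\mu$ is tight, so $\mu \in \pfRadon$.

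The main obstacle is the forward direction of part~\eqref{it:exhausted}: converting the functional-analytic condition into genuine tightness of $\mu$. The subtlety is that $\mu$ itself may be infinite, so one cannot argue with it directly; the reduction to the \emph{finite} measure $d_A\mu$ via monotone convergence, followed by the upgrade from mass-concentration to full inner regularity and the tightness-preserving change of measure, is exactly what is needed to close the gap.
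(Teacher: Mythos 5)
Your proof is correct, but for the forward direction of part~(a) it takes a genuinely different route from the paper. The paper's proof is a one-liner on top of the representation machinery: by Beppo Levi's theorem $\mu$ is a sequentially order continuous positive linear functional on $\LipX$, and \cref{thm:representation-lip} (which rests on the Pollard--Tops{\o}e representation theorem) then converts ``exhausted by compact sets'' into representability by a tight measure. Your argument instead stays entirely measure-theoretic: you translate the exhaustion condition through \cref{lem:exhausted} and monotone convergence into the statement that the finite measure $d_A\mu$ places all but $\eps$ of its mass on compact subsets of $X \setminus A$, upgrade this to full inner regularity using the standard closed-set regularity of finite Borel measures on metric spaces, and transfer tightness back to $\mu$ via the multiplication-by-$d_A$ bijection, which the paper itself records (with the reference to \cite[412Q]{fremlin4}) in the remark following \cref{lem:mu-eps-finite}. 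Your route is more elementary and self-contained, and it sidesteps a point the paper leaves implicit, namely the identification of the measure produced by \cref{thm:representation-lip} with the original $\mu$; what the paper's route buys is brevity, given that the representation theorem is already in place. Your treatment of part~(b) and of the reverse direction of part~(a) coincides with the paper's.
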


\begin{proof}
    \eqref{it:exhausted}
    For the reverse direction, if $\mu$ is tight then by \cref{def:weak-radon}, $\mu \in \pfRadon$, and so by \cref{lem:pfRadon_exhausted_by_cpt_sets}, $\mu$ is exhausted by compact sets.
    For the forward direction, by 
    %\cref{prop:measures_spaces_equiv_def_2} and 
    Beppo Levi's theorem, 
    $\mu$ is a sequentially order continuous positive linear functional on $\LipX$.
    Thus, by \cref{thm:representation-lip}, $\mu$ is tight.

    \eqref{it:locally-compact-sigma-compact}
    Let $\mu \in \pBorel$ such that $\mu$ is $1$-finite. 
    By \cref{lem:sequentially-order-continuous}, $\mu$ is a sequentially order continuous positive linear functional on $\LipX$.
    Thus, by \cref{lem:loc-compact-sigma-compact}, $\mu$ is exhausted by compact sets, and 
    hence by \eqref{it:exhausted}, $\mu$ is tight.
    Therefore, we may omit the tightness condition from \eqref{eq:pfRadon} and \eqref{eq:pfRadon-lip}.
\end{proof}

\begin{theorem}\label{thm:representation-lip-non-positive}
    Let $T:\LipX\to \R$ be an order bounded, sequentially order continuous linear functional,
    which is exhausted by compact sets.
    Then there exists measures $\mu,\nu\in \pfRadon$ such that $T(f) = \int_Xfd(\mu-\nu)$ for all $f\in \LipX$. 
    Moreover, $\mu$ and $\nu$ can be chosen uniquely such that, for all $f\in \LipXpos$, 
	\[\textstyle \inf\{\int_X g d\mu + \int_X h d\nu \ | \ g+ h = f, \ g,h \in \LipXpos\} = 0.\] 
\end{theorem}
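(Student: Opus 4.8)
The plan is to reduce the signed case to the positive case already handled in \cref{thm:representation-lip}, exactly as \cref{thm:representation-lipc-non-positive} was reduced to \cref{thm:representation-lipc}. First I would observe that since $T$ is order bounded, it is an element of the order dual $\LipX^{\sim}$, which is a Riesz space because $\R$ is Dedekind complete. Hence $T$ admits the canonical Jordan-type decomposition $T = T^+ - T^-$ with $T^+ \wedge T^- = 0$, where $T^+$ and $T^-$ are positive linear functionals on $\LipX$. The lattice operations in $\LipX^{\sim} = \mathcal{L}_{\textup{b}}(\LipX,\R)$ are given by the Riesz--Kantorovich formulas recalled in \cref{sec:riesz}, so $T^+$ and $T^-$ are genuine positive functionals.

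The two things I must verify before applying \cref{thm:representation-lip} to each of $T^+$ and $T^-$ are that they are \emph{sequentially order continuous} and \emph{exhausted by compact sets}. For sequential order continuity, I would invoke the equivalence stated in \cref{sec:riesz}: for an order bounded operator into the Dedekind complete space $\R$, $T$ is sequentially order continuous if and only if both $T^+$ and $T^-$ are. For the exhaustion condition, I would use \cref{def:exhausted} directly: by definition $T$ is exhausted by compact sets means precisely that $\abs{T} = T^+ + T^-$ is. Since $T^+, T^- \leq \abs{T}$ and both are positive, the supremum defining the exhaustion condition for $T^+$ (respectively $T^-$) is dominated by the corresponding supremum for $\abs{T}$; concretely, for $g \in \LipXpos$ with $g \leq f$ and $g|_K = 0$ we have $T^+(g) \leq \abs{T}(g)$, so any compact $K$ witnessing exhaustion for $\abs{T}$ witnesses it for $T^+$ and $T^-$ simultaneously. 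Thus both $T^+$ and $T^-$ satisfy the full hypotheses of \cref{thm:representation-lip}.

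Applying \cref{thm:representation-lip} to each yields unique $\mu,\nu \in \pfRadon$ with $T^+(f) = \int_X f\,d\mu$ and $T^-(f) = \int_X f\,d\nu$ for all $f \in \LipX$, whence $T(f) = \int_X f\,d(\mu - \nu)$ by linearity. For the normalization and uniqueness statement, I would translate the minimality condition $T^+ \wedge T^- = 0$ through the representation. By the Riesz--Kantorovich formula, for $f \in \LipXpos$,
\[
    (T^+ \wedge T^-)(f) = \inf\{ T^+(g) + T^-(h) \ | \ g + h = f, \ g,h \in \LipXpos \} = \inf\{ \textstyle\int_X g\,d\mu + \int_X h\,d\nu \},
\]
so $T^+ \wedge T^- = 0$ is exactly the stated condition $\inf\{\int_X g\,d\mu + \int_X h\,d\nu \mid g+h=f,\ g,h \in \LipXpos\} = 0$. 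Uniqueness of the pair $(\mu,\nu)$ subject to this normalization then follows from the uniqueness of the decomposition $T = T^+ - T^-$ with $T^+ \wedge T^- = 0$ in the Riesz space $\LipX^{\sim}$, combined with the uniqueness of the representing measures in \cref{thm:representation-lip}.

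The main obstacle I anticipate is the verification that the exhaustion hypothesis passes from $\abs{T}$ to $T^+$ and $T^-$; while the domination argument above is the natural route, one must check that \cref{def:exhausted}'s definition of ``$T$ exhausted by compact sets'' for non-positive $T$ (namely, that $\abs{T}$ is exhausted) is precisely what licenses applying \cref{thm:representation-lip} to the positive parts. Everything else is a faithful transcription of the compactly supported case, so I expect no new analytic difficulty beyond correctly bookkeeping the lattice identities.
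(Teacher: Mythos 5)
Your proposal is correct and follows essentially the same route as the paper's proof: decompose $T = T^+ - T^-$ in the order dual, note that sequential order continuity and the exhaustion hypothesis (via $T^\pm \leq \abs{T}$) pass to the positive parts, apply \cref{thm:representation-lip} to each, and read off the normalization from $T^+ \wedge T^- = 0$. You simply spell out in more detail the steps the paper states tersely.
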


\begin{proof}
    Since $T$ is order bounded, it is an element of the order dual, $\LipX^{\sim}$, of the Riesz space $\LipX$.
    Hence there is a unique decomposition $T = T^+ - T^-$, where $T^+$ and $T^-$ are positive linear functionals with $T^+\wedge T^- = 0$.
    By assumption, $T^+$ and $T^-$ are exhausted by compact sets.
    
    Since $T$ is sequentially order continuous, so are $T^+$ and $T^-$. 
    Thus, by Theorem \ref{thm:representation-lip}, there exists measures $\mu,\nu\in \pfRadon$ such that $T(f) = \mu(f) - \nu(f)$ for all $f\in \LipX$. 
    The uniqueness statement is a restatement of the uniqueness of the decomposition $T = T^+ - T^-$ with $T^+\wedge T^- = 0$. %, expressed in terms of $\mu$ and $\nu$.
\end{proof}

Combining \cref{lem:loc-compact-sigma-compact,thm:representation-lip-non-positive}, we have the following

\begin{corollary} \label{cor:representation-lip-non-positive}
    If $X \setminus A$ is locally compact and $\sigma$-compact then 
    $\LipX_c^{\sim} = \fRadon$. 
\end{corollary}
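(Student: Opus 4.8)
The plan is to prove the asserted equality by first exhibiting $\fRadon$ as a subspace of the order dual $\LipX^{\sim}$ and then showing its image is precisely the sequentially order continuous dual $\LipX_c^{\sim}$, via two inclusions. To set up the identification, I would note that the monoid homomorphism $\pfRadon \to \LipX^{\sim}$ sending $\mu$ to the functional $f \mapsto \mu(f)$ extends, by the universal property of the Grothendieck group, to a linear map $\fRadon \to \LipX^{\sim}$ carrying $\mu^+ - \mu^-$ to $f \mapsto \mu^+(f) - \mu^-(f)$; this map is injective, since if $\mu^+(f) = \mu^-(f)$ for all $f \in \LipXpos$ then $\mu^+ = \mu^-$ by \cref{cor:lip_determines_pfRadon}. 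With this identification, the corollary reduces to an equality of images.

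For the inclusion $\fRadon \subseteq \LipX_c^{\sim}$, I would invoke \cref{prop:pfRadon-sequentially-order-continuous-exhausted}: each $\mu \in \pfRadon$ is a sequentially order continuous positive linear functional on $\LipX$, hence lies in $\LipX_c^{\sim}$. Since $\LipX_c^{\sim}$ is a band, and in particular a linear subspace, of $\LipX^{\sim}$, any difference $\mu^+ - \mu^-$ with $\mu^+,\mu^- \in \pfRadon$ again lies in $\LipX_c^{\sim}$. This direction uses neither local compactness nor $\sigma$-compactness.

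The reverse inclusion $\LipX_c^{\sim} \subseteq \fRadon$ is where the hypotheses enter and constitutes the main work. Let $T \in \LipX_c^{\sim}$. Since $\LipX_c^{\sim} \subseteq \LipX^{\sim} = \mathcal{L}_{\textup{b}}(\LipX,\R)$, the functional $T$ is order bounded and decomposes uniquely as $T = T^+ - T^-$ with $T^+ \wedge T^- = 0$. Because $\R$ is Dedekind complete, the equivalences recorded in \cref{sec:riesz} show that sequential order continuity of $T$ passes to $T^+$, $T^-$, and hence to $\abs{T} = T^+ + T^-$. I would then apply \cref{lem:loc-compact-sigma-compact} to the sequentially order continuous positive linear functional $\abs{T}$: since $X \setminus A$ is locally compact and $\sigma$-compact, $\abs{T}$ is exhausted by compact sets, which by definition means $T$ is exhausted by compact sets. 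Feeding $T$, now known to be order bounded, sequentially order continuous, and exhausted by compact sets, into \cref{thm:representation-lip-non-positive} produces $\mu,\nu \in \pfRadon$ with $T(f) = \mu(f) - \nu(f)$ for all $f \in \LipX$. Thus $T$ is the image of $\mu - \nu \in \fRadon$, completing the inclusion.

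The main obstacle is concentrated in \cref{lem:loc-compact-sigma-compact}, which is exactly where local compactness and $\sigma$-compactness of $X \setminus A$ are used to guarantee automatic exhaustion by compact sets; without this hypothesis a sequentially order continuous functional need not be represented by a $1$-finite Radon measure. The remaining difficulty is organizational: correctly reducing the general (non-positive) case to the positive case via the Riesz decomposition $T = T^+ - T^-$ and transporting both sequential order continuity (through Dedekind completeness of $\R$) and exhaustion by compact sets (through the definition via $\abs{T}$) to the pieces, so that \cref{thm:representation-lip-non-positive} applies verbatim.
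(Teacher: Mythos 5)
Your proposal is correct and follows essentially the same route as the paper, which obtains the corollary by combining \cref{lem:loc-compact-sigma-compact} (to get exhaustion by compact sets for free from the hypotheses) with \cref{thm:representation-lip-non-positive}; you simply make explicit the Riesz decomposition bookkeeping and the forward inclusion via \cref{prop:pfRadon-sequentially-order-continuous-exhausted} that the paper leaves implicit.
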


\begin{lemma} \label{lem:compact-offset}
    Assume that $X \setminus A$ is locally compact. 
    Let $K$ be a compact subset of $X \setminus A$.
    Then there exists $\delta > 0$ such that $K^\delta$ is a compact subset of $X \setminus A$.
\end{lemma}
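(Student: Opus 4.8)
The plan is to use local compactness of $X \setminus A$ to trap $K$ inside a compact set, and then use the metric on $X$ to choose an offset radius small enough that $K^\delta$ sits inside that compact set (and hence avoids $A$ automatically).

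First I would produce a compact set $C \subset X \setminus A$ together with a set $V$, open in $X$, satisfying $K \subset V \subset C$. Since $X \setminus A$ is a locally compact Hausdorff space, each $x \in K$ has a neighborhood $V_x$, open in $X \setminus A$, whose closure $\overline{V_x}$ in $X \setminus A$ is compact. Because $A$ is closed, $X \setminus A$ is open in $X$, so each $V_x$ is in fact open in $X$. By compactness of $K$, finitely many $V_{x_1}, \dots, V_{x_n}$ cover $K$; I would set $V = \bigcup_{i} V_{x_i}$ and $C = \bigcup_{i} \overline{V_{x_i}}$. Then $V$ is open in $X$, while $C$ is a finite union of compact sets, hence compact (and compactness is independent of the ambient space, so $C$ is compact in $X$), with $C \subset X \setminus A$ and $K \subset V \subset C$.

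Next I would extract a positive offset radius. The set $X \setminus V$ is closed in $X$ and disjoint from $K$, so the continuous function $x \mapsto d(x, X \setminus V)$ is strictly positive on $K$; being attained on the compact set $K$, the value $s := \inf_{x \in K} d(x, X \setminus V)$ is positive (if $V = X$, simply take $s = 1$). For any $0 < \delta < s$, every $x$ with $d(x, K) \leq \delta$ satisfies $d(x, X \setminus V) > 0$, hence lies in $V$; thus $K^\delta \subset V \subset C \subset X \setminus A$. In particular $K^\delta$ avoids $A$. Finally, $K^\delta = \{x \in X \mid d(x,K) \leq \delta\}$ is closed in $X$ because $d(\cdot, K)$ is continuous, and it is contained in the compact set $C$; as a closed subset of a compact set it is compact. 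Combined with $K^\delta \subset X \setminus A$, this yields the claim.

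The main obstacle is the topological bookkeeping: one must carefully track which ambient space each set is open or closed in. The neighborhoods supplied by local compactness live in $X \setminus A$, whereas the offset $K^\delta$ is defined through distances in $X$, so these must be reconciled. The key point making this work is that $A$ is closed, which renders $X \setminus A$ open in $X$ and lets the neighborhoods $V_x$ be regarded as open subsets of $X$; once this is in place the distance estimates are routine.
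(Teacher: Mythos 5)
Your proof is correct and follows essentially the same strategy as the paper's: cover $K$ by finitely many relatively compact neighborhoods inside $X\setminus A$, obtain a compact set containing a neighborhood of $K$, and shrink $\delta$ so that $K^\delta$ stays inside it. The only cosmetic difference is how $\delta$ is extracted — you use the positive distance from the compact $K$ to the closed complement of your open set $V$, whereas the paper works with explicit ball radii and a half-radius argument.
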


\begin{proof}
    Since $K$ is a compact subset of $X \setminus A$ and $X \setminus A$ is locally compact,
    for $x \in K$, we may choose $0 < \delta_x < d_A(x)$ such that $\overline{B_{\delta_x}(x)}$ is compact.
    Consider $\{B_{\frac{\delta_x}{2}}(x)\}_{x \in K}$.
    Since $K$ is compact, there is a finite subcover 
    $B_{\frac{\delta_{x_1}}{2}}(x_1), \ldots,
    B_{\frac{\delta_{x_n}}{2}}(x_n)$ of $K$.
    That is, for all $x \in K$ there exists $1 \leq i \leq n$ such that
    \begin{equation} \label{eq:finite-cover}
        d(x,x_i) \leq \frac{\delta_{x_i}}{2}.
    \end{equation}
    Let $K' = \overline{B_{\delta_{x_1}}(x_1)} \cup \cdots \cup
    \overline{B_{\delta_{x_n}}(x_n)}$.
    Then $K'$ is a compact subset of $X \setminus A$.
    Let $\delta = \min\{\frac{\delta_{x_1}}{2},\ldots,\frac{\delta_{x_n}}{2}\}$.
    We will show that $K^\delta \subset K'$.
    Let $x \in K^\delta$.
    That is, there exists $x' \in K$ such that $d(x,x') \leq \delta$.
    Apply \eqref{eq:finite-cover} to $x' \in K$ to choose $1 \leq i \leq n$.
    Then $d(x,x_i) \leq d(x,x') + d(x',x_i) \leq \frac{\delta_{x_i}}{2} + \frac{\delta_{x_i}}{2} = \delta_{x_i}$.
    Therefore $x \in K'$. Thus $K^\delta \subset K'$ as claimed.
    Hence $K^\delta$ is a compact subset of $X \setminus A$.
\end{proof}

\begin{lemma} \label{lem:exhausted-implies-seq-order-cont}
    Assume that $X \setminus A$ is locally compact.
    Let $T$ be a positive linear functional on $\LipX$, which is exhausted by compact sets.
    Then $T$ is sequentially order continuous.
\end{lemma}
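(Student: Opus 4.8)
The plan is to use the characterization of sequential order continuity for positive functionals recorded in \cref{sec:riesz}: since $T \geq 0$, it suffices to show that $f_n \downarrow 0$ in $\LipX$ (equivalently, $f_n$ decreases pointwise to $0$) implies $T(f_n) \to 0$. Fix such a sequence and set $M = L(f_1)$, so that $0 \le f_n \le f_1 \le M d_A$ for all $n$. Fix $\eps > 0$. Since $T$ is exhausted by compact sets, \cref{lem:exhausted} produces a compact set $K \subset X \setminus A$ with $T(d_A \meet L' d_K) < \eps$ for every $L' > 0$, and, using that $X \setminus A$ is locally compact, \cref{lem:compact-offset} produces $\delta > 0$ with $K^\delta$ compact in $X \setminus A$. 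For a single large constant $L$ I will split each $f_n$ as
\[ f_n = (f_n \meet L d_K) + (f_n - L d_K)^+, \]
where both summands lie in $\LipX$ (each vanishes on $A$ since $f_n$ does and $L d_K \geq 0$, and each is Lipschitz), control the first summand uniformly in $n$ via the exhausted condition, and control the second via Dini's theorem on a fixed compact set.

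For the first summand, $f_n \meet L d_K \le M d_A \meet L d_K = M\,(d_A \meet (L/M) d_K)$, so positivity of $T$ and the exhausted bound give $T(f_n \meet L d_K) \le M\, T(d_A \meet (L/M) d_K) < M\eps$, uniformly in $n$ and for any choice of $L$.

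The crux is the second summand $u_n := (f_n - L d_K)^+$, and the main obstacle is that we may \emph{not} invoke sequential order continuity of $T$ to pass to the limit in $u_n \downarrow 0$, since that is precisely what we are proving. Instead I will force all the $u_n$ to be supported in one fixed compact set, so that Dini's theorem applies. Since $K$ is compact, for each $x$ we may pick $k \in K$ with $d(x,k) = d_K(x)$, whence $f_1(x) \le f_1(k) + M d_K(x) \le M R_K + M d_K(x)$, where $R_K = \max_{x \in K} d_A(x) < \infty$. Consequently, on the closed set $S = \{f_1 \ge L d_K\}$ one has $(L - M) d_K \le M R_K$; choosing $L > M$ large enough that $M R_K/(L-M) \le \delta$ forces $S \subseteq K^\delta$. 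As $f_n \le f_1$, every $u_n$ is supported in $S$, hence in the fixed compact set $K^\delta$.

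Finally, $u_n \downarrow 0$ pointwise and each $u_n$ is continuous, so by Dini's theorem $s_n := \sup_X u_n = \max_{K^\delta} u_n \to 0$. Writing $r = \min_{K^\delta} d_A > 0$ (positive since $K^\delta$ is compact and disjoint from $A$), we have $u_n \le (s_n/r)\, d_A$ everywhere, so $T(u_n) \le (s_n/r)\, T(d_A) = (s_n/r)\,\norm{T}_{\op} \to 0$, using $T(d_A) = \norm{T}_{\op} < \infty$ from \cref{lem:plf_automatically_bounded_on_Lip}. Combining the two estimates yields $\limsup_n T(f_n) \le M\eps$, and since $\eps > 0$ was arbitrary while $M$ depends only on $f_1$, we conclude $T(f_n) \to 0$. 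The key difficulty throughout is the uniform-support step for $u_n$; once the linear bound $f_1 \le M R_K + M d_K$ is combined with \cref{lem:compact-offset} to trap every $u_n$ in a single compact set, the remainder is Dini's theorem together with the order-unit estimate for $T(d_A)$.
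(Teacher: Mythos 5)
Your argument is correct and shares the paper's overall two-part strategy --- control a piece of $f_n$ localized near a compact set by Dini's theorem, and control the complementary piece by the exhaustion hypothesis --- but the decomposition and the way exhaustion enters are genuinely different. The paper sets $a_n = \sup_K h_n$, builds the explicit majorants $\tilde g_n = a_n\bigl(\tfrac{1}{\delta}d_{(K^\delta)^c}\meet 1\bigr)$, and splits $h_n = (h_n - \tilde g_n \meet h_n) + (\tilde g_n \meet h_n)$: the first summand vanishes on $K$ and is $\leq h_1$, so the raw form of \cref{def:exhausted} (applied with $f = h_1$) bounds it by $\eps/2$ uniformly in $n$, while the second is $\leq a_n\cdot(\text{fixed function})$ with $a_n \downarrow 0$ by Dini on $K$. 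You instead split $f_n = (f_n \meet L d_K) + (f_n - Ld_K)^+$ and invoke the reformulation in \cref{lem:exhausted} to handle the first summand; the price of this cleaner cut is the extra localization step (the estimate $f_1 \leq MR_K + Md_K$ and the choice of $L$) needed to trap the supports of all the $(f_n - Ld_K)^+$ inside the single compact set $K^\delta$ from \cref{lem:compact-offset} before Dini can be applied, and your closing bound $u_n \leq (s_n/r)d_A$ with $T(d_A) = \norm{T}_{\op}$ from \cref{lem:plf_automatically_bounded_on_Lip} plays the role of the paper's $g_n \leq a_n\varphi$. Both routes are valid and of comparable length. One caveat you share with the paper: both proofs pass from ``$f_n \downarrow 0$ in $\LipX$'' to pointwise convergence to $0$ before applying Dini, and since a decreasing sequence in $\LipX$ can have lattice infimum $0$ without its pointwise infimum being $0$, that identification deserves justification --- but this is not a defect of your argument relative to the paper's.
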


\begin{proof}
    Let $(h_n) \subset \LipXpos$ such that $h_n \downarrow 0$.
    Let $\eps > 0$.
    Since $T$ is exhausted by compact sets, 
    there exists a compact set $K \subset X \setminus A$ such that
    $\sup\{ T(g) \ | \ g \in \LipXpos, g|_K = 0, g \leq h_1\} < \frac{\eps}{2}$.
    % Choose $0 < \delta < \inf_{x \in K} d_A(x)$.
    % Since $X$ is boundedly compact 
    By \cref{lem:compact-offset}, there exists $\delta > 0$ such that $K^\delta$ is a compact subset of $X \setminus A$.

    For each $n$, let $a_n = \sup_{x \in K} h_n(x)$.
    Since $K$ is compact, by Dini's theorem $a_n \downarrow 0$.
    Let $\tilde{g}_n = \frac{a_n}{\delta} d_{(K^\delta)^c} \meet a_n \in \LipcXpos$.
    Let $g_n = \tilde{g}_n \meet h_n$ and let $f_n = h_n - g_n$.
    Then $f_n \in \LipXpos$, $f_n|_K = 0$, $f_n|_{(K^\delta)^c} = h_n|_{(K^\delta)^c}$ 
    and $f_n \leq h_n \leq h_1$.
    Therefore $T(f_n) < \frac{\eps}{2}$.

    Furthermore, $\tilde{g}_n = a_n(\frac{1}{\delta} d_{(K^\delta)^c} \meet 1)$.
    Thus $T(\tilde{g}_n) = a_n T(\frac{1}{\delta} d_{(K^\delta)^c} \meet 1) \downarrow 0$.
    Since $g_n \leq \tilde{g}_n$, $T(g_n) \leq T(\tilde{g}_n)$.
    Hence, $T(g_n) < \frac{\eps}{2}$ for $n$ sufficiently large.
    Therefore, $T(h_n) = T(f_n) + T(g_n) < \eps$ for $n$ sufficiently large.
\end{proof}

Combining \cref{lem:exhausted-implies-seq-order-cont,thm:representation-lip}, we have the following.

\begin{theorem} \label{thm:representation-lip-exhausted}
    Assume that $X \setminus A$ is locally compact.
    Let $T: \LipX \to \R$ be a positive linear functional 
    which is
    exhausted by compact sets.
    Then $T$ is represented by a unique $\mu \in \pfRadon$.
    Furthermore, 
    for each compact set $K \subset X \setminus A$, 
    $\mu(K) = \inf \{ Th \ | \ h \geq 1_K, h \in \LipXpos\}$.
\end{theorem}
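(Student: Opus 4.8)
The plan is to observe that this theorem is an immediate consequence of two results already established, so essentially all of its content lies in combining them in the correct order. First I would invoke \cref{lem:exhausted-implies-seq-order-cont}: since $X \setminus A$ is locally compact and $T$ is a positive linear functional on $\LipX$ that is exhausted by compact sets, that lemma guarantees that $T$ is sequentially order continuous. This upgrades the given data so that $T$ now meets the standing hypothesis of \cref{thm:representation-lip}.

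Next I would apply \cref{thm:representation-lip} directly. That theorem asserts that a sequentially order continuous positive linear functional on $\LipX$ is represented by a unique $\mu \in \pfRadon$ precisely when $T$ is exhausted by compact sets. The ``exhausted'' direction of that biconditional is exactly our hypothesis, so the representing measure $\mu \in \pfRadon$ exists and is unique, and the formula $\mu(K) = \inf \{ Th \ | \ h \geq 1_K, h \in \LipXpos\}$ for compact $K \subset X \setminus A$ is carried over verbatim from the conclusion of \cref{thm:representation-lip}.

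I do not expect any genuine obstacle at this level, since the substantive analytic work has already been discharged upstream. The core of the argument sits in \cref{thm:representation-lip}, whose proof rests on the Pollard--Tops{\o}e representation theorem applied to the Riesz cone $\LipXpos$ together with the collection of compact subsets of $X \setminus A$, and in the Dini-type argument of \cref{lem:exhausted-implies-seq-order-cont} that converts the ``exhausted by compact sets'' condition into sequential order continuity; this is precisely where local compactness of $X \setminus A$ enters, via the compact-offset construction of \cref{lem:compact-offset}. The only thing to check here is that the two hypothesis sets are compatible: positivity, exhaustion, and local compactness of $X \setminus A$ feed the lemma, and the lemma's output together with exhaustion feeds the theorem. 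Since they match, the conclusion follows with no further computation.
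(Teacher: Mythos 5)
Your proposal is exactly the paper's argument: the theorem is stated there as an immediate consequence of combining \cref{lem:exhausted-implies-seq-order-cont} with \cref{thm:representation-lip}, in precisely the order you describe. The proposal is correct and complete.
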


Combining \cref{lem:exhausted-implies-seq-order-cont,thm:representation-lip-non-positive}, we have the following.

\begin{theorem} \label{thm:representation-lip-nonpositive-exhausted}
    Assume that $X \setminus A$ is locally compact.
    Let $T:\LipX\to \R$ be an order bounded linear functional
%    such that $T^+$ and $T^-$ are 
    which is
    exhausted by compact sets. 
    Then there exist $\mu,\nu\in \pfRadon$ such that $T(f) = \int_Xfd(\mu-\nu)$ for all $f\in \LipX$. Moreover, $\mu$ and $\nu$ can be chosen uniquely such that, for all $f\in \LipXpos$, 
	\[\textstyle \inf\{\int_X g d\mu + \int_X h d\nu \ | \ g+ h = f, \ g,h \in \LipXpos\} = 0.\] 
\end{theorem}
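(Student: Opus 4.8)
The plan is to reduce the statement to \cref{thm:representation-lip-non-positive}. That theorem already produces measures $\mu,\nu\in\pfRadon$ with $T(f)=\int_X f\,d(\mu-\nu)$ and the asserted minimality condition, but it assumes in addition that $T$ is \emph{sequentially order continuous}. Our hypotheses differ only in that we assume $T$ is exhausted by compact sets (together with local compactness of $X\setminus A$) in place of sequential order continuity. So the entire task is to show that, under these hypotheses, $T$ is automatically sequentially order continuous, after which \cref{thm:representation-lip-non-positive} applies verbatim.

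First I would invoke the order-dual structure of $\LipX$. Since $T$ is order bounded, it is an element of the Riesz space $\LipX^{\sim}$ and hence admits the canonical decomposition $T=T^+-T^-$ with $T^+\wedge T^-=0$, so that $\abs{T}=T^++T^-$ and $0\leq T^+\leq\abs{T}$, $0\leq T^-\leq\abs{T}$. By \cref{def:exhausted}, the assumption that $T$ is exhausted by compact sets means precisely that the positive functional $\abs{T}$ is exhausted by compact sets. I would then observe that this property descends to $T^+$ and $T^-$ by monotonicity: for $f\in\LipXpos$ and $\eps>0$, choose a compact $K\subset X\setminus A$ witnessing exhaustion for $\abs{T}$; then for every admissible $g\in\LipXpos$ with $g\leq f$ and $g|_K=0$ one has $T^+(g)\leq\abs{T}(g)$ (and likewise for $T^-$), so the defining supremum for $T^{\pm}$ is dominated by that for $\abs{T}$ and the same $K$ works. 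Thus $T^+$ and $T^-$ are each positive and exhausted by compact sets.

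Now \cref{lem:exhausted-implies-seq-order-cont}, which is exactly where the local compactness of $X\setminus A$ enters, shows that each of $T^+$ and $T^-$ is sequentially order continuous, and therefore so is their difference $T=T^+-T^-$. At this point $T$ is order bounded, sequentially order continuous, and exhausted by compact sets, so \cref{thm:representation-lip-non-positive} produces the measures $\mu,\nu\in\pfRadon$ representing $T$ together with the uniqueness and minimality statement. The only step requiring genuine care is the passage of exhaustion from $\abs{T}$ to $T^{\pm}$, but as sketched above this is an elementary monotonicity observation rather than a real obstacle; all the substantive work is already carried by \cref{lem:exhausted-implies-seq-order-cont} and \cref{thm:representation-lip-non-positive}.
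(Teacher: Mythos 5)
Your proposal is correct and is essentially the paper's own argument: the paper proves this theorem simply by combining \cref{lem:exhausted-implies-seq-order-cont} with \cref{thm:representation-lip-non-positive}. The only detail you add --- routing the exhaustion hypothesis through the Jordan decomposition $T = T^+ - T^-$ so that the positive-functional lemma applies to each part, via the monotonicity $T^{\pm} \leq \abs{T}$ --- is exactly the step the paper leaves implicit, and your justification of it is sound.
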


\subsection{Bounded linear functionals on Lipschitz functions}

We introduce the following condition 
on a metric pair $(X,A)$ which is slightly more general than $X$ being boundedly compact.

\begin{definition}
    Say that the metric pair is $(X,A)$ is boundedly compact if $X$ is locally compact and $\sigma$-compact and for each $x \in X \setminus A$ and for each $\eps, r > 0$, $\overline{B}_r(x) \cap \overline{A_\eps}$ is compact.
\end{definition}

% If $X$ is boundedly compact then $(X,A)$ is boundedly compact and 
% if $(X,A)$ is boundedly compact then $X \setminus A$ is locally compact and $\sigma$-compact.

\begin{theorem} \label{thm:representation-proper}
    Assume that 
    %$X$ 
    $(X,A)$
    is 
    %locally compact and $\sigma$ compact.
    boundedly compact.
%    and that $A$ is closed and bounded.
    Let $T$ be a sequentially order continuous, 
    bounded,
    positive linear functional on $\LipcX$.
    Then $T$ is represented by a unique $\mu \in \pfRadon$.
\end{theorem}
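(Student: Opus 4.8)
The plan is to obtain the representing measure from \cref{thm:representation-lipc} and then use the boundedness hypothesis to upgrade it from locally $1$-finite to $1$-finite. Since the pair $(X,A)$ being boundedly compact entails in particular that $X$ is locally compact, \cref{thm:representation-lipc} applies to the sequentially order continuous positive linear functional $T$ and yields a unique $\mu \in \pRadon$ with $T(f) = \mu(f)$ for every $f \in \LipcX$. It therefore remains only to show that $\mu \in \pfRadon$, i.e.\ that $\mu(d_A) < \infty$; uniqueness is then automatic, since any two representing measures in $\pfRadon \subset \pRadon$ must coincide by the uniqueness already provided by \cref{thm:representation-lipc}.

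To bound $\mu(d_A)$, I would construct a sequence $g_n \in \LipcXpos$ with $L(g_n) \leq 1$ and $g_n \uparrow d_A$ pointwise. Granting this, the monotone convergence theorem together with $T(g_n) \leq \norm{T}_{\textup{op}} L(g_n) \leq \norm{T}_{\textup{op}}$ gives
\[
  \mu(d_A) = \lim_{n \to \infty} \mu(g_n) = \lim_{n \to \infty} T(g_n) \leq \norm{T}_{\textup{op}} < \infty.
\]
If $A = X$ then $d_A = 0$ and there is nothing to prove, so assume $A \neq X$ and fix a basepoint $x_0 \in X \setminus A$. I would take
\[
  g_n = \bigl(d_A - \tfrac{1}{n}\bigr)^+ \wedge \bigl(n - d_{x_0}\bigr)^+.
\]
Each $g_n$ is an infimum of $1$-Lipschitz functions, hence $1$-Lipschitz, is nonnegative, and vanishes on $A^{1/n} \supseteq A$; since both factors increase with $n$, the sequence $g_n$ is increasing with $g_n \uparrow d_A$ pointwise.

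The crux of the argument, and the step I expect to be the main obstacle, is verifying that each $g_n$ has compact support, and this is precisely where the full strength of bounded compactness of the \emph{pair} is needed. The support of $g_n$ is contained in $\overline{A_{1/n}} \cap \overline{B}_n(x_0)$, which is compact by the defining property of a boundedly compact pair applied with $\eps = 1/n$, $r = n$, and $x = x_0 \in X \setminus A$. The two truncations play complementary and both indispensable roles: the factor $\bigl(d_A - \tfrac{1}{n}\bigr)^+$ pushes the support a positive distance away from $A$, so that the relevant offset region is compact rather than merely closed, while $\bigl(n - d_{x_0}\bigr)^+$ confines the support to a bounded set. When $A$ is unbounded neither cutoff alone suffices, since a slab of the form $\{1/n \leq d_A \leq c\}$ need not be compact. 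Once $g_n \in \LipcXpos$ is established, the displayed bound shows $\mu(d_A) \leq \norm{T}_{\textup{op}}$, so $\mu$ is tight with finite first moment about $A$ and hence lies in $\pfRadon$, completing the proof.
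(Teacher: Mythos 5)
Your proof is correct and takes essentially the same route as the paper: the paper also obtains $\mu \in \pRadon$ from \cref{thm:representation-lipc} and then builds an increasing sequence of compactly supported $1$-Lipschitz functions converging up to $d_A$ (it uses $h_n = d_{(K_n)^c}$ with $K_n = \overline{B}_n(x)\cap\overline{A_{1/n}}$, where your $g_n$ is a cosmetic variant supported in the same compact set), concluding via boundedness of $T$ and monotone convergence. Your identification of the compactness of $\overline{B}_n(x_0)\cap\overline{A_{1/n}}$ as the crux is exactly where the paper invokes bounded compactness of the pair.
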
 

\begin{proof}
    By \cref{thm:representation-lipc}, 
    $T$ is represented by a unique $\mu \in \pRadon$.
%    Let $g_n = (d_A \meet (n-d_A)) \join 0$.
    % Let $x \in X$.
    %Let $g_n = (n - d_x)^+ \meet d_A$.
    Fix $x\in X\backslash A$. 
    For each $n\geq 1$, let $K_n = \overline{B}_n(x)\cap \overline{A_{1/n}}$ and 
%    let $h_n = d_A\wedge d_{(K_n)^c}$. 
    let $h_n = d_{(K_n)^c}$. 
    Since $A \subset (K_n)^c$, $d_{(K_n)^c} \leq d_A$. Also,
    since $(X,A)$ is boundedly compact, $K_n$ is compact, and hence $h_n\in \LipcOneXpos$. 
    Moreover, $h_n \uparrow d_A$ (here, we are using the convention that $d_{(K_n)^c} = \infty$ if $(K_n)^c = \emptyset$).
    
    % Then $g_n \in \LipcOneXpos$.
    Since $T$ is bounded, there is an $M > 0$ such that $T(h_n) \leq M L(h_n) \leq  M$.
    Since $h_n \uparrow d_A$, 
    by Beppo Levi's lemma,
    $\mu(d_A) = \sup \mu(h_n) \leq M$.
    Therefore $\mu \in \pfRadon$.
\end{proof}

\begin{corollary}
    Assume that 
%    $X$ 
    $(X,A)$
    is boundedly compact.
%    and that $A$ is closed and bounded.
    Then there is a bijection between sequentially order continuous, positive, bounded linear functionals on $\LipcX$ and sequentially order continuous positive linear functionals on $\LipX$.
\end{corollary}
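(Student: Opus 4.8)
The plan is to show that each of the two classes of functionals is in natural bijection with $\pfRadon$, and that the resulting bijection between them is realized concretely by restriction along the inclusion $\LipcX \hookrightarrow \LipX$, with inverse given by extension through the common representing measure. Write $S_c$ for the class of sequentially order continuous, positive, bounded linear functionals on $\LipcX$, and $S$ for the class of sequentially order continuous positive linear functionals on $\LipX$.

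Before anything else I would record the relevant topology. Since $(X,A)$ is boundedly compact, $X$ is locally compact and $\sigma$-compact, hence separable and second countable; therefore the open set $X \setminus A$ is locally compact and, being a subspace of a second countable (hence hereditarily Lindel\"of) space, is itself Lindel\"of and so $\sigma$-compact. This supplies exactly the hypotheses needed below: local compactness of $X$ for \cref{thm:representation-proper} and \cref{cor:cpt_lip_determines_pRadon}, and local compactness together with $\sigma$-compactness of $X \setminus A$ for \cref{lem:loc-compact-sigma-compact}.

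First I would identify $S_c$ with $\pfRadon$. By \cref{thm:representation-proper}, every $T \in S_c$ is represented by a unique $\mu \in \pfRadon$; conversely, any $\mu \in \pfRadon \subset \pRadon$ restricts to a functional on $\LipcX$ that is positive, sequentially order continuous by \cref{lem:sequentially-order-continuous}(2), and bounded by \cref{lem:plf_automatically_bounded_on_Lip} together with \cref{lem:order_bounded_implies_norm_bounded}, hence lies in $S_c$. Uniqueness in \cref{thm:representation-proper} makes these two assignments mutually inverse. Next I would identify $S$ with $\pfRadon$: by \cref{lem:loc-compact-sigma-compact} every $T \in S$ is automatically exhausted by compact sets, so \cref{thm:representation-lip} represents it by a unique $\mu \in \pfRadon$; conversely \cref{prop:pfRadon-sequentially-order-continuous-exhausted} shows that each $\mu \in \pfRadon$ defines an element of $S$, and again uniqueness makes these mutually inverse.

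Composing the two identifications gives the desired bijection $S_c \cong \pfRadon \cong S$. Tracing the maps shows that the induced bijection $S \to S_c$ sends $T$ to its restriction $T|_{\LipcX}$, both $T$ and $T|_{\LipcX}$ being represented by the same $\mu$, while the inverse $S_c \to S$ sends $T$ to the integration functional $f \mapsto \mu(f)$ on all of $\LipX$, where $\mu$ represents $T$; injectivity of restriction reduces to \cref{cor:cpt_lip_determines_pRadon}, since agreement on $\LipcXpos$ forces equality of the representing measures. The whole argument is an assembly of the representation theorems, so the only delicate points are the bookkeeping that the same measure $\mu$ represents a functional on $\LipX$ and its restriction to $\LipcX$, and the topological preliminary that $X \setminus A$ inherits local compactness and $\sigma$-compactness; I expect the latter, namely verifying that the hypotheses of \cref{lem:loc-compact-sigma-compact} genuinely hold, to be the main thing requiring care.
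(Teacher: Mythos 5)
Your proof is correct and follows essentially the same route as the paper: the forward direction rests on \cref{thm:representation-proper} (the representing measure $\mu\in\pfRadon$ furnishing the extension to $\LipX$) and the reverse direction on restriction, with boundedness supplied by \cref{lem:plf_automatically_bounded_on_Lip}. The only difference is that you additionally identify the $\LipX$-side with $\pfRadon$ via \cref{lem:loc-compact-sigma-compact} and \cref{thm:representation-lip}, which makes the mutual-inverse check explicit where the paper leaves it implicit; your topological preliminary that $X\setminus A$ inherits local compactness and $\sigma$-compactness from the bounded compactness of $(X,A)$ is also correct.
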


\begin{proof}
  Let $T: \LipcX \to \R$ be a sequentially order continuous, positive, bounded linear functional. Then by Theorem~\ref{thm:representation-proper}, $T$ is represented by a  unique $\mu \in \pfRadon$, which gives a sequentially order continuous positive linear extension of $T$ to $\LipX$.

  Let $T$ be a sequentially order continuous positive linear functional on $\LipX$.
  % Since $T$ is positive, $T$ is order bounded, and hence by Lemma~\ref{lem:order_bounded_implies_norm_bounded}, 
  By \cref{lem:plf_automatically_bounded_on_Lip},
  $T$ is bounded.
  Therefore $T$ restricts to a sequentially order continuous positive, bounded linear functional on $\LipcX$.
\end{proof}

\begin{theorem} \label{thm:representation-proper-not-positive}
    Assume that 
%    $X$ 
    $(X,A)$
    is boundedly compact.
%    and that $A$ is closed and bounded.
    Let $T$ be an order bounded, sequentially order continuous, linear functional $T:\LipcX\to \R$ such that both $T$ and $\abs{T}$ are bounded.
    Then there exists $\mu,\nu \in \pfRadon$ such that $T(f) = \int_X f d(\mu-\nu)$ for all $f\in \LipcX$. Moreover, $\mu$ and $\nu$ can be chosen uniquely such that, for all $f\in \LipcXpos$, 
	\[\textstyle \inf\{\int_X g d\mu + \int_X h d\nu \ | \ g+ h = f, \ g,h \in \LipcXpos\} = 0.\] 
\end{theorem}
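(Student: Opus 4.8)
The plan is to mirror the proofs of \cref{thm:representation-lipc-non-positive,thm:representation-lip-non-positive}, reducing to the positive case already settled by \cref{thm:representation-proper} via the Jordan-type decomposition in the order dual. First I would observe that, since $T$ is order bounded, it belongs to the order dual $\LipcX^{\sim}$ of the Riesz space $\LipcX$. As $\R$ is Dedekind complete, $\LipcX^{\sim}$ is itself a Riesz space, so $T$ admits the unique decomposition $T = T^+ - T^-$ with $T^+, T^- \geq 0$ and $T^+ \meet T^- = 0$, and moreover $\abs{T} = T^+ + T^-$.

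Next I would verify that $T^+$ and $T^-$ satisfy the three hypotheses of \cref{thm:representation-proper}: sequential order continuity, positivity, and boundedness. Positivity is immediate from the decomposition. Sequential order continuity of $T^+$ and $T^-$ follows from that of $T$: since $T$ is order bounded and $\R$ is Dedekind complete, the background equivalences give that $T$ is sequentially order continuous if and only if both $T^+$ and $T^-$ are. The boundedness is the one place where the extra hypothesis is genuinely needed: writing $T^+ = \tfrac{1}{2}(\abs{T} + T)$ and $T^- = \tfrac{1}{2}(\abs{T} - T)$ exhibits each as a linear combination of the bounded functionals $T$ and $\abs{T}$, so each is bounded by the triangle inequality for the operator norm. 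I expect this to be the main (if modest) point of the argument: unlike on $\LipX$, where \cref{lem:plf_automatically_bounded_on_Lip} forces every positive functional to be bounded, a positive functional on $\LipcX$ need not be bounded, so the decomposition alone does not produce bounded pieces — it is precisely to control this that we assume $\abs{T}$ bounded in addition to $T$.

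With these verified, \cref{thm:representation-proper} applies to each of $T^+$ and $T^-$, yielding unique $\mu, \nu \in \pfRadon$ with $T^+(f) = \int_X f\,d\mu$ and $T^-(f) = \int_X f\,d\nu$ for all $f \in \LipcX$; subtracting gives $T(f) = \int_X f\,d(\mu - \nu)$. Finally, the asserted normalization is simply a restatement of $T^+ \meet T^- = 0$. Indeed, by the Riesz--Kantorovich formula for the lattice operations on the order dual, for each $f \in \LipcXpos$ we have
\[
  (T^+ \meet T^-)(f) = \inf\left\{ T^+(g) + T^-(h) \ \middle| \ g + h = f,\ g,h \in \LipcXpos \right\} = \inf\left\{ \textstyle\int_X g\,d\mu + \int_X h\,d\nu \ \middle| \ g + h = f,\ g,h \in \LipcXpos \right\},
\]
so the requirement that this infimum vanish for all such $f$ is exactly the condition $T^+ \meet T^- = 0$. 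Uniqueness of the pair $(\mu, \nu)$ subject to this normalization then follows from the uniqueness of the decomposition $T = T^+ - T^-$ with $T^+ \meet T^- = 0$, together with the uniqueness in \cref{thm:representation-proper}.
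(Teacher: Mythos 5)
Your proposal is correct and follows essentially the same route as the paper's proof: decompose $T = T^+ - T^-$ in the order dual, check that $T^\pm = \tfrac{1}{2}(\abs{T} \pm T)$ inherit positivity, sequential order continuity, and boundedness, apply \cref{thm:representation-proper} to each piece, and read off the normalization from $T^+ \meet T^- = 0$ via the Riesz--Kantorovich formula. Your remark about why boundedness of $\abs{T}$ must be assumed separately (since positive functionals on $\LipcX$, unlike on $\LipX$, need not be bounded) correctly identifies the role of that hypothesis.
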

	
\begin{proof}
    Since $T$ is order bounded, 
    it is an element of the order dual $\LipcX^{\sim}$ of the Riesz space $\LipcX$.
    Hence, $T$ has a unique decomposition
    $T = T^+ - T^-$, where $T^+$ and $T^-$ are
    positive linear operators with $T^+\wedge T^- = 0$. 
    Since $T$ is sequentially order continuous, so are $T^+$ and $T^-$. 
    Since $T$ and $\abs{T}$ are bounded, so are
    $T^+ = \frac{1}{2}(T + \abs{T})$ and $T^- = \frac{1}{2}(T - \abs{T})$.
    By Theorem \ref{thm:representation-proper}, there exists unique $\mu,\nu\in \pfRadon$ such that $T^+(f) = \int_Xf d\mu$ and $T^-(f) = \int_X f d\nu$ for all $f\in \LipcX$. 
    Hence $T(f) = \int_X fd(\mu-\nu)$ for all $f\in \LipcX$. 
\end{proof}

\section{Relative optimal transport}
\label{sec:OT}
        
Classical optimal transport is concerned with finding the most cost effective plan for transporting one configuration of mass to another. In the classical formulation, the initial and final states must have the same finite total mass. In the relative transport problem, we 
have a reservoir which provides an unlimited source or sink for mass.
As with the classical transport problem, the relative transport problem induces a family of distances between measures called Wasserstein distances. However, unlike the classical problem, the corresponding relative  distance is well-defined between measures of different total mass.
In this section assume that all metric spaces are
complete and separable.

\subsection{Products of metric pairs}

We start with some elementary results on products of metric pairs.

Consider metric pairs $(X,d,A)$ and $(Y,e,B)$.
Assume that $A,B \neq \emptyset$.
We have the product $(X \times Y, d + e, A \times B)$.
For simplicity, denote these metric pairs $(X,A)$, $(Y,B)$ and $(X \times Y, A \times B)$.
Denote the projection maps by 
$p_1: (X \times Y, A \times B) \to (X,A)$
and
$p_2: (X \times Y, A \times B) \to (Y,B)$.
It is easy to verify the following.

\begin{lemma} \label{lem:products}
    \begin{enumerate}
        \item \label{it:projection-lipschitz} $p_1$ and $p_2$ are $1$-Lipschitz.
        \item \label{it:induced-map} 
            A morphism of metric pairs $\varphi:(X,A) \to (Y,B)$ induces
            $\varphi_*: \pBorel \to \pBorelY$.
        \item \label{it:dAeB} $(d+e)_{A \times B} = d_A \oplus e_B$.
        \item \label{it:pi-f-oplus-g}
            Let $\pi \in \pBorelXY$, $f \in \LipX$, and $g \in \LipY$.
            Then $\pi(f \oplus g) = ((p_1)_* \pi)(f) + ((p_2)_* \pi)(g)$ if either the left hand side or right hand side is defined in $[-\infty,\infty]$.
    \end{enumerate}
\end{lemma}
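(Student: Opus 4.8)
The plan is to treat the four parts separately; the first three are immediate from the definitions and the fourth is the only one requiring genuine care. For \eqref{it:projection-lipschitz}, I would unwind the product metric: since $(d+e)((x,y),(x',y')) = d(x,x') + e(y,y')$, we have $d(p_1(x,y),p_1(x',y')) = d(x,x') \leq (d+e)((x,y),(x',y'))$, and $p_1(A \times B) \subseteq A$, so $p_1$ is a $1$-Lipschitz morphism of metric pairs; the argument for $p_2$ is identical. (This was already asserted in \cref{sec:lipschitz}.) For \eqref{it:induced-map}, I would use the pushforward $\varphi_*\mu(E) = \mu(\varphi^{-1}(E))$, which is well defined because the Lipschitz map $\varphi$ is Borel measurable and which is a monoid homomorphism $\mathcal{B}^+(X) \to \mathcal{B}^+(Y)$. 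The key point is that $\varphi(A) \subseteq B$ forces $\varphi^{-1}(Y \setminus B) \subseteq X \setminus A$, so $\varphi_*$ carries the submonoid $\mathcal{B}^+(A)$ into $\mathcal{B}^+(B)$; hence it respects the congruences defining the quotient monoids and descends to $\varphi_* \colon \pBorel \to \pBorelY$. For \eqref{it:dAeB}, I would compute the distance to the product set directly, using that an infimum of a sum over a product set splits as a sum of infima: $d_{A \times B}(x,y) = \inf_{(a,b) \in A \times B}\bigl(d(x,a) + e(y,b)\bigr) = \inf_{a \in A} d(x,a) + \inf_{b \in B} e(y,b) = d_A(x) + e_B(y)$.

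For \eqref{it:pi-f-oplus-g} I would first dispatch the nonnegative case. Recall the image-measure theorem: for a Borel-measurable map $\varphi$ and a nonnegative Borel function $h$, $\pi(h \circ \varphi) = (\varphi_* \pi)(h)$ in $[0,\infty]$. Writing $\mu = (p_1)_*\pi$ and $\nu = (p_2)_*\pi$ and applying this to $f^+\circ p_1$, $f^-\circ p_1$, $g^+\circ p_2$, $g^-\circ p_2$, together with additivity of the integral on nonnegative functions, yields the two identities $\pi(f^+ \oplus g^+) = \mu(f^+) + \nu(g^+)$ and $\pi(f^- \oplus g^-) = \mu(f^-) + \nu(g^-)$, each holding in $[0,\infty]$ with no cancellation. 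In particular, for $f \in \LipXpos$ and $g \in \Lip^+(Y,B)$ the claimed identity holds unconditionally in $[0,\infty]$.

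For general $f$ and $g$ I would use the decomposition $f \oplus g = (f^+ \oplus g^+) - (f^- \oplus g^-)$ into a difference of nonnegative Borel functions, and invoke the standard fact that if $u,v \geq 0$ are measurable and at least one of $\pi(u)$, $\pi(v)$ is finite, then $\pi(u-v)$ is defined and equals $\pi(u) - \pi(v)$. Combined with the two identities above this gives $\pi(f \oplus g) = [\mu(f^+)+\nu(g^+)] - [\mu(f^-)+\nu(g^-)] = \mu(f) + \nu(g)$, the final rearrangement being legitimate in the extended reals exactly when at least one of the two bracketed sums is finite. The main obstacle, and the only step that is not purely formal, is the bookkeeping that reconciles the ``defined in $[-\infty,\infty]$'' hypotheses with this finiteness condition. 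I would handle it by a short case analysis on which of the four quantities $\mu(f^{\pm})$, $\nu(g^{\pm})$ are infinite: if the right-hand side $\mu(f) + \nu(g)$ is defined, then (ruling out $\infty - \infty$ within each marginal and oppositely signed infinities between the two marginals) at least one of $\mu(f^+)+\nu(g^+)$, $\mu(f^-)+\nu(g^-)$ must be finite, which is precisely the hypothesis licensing both the difference-of-integrals fact and the rearrangement; moreover, since $(f \oplus g)^{\pm} \leq f^{\pm} \oplus g^{\pm}$, finiteness of one of these sums also bounds the corresponding part of $\pi(f \oplus g)$, so the left-hand side is defined and the two agree.
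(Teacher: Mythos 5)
Your proof is correct and follows essentially the same route as the paper's: parts (a) and (c) are the elementary computations the paper leaves to the reader, part (b) is the paper's observation that the pushforward commutes with the canonical inclusions $\pBorelAonly \to \pBorelXonly$ and hence descends to the quotient monoids, and part (d) is the image-measure theorem (the paper simply cites Fremlin 235G) combined with additivity of the integral. Your treatment of (d) is in fact more careful than the paper's one-line chain of equalities, since you make explicit the decomposition into $f^{\pm}\oplus g^{\pm}$ and the finiteness condition under which the rearrangement in $[-\infty,\infty]$ is legitimate. One caveat: your case analysis establishes only the implication ``right-hand side defined $\Rightarrow$ left-hand side defined and equal,'' whereas the statement's ``if either side is defined'' nominally also promises the converse. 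That converse is false as literally stated --- for instance, take $X=Y$, $\pi = \Delta_*\mu$ for a tight $\mu$ with $\mu(d_A)=\infty$, and $f = d_A$, $g=-d_A$, so that $\pi(f\oplus g)=0$ is defined while the right-hand side is $\infty - \infty$ --- and the paper's proof does not address this direction either; only the direction you prove is used in the sequel, so this is a defect of the statement rather than of your argument, but it is worth flagging.
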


\begin{proof}
    \eqref{it:projection-lipschitz} and \eqref{it:dAeB} are elementary calculations.
    
    \eqref{it:induced-map}
%    Let $\mu \in \BorelA$ and let $E$ be a Borel set in $Y$.
    For a metric pair $(X,A)$, let $\iota_X$ 
    denote the canonical map from $\pBorelAonly$ to $\pBorelXonly$ by $\iota_X$.
%    From the definitions, both $f_*(\iota_X(\mu))(E)$ and $\iota_Y(f_*(\mu))(E)$ equal $\mu(f^{-1}(E) \cap A)$.
    From the definitions, $\varphi_* \circ \iota_X = \iota_Y \circ \varphi_*$. 
    Therefore there is a canonical induced map between the quotient monoids.

    \eqref{it:pi-f-oplus-g} 
    By definition and \cite[235G]{fremlin2},
    $\pi(f \oplus g) = \pi(f \circ p_1 + g \circ p_2) = \int f p_1 d\pi + \int g p_2 d\pi
    = \int f d((p_1)_* \pi) + \int g d((p_2)_* \pi) = (p_1)_* \pi (f) + (p_2)_* \pi (g)$.
\end{proof}

% In the case that $(X,A) = (Y,B)$,
% a choice of $a_x \in A$ for each $x \in X$ such that $d_A(x) = d(x,A)$ 
% provides a section $i_1:(X,A) \to (X^2,A^2)$ to $p_1$ given by $x \mapsto (x,a_x)$ and 
% a section $i_2:(X,A) \to (X^2,A^2)$ to $p_2$ given by $x \mapsto (a_x,x)$.
% Note that these are only morphisms of of pairs of sets, since they are need not be continuous.

In the case that $(X,d,A) = (Y,e,B)$,
we have the metric pair $(X \times X,d+d,A \times A)$ 
which we denote by $(X^2,A^2)$.
Then $d: (X^2,d+d) \to \R$ is $1$-Lipschitz but $d|_{A^2} \neq 0$ in general.
Let 
\begin{equation} \label{eq:d-bar}
    \bar{d} = d \meet (d_A \oplus d_A),
\end{equation}
Since $d$ and $(d+d)_{A \times A}$ are $1$-Lipschitz, $\bar{d} \in \LipOneXXpos$.
Also, $\bar{d}$ is a pseudometric on $X$~\cite[Lemma 3.13]{bubenik2022universality} such that $d(x,y) = 0$ if and only if either $x=y$ or $x,y \in A$.
%Call $\bar{d}$ the induced pseudometric on $(X,A)$.
Furthermore $\bar{d}$ is the quotient metric on $X/A$~\cite[Lemma 3.17]{bubenik2022universality}.

\begin{lemma} \label{lem:lipschitz-dbar}
    Let $f \in \LipX$.
    Then for all $x,y \in X$,
    $f(x) - f(y) \leq L(f) \bar{d}(x,y)$.
\end{lemma}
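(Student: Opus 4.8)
The plan is to bound $f(x) - f(y)$ separately by each of the two arguments of the minimum defining $\bar{d} = d \meet (d_A \oplus d_A)$. Since $\bar{d}(x,y) = \min(d(x,y), d_A(x) + d_A(y))$, and the minimum of two upper bounds for a quantity is again an upper bound, it suffices to establish $f(x) - f(y) \leq L(f) d(x,y)$ and $f(x) - f(y) \leq L(f)(d_A(x) + d_A(y))$ individually.

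The first bound is immediate: because $f$ is $L(f)$-Lipschitz, $f(x) - f(y) \leq \abs{f(x) - f(y)} \leq L(f) d(x,y)$.

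For the second bound, the essential input is that $f$ vanishes on $A$, which is built into the definition of $\LipX$. The excerpt already records (in \cref{sec:lipschitz}) that $\abs{f} \leq L(f) d_A$ pointwise for every $f \in \LipX$. From this I would conclude $f(x) \leq \abs{f(x)} \leq L(f) d_A(x)$ and $-f(y) \leq \abs{f(y)} \leq L(f) d_A(y)$, and add the two inequalities to obtain $f(x) - f(y) \leq L(f)(d_A(x) + d_A(y))$. Alternatively, one can argue directly from $f(A) = 0$: for all $a, a' \in A$ we have $f(x) - f(y) = (f(x) - f(a)) + (f(a') - f(y)) \leq L(f)\bigl(d(x,a) + d(y,a')\bigr)$, and taking the infimum over $a, a' \in A$ gives the same conclusion.

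Combining the two bounds, $f(x) - f(y)$ lies below both $L(f) d(x,y)$ and $L(f)(d_A(x) + d_A(y))$, hence below their minimum $L(f) \bar{d}(x,y)$, as desired. I do not expect a genuine obstacle here; the only point meriting care is that the second bound relies crucially on the hypothesis $f|_A = 0$ encoded in $\LipX$ — for an arbitrary Lipschitz function the estimate against $d_A \oplus d_A$ would fail — so the argument must invoke the vanishing of $f$ on $A$ rather than the Lipschitz condition alone.
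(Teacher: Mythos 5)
Your proof is correct and follows essentially the same route as the paper: the paper likewise bounds $f(x)-f(y)$ by $L(f)\,d(x,y)$ via the Lipschitz condition and by $\abs{f(x)}+\abs{f(y)} \leq L(f)(d_A \oplus d_A)(x,y)$ via the pointwise bound $\abs{f} \leq L(f) d_A$ coming from $f|_A = 0$, then takes the minimum. Your added remark that the second estimate genuinely needs the vanishing of $f$ on $A$ is accurate and consistent with the paper's setup.
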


\begin{proof}
    First, $d$ is $1$-Lipschitz.
    Second, for all $x,y \in X$, $f(x) - f(y) \leq \abs{f(x)} + \abs{f(y)} \leq L(f)(d_A \oplus d_A)(x,y)$.
\end{proof}

\subsection{1-Wasserstein distance for metric pairs}

Let $(X,d,A)$ be a metric pair with 
$(X,d)$ complete and separable and
$A \neq \emptyset$.
In this section we define a function $W_1: \pBorel \times \pBorel \to [0,\infty]$,
which we call the (relative) $1$-Wasserstein distance. 
For $\mu,\nu \in \pBorel$, 
we prove that $W_1(\mu,\nu) = W_1(\nu,\mu)$, $W_1(\mu,\mu) = 0$, and that if $\mu,\nu$ are $1$-finite then $W_1(\mu,\nu) < \infty$. 
Under the additional assumption that $\mu,\nu \in \pfRadon$, we prove that $W_1(\mu,\nu) = 0$ implies that $\mu=\nu$ and that $W_1$ satisfies the triangle inequality.

Given $a \in A$, we have maps
$i_1^a: X \to X^2$ and $i_2^a:X \to X^2$ given by $i_1^a(x) = (x,a)$ and $i_2^a(x) = (a,x)$, which induce $1$-Lipschitz morphisms $i_1^a:(X,A) \to (X^2,A^2)$ and $i_2^a:(X,A) \to (X^2,A^2)$.

Throughout this section $\mu,\nu \in \pBorel$. Also $\mu_j,\nu_j \in \pBorel$ for $j=1,2$.

\begin{definition}
%    Let $\mu,\nu \in \pBorel$. 
    A \emph{coupling} of $\mu$ and $\nu$ is a given by $\pi \in \pBorelXX$ such that
    $(p_1)_*(\pi) = \mu$ and $(p_2)_*(\pi) = \nu$.
    Let $\Pi(\mu,\nu)$ denote the set of couplings of $\mu$ and $\nu$.
\end{definition}

Note that if $\pi_1 \in \Pi(\mu_1,\nu_1)$ and $\pi_2 \in \Pi(\mu_2,\nu_2)$ then $\pi_1 + \pi_2 \in \Pi(\mu_1+\mu_2,\nu_1+\nu_2)$.

\begin{example} \label{ex:trivial-coupling}
%    Let $\mu,\nu \in \pBorel$.
    % Let $a \in A$. 
Let $a \in A$.
    By \cref{lem:products}\eqref{it:induced-map}, we can define $\pi = (i_1^a)_* \mu + (i_2^a)_* \nu \in \pBorelXX$.
    Then $(p_1)_* \pi = (p_1 i_1^a)_* \mu + (p_1 i_2^a)_* \nu = \mu$
    and similarly $(p_2)_* \pi = \nu$.
    Call $\pi$ a \emph{trivial coupling} of $\mu$ and $\nu$.
\end{example}

\begin{example} \label{ex:trivial-extension}
    Given $\mu = \mu_1 + \mu_2$ and $\nu = \nu_1 + \nu_2$.
    Then $\pi_1 \in \Pi(\mu_1,\mu_2)$ can be \emph{trivially extended} to $\pi \in \Pi(\mu,\nu)$ by adding the trivial coupling of $\mu_2$ and $\nu_2$ in \cref{ex:trivial-coupling} to $\pi$.
\end{example}

Because of the existence of trivial couplings, we have the following.

\begin{lemma} \label{lem:couplings-nonempty}
%    Let $\mu, \nu \in \pBorel$.
%$    Then 
    $\Pi(\mu,\nu) \neq \emptyset$.
\end{lemma}

\begin{example} \label{ex:diagonal-coupling}
    For $\mu \in \pBorel$, we have the \emph{diagonal coupling} $\Delta_* \mu$, where $\Delta:X \to X \times X$ is given by $x \mapsto (x,x)$.
    For $j=1,2$, $p_j \circ \Delta$ equals the identity map on $X$, and thus $(p_j)_* \Delta_* \mu = (p_j \circ \Delta)_* \mu = \mu$.
    Hence $\Delta_* \mu \in \Pi(\mu,\mu)$.
\end{example}

\begin{example} \label{ex:approximation-by-finite-measure}
%    Let $\mu \in \pBorel$ and 
    Let $\eps \geq 0$.
    Recall that $\mu = \mu^\eps + \mu_\eps$.
    Let $a \in A$.
    Combining 
    %\cref{ex:trivial-coupling,ex:diagonal-coupling} 
    a trivial coupling of $\mu^\eps$ and $0$ and the diagonal coupling on $\mu_\eps$,
    we have that 
    $(i_1^a)_*(\mu^\eps) + \Delta_*(\mu_\eps) \in \Pi(\mu,\mu_\eps)$.
\end{example}

Recall \eqref{eq:d-bar}, $\bar{d} = d \wedge (d_A \oplus d_A) = d \wedge (d+d)_{A \times A}$.
That is, $\bar{d}(x,y) = d(x,y) \wedge (d_A(x) + d_A(y))$.
Given $a \in A$, for all $x \in X$, $\bar{d}(x,a) = d_A(x)$.

\begin{definition}
    Define the \emph{(relative) $1$-Wasserstein distance} between $\mu$ and $\nu$ to be given by 
    \begin{equation*}
        W_1(\mu,\nu)  
%        = \inf_{\pi \in \Pi(\mu,\nu)} C(\pi)
        = \inf_{\pi \in \Pi(\mu,\nu)} \pi( \bar{d} ) 
    \end{equation*}
\end{definition}

% \begin{lemma} \label{lem:reflexivity-W1}
%     $W_1(\mu,\mu) = 0$.
% \end{lemma}

If $\pi\in \Pi(\mu,\nu)$ then by \cref{lem:products}\eqref{it:pi-f-oplus-g}, we have $\pi(f\oplus g) = (p_1)_*\pi(f) + (p_2)_*\pi(g)$. Since $(p_1)_*\pi = \mu$ and $(p_2)_*\pi = \nu$, we have the following.

\begin{lemma}\label{lem:coupling_characterization_forward} Let $\pi\in \Pi(\mu,\nu)$. Then $\pi(f\oplus g) = \mu(f) + \nu(g)$ for all $f,g\in \LipX$.
\end{lemma}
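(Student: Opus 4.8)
The plan is to read the statement directly off \cref{lem:products}\eqref{it:pi-f-oplus-g}. Specializing that result to the case $(Y,e,B) = (X,d,A)$, so that $\pi \in \pBorelXX$ and $f,g \in \LipX$, it asserts that $\pi(f \oplus g) = ((p_1)_*\pi)(f) + ((p_2)_*\pi)(g)$ whenever either side is defined in $[-\infty,\infty]$. The defining property of a coupling $\pi \in \Pi(\mu,\nu)$ is precisely that $(p_1)_*\pi = \mu$ and $(p_2)_*\pi = \nu$, so substituting these two marginals into the right-hand side immediately gives $\pi(f \oplus g) = \mu(f) + \nu(g)$. Concretely, I would first fix arbitrary $f,g \in \LipX$, then invoke \cref{lem:products}\eqref{it:pi-f-oplus-g}, and finally rewrite the two pushforwards as $\mu$ and $\nu$. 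There is no genuine computation to carry out; the content is entirely the identification of the marginals.

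The one point that requires attention is the definedness caveat inherited from \cref{lem:products}\eqref{it:pi-f-oplus-g}. Since $f$ and $g$ need not be positive and $\mu,\nu$ are arbitrary elements of $\pBorel$, the integrals $\mu(f)$ and $\nu(g)$ could in principle be indeterminate, so the hard part is not the algebra but verifying that at least one side of the identity lies in $[-\infty,\infty]$ before the substitution is legitimate. I expect this to be automatic in every setting where the lemma is used: when $f,g \geq 0$ all three integrals take values in $[0,\infty]$, and when $\mu,\nu$ are $1$-finite the estimate $\abs{f} \leq L(f)\, d_A$ together with $\mu(d_A) < \infty$ forces $\mu(f)$ and $\nu(g)$ to be finite, so the right-hand side is defined. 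In either case the substitution step is justified and no further estimates are needed, which is why I would present this as an essentially immediate consequence of \cref{lem:products}\eqref{it:pi-f-oplus-g} rather than reproving anything.
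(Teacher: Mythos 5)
Your proposal is correct and matches the paper's own argument, which likewise derives the identity directly from \cref{lem:products}\eqref{it:pi-f-oplus-g} and the defining property $(p_1)_*\pi = \mu$, $(p_2)_*\pi = \nu$ of a coupling. Your extra remark on the definedness caveat is a reasonable point of care but is not something the paper dwells on.
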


The converse of \cref{lem:coupling_characterization_forward} is true under additional hypotheses. %(see Section \ref{sec:func_crit_couplings}).

\begin{proposition} \label{prop:coupling-lipc}
    Assume that $X$ is locally compact.
    Let $\mu,\nu \in \pRadon$ and $\pi \in \pBorelXX$.
    Then the following are equivalent.
    \begin{enumerate}
        \item $\pi \in \Pi(\mu,\nu)$.
        \item For all $f,g \in \LipcXpos$, $\pi(f \oplus g) = \mu(f) + \nu(g)$.
    \end{enumerate}
\end{proposition}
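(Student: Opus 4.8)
The plan is to treat the two implications separately. The forward direction is immediate: if $\pi \in \Pi(\mu,\nu)$, then \cref{lem:coupling_characterization_forward} gives $\pi(f \oplus g) = \mu(f) + \nu(g)$ for all $f,g \in \LipX$, and in particular for all $f,g \in \LipcXpos$, since $\LipcXpos \subset \LipX$.

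For the converse, suppose that $\pi(f \oplus g) = \mu(f) + \nu(g)$ for all $f,g \in \LipcXpos$. First observe that $p_1$ and $p_2$ are morphisms of metric pairs by \cref{lem:products}\eqref{it:projection-lipschitz}, so the pushforwards $(p_1)_*\pi$ and $(p_2)_*\pi$ are well-defined elements of $\pBorel$ by \cref{lem:products}\eqref{it:induced-map}. Specializing the hypothesis to $g = 0$ (note $0 \in \LipcXpos$) and applying \cref{lem:products}\eqref{it:pi-f-oplus-g}, I obtain $(p_1)_*\pi(f) = \pi(f \oplus 0) = \mu(f)$ for every $f \in \LipcXpos$; symmetrically $(p_2)_*\pi(g) = \nu(g)$ for every $g \in \LipcXpos$. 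The remaining task is to promote these equalities of functionals to equalities of relative Borel measures.

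The crucial step is to show $(p_1)_*\pi \in \pRadon$. Since $\mu \in \pRadon$ is locally $1$-finite, \cref{prop:measures_spaces_equiv_def_1}\eqref{it:3.26a} yields $\mu(f) < \infty$ for all $f \in \LipcXpos$, whence $(p_1)_*\pi(f) = \mu(f) < \infty$ as well. Because $X$ is locally compact, the functional-analytic characterization \cref{cor:measure_spaces_equiv_def_3} then forces $(p_1)_*\pi \in \pRadon$, and likewise $(p_2)_*\pi \in \pRadon$. Now $(p_1)_*\pi$ and $\mu$ both lie in $\pRadon$ and agree on $\LipcXpos$, so the uniqueness result \cref{cor:cpt_lip_determines_pRadon} gives $(p_1)_*\pi = \mu$; similarly $(p_2)_*\pi = \nu$. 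Hence $\pi \in \Pi(\mu,\nu)$.

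The main obstacle is precisely this promotion step: the hypothesis only constrains the integrals of $\pi$ against compactly supported Lipschitz functions, so one cannot directly assert that the marginals are tight or locally finite. The device that resolves this is \cref{cor:measure_spaces_equiv_def_3}, which identifies $\pRadon$ with the relative Borel measures that are finite on all of $\LipcXpos$; its proof rests on the representation theorem \cref{thm:representation-lipc} and genuinely requires the local compactness of $X$ --- which is exactly why that hypothesis is imposed. Once membership in $\pRadon$ has been established, \cref{cor:cpt_lip_determines_pRadon} (itself a consequence of that representation theorem) closes the argument.
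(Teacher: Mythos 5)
Your proof is correct and follows essentially the same route as the paper's: the forward direction via \cref{lem:coupling_characterization_forward}, and the converse by setting $g=0$, identifying $(p_1)_*\pi(f)=\mu(f)$ on $\LipcXpos$, invoking \cref{cor:measure_spaces_equiv_def_3} to place $(p_1)_*\pi$ in $\pRadon$, and concluding with \cref{cor:cpt_lip_determines_pRadon}. The only cosmetic difference is that you cite \cref{prop:measures_spaces_equiv_def_1} for the finiteness of $\mu(f)$ where the paper cites \cref{cor:measure_spaces_equiv_def_3}; both are valid.
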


\begin{proof}
    %For the forward direction,
    %by \cref{lem:products}\eqref{it:pi-f-oplus-g}, $\pi(f \oplus g) = ((p_1)_*)\pi(f) + ((p_2)_*)\pi(g)$, which equals $\mu(f) + \nu(g)$ by assumption.
    The forward direction is \cref{lem:coupling_characterization_forward}.
    For the reverse direction,
    by \cref{lem:products}\eqref{it:induced-map}, $(p_1)_* \pi \in \pBorel$.
    Let $f \in \LipcXpos$.
    By \cref{lem:products}\eqref{it:pi-f-oplus-g}, $((p_1)_* \pi)(f) = \pi(f \oplus 0)$,
    which by assumption equals $\mu(f)$,
    which is finite by \cref{cor:measure_spaces_equiv_def_3}.
    Hence, by \cref{cor:measure_spaces_equiv_def_3}, $(p_1)_* \pi \in \pRadon$.
    Since for all $f \in \LipcXpos$, $((p_1)_* \pi)(f) = \mu(f)$, 
    by \cref{cor:cpt_lip_determines_pRadon}, $(p_1)_* \pi = \mu$.
    Similarly $(p_2)_* \pi = \nu$.
    Therefore $\pi \in \Pi(\mu,\nu)$.
\end{proof}

Similarly, using 
\cref{cor:representation-lip}\eqref{it:locally-compact-sigma-compact}
instead of
\cref{cor:measure_spaces_equiv_def_3}
and 
\cref{cor:lip_determines_pfRadon} 
instead of 
\cref{cor:cpt_lip_determines_pRadon} 
we have the following.

\begin{proposition} \label{prop:coupling-pfRadon}
    Assume that $X \setminus A$ is locally compact and $\sigma$-compact.
    Let $\mu,\nu \in \pfRadon$ and $\pi \in \pBorelXX$.
    Then the following are equivalent.
    \begin{enumerate}
        \item $\pi \in \Pi(\mu,\nu)$.
        \item For all $f,g \in \LipXpos$, $\pi(f \oplus g) = \mu(f) + \nu(g)$.
    \end{enumerate}
\end{proposition}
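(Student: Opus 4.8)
The plan is to mirror the proof of \cref{prop:coupling-lipc} nearly verbatim, swapping in the representation results appropriate to the $\LipXpos$ setting. The forward implication $(1)\Rightarrow(2)$ is immediate from \cref{lem:coupling_characterization_forward}, which gives $\pi(f\oplus g) = \mu(f)+\nu(g)$ for all $f,g \in \LipX$, hence a fortiori for $f,g \in \LipXpos$.

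For the reverse implication, first I would observe that $p_1 : (X^2,A^2)\to(X,A)$ is a morphism of metric pairs (it is $1$-Lipschitz by \cref{lem:products}\eqref{it:projection-lipschitz} and carries $A\times A$ into $A$), so by \cref{lem:products}\eqref{it:induced-map} the pushforward $(p_1)_*\pi$ is a well-defined element of $\pBorel$. For each $f \in \LipXpos$, applying \cref{lem:products}\eqref{it:pi-f-oplus-g} with $g=0$ gives $((p_1)_*\pi)(f) = \pi(f\oplus 0)$, which by hypothesis (2) equals $\mu(f)$; this is finite because $\mu \in \pfRadon$ and, since $X\setminus A$ is locally compact and $\sigma$-compact, \cref{cor:representation-lip}\eqref{it:locally-compact-sigma-compact} applies.

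Since $((p_1)_*\pi)(f) < \infty$ for every $f \in \LipXpos$, the same \cref{cor:representation-lip}\eqref{it:locally-compact-sigma-compact}---which under these hypotheses characterizes $\pfRadon$ purely by finiteness of $\mu(f)$ for $f \in \LipXpos$, with no separate tightness requirement---yields $(p_1)_*\pi \in \pfRadon$. As $(p_1)_*\pi$ and $\mu$ both lie in $\pfRadon$ and agree on all of $\LipXpos$, \cref{cor:lip_determines_pfRadon} forces $(p_1)_*\pi = \mu$. The identical argument with $p_2$ and $g \in \LipXpos$ (applying \cref{lem:products}\eqref{it:pi-f-oplus-g} with $f=0$) gives $(p_2)_*\pi = \nu$, so $\pi \in \Pi(\mu,\nu)$.

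There is no genuine obstacle: the argument is a line-by-line transplant of \cref{prop:coupling-lipc}. The one point worth flagging is \emph{why} the hypotheses strengthen from ``$X$ locally compact'' to ``$X\setminus A$ locally compact and $\sigma$-compact''. This is precisely what lets \cref{cor:representation-lip}\eqref{it:locally-compact-sigma-compact} drop the tightness clause and recover membership in $\pfRadon$ from finiteness of the marginal integrals alone---the exact role played by \cref{cor:measure_spaces_equiv_def_3} in the compactly supported case. I would make sure the plan invokes \cref{cor:representation-lip}\eqref{it:locally-compact-sigma-compact} (not \cref{cor:measure_spaces_equiv_def_3}) and \cref{cor:lip_determines_pfRadon} (not \cref{cor:cpt_lip_determines_pRadon}) at the two places where the corresponding results were used before.
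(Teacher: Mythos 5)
Your proposal is correct and is exactly the paper's argument: the paper proves this proposition by the one-line remark that one repeats the proof of \cref{prop:coupling-lipc}, substituting \cref{cor:representation-lip}\eqref{it:locally-compact-sigma-compact} for \cref{cor:measure_spaces_equiv_def_3} and \cref{cor:lip_determines_pfRadon} for \cref{cor:cpt_lip_determines_pRadon}, which is precisely the substitution you identify and carry out in detail.
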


\begin{lemma} \label{lem:reflexivity-W1_iff}
    % $W_1(\mu,\mu) = 0$.
    %$W_1(\mu,\nu) = 0$ if and only if $\mu = \nu$.
    If $\mu=\nu$ then $W_1(\mu,\nu) = 0$. 
    The converse holds if $\mu,\nu \in \pfRadon$.
\end{lemma}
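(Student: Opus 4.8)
The plan is to dispatch the two implications separately, using the diagonal and general couplings already constructed. Neither direction requires any new machinery; the forward direction is immediate from the existence of the diagonal coupling, while the converse rests on a one-sided Kantorovich--Rubinstein estimate together with the fact that $1$-finite Radon measures are determined by their integrals against Lipschitz functions.

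For the forward implication, suppose $\mu = \nu$. The diagonal coupling $\Delta_* \mu \in \Pi(\mu,\mu)$ from \cref{ex:diagonal-coupling} is an admissible competitor, and since $\bar{d}(x,x) = \min(d(x,x), 2 d_A(x)) = \min(0, 2 d_A(x)) = 0$ for every $x$, the change of variables for pushforward measures gives $\Delta_* \mu(\bar{d}) = \int_X \bar{d}(x,x)\, d\mu = 0$. As $\bar{d} \geq 0$ forces $W_1 \geq 0$, I conclude $W_1(\mu,\mu) = 0$. Note that this step needs no finiteness hypothesis on $\mu$, matching the generality of the first clause.

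For the converse, assume $\mu, \nu \in \pfRadon$ and $W_1(\mu,\nu) = 0$. Fix $f \in \LipX$; since $\mu$ and $\nu$ are $1$-finite and $\abs{f} \leq L(f) d_A$, both $\mu(f)$ and $\nu(f)$ are finite. For any coupling $\pi \in \Pi(\mu,\nu)$ with $\pi(\bar{d}) < \infty$, \cref{lem:lipschitz-dbar} applied to both $f$ and $-f$ gives $\abs{f(x) - f(y)} \leq L(f)\bar{d}(x,y)$, so $f \oplus (-f)$ is $\pi$-integrable and, by \cref{lem:coupling_characterization_forward},
\[
  \mu(f) - \nu(f) = \pi\bigl(f \oplus (-f)\bigr) \leq L(f)\,\pi(\bar{d}).
\]
Since $W_1(\mu,\nu) = \inf_{\pi} \pi(\bar{d}) = 0$, for each $\eps > 0$ there is a coupling with $\pi(\bar{d}) < \eps$, whence $\mu(f) - \nu(f) \leq L(f)\eps$; letting $\eps \to 0$ yields $\mu(f) \leq \nu(f)$. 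Replacing $f$ by $-f$ gives the reverse inequality, so $\mu(f) = \nu(f)$ for every $f \in \LipX$, in particular for every $f \in \LipXpos$.

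Finally, I would invoke \cref{cor:lip_determines_pfRadon}, which asserts that elements of $\pfRadon$ agreeing on all of $\LipXpos$ coincide, to conclude $\mu = \nu$. The only real content lies in the converse, and the step to watch is the integrability and well-definedness of $\pi(f \oplus (-f))$: this is exactly what the bound $\abs{f \oplus (-f)} \leq L(f)\bar{d}$ combined with $\pi(\bar{d}) < \infty$ secures, and it is also what licenses the use of \cref{lem:coupling_characterization_forward} (whose underlying identity in \cref{lem:products} requires one side to be defined in $[-\infty,\infty]$). Beyond this bookkeeping no genuine obstacle remains, since the determination of $1$-finite Radon measures by their values on Lipschitz functions has already been established.
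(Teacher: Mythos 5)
Your proof is correct and follows essentially the same route as the paper's: the diagonal coupling $\Delta_*\mu$ for the forward direction, and the estimate $\mu(f)-\nu(f) = \pi(f\oplus(-f)) \leq L(f)\pi(\bar d)$ via \cref{lem:lipschitz-dbar} and \cref{lem:coupling_characterization_forward}, concluded by \cref{cor:lip_determines_pfRadon}. Your extra attention to the integrability of $f\oplus(-f)$ is a welcome refinement but does not change the argument.
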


\begin{proof}
    Suppose that $\mu = \nu$.
    By \cref{ex:diagonal-coupling}, we have the diagonal coupling $\Delta_*\mu$.
    Then $\Delta_* \mu (\bar{d}) = \mu( \bar{d} \circ \Delta) = \mu(0) = 0$.
    Therefore $W_1(\mu,\mu) = 0$.

    Suppose that $W_1(\mu,\nu) = 0$. Let $f\in \LipX$. Given $\eps>0$, there exists $\pi\in \Pi(\mu,\nu)$ with $\pi(\bar{d}) <\eps/L(f)$. 
    By \cref{lem:lipschitz-dbar,lem:coupling_characterization_forward}, we have $\eps >L(f)\pi(\bar{d}) \geq \pi(f\oplus(-f)) = \mu(f)-\nu(f)$, and hence $\mu(f) < \nu(f) + \eps$. Similarly, $\nu(f)<\mu(f) + \eps$. Thus $\mu(f) = \nu(f)$ for all $f\in \LipX$. 
    If $\mu,\nu \in \pfRadon$, then
    by \cref{cor:lip_determines_pfRadon}, $\mu = \nu$.
\end{proof}

\begin{lemma} \label{lem:symmetry-W1}
%    For $\mu,\nu \in \pBorel$, 
    $W_1(\mu,\nu) = W_1(\nu,\mu)$.
\end{lemma}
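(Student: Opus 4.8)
The plan is to exploit the coordinate-swap involution on the product metric pair. Define $s \colon (X^2, A^2) \to (X^2, A^2)$ by $s(x,y) = (y,x)$. This is a morphism of metric pairs: it is $1$-Lipschitz (indeed an isometry for $d+d$, since $(d+d)((x,y),(x',y')) = d(x,x')+e(y,y')$ is unchanged under swapping both coordinates) and it maps $A^2 = A \times A$ onto itself. Hence by \cref{lem:products}\eqref{it:induced-map} it induces a pushforward $s_* \colon \pBorelXX \to \pBorelXX$, and since $s \circ s$ is the identity, $s_*$ is an involution.

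Next I would check that $s_*$ restricts to a bijection $\Pi(\mu,\nu) \to \Pi(\nu,\mu)$. The key relations are $p_1 \circ s = p_2$ and $p_2 \circ s = p_1$. So if $\pi \in \Pi(\mu,\nu)$, then $(p_1)_*(s_*\pi) = (p_1 \circ s)_*\pi = (p_2)_*\pi = \nu$ and, symmetrically, $(p_2)_*(s_*\pi) = (p_1)_*\pi = \mu$, giving $s_*\pi \in \Pi(\nu,\mu)$. Because $s_*$ is its own inverse, it is a bijection between the two coupling sets.

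Finally I would show the objective functional is preserved. Observe that $\bar d = d \meet (d_A \oplus d_A)$ is symmetric, i.e.\ $\bar d \circ s = \bar d$, since $d(x,y) = d(y,x)$ and $d_A(x) + d_A(y) = d_A(y) + d_A(x)$. Applying the change-of-variables formula for pushforwards (as in the proof of \cref{lem:products}), we get $(s_*\pi)(\bar d) = \pi(\bar d \circ s) = \pi(\bar d)$. Combining this with the bijection gives
\[
  W_1(\nu,\mu) = \inf_{\sigma \in \Pi(\nu,\mu)} \sigma(\bar d) = \inf_{\pi \in \Pi(\mu,\nu)} (s_*\pi)(\bar d) = \inf_{\pi \in \Pi(\mu,\nu)} \pi(\bar d) = W_1(\mu,\nu),
\]
as desired. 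There is no real obstacle here: the entire argument rests on the two elementary symmetries (of the swap map on couplings and of $\bar d$ as a function), and the only point requiring a little care is confirming that $s$ genuinely is a morphism of metric pairs so that $s_*$ is defined on $\pBorelXX$.
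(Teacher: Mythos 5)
Your proposal is correct and follows essentially the same route as the paper, which uses the transpose map $t(x,y)=(y,x)$ to carry couplings in $\Pi(\mu,\nu)$ to couplings in $\Pi(\nu,\mu)$ and notes $(t_*\pi)(\bar d)=\pi(\bar d\circ t)=\pi(\bar d)$. Your write-up merely spells out the details (that $t$ is a morphism of metric pairs, an involution, and the projection identities) that the paper leaves implicit.
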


\begin{proof}
    Consider the transpose map $t:X \times X \to X \times X$ given by $(x,y) \mapsto (y,x)$.
    For $\pi \in \Pi(\mu,\nu)$, $t_* \pi \in \Pi(\nu,\mu)$ and 
    $(t_* \pi) (\bar{d}) = \pi (\bar{d} \circ t) = \pi (\bar{d})$.
    The result follows.
\end{proof}

\begin{lemma}\label{lem:w1-mu-0}
%    Let $\mu \in \pBorel$.
    If $\mu$ is $1$-finite then $W_1(\mu,0) = \mu(d_A)$.
\end{lemma}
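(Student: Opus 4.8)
The plan is to prove the two inequalities $W_1(\mu,0) \le \mu(d_A)$ and $W_1(\mu,0) \ge \mu(d_A)$ separately. The first is achieved by a single explicit coupling, and the second by a Kantorovich--Rubinstein-style test-function argument using $d_A$ itself.

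For the upper bound, I would simply evaluate $\bar d$ against a trivial coupling. Fix $a \in A$ (possible since $A \neq \emptyset$) and take $\pi = (i_1^a)_* \mu \in \Pi(\mu,0)$ as in \cref{ex:trivial-coupling}. Then $\pi(\bar d) = \mu\bigl(x \mapsto \bar d(x,a)\bigr)$, and since $\bar d(x,a) = d_A(x)$ for every $a \in A$, this equals $\mu(d_A)$. Taking the infimum over all couplings gives $W_1(\mu,0) \le \mu(d_A)$.

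For the lower bound, I would test against $f = d_A$ and $g = -d_A$, both of which lie in $\LipX$. By \cref{lem:lipschitz-dbar} applied to the $1$-Lipschitz function $d_A$, we have $d_A(x) - d_A(y) \le \bar d(x,y)$ for all $x,y$, that is, $f \oplus g \le \bar d$ pointwise. Hence for any $\pi \in \Pi(\mu,0)$, monotonicity of the integral yields $\pi(f \oplus g) \le \pi(\bar d)$, while \cref{lem:coupling_characterization_forward} identifies $\pi(f \oplus g) = \mu(f) + 0(g) = \mu(d_A)$. Thus $\pi(\bar d) \ge \mu(d_A)$ for every coupling, and taking the infimum gives $W_1(\mu,0) \ge \mu(d_A)$. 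Combining the two bounds finishes the proof.

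The one step needing care is the well-definedness of $\pi(f \oplus g)$, since $f \oplus g = d_A \circ p_1 - d_A \circ p_2$ changes sign and $\pi$ need not be a finite measure; this is exactly the hypothesis under which \cref{lem:products}\eqref{it:pi-f-oplus-g} (and hence \cref{lem:coupling_characterization_forward}) applies. Here the $1$-finiteness of $\mu$ does the work: the positive part of $f \oplus g$ is dominated by $d_A \circ p_1$, whose $\pi$-integral is $\mu(d_A) < \infty$, and the negative part by $d_A \circ p_2$, whose $\pi$-integral is $0$ because $(p_2)_*\pi = 0$ in $\pBorel$ and $d_A$ vanishes on $A$. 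So $\pi(f \oplus g)$ is defined and finite, which both legitimizes the appeal to \cref{lem:coupling_characterization_forward} and licenses the monotonicity comparison against the nonnegative function $\bar d$.
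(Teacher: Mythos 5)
Your proof is correct. The upper bound is exactly the paper's argument: evaluate $\bar{d}$ against the trivial coupling $(i_1^a)_*\mu$ and use $\bar{d}(\cdot,a) = d_A$. For the lower bound, however, you take a genuinely different route. The paper argues measure-theoretically: since $(p_2)_*\pi = 0$, the coupling $\pi$ is concentrated on $X \times A$, where $\bar{d}$ \emph{equals} $d_A \circ p_1$, so $\pi(\bar{d}) \geq \pi(d_A \circ p_1) = \mu(d_A)$ with no sign-changing integrand ever appearing. You instead run a Kantorovich--Rubinstein-style dual argument, testing against $f = d_A$, $g = -d_A$ and invoking \cref{lem:lipschitz-dbar} and \cref{lem:coupling_characterization_forward}; this is the same strategy the paper uses elsewhere (e.g.\ in \cref{lem:reflexivity-W1_iff} and in the ``functional analytic'' half of \cref{lem:isometric-embedding-W1}), just not here. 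The price of your route is the integrability discussion for $d_A \circ p_1 - d_A \circ p_2$, which you handle correctly: the positive part is dominated by $d_A \circ p_1$ with $\pi$-integral $\mu(d_A) < \infty$ by $1$-finiteness, and the negative part by $d_A \circ p_2$ with $\pi$-integral $0$, so \cref{lem:products}~(d) applies. The paper's support argument is slightly more economical because it sidesteps this issue entirely and, in fact, does not use $1$-finiteness for the lower bound at all; your dual argument has the mild advantage of foreshadowing the duality results of \cref{sec:KR}. Both are complete proofs.
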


\begin{proof}
    Let $a \in A$.
    Consider the trivial coupling $(i_1^a)_* \mu \in \Pi(\mu,0)$.
    Then $(i_1^a)_* \mu (\bar{d}) = \mu (\bar{d} \circ i_1^a) = \mu(d_A)$.
    Hence $W_1(\mu,0) \leq \mu(d_A)$.

    Consider $\pi \in \Pi(\mu,0)$.
    Since $(p_2)_* \pi = 0$,
    $\pi(X \times (X \setminus A)) = \pi( p_2^{-1}(X \setminus A)) = (p_2)_* \pi(X \setminus A) = 0$.
    Thus, $\supp(\pi) \subset X \times A$.
    For $x \in X$, $a \in A$, $\bar{d}(x,a) = d_A(x)$. %\alex{in fact we have $\bar{d}(x,a) = d_A(x)$} % = (d_A \oplus 0)(x,a)$.
    Therefore $\pi(\bar{d}) \geq \pi(d_A \circ p_1) = (p_1)_* \pi(d_A) = \mu(d_A)$.
    Hence $W_1(\mu,0) \geq \mu(d_A)$.
\end{proof}

\begin{lemma} \label{lem:isometric-embedding-W1}
    Let $x,y \in X$. Then $W_1(\delta_x,\delta_y) = \bar{d}(x,y)$.
\end{lemma}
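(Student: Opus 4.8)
The plan is to prove the two inequalities $W_1(\delta_x,\delta_y) \le \bar{d}(x,y)$ and $W_1(\delta_x,\delta_y) \ge \bar{d}(x,y)$ separately. For the upper bound I would exhibit a single explicit coupling: the Dirac measure $\delta_{(x,y)} \in \pBorelXX$ concentrated at the point $(x,y) \in X^2$. Since $p_1(x,y) = x$ and $p_2(x,y) = y$, \cref{lem:products}\eqref{it:induced-map} gives $(p_1)_*\delta_{(x,y)} = \delta_x$ and $(p_2)_*\delta_{(x,y)} = \delta_y$, so $\delta_{(x,y)} \in \Pi(\delta_x,\delta_y)$. Its cost is $\delta_{(x,y)}(\bar{d}) = \bar{d}(x,y)$, and hence $W_1(\delta_x,\delta_y) \le \bar{d}(x,y)$ directly from the definition of $W_1$ as an infimum over couplings. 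Note that because $\bar{d} = d \wedge (d_A \oplus d_A)$ already encodes the minimum of the two transport strategies (moving mass directly from $x$ to $y$ versus routing it through the reservoir $A$), this one coupling suffices and no case distinction is needed at this stage.

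For the lower bound I would use weak duality against a well-chosen Lipschitz potential. For any $\pi \in \Pi(\delta_x,\delta_y)$ and any $f \in \LipOneX$, \cref{lem:lipschitz-dbar} gives the pointwise inequality $f \oplus (-f) \le \bar{d}$ on $X^2$; integrating against the nonnegative measure $\pi$ and applying \cref{lem:coupling_characterization_forward} yields
\[
  f(x) - f(y) = \pi\bigl(f \oplus (-f)\bigr) \le \pi(\bar{d}).
\]
Taking the infimum over $\pi \in \Pi(\delta_x,\delta_y)$ then gives $f(x) - f(y) \le W_1(\delta_x,\delta_y)$ for every $1$-Lipschitz $f$ vanishing on $A$. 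It remains to choose $f$ realizing the value $\bar{d}(x,y)$.

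The candidate potential is $f := \bar{d}(\cdot,y) - d_A(y)$. I would verify three things. First, $f \in \LipX$: since $\bar{d}$ is a pseudometric with $\bar{d} \le d$, the function $z \mapsto \bar{d}(z,y)$ is $1$-Lipschitz with respect to $d$, and subtracting the constant $d_A(y)$ does not change its Lipschitz number, so $L(f) \le 1$; moreover $\bar{d}(a,y) = d_A(y)$ for $a \in A$, so $f|_A = 0$. Second, using $\bar{d}(y,y) = 0$, the constant cancels and $f(x) - f(y) = \bar{d}(x,y) - \bar{d}(y,y) = \bar{d}(x,y)$. Combining with the previous paragraph gives $\bar{d}(x,y) \le W_1(\delta_x,\delta_y)$, and together with the upper bound this proves the equality.

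I do not expect a genuine obstacle here; the content is entirely in identifying the correct dual potential. The only point requiring care is that the potential must simultaneously vanish on $A$ (so that it lies in $\LipX$) and realize the cost $\bar{d}(x,y)$; shifting $\bar{d}(\cdot,y)$ by the constant $d_A(y)$ accomplishes both at once, precisely because $\bar{d}$ already interpolates between the direct distance $d(x,y)$ and the reservoir route $d_A(x)+d_A(y)$. This also handles the degenerate cases $x \in A$ or $y \in A$ uniformly, without separate arguments.
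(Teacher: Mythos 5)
Your proof is correct and follows essentially the same route as the paper's first argument: the trivial coupling $\delta_{(x,y)}$ for the upper bound, and weak duality against a $1$-Lipschitz potential vanishing on $A$ for the lower bound. The only real difference is that the paper defines its potential on $A\cup\{x,y\}$ and invokes McShane's extension theorem, whereas your explicit formula $f=\bar{d}(\cdot,y)-d_A(y)$ accomplishes the same thing directly; the paper also records a second, purely measure-theoretic proof of the lower bound via the decomposition $\pi = c\,\delta_{(x,y)}+\rho$, which you do not need.
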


\begin{proof}
    Consider $\delta_{(x,y)} \in \Pi(\delta_x,\delta_y)$.
    Then $\delta_{(x,y)}(\bar{d}) = \bar{d}(x,y)$.
    Hence $W_1(\delta_x,\delta_y) \leq \bar{d}(x,y)$.
    It remains to prove that $W_1(\delta_x,\delta_y) \geq \bar{d}(x,y)$.
    We give both a functional analytic and a measure theoretic proof.
    
Let $B = A\cup \{x,y\}$ and define a function $f:B \to \R$ by $f(x) = d_A(x)$, $f(y) = d_A(x) - \bar{d}(x,y)$, and $f(a) = 0$ for all $a\in A$. Then $f(s)-f(t)\leq \bar{d}(s,t)$ for all $s,t\in B$. By McShane's extension theorem, $f$ extends to a function $\bar{f}:X\to \R$ which satisfies $\bar{f}(p)-\bar{f}(q)\leq \bar{d}(p,q)$ for all $p,q\in X$. Hence $\bar{f}\oplus(-\bar{f})\leq \bar{d}$. By \cref{lem:coupling_characterization_forward}, $\pi(\bar{d}) \geq \pi(\bar{f}\oplus(-\bar{f})) = \delta_x(\bar{f}) -\delta_y(\bar{f}) = f(x) - f(y) = \bar{d}(x,y)$, and hence $W_1(\delta_x,\delta_y)\geq \bar{d}(x,y)$.

    Consider $\pi \in \Pi(\delta_x,\delta_y)$.
    Then $\pi = c \delta_{(x,y)} + \rho$ for some $c \in [0,1]$ and some $\rho \in \pBorelXX$ with
    $\supp(\rho) \subset \{x\} \times A  \cup A  \times \{y\}$. 
    % such that 
    % $\rho(\{x\} \times A) = 1-c$ and $\rho(A \times \{y\}) = 1-c$.
    % For all $a \in A$, $\bar{d}(x,a) = d_A(x)$ and $\bar{d}(a,y) = d_A(y)$. %(0 \oplus d_A)(x,y)$.
    % Therefore,
    % $\pi(\bar{d}) 
    % = c\delta_{(x,y)}(\bar{d}) + \rho(\bar{d})
    % = c \bar{d}(x,y) + (1-c)d_A(x) + (1-c)d_A(y)
    % = c \bar{d}(x,y) + (1-c) (d_A \oplus d_A)(x,y) \geq \bar{d}(x,y)$.
    On $\supp(\rho)$, $\bar{d} = d_A\oplus d_A$.
    Then $\pi(\bar{d}) %= c\delta_{(x,y)}(\bar{d}) + \rho(\bar{d}) 
    = c\bar{d}(x,y) + \rho(d_A\oplus d_A)$. 
    Furthermore, $(p_1)_*\rho = (1-c)\delta_x$ and $(p_2)_*\rho = (1-c)\delta_y$. 
    So $\rho(d_A\oplus d_A) = (1-c)(\delta_x(d_A) + \delta_y(d_A)) = (1-c)(d_A\oplus d_A)(x,y) \geq (1-c)\bar{d}(x,y)$.
    Hence $W_1(\delta_x,\delta_y) \geq \bar{d}(x,y)$.
\end{proof}

% Since $\bar{d}^p \leq (d+d)_{A \times A}^p$, we have the following.

% \begin{lemma} \label{lem:couplings-finite-cost}
%     If a coupling $\pi$ is $p$-finite, then $\pi(\bar{d}^p) < \infty$.
% \end{lemma}

% \begin{lemma} \label{lem:couplings-finite-cost}
%     If a coupling is $1$-finite then its cost is finite.
% \end{lemma}

% \begin{proof}
%     Let $\pi$ be a coupling.
%     Since $\bar{d} \leq d_A \oplus d_A$, $\pi(\bar{d}) \leq \pi(d_A \oplus d_A) = \pi((d+d)_{A \times A})$.
%     Thus, if $\pi((d+d)_{A \times A}) < \infty$ then $C(\pi) = \pi(\bar{d}) < \infty$.
% \end{proof}

% Combining \cref{lem:couplings-nonempty,lem:couplings-1-finite,lem:couplings-finite-cost}, we have the following.

% \begin{proposition} \label{prop:Wasserstein-distance-finite}
% %    Let $\mu,\nu \in \pBorel$.
%     If $\mu$ and $\nu$ are $1$-finite then $W_1(\mu,\nu) < \infty$.
% \end{proposition}

\begin{lemma} \label{lem:finiteness}
    Let $\eps \geq 0$. 
    Let $\pi \in \Pi(\mu,\nu)$.
    If $\mu_\eps,\nu_\eps$ are finite then so is $\pi_\eps$.
\end{lemma}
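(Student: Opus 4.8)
The plan is to rewrite the finiteness of $\pi_\eps$ as a bound on the $\pi$-mass of the region of $X^2$ lying at distance more than $\eps$ from $A^2$, and then to push that region down to the two marginals. Recall that $\pi_\eps = \pi_\eps^\infty$ is the restriction of $\pi$ to $(A^2)_\eps = \{(x,y) \in X^2 \ | \ d_{A^2}(x,y) > \eps\}$, and that by \cref{lem:products}\eqref{it:dAeB} the distance to $A^2$ in the product metric pair is $d_{A^2} = d_A \oplus d_A$, so $d_{A^2}(x,y) = d_A(x) + d_A(y)$. Hence $\pi_\eps$ is finite exactly when $\pi(\{(x,y) \ | \ d_A(x) + d_A(y) > \eps\}) < \infty$; this set lies in $X^2 \setminus A^2$, so the expression is well defined for $\pi \in \pBorelXX$, and it is Borel since $d_A$ is continuous.

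First I would cover the region by two product slabs. Since $d_A(x) + d_A(y) > \eps$ forces $\max(d_A(x), d_A(y)) > \eps/2$, we obtain
\[
  \{(x,y) \ | \ d_A(x) + d_A(y) > \eps\} \subseteq (A_{\eps/2} \times X) \cup (X \times A_{\eps/2}) = p_1^{-1}(A_{\eps/2}) \cup p_2^{-1}(A_{\eps/2}).
\]
Because $A_{\eps/2} \subset X \setminus A$, the coupling conditions $(p_1)_*\pi = \mu$ and $(p_2)_*\pi = \nu$ give $\pi(p_1^{-1}(A_{\eps/2})) = \mu(A_{\eps/2}) = \mu_{\eps/2}(X)$, and similarly for the second slab, so monotonicity of $\pi$ yields
\[
  \pi_\eps(X^2) \leq \mu_{\eps/2}(X) + \nu_{\eps/2}(X).
\]

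For $\eps = 0$ this already closes the argument with no loss: the slabs may be taken at threshold $0$, so $\pi_0(X^2) \leq \mu_0(X) + \nu_0(X)$ follows directly from the hypothesis. The step I expect to be the main obstacle is the case $\eps > 0$, where the $\ell^1$-shape of the product distance produces a middle region $\{d_A(x) \leq \eps,\ d_A(y) \leq \eps,\ d_A(x) + d_A(y) > \eps\}$ lying in neither $A_\eps \times X$ nor $X \times A_\eps$; this is precisely what forces the halved threshold and makes the clean bound land on $\mu_{\eps/2}, \nu_{\eps/2}$ rather than on $\mu_\eps, \nu_\eps$ (indeed a single off-diagonal atom of infinite mass with both coordinates of distance slightly above $\eps/2$ shows the middle region cannot be controlled by $\mu_\eps, \nu_\eps$ alone). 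To recover the stated conclusion I would invoke that the marginals relevant here are upper finite, so that $\mu_{\eps/2}(X)$ and $\nu_{\eps/2}(X)$ are finite as well; e.g. $1$-finite marginals are upper finite by \cref{lem:upper-finite-q-p}. The equivalent clean formulation of the estimate is that $\mu_\eps, \nu_\eps$ finite yields $\pi_{2\eps}$ finite.
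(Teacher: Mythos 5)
Your proof is correct, and it is in fact more careful than the paper's own argument, which contains an error that you have effectively diagnosed. The paper's proof runs: $\pi(A_\eps\times X)=\pi(p_1^{-1}(A_\eps))=\mu_\eps(X)<\infty$, similarly for $p_2$, and then asserts the set identity $(A\times A)_\eps=(A_\eps\times X)\cup(X\times A_\eps)$ to conclude $\pi_\eps(X^2)\leq\mu_\eps(X)+\nu_\eps(X)$. But since the product pair carries the $\ell^1$ metric $d+d$, \cref{lem:products}\eqref{it:dAeB} gives $(d+d)_{A\times A}=d_A\oplus d_A$, so $(A\times A)_\eps=\{(x,y)\ |\ d_A(x)+d_A(y)>\eps\}$, which for $\eps>0$ strictly contains $(A_\eps\times X)\cup(X\times A_\eps)=\{(x,y)\ |\ \max(d_A(x),d_A(y))>\eps\}$; the asserted identity holds only at $\eps=0$ (or under the sup product metric, which is not the one used). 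Your halved-threshold inclusion and the resulting bound $\pi_\eps(X^2)\leq\mu_{\eps/2}(X)+\nu_{\eps/2}(X)$ is the correct form of the same slab-covering argument, and your counterexample sketch is genuine: take $X=\R$, $A=\{0\}$, $\eps=1$, $\mu=\nu=\infty\cdot\delta_{0.6}$ and the diagonal coupling $\pi=\Delta_*\mu$; then $\mu_1(X)=\nu_1(X)=0$ but $\pi_1(X^2)=\pi(\{(x,y)\ |\ |x|+|y|>1\})=\infty$. So the lemma as stated fails for $\eps>0$ and must be amended, e.g.\ to ``$\mu_{\eps/2},\nu_{\eps/2}$ finite implies $\pi_\eps$ finite,'' or by strengthening the hypothesis to upper finiteness as you suggest.

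As you anticipate, the correction is harmless downstream. \Cref{lem:upper-finite} survives verbatim, since upper finiteness of $\mu$ and $\nu$ supplies finiteness of $\mu_\delta$ and $\nu_\delta$ for every $\delta>0$, in particular for $\delta=\eps/2$; and \cref{cor:finite} needs only the $\eps=0$ case, where, as you note, $d_A(x)+d_A(y)>0$ forces one of the summands to be positive and the paper's identity is actually true. The lemma is used in the rest of the paper only through these two corollaries, so no later result is affected.
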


\begin{proof}
    Assume that $\mu_\eps$ and $\nu_\eps$ are finite.
    Then $\pi(A_\eps \times X) = \pi (p_1)^{-1}(A_\eps) = \mu(A_\eps) = \mu_\eps(X) < \infty$.
    Similarly, $\pi(X \times A_\eps) = \nu_\eps(X) < \infty$.
    Therefore $\pi_\eps(X) = \pi((A \times A)_\eps) = \pi(A_\eps \times X \cup X \times A_\eps) \leq \pi(A_\eps \times X) + \pi(X \times A_\eps) < \infty$.
\end{proof}

\begin{corollary} \label{lem:upper-finite}
    %    Let $\mu,\nu \in \pBorel$ and 
    Let $\pi \in \Pi(\mu,\nu)$.
    If $\mu$ and $\nu$ are upper finite then so is $\pi$.
\end{corollary}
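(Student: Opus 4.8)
The plan is to observe that this corollary follows almost immediately from the preceding \cref{lem:finiteness} by quantifying over the offset parameter $\eps$. Recall that, by the definitions in \cref{sec:finiteness-conditions} applied to the product metric pair $(X^2,A^2)$, the measure $\pi$ is upper finite precisely when $\pi_\eps$ is a finite measure for every $\eps > 0$, where $\pi_\eps$ denotes the restriction of $\pi$ to $(A^2)_\eps = \{(x,y) \in X^2 \ | \ d_A(x) + d_A(y) > \eps\}$; here I am invoking \cref{lem:products}\eqref{it:dAeB} to identify the distance to $A^2$ in the product metric pair with $d_A \oplus d_A$. Likewise, $\mu$ and $\nu$ being upper finite means exactly that $\mu_\eps$ and $\nu_\eps$ are finite for every $\eps > 0$.

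First I would fix an arbitrary $\eps > 0$. Since $\mu$ and $\nu$ are upper finite, the measures $\mu_\eps$ and $\nu_\eps$ are both finite. Applying \cref{lem:finiteness} with this value of $\eps$ to the coupling $\pi \in \Pi(\mu,\nu)$ then yields that $\pi_\eps$ is finite as well.

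Because $\eps > 0$ was arbitrary, this establishes that $\pi_\eps$ is finite for every $\eps > 0$, which is exactly the assertion that $\pi$ is upper finite. I expect no real obstacle here: the entire content of the argument is carried by \cref{lem:finiteness}, and the corollary merely promotes that single-offset finiteness statement to the uniform-in-$\eps$ statement that constitutes upper finiteness.
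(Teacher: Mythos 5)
Your proposal is correct and matches the paper's (unwritten) intent exactly: the corollary is stated without proof precisely because it follows by applying \cref{lem:finiteness} for each $\eps > 0$, which is what you do. The identification of upper finiteness of $\pi$ with finiteness of $\pi_\eps$ for all $\eps>0$ via $(d+d)_{A\times A} = d_A \oplus d_A$ is the right reading of the definitions, so there is nothing to add.
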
 

% \begin{proof}
%     Let $\eps > 0$.
%     Then 
%     $\pi((X \setminus A^\eps) \times X) = \pi p_1^{-1}(X \setminus A^\eps) = \mu(X \setminus A^\eps) = \mu_\eps(X) < \infty$
%     and similarly
%     $\pi(X \times (X \setminus A^\eps)) = \pi p_2^{-1}(X \setminus A^\eps) = \nu(X \setminus A^\eps) = \nu_\eps(X) < \infty$.
%     Therefore $\pi((X \setminus A^\eps) \times X \cup X \times (X \setminus A^\eps)) < \infty$ for all $\eps > 0$.
%     Since $X^2 \setminus (A^2)^\eps \subset (X \setminus A^{\eps/2}) \times X \cup X \times (X \setminus A^{\eps/2})$, 
%     $\pi_\eps(X^2) = \pi(X^2 \setminus (A^2)^\eps) < \infty$.
% \end{proof}

\begin{corollary} \label{cor:finite}
    Let $\pi \in \Pi(\mu,\nu)$.
    If $\mu$ and $\nu$ are finite then so is $\pi$.
\end{corollary}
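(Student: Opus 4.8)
The plan is to read off \cref{cor:finite} as the special case $\eps = 0$ of \cref{lem:finiteness}, after translating its hypothesis and conclusion into the language of finiteness. Recall from \cref{sec:finiteness-conditions} that a relative Borel measure is finite exactly when it is $0$-finite, and that $\mu_0 = \mu_0^\infty$ is the restriction $\mu_{X \setminus A}$, since $A_0 = X \setminus A^0 = X \setminus A$. Hence ``$\mu$ is finite'' is the same as ``$\mu_0$ is a finite measure'', and likewise for $\nu$.

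Next I would make the same identification for the coupling $\pi$, now regarded as a relative Borel measure on the product pair $(X^2,A^2)$. By \cref{lem:products}\eqref{it:dAeB} the distance to $A^2 = A \times A$ in the sum metric is $d_{A^2} = d_A \oplus d_A$, so that
\[
    (A^2)_0 = \{(x,y) \in X^2 \mid d_A(x) + d_A(y) > 0\} = (X \times X) \setminus (A \times A),
\]
the sum of two nonnegative numbers being positive precisely when one of them is. Consequently $\pi_0 = \pi_{X^2 \setminus A^2}$, and ``$\pi$ is finite'' is exactly ``$\pi_0$ is finite''. With these identifications in hand, applying \cref{lem:finiteness} with $\eps = 0$ yields that if $\mu_0$ and $\nu_0$ are finite then $\pi_0$ is finite, which is precisely the assertion that $\mu,\nu$ finite implies $\pi$ finite.

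There is essentially no obstacle here: the content is entirely carried by \cref{lem:finiteness}, and the corollary only requires recognizing that the relevant finiteness conditions coincide at the offset parameter $\eps = 0$, where the product offset $(A^2)_0$ collapses cleanly onto $X^2 \setminus A^2$.
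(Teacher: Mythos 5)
Your proof is correct and matches the paper's intent: the paper states \cref{cor:finite} without proof as an immediate consequence of \cref{lem:finiteness}, and the intended derivation is exactly the specialization $\eps = 0$ that you carry out, including the identification $(A\times A)_0 = X^2\setminus(A\times A)$. Nothing is missing.
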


\begin{lemma} \label{lem:representative-coupling}
    Let $\sigma \in \Pi(\mu,\nu)$ with $\mu,\nu$ finite.
    Let $\check{\mu} \in \pBorelAonly$ be finite with
    $\check{\mu} \geq (p_1)_*(\sigma|_{A \times (X \setminus A)})$.
    Then $\sigma$ has a representative $\sigma_1 \in \pBorelXXonly$ such that 
    $(p_1)_*\sigma_1 = \check{\mu} + \mu|_{X \setminus A}$.
    Similarly, let $\check{\nu} \in \pBorelAonly$ be finite with
    $\check{\nu} \geq (p_2)_*(\sigma|_{(X \setminus A) \times A})$.
    Then $\sigma$ has a representative $\sigma_2 \in \pBorelXXonly$ such that 
    $(p_2)_*\sigma_2 = \check{\nu} + \nu|_{X \setminus A}$.
\end{lemma}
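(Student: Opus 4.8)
The plan is to exploit the fact that a representative of $\sigma \in \pBorelXX$ is completely unconstrained on $A \times A = A^2$: two representatives of the same class agree precisely on $X^2 \setminus A^2$, so we may alter the mass of a representative on $A^2$ without leaving the class $\sigma$, and we will use this freedom to engineer the first marginal on $A$. First I would record the decomposition $X = A \sqcup (X \setminus A)$, which splits the first marginal of any representative $\sigma_1$ as $(p_1)_*\sigma_1|_{X \setminus A} + (p_1)_*\sigma_1|_A$. Since $(X \setminus A) \times X \subset X^2 \setminus A^2$, the piece $(p_1)_*\sigma_1|_{X\setminus A}$ is determined by the class $\sigma$ alone and equals $\mu|_{X \setminus A}$, independently of the chosen representative. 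Thus only the marginal on $A$ is at our disposal, and the target identity forces the requirement $(p_1)_*\sigma_1|_A = \check{\mu}$.

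Next I would take the representative $\sigma_0$ of $\sigma$ supported on $X^2 \setminus A^2$, so that $\sigma_0(A^2) = 0$. Since $\mu,\nu$ are finite, $\sigma$ is finite by \cref{cor:finite}, and hence $(p_1)_*(\sigma|_{A \times (X\setminus A)})$ is a finite Borel measure on $A$. Computing the $A$-marginal of $\sigma_0$ and using $\sigma_0(A^2) = 0$ gives $(p_1)_*\sigma_0|_A = (p_1)_*(\sigma|_{A\times(X\setminus A)})$, because for $E \subset A$ the mass $\sigma_0(E \times A)$ vanishes and only $\sigma_0(E \times (X\setminus A))$ survives. Define $\lambda = \check{\mu} - (p_1)_*(\sigma|_{A\times(X\setminus A)})$; the domination hypothesis $\check{\mu} \geq (p_1)_*(\sigma|_{A\times(X\setminus A)})$ together with finiteness of $\check{\mu}$ makes $\lambda$ a well-defined finite Borel measure on $A$. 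Then I would fix $a_0 \in A$ (possible since $A \neq \emptyset$), let $j:A \to X^2$ be $j(x) = (x,a_0)$, and set $\sigma_1 = \sigma_0 + j_*\lambda$.

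Finally I would verify the three assertions. Because $j_*\lambda$ is supported on $A \times \{a_0\} \subset A^2$, the measures $\sigma_1$ and $\sigma_0$ agree on $X^2 \setminus A^2$, so $\sigma_1$ is again a representative of $\sigma$. On $X \setminus A$ the added mass contributes nothing to the first marginal, so $(p_1)_*\sigma_1|_{X\setminus A} = \mu|_{X\setminus A}$; on $A$, using $(p_1)_*j_*\lambda = (p_1 \circ j)_*\lambda = \lambda$, we obtain $(p_1)_*\sigma_1|_A = (p_1)_*(\sigma|_{A\times(X\setminus A)}) + \lambda = \check{\mu}$. Adding the two pieces yields $(p_1)_*\sigma_1 = \check{\mu} + \mu|_{X \setminus A}$, as required. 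The argument producing $\sigma_2$ from $\check{\nu}$ is identical after exchanging the roles of the two coordinates (using $p_2$ and the map $x \mapsto (a_0,x)$).

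The only genuinely delicate point is the bookkeeping of the four-way split of $X^2$ into the products of $A$ and $X \setminus A$, together with the verification that the set-function difference $\check{\mu} - (p_1)_*(\sigma|_{A\times(X\setminus A)})$ is indeed a countably additive, nonnegative measure — which is exactly where the domination hypothesis and the finiteness of $\check{\mu}$ (hence, via \cref{cor:finite}, of $\sigma$) are used. Everything else is a direct computation with pushforwards and restrictions.
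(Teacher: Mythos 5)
Your proof is correct and follows essentially the same route as the paper's: both take the canonical representative $\hat{\sigma}$ of $\sigma$ with $\hat{\sigma}(A^2)=0$, observe that its first marginal is $\mu|_{X\setminus A} + (p_1)_*(\sigma|_{A\times(X\setminus A)})$, and then add $(i_1^{a})_*\bigl(\check{\mu}-(p_1)_*(\sigma|_{A\times(X\setminus A)})\bigr)$, a measure supported in $A^2$, to adjust the marginal on $A$ to $\check{\mu}$ without changing the class. The paper states this in three lines; your additional verifications (that the difference is a genuine finite measure via the domination hypothesis, and that the $X\setminus A$ part of the marginal is forced) are exactly the details it leaves implicit.
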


\begin{proof}
    Let $\hat{\sigma} \in \pBorelXXonly$ denote the canonical representative of $\sigma$ with $\sigma(A^2) = 0$, and similarly for $\hat{\mu}$ and $\hat{\nu}$.
    Then $(p_1)_*\hat{\sigma} = \hat{\mu} + (p_1)_*(\sigma|_{A \times (X \setminus A)})$.
    Let $\sigma_1 = \hat{\sigma} + (i_1^a)_*(\check{\mu} - (p_1)_*(\sigma|_{A \times (X \setminus A)}))$.
    Then $\sigma_1$ is the desired representative of $\sigma$.
    The other case is similar.
\end{proof}

Assume $\eps > 0$.
Let $a \in A$.
We will define a (discontinuous) retraction from $X^2$ to $(A \cup A_\eps)^2$.
Let $\hat{r}:X \to X$ be given by
\begin{equation} \label{eq:retraction}
    % r(x,y) = \begin{cases}
    %     (x,c(x)) &\text{if }(x,y) \in (A \cup A_\eps) \times A_0^\eps\\
    %     (c(y),y) &\text{if }(x,y) \in A_0^\eps \times (A \cup A_\eps)\\
    %     (c(y),c(x)) &\text{if }(x,y) \in A_0^\eps \times A_0^\eps\\
    %     (x,y) &\text{otherwise.}
    % \end{cases}
    \hat{r}(x) = \begin{cases}
        a &\text{if } x \in A_0^\eps\\
        x &\text{otherwise,}
    \end{cases}
\end{equation}
Then we have the desired retract $r:X^2 \to X^2$ given by $r = \hat{r} \oplus \hat{r}$, i.e., $r(x,y) = (\hat{r}(x),\hat{r}(y))$.

% \begin{lemma}
%     $\hat{r}_*(\mu) = \mu_\eps$.
% \end{lemma}

% \begin{proof}
%     $(\mu \circ \hat{r})|_{A_\eps} = \mu|_{A_\eps}$ and $(\mu \circ \hat{r})|_{A_\eps} = 0$.
% \end{proof}

\begin{lemma} \label{lem:r-pi-coupling}
    Let $\pi \in \Pi(\mu,\nu)$. Then
    $r_*\pi \in \Pi(\mu_\eps,\nu_\eps)$
\end{lemma}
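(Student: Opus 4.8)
The plan is to treat the two marginals of $r_*\pi$ separately and reduce each to a pushforward of a single marginal of $\pi$ along $\hat{r}$, exploiting the product form $r = \hat{r} \oplus \hat{r}$. Concretely, $r(x,y) = (\hat{r}(x),\hat{r}(y))$ gives the intertwining relations $p_1 \circ r = \hat{r} \circ p_1$ and $p_2 \circ r = \hat{r} \circ p_2$. Since pushforward is functorial under composition of Borel measurable maps, these yield $(p_1)_*(r_*\pi) = \hat{r}_*((p_1)_*\pi) = \hat{r}_*\mu$ and, symmetrically, $(p_2)_*(r_*\pi) = \hat{r}_*\nu$. It therefore suffices to establish the single identity $\hat{r}_*\mu = \mu_\eps$ in $\pBorel$ together with its analogue for $\nu$.

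The key step is this identity, which I would verify by an explicit preimage computation. Because equality in $\pBorel$ is tested on Borel subsets of $X \setminus A$, I would fix a Borel set $E \subset X \setminus A$ and partition $X$ as $A \sqcup A_0^\eps \sqcup A_\eps$ according to whether $d_A$ equals $0$, lies in $(0,\eps]$, or exceeds $\eps$. On $A_0^\eps$ the map $\hat{r}$ is constantly $a \in A$, so those points are sent outside $E$; on $A$ the map is the identity but $A \cap E = \emptyset$; and on $A_\eps$ the map is the identity. Hence $\hat{r}^{-1}(E) = E \cap A_\eps$, so $\hat{r}_*\mu(E) = \mu(E \cap A_\eps) = \mu_\eps(E)$ by the definition of $\mu_\eps$. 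As this holds for every Borel $E \subset X \setminus A$, we conclude $\hat{r}_*\mu = \mu_\eps$, and the same argument gives $\hat{r}_*\nu = \nu_\eps$.

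The point that requires the most care — and the main obstacle — is that $\hat{r}$, although Borel measurable (since $A_0^\eps = A^\eps \setminus A$ is Borel), is not Lipschitz, so \cref{lem:products}\eqref{it:induced-map} does not directly apply and I must check that $r_*$ and $\hat{r}_*$ descend to the quotient monoids. Here I would use that $\hat{r}$ fixes $A$ pointwise: every $x \in A$ has $d_A(x) = 0$, hence $x \notin A_0^\eps$, so $\hat{r}(x) = x$; consequently $r$ fixes $A^2$ pointwise. Thus for any Borel $F \subset X^2 \setminus A^2$ one has $r^{-1}(F) \cap A^2 = F \cap A^2 = \emptyset$, so $r_*\pi(F) = \pi(r^{-1}(F))$ depends only on the class of $\pi$ in $\pBorelXX$; the analogous statement for $\hat{r}_*$ on $\pBorel$ is already built into the computation above, since $E \cap A_\eps \subset X \setminus A$. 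This makes all of the preceding identities meaningful in the quotients and completes the argument.
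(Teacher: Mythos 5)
Your proof is correct and follows essentially the same route as the paper's: both establish the key identity $\hat{r}_*\mu = \mu_\eps$ and then use the intertwining relation $p_1 \circ r = \hat{r} \circ p_1$ together with functoriality of pushforward to identify the marginals of $r_*\pi$. The only difference is that you spell out the preimage computation and explicitly verify that $r_*$ and $\hat{r}_*$ descend to the quotient monoids (since $\hat{r}$ is not Lipschitz and \cref{lem:products} does not apply), a point the paper leaves implicit.
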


\begin{proof}
% \pb{I've changed to your proof.}    
    First note that $\hat{r}_*\mu = \mu_\eps$, since
    $(\mu \circ \hat{r})|_{A_\eps} = \mu|_{A_\eps}$ and $(\mu \circ \hat{r})|_{A^\eps} = 0$. 
    Next note that $p_1\circ r = \hat{r}\circ p_1$, since both send $(x,y)$ to $\hat{r}(x)$. 
    Then $(p_1)_*r_*\pi = (p_1\circ r)_*\pi = (\hat{r}\circ p_1)_*\pi = \hat{r}_*(p_1)_*\pi = \hat{r}_*\mu = \mu_\eps$. 
    Similarly, $(p_2)_* r_* \pi = \nu_\eps$.
    Hence $r_*\pi \in \Pi(\mu_\eps,\nu_\eps)$.
\end{proof}

% \begin{lemma} \label{lem:r-pi-finite}
%     Let $\pi \in \Pi(\mu,\nu)$. 
%     If $\mu_\eps$ and $\nu_\eps$ are finite, then
%     $r_*(\pi)$ is finite.
% \end{lemma}

% \begin{proof}
%     By definition $r_*(\pi) \leq \pi_\eps$.
%     Therefore, by \cref{lem:finiteness}, $r_*(\pi)$ is finite.
% \end{proof}

\begin{proposition} \label{prop:r-pi-cost}
    Let $\pi \in \Pi(\mu,\nu)$. Then
    $r_*(\pi)(\bar{d}) \leq \pi(\bar{d}) + \mu^\eps(d_A) + \nu^\eps(d_A)$.
\end{proposition}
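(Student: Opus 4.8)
The plan is to combine the pseudometric property of $\bar d$ with the explicit form of the retraction $\hat r$. First I would unwind the left-hand side via the pushforward, writing
\[
  r_*(\pi)(\bar d) = \pi(\bar d \circ r) = \int_{X^2} \bar d(\hat r(x), \hat r(y)) \, d\pi(x,y).
\]

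Since $\bar d$ is a pseudometric, the triangle inequality gives, for every $(x,y) \in X^2$,
\[
  \bar d(\hat r(x), \hat r(y)) \leq \bar d(x, \hat r(x)) + \bar d(x,y) + \bar d(y, \hat r(y)).
\]
The key computation is to evaluate $\bar d(x, \hat r(x))$. By the definition \eqref{eq:retraction} of $\hat r$, if $x \in A_0^\eps$ then $\hat r(x) = a \in A$, so $\bar d(x, \hat r(x)) = \bar d(x,a) = d_A(x)$; otherwise $\hat r(x) = x$ and $\bar d(x, \hat r(x)) = 0$. Hence $\bar d(x, \hat r(x)) = d_A(x) \chi_{A_0^\eps}(x)$, a Borel function supported on $X \setminus A$, and likewise for the second coordinate.

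I would then integrate the pointwise inequality against $\pi$; as all three integrands are nonnegative, monotonicity of the integral applies regardless of whether any term is infinite, yielding
\[
  r_*(\pi)(\bar d) \leq \pi(\bar d) + \int_{X^2} d_A(x) \chi_{A_0^\eps}(x) \, d\pi + \int_{X^2} d_A(y) \chi_{A_0^\eps}(y) \, d\pi.
\]
The first extra integrand depends only on the first coordinate and the second only on the second, so the pushforward formula (as in \cref{lem:products}\eqref{it:pi-f-oplus-g}) together with $(p_1)_* \pi = \mu$ and $(p_2)_* \pi = \nu$ identifies these integrals as $\mu^\eps(d_A)$ and $\nu^\eps(d_A)$ respectively, giving the claim.

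The only delicate point is the reduction of the two marginal integrals to $\mu^\eps(d_A)$ and $\nu^\eps(d_A)$ for relative measures, but since $d_A \chi_{A_0^\eps}$ vanishes on $A$ these integrals are well defined on $\pBorel$ and agree with integration against any representative, so this presents no real obstacle; the substance of the argument is entirely contained in the pointwise triangle-inequality estimate above.
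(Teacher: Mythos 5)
Your proof is correct, and it reaches the estimate by a somewhat different route than the paper. The paper's proof starts from the (integrated form of the) pointwise inequality $\bigl(\bar{d} + d_A\oplus d_A\bigr)\circ r \leq \bar{d} + d_A\oplus d_A$, so that $r_*\pi(\bar{d}) + r_*\pi(d_A\oplus d_A) \leq \pi(\bar{d}) + \pi(d_A\oplus d_A)$, and then subtracts, using \cref{lem:products} to compute $\pi(d_A\oplus d_A) = \mu(d_A)+\nu(d_A)$ and $r_*\pi(d_A\oplus d_A) = \mu_\eps(d_A)+\nu_\eps(d_A)$, whose difference is exactly $\mu^\eps(d_A)+\nu^\eps(d_A)$. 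You replace this add-and-subtract device with a two-fold triangle inequality for the pseudometric $\bar{d}$ together with the explicit identity $\bar{d}(x,\hat{r}(x)) = d_A(x)\chi_{A_0^\eps}(x)$; both arguments then identify the marginal integrals as $\mu^\eps(d_A)$ and $\nu^\eps(d_A)$ in the same way via $(p_1)_*\pi = \mu$ and $(p_2)_*\pi = \nu$. The computations are essentially equivalent, but your arrangement has a small technical advantage: all of your correction terms appear additively on the larger side and are nonnegative, so you never subtract a possibly infinite quantity, whereas the paper's final step tacitly needs $r_*\pi(d_A\oplus d_A) = \mu_\eps(d_A)+\nu_\eps(d_A) < \infty$ (harmless in the intended applications to $1$-finite measures, but your version works verbatim for arbitrary $\mu,\nu \in \pBorel$).
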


\begin{proof}
Note that $r_*\pi(\bar{d}) + r_*\pi(d_A\oplus d_A) \leq \pi(\bar{d}) + \pi(d_A\oplus d_A)$ so that $r_*\pi(\bar{d})  \leq \pi(\bar{d}) + \pi(d_A\oplus d_A) -  r_*\pi(d_A\oplus d_A)$. 
By \cref{lem:products}\eqref{it:pi-f-oplus-g},
$\pi(d_A \oplus d_A) = \mu(d_A) + \nu(d_A)$ and
$r_*\pi(d_A \oplus d_A) = \pi((d_A \circ \hat{r}) \oplus (d_A \circ \hat{r})) = \mu(d_A \circ \hat{r}) + \nu(d_A \circ \hat{r}) = \mu_\eps(d_A) + \nu_\eps(d_A)$.
Thus,
$\pi(d_A\oplus d_A) - r_*\pi(d_A \oplus d_A) 
%= \mu(d_A) + \nu(d_A) - \mu_\eps(d_A) - \nu_\eps(d_A) 
= \mu^\eps(d_A) + \nu^\eps(d_A)$, and the result follows.
\end{proof}

Note that $r_*\pi$ has a trivial extension to a coupling of $\mu$ and $\nu$ given by
$r_*(\pi) + (i_1^c)_* (\mu^\eps) + (i_2^c)_* (\nu^\eps)$.

Recall that $\fpRadon %\mathcal{M}_0^+(X,A) 
= \{\mu \in \pBorel \ | \ \mu \text{ is tight, and } \mu(X \setminus A) < \infty\}$.
Let $p_{12},p_{23}:X^3 \to X^2$ denote the projections on the first and second, and first and third coordinates, respectively.

\begin{proposition} \label{thm:triangle-inequality-W1}
    $W_1$ satisfies the triangle inequality on $\pfRadon$.
\end{proposition}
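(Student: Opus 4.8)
The plan is to establish the triangle inequality by a gluing argument, first truncating near the reservoir so that the measures involved become finite and the classical disintegration is available. Fix $\mu,\nu,\lambda \in \pfRadon$; the goal is $W_1(\mu,\lambda) \le W_1(\mu,\nu) + W_1(\nu,\lambda)$. Given $\delta>0$, I would choose $\pi_{12}\in\Pi(\mu,\nu)$ and $\pi_{23}\in\Pi(\nu,\lambda)$ with $\pi_{12}(\bar d) < W_1(\mu,\nu)+\delta$ and $\pi_{23}(\bar d) < W_1(\nu,\lambda)+\delta$.

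Fix $\eps>0$ and apply the retraction $r$ of \eqref{eq:retraction}. By \cref{lem:r-pi-coupling}, $r_*\pi_{12}\in\Pi(\mu_\eps,\nu_\eps)$ and $r_*\pi_{23}\in\Pi(\nu_\eps,\lambda_\eps)$; since $\mu,\nu,\lambda$ are $1$-finite, \cref{lem:mu-eps-finite} shows $\mu_\eps,\nu_\eps,\lambda_\eps$ are finite, and then \cref{cor:finite} shows these two couplings have finite representatives. The cost of truncation is controlled by \cref{prop:r-pi-cost}: $r_*\pi_{12}(\bar d)\le \pi_{12}(\bar d)+\mu^\eps(d_A)+\nu^\eps(d_A)$ and $r_*\pi_{23}(\bar d)\le \pi_{23}(\bar d)+\nu^\eps(d_A)+\lambda^\eps(d_A)$.

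The central step is to glue $r_*\pi_{12}$ and $r_*\pi_{23}$ along the common middle marginal $\nu_\eps$. The feature peculiar to the relative setting is that these marginals agree only in $\pBorel$, i.e.\ only on $X\setminus A$, while their representatives may put different mass on the reservoir $A$. I would invoke \cref{lem:representative-coupling} to select representatives $\sigma_{12}$ of $r_*\pi_{12}$ and $\sigma_{23}$ of $r_*\pi_{23}$ whose second and first marginals, respectively, both equal $\beta+\nu_\eps$ for one finite measure $\beta$ on $A$ chosen to dominate the boundary marginals $(p_2)_*(\sigma_{12}|_{(X\setminus A)\times A})$ and $(p_1)_*(\sigma_{23}|_{A\times(X\setminus A)})$; finiteness of the truncated measures is exactly what makes this possible. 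Because $\bar d$ vanishes on $A^2$ and representatives differ only by $A^2$-supported mass, this adjustment leaves the costs unchanged: $\sigma_{12}(\bar d)=r_*\pi_{12}(\bar d)$ and $\sigma_{23}(\bar d)=r_*\pi_{23}(\bar d)$. With the middle marginals now equal as honest finite Borel measures on the Polish space $X$, the classical gluing lemma (disintegrating $\sigma_{12}$ and $\sigma_{23}$ over their common marginal) yields a finite Borel measure $\gamma$ on $X^3$ with $(p_{12})_*\gamma=\sigma_{12}$ and $(p_{23})_*\gamma=\sigma_{23}$.

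Finally I would set $\pi_{13}=(p_{13})_*\gamma\in\Pi(\mu_\eps,\lambda_\eps)$. Since $\bar d$ is a pseudometric, $\bar d(x,z)\le\bar d(x,y)+\bar d(y,z)$ pointwise on $X^3$, and integrating against $\gamma$ gives $\pi_{13}(\bar d)\le \sigma_{12}(\bar d)+\sigma_{23}(\bar d)$. Extending $\pi_{13}$ by the trivial coupling $(i_1^a)_*\mu^\eps+(i_2^a)_*\lambda^\eps$ produces a coupling of $\mu$ and $\lambda$ of cost $\pi_{13}(\bar d)+\mu^\eps(d_A)+\lambda^\eps(d_A)$, so assembling the estimates gives $W_1(\mu,\lambda)\le \pi_{12}(\bar d)+\pi_{23}(\bar d)+2\mu^\eps(d_A)+2\nu^\eps(d_A)+2\lambda^\eps(d_A)$. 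Taking the infimum over $\pi_{12}$ and $\pi_{23}$ and then letting $\eps\downarrow 0$, where each error term vanishes by \cref{lem:lower-p-finite} (the measures are lower $1$-finite), yields the triangle inequality. The main obstacle is precisely this gluing step: disintegration demands finite measures together with exact equality of the middle marginals, and the reservoir $A$ defeats both directly, which is why the argument must be routed through the truncation of \cref{lem:r-pi-coupling} and \cref{prop:r-pi-cost} and the marginal-matching of \cref{lem:representative-coupling}.
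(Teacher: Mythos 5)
Your proposal is correct and follows essentially the same route as the paper: truncate via the retraction of \eqref{eq:retraction} with cost control from \cref{prop:r-pi-cost}, match the middle marginals of the two finite truncated couplings using \cref{lem:representative-coupling} with the join of the boundary marginals, glue over the common marginal, project, and use the pointwise triangle inequality for $\bar{d}$ before extending trivially and letting the truncation parameter tend to zero. The only cosmetic difference is that the paper normalizes to probability measures before invoking the gluing lemma of Ambrosio--Gigli--Savar\'e, and your explicit remark that changing representatives leaves the cost unchanged because $\bar{d}$ vanishes on $A^2$ is a point the paper leaves implicit.
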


\begin{proof}
    Let $\mu_1,\mu_2,\mu_3 \in \pfRadon$. 
    Let $\eps > 0$. 
    Let $\pi_{12} \in \Pi(\mu_1,\mu_2)$ and
    $\pi_{23} \in \Pi(\mu_2,\mu_3)$
    such that $\pi_{12}(\bar{d}) < W_1(\mu_1,\mu_2) + \frac{\eps}{8}$ and
    $\pi_{23}(\bar{d}) < W_1(\mu_2,\mu_3) + \frac{\eps}{8}$.
    % By \cref{prop:rigidification}, there is a $\delta > 0$ such that 
    % $(\pi_{12})_\delta(X), (\pi_{23})_\delta(X) < \infty$ and
    % $C((\pi_{12})_\delta') < C(\pi_{12}) + \frac{\eps}{4}$ and
    % $C((\pi_{23})_\delta') < \pi_{23}(\bar{d}) + \frac{\eps}{4}$,
    % where
    % $(\pi_{12})_\delta'$ is the trivial extension of $(\pi_{12})_\delta$ to a coupling of $\mu_1$ and $\mu_2$ and
    % $(\pi_{23})_\delta'$ is the trivial extension of $(\pi_{23})_\delta$ to a coupling of $\mu_2$ and $\mu_3$.
    By \cref{lem:lower-p-finite}, 
    there is a $\delta>0$ such that for $j=1,2,3$, $(\mu_j)^\delta(d_A) < \frac{\eps}{8}$.
    For $\delta$, use \eqref{eq:retraction} to define a retract $r$ from $X^2$ to $(A \cup A_\delta)^2$.
    Consider $r_*(\pi_{12})$ and $r_*(\pi_{23})$.
    By \cref{lem:r-pi-coupling},
    $r_*(\pi_{12}) \in \Pi((\mu_1)_\delta,(\mu_2)_\delta)$ and
    $r_*(\pi_{23}) \in \Pi((\mu_2)_\delta,(\mu_3)_\delta)$.
    By \cref{lem:mu-eps-finite}, $(\mu_1)_\delta,(\mu_2)_\delta,(\mu_3)_\delta \in \fpRadon$.
    %\mathcal{M}_0^+(X,A)$.
    % By \cref{lem:r-pi-finite},
    % $r_*(\pi_{12})$ and $r_*(\pi_{23})$ are both finite.
    By \cref{prop:r-pi-cost}, 
    $r_*(\pi_{12})(\bar{d}) \leq \pi_{12}(\bar{d}) + (\mu_1)^\delta(d_A) + (\mu_2)^\delta(d_A) < \pi_{12}(\bar{d}) + \frac{\eps}{4}$ and
    $r_*(\pi_{23})(\bar{d}) \leq \pi_{23}(\bar{d}) + (\mu_2)^\delta(d_A) + (\mu_3)^\delta(d_A) < \pi_{23}(\bar{d}) + \frac{\eps}{4}$.

    Let $\check{\mu}_2 = (p_2)_*(r_*(\pi_{12})|_{(X \setminus A) \times A}) \join (p_1)_*(r_*(\pi_{23})|_{A \times (X \setminus A)})$.
    By \cref{lem:representative-coupling},
    $r_*(\pi_{12})$ has a finite representative $\sigma_2 \in \pBorelXXonly$
    such that 
    $(p_2)_*\sigma_2 = \check{\mu}_2 + (\mu_2)_\delta$ and
    $r_*(\pi_{23})$ has a finite representative $\sigma_1 \in \pBorelXXonly$
    such that 
    $(p_1)_*\sigma_1 = \check{\mu}_2 + (\mu_2)_\delta$.
    Let $m = (\check{\mu}_2 + (\mu_2)_\delta)(X)$.
    Let $\gamma_{12} = \frac{1}{m} \sigma_2$ and
    let $\gamma_{23} = \frac{1}{m} \sigma_1$.
    By the gluing lemma for probability measures on a Polish space~\cite[Lemma 5.3.2]{Ambrosio:2008}, there exists a probability measure 
    $\gamma$ on $X^3$ such that 
    $(p_{12})_*(\gamma) = \gamma_{12}$ and
    $(p_{23})_*(\gamma) = \gamma_{23}$,
    where $p_{12},p_{23}:X^3 \to X^2$ denote the projections on the first two and last two coordinates, respectively.
    Let $\gamma_{13} = (p_{13})_*\gamma$, where $p_{13}:X^3 \to X^2$ denotes the projection onto the first and third coordinates.
    Then $m \gamma_{13}$ represents a coupling $\pi'_{13}$ of $(\mu_1)_\delta$ and $(\mu_3)_\delta$.
    Let $\pi = m \gamma$.

%
    % Apply \cref{lem:gluing-new} to $r_*(\pi_{12})$ and $r_*(\pi_{23})$ to obtain $\pi \in \pBorelXXX$.
    % Let $p_{13}:X^3 \to X^2$ denote the projection on the first and third coordinates.
    % Let $\pi_{13}' = (p_{13})_*\pi$.
    % Then $\pi_{13}' \in \Pi((\mu_1)_\delta,(\mu_3)_\delta)$.
    By the triangle inequality,
    \begin{multline*}
        \pi_{13}'(\bar{d}) = 
        \int_{X^2} \bar{d}(x,z) d\pi'_{13}(x,z) = \int_{X^3} \bar{d}(x,z) d\pi(x,y,z)
        \leq \int_{X^3} \left( \bar{d}(x,y) + \bar{d}(y,z) \right) d\pi(x,y,z)\\
        = \int_{X^2} \bar{d}(x,y) d\sigma_2(x,y) + 
        \int_{X^2} \bar{d}(y,z) d\sigma_1(y,z)
%        = \int_{X^2} \bar{d}(x,y) d(r_*(\pi_{12})) + \int_{X^2} \bar{d}(y,z) d(r_*(\pi_{23}))         
        = r_*(\pi_{12})(\bar{d}) + r_*(\pi_{23})(\bar{d}).
    \end{multline*}
    Fix $a\in A$ and extend $\pi_{13}'$ trivially to a coupling $\pi_{13}$ of $\mu_1$ and $\mu_3$.  
    That is, $\pi_{13} = \pi_{13}' + (i_1^a)_*((\mu_1)^\delta) + (i_2^a)_*((\mu_2)^\delta)$ and $\pi_{13} \in \Pi(\mu_1,\mu_3)$.
    Then $\pi_{13}(\bar{d}) = \pi_{13}'(\bar{d}) + (\mu_1)^\delta(d_A) + (\mu_2)^\delta(d_A) < \pi_{13}'(\bar{d}) + \frac{\eps}{4}$.
    Hence $\pi_{13}(\bar{d}) < r_*(\pi_{12})(\bar{d}) + r_*(\pi_{23})(\bar{d}) + \frac{\eps}{4} < \pi_{12}(\bar{d}) + \pi_{23}(\bar{d}) + \frac{3\eps}{4} < W_1(\mu_1,\mu_2) + W_1(\mu_2,\mu_3) + \eps$.
    Therefore $W_1(\mu,\mu_3) \leq W_1(\mu_1,\mu_2) + W_1(\mu_2,\mu_3)$.
\end{proof}

Combining \cref{lem:reflexivity-W1_iff,lem:symmetry-W1,thm:triangle-inequality-W1,lem:isometric-embedding-W1}, we have the following.

\begin{theorem} \label{thm:metric}
    $(\pfRadon, W_1)$ is a metric space, and the inclusion $X \to \pfRadon$ given by $x \mapsto \delta_x$ gives an isometric embedding $(X,\bar{d}) \to (\pfRadon,W_1)$.
\end{theorem}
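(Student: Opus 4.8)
The plan is to verify the metric axioms for $W_1$ on $\pfRadon$ by assembling the lemmas already established in this section, and then to read off the isometric embedding directly from the computation of $W_1$ on Dirac measures. Almost all of the real work has been done: the difficult content lives in \cref{thm:triangle-inequality-W1} (the triangle inequality, via the gluing lemma) and in the lower bound half of \cref{lem:isometric-embedding-W1}, so the present statement is essentially a bookkeeping argument.

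First I would dispose of the axioms that are immediate. Nonnegativity, $W_1(\mu,\nu) \geq 0$, holds by definition, since $\bar{d} \geq 0$ and $W_1$ is an infimum of integrals of $\bar{d}$. Symmetry, $W_1(\mu,\nu) = W_1(\nu,\mu)$, is \cref{lem:symmetry-W1}. The identity of indiscernibles, $W_1(\mu,\nu) = 0 \iff \mu = \nu$ for $\mu,\nu \in \pfRadon$, is exactly \cref{lem:reflexivity-W1_iff}. The triangle inequality on $\pfRadon$ is \cref{thm:triangle-inequality-W1}.

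The one point requiring a small extra observation is \emph{finiteness}: to conclude that $(\pfRadon, W_1)$ is a genuine real-valued metric space rather than merely an extended-metric space, I would note that $0 \in \pfRadon$ and combine the triangle inequality with \cref{lem:w1-mu-0}. Explicitly, for $\mu,\nu \in \pfRadon$,
\[
  W_1(\mu,\nu) \leq W_1(\mu,0) + W_1(0,\nu) = \mu(d_A) + \nu(d_A) < \infty,
\]
where I use \cref{lem:symmetry-W1} to write $W_1(0,\nu) = W_1(\nu,0) = \nu(d_A)$, and where $\mu(d_A), \nu(d_A) < \infty$ because $\mu,\nu$ are $1$-finite. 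Thus $W_1$ takes values in $[0,\infty)$. Finally, for the isometric embedding I would invoke \cref{lem:isometric-embedding-W1}, which gives $W_1(\delta_x,\delta_y) = \bar{d}(x,y)$ for all $x,y \in X$. Since $x \mapsto \delta_x$ lands in $\pfRadon$ (for $x \notin A$ the measure $\delta_x$ is tight with $\delta_x(d_A) = d_A(x) < \infty$, and for $x \in A$ it represents $0 \in \pfRadon$) and preserves $\bar{d}$-distances exactly, it is an isometric embedding of the pseudometric space $(X,\bar{d})$ into $(\pfRadon, W_1)$. I do not expect any genuine obstacle; the only care needed is the finiteness computation above, which upgrades the extended metric to an honest metric.
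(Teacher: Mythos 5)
Your proposal is correct and follows the paper's proof exactly: the paper proves the theorem by simply combining \cref{lem:reflexivity-W1_iff}, \cref{lem:symmetry-W1}, \cref{thm:triangle-inequality-W1}, and \cref{lem:isometric-embedding-W1}, which is precisely your bookkeeping argument. Your extra finiteness check via $W_1(\mu,\nu)\leq W_1(\mu,0)+W_1(0,\nu)=\mu(d_A)+\nu(d_A)<\infty$ is a valid (and welcome) explicit verification of a point the paper only asserts in the preamble to the section (where it is obtained directly from the trivial coupling rather than from the triangle inequality, but the two routes give the same bound).
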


\begin{proposition}
    Let $\mu \in \pfRadon$ then there exists a sequence $(\mu^{(n)}) \subset \fpRadon \cap \pfRadon$ such that $\mu^{(n)} \to \mu$ in $(\pfRadon,W_1)$.
\end{proposition}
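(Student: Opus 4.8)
The plan is to take $\mu^{(n)} = \mu_{1/n}$, the restriction of $\mu$ to $A_{1/n} = \{x : d_A(x) > 1/n\}$, and to show that this truncation both lands in $\fpRadon \cap \pfRadon$ and converges to $\mu$ in $W_1$. First I would verify membership. Since $\mu \in \pfRadon$ is in particular upper $1$-finite, \cref{lem:mu-eps-finite} gives $\mu_{1/n} \in \fpRadon$ directly. Moreover $\mu_{1/n} \leq \mu$ as Borel measures on $X \setminus A$, so $\mu_{1/n}$ is tight, being dominated by the tight measure $\mu$, and $\mu_{1/n}(d_A) \leq \mu(d_A) < \infty$; hence $\mu_{1/n} \in \pfRadon$ as well. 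Thus each $\mu^{(n)}$ lies in $\fpRadon \cap \pfRadon$.

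Next I would bound $W_1(\mu, \mu_{1/n})$ using the explicit coupling of \cref{ex:approximation-by-finite-measure}. Fixing $a \in A$ and writing $\mu = \mu^{1/n} + \mu_{1/n}$, that example produces $\pi_n = (i_1^a)_*(\mu^{1/n}) + \Delta_*(\mu_{1/n}) \in \Pi(\mu, \mu_{1/n})$. Its cost splits as $\pi_n(\bar{d}) = \mu^{1/n}(\bar{d} \circ i_1^a) + \mu_{1/n}(\bar{d} \circ \Delta)$. Since $\bar{d}(x,a) = d_A(x)$ for $a \in A$, and $\bar{d}(x,x) = 0$, the second term vanishes and the first equals $\mu^{1/n}(d_A)$. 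Hence $W_1(\mu, \mu_{1/n}) \leq \pi_n(\bar{d}) = \mu^{1/n}(d_A)$.

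Finally, because $\mu$ is $1$-finite and therefore lower $1$-finite, \cref{lem:lower-p-finite} (with $p = 1$) gives $\mu^{\eps}(d_A) \downarrow 0$ as $\eps \downarrow 0$. Taking $\eps = 1/n$ yields $W_1(\mu, \mu^{(n)}) \leq \mu^{1/n}(d_A) \to 0$, and by \cref{lem:symmetry-W1} this is the same as $W_1(\mu^{(n)}, \mu) \to 0$, so $\mu^{(n)} \to \mu$ in $(\pfRadon, W_1)$. I do not expect a serious obstacle; the only points requiring care are the bookkeeping that the combined trivial-and-diagonal coupling has cost exactly $\mu^{1/n}(d_A)$, and the verification that the truncation remains tight and $1$-finite, both of which follow immediately from the cited lemmas.
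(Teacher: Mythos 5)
Your proposal is correct and follows essentially the same route as the paper: truncate to $\mu_\delta$ with $\delta \downarrow 0$, use \cref{lem:mu-eps-finite} and domination by $\mu$ for membership in $\fpRadon \cap \pfRadon$, bound $W_1(\mu,\mu_\delta)$ by $\mu^\delta(d_A)$ via the coupling of \cref{ex:approximation-by-finite-measure}, and conclude with \cref{lem:lower-p-finite}. The only cosmetic difference is that you fix $\delta = 1/n$ outright while the paper chooses $\delta$ so that $\mu^\delta(d_A) < \frac{1}{n}$; both are justified by the same lemma.
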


\begin{proof}
    %Since $\mu$ is lower $1$-finite, 
    By \cref{lem:lower-p-finite},
    for all $n \geq 1$,
    there exists $\delta > 0$ such that $\mu^\delta(d_A) < \frac{1}{n}$.
    By \cref{ex:approximation-by-finite-measure}, 
    $W_1(\mu,\mu_\delta) \leq \mu^\delta(d_A) < \frac{1}{n}$.
    Since $\mu_\delta \leq \mu$, $\mu_\delta \in \pfRadon$.
    By \cref{lem:mu-eps-finite}, $\mu_\delta \in \fpRadon$.
    Let $\mu^{(n)} = \mu_\delta$.
\end{proof}

\subsection{p-Wasserstein distance for metric pairs}
\label{sec:p-wasserstein}

Let $(X,d,A)$ be a metric pair with 
$(X,d)$ complete and separable and
$A \neq \emptyset$.
Recall \eqref{eq:d-bar}, $\bar{d} = d \wedge d_A \oplus d_A = d \wedge (d+d)_{A \times A}$.
That is, $\bar{d}(x,y) = d(x,y) \wedge (d_A(x) + d_A(y))$.
For $1 \leq p < \infty$, define
\begin{equation}
    d_p = d \wedge (d_A^p \oplus d_A^p)^\frac{1}{p}.
\end{equation}
That is, $d_p(x,y) = d(x,y) \wedge \norm{(d_A(x),d_A(y))}_p$.
In particular, $d_1 = \bar{d}$.
One can check that $d_p$ is pseudometric on $(X,d,A)$~\cite[Lemma 3.13]{bubenik2022universality} and $d_p(x,y) = 0$ if and only if either $x=y$ or $x \in A$ and $y \in A$,
and $d_p$ is a metric on the quotient $X/A$ \cite[Lemma 3.17]{bubenik2022universality}.

Let $\mu,\nu \in \pBorel$.

\begin{definition} \label{def:Wp-hat}
    Let $1 \leq p < \infty$.
    Define 
    \begin{equation*}
        \hat{W}_p(\mu,\nu)  
        = \left( \inf_{\pi \in \Pi(\mu,\nu)} \pi( d_p^p ) \right)^{\frac{1}{p}}.
    \end{equation*}
\end{definition}

\begin{definition} \label{def:Wp}
    Let $a \in A$.
    For $\eps > 0$, let 
    \[
    \Pi_\eps(\mu,\nu) = \{ \pi \in \Pi(\mu,\nu) \ | \ \exists \pi' \in \Pi(\mu_\eps,\nu_\eps) \text{ such that } 
    \pi = \pi' + (i_1^a)_* \mu^\eps + (i_2^a)_* \nu^\eps\}.
    \]
    Note that $\Pi_{\eps}(\mu,\nu)\neq\emptyset$ since every coupling $\pi'\in \Pi(\mu_\eps,\nu_\eps)$ can be trivially extended to a coupling $\pi\in \Pi(\mu,\nu)$ by \cref{ex:trivial-coupling}.
    Define
    the \emph{$p$-Wasserstein distance} between $\mu$ and $\nu$ to be given by 
    \begin{equation*}
        W_p(\mu,\nu) = \left( \inf_{\eps > 0} \inf_{\pi \in \Pi_\eps(\mu,\nu)} \pi(d_p^p) \right)^{\frac{1}{p}}.
    \end{equation*}
\end{definition}

Note that if $\pi = \pi' + (i_1^c)_* \mu^\eps + (i_2^c)_* \nu^\eps$ 
then $\pi(d_p^p) = \pi'(d_p^p) + \mu^\eps(d_A^p) + \nu^\eps(d_A^p)$.

\begin{lemma}
    If $0 < \eps < \eps'$ then $\Pi_{\eps'}(\mu,\nu) \subset \Pi_\eps(\mu,\nu)$.
\end{lemma}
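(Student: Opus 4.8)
The plan is to take an arbitrary $\pi \in \Pi_{\eps'}(\mu,\nu)$ and exhibit, by regrouping its trivial pieces, the decomposition required for membership in $\Pi_\eps(\mu,\nu)$. By definition of $\Pi_{\eps'}(\mu,\nu)$ there is a coupling $\pi'' \in \Pi(\mu_{\eps'},\nu_{\eps'})$ with
\[ \pi = \pi'' + (i_1^a)_*\mu^{\eps'} + (i_2^a)_*\nu^{\eps'}. \]
Since $0 < \eps < \eps'$, the restriction additivity recorded in \cref{sec:finiteness-conditions} (the case $A_0^\eps \cup A_\eps^{\eps'} = A_0^{\eps'}$) gives $\mu^{\eps'} = \mu^\eps + \mu_\eps^{\eps'}$, and likewise $\nu^{\eps'} = \nu^\eps + \nu_\eps^{\eps'}$. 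Because $(i_1^a)_*$ and $(i_2^a)_*$ are monoid homomorphisms (\cref{lem:products}\eqref{it:induced-map}), I would substitute these splittings, collect terms, and set
\[ \pi' = \pi'' + (i_1^a)_*\mu_\eps^{\eps'} + (i_2^a)_*\nu_\eps^{\eps'}, \]
which rearranges the displayed identity into $\pi = \pi' + (i_1^a)_*\mu^\eps + (i_2^a)_*\nu^\eps$.

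It then remains to check that $\pi' \in \Pi(\mu_\eps,\nu_\eps)$ by computing its two marginals. The main point is that $p_1 \circ i_1^a = \mathrm{id}_X$ while $p_1 \circ i_2^a$ is the constant map at $a \in A$, so that $(p_1)_*(i_2^a)_*\nu_\eps^{\eps'}$ is concentrated on $A$ and therefore vanishes in $\pBorel$. Combining this with $(p_1)_*\pi'' = \mu_{\eps'}$ and the additivity $\mu_\eps^{\eps'} + \mu_{\eps'} = \mu_\eps$ (now the case $A_\eps^{\eps'} \cup A_{\eps'}^\infty = A_\eps^\infty$) yields $(p_1)_*\pi' = \mu_{\eps'} + \mu_\eps^{\eps'} = \mu_\eps$, and the symmetric computation gives $(p_2)_*\pi' = \nu_\eps$. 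Hence $\pi' \in \Pi(\mu_\eps,\nu_\eps)$, and the rewritten decomposition of $\pi$ witnesses $\pi \in \Pi_\eps(\mu,\nu)$.

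The argument is essentially bookkeeping, and the only delicate point is that everything takes place in the quotient monoids $\pBorelXX$ and $\pBorel$: the cross terms $(p_1)_*(i_2^a)_*\nu_\eps^{\eps'}$ and $(p_2)_*(i_1^a)_*\mu_\eps^{\eps'}$, being supported on the reservoir $A$, must be recognized as zero rather than carried along. Tracking these vanishing terms, together with the two restriction identities $\mu^{\eps'} = \mu^\eps + \mu_\eps^{\eps'}$ and $\mu_\eps = \mu_\eps^{\eps'} + \mu_{\eps'}$, is the whole substance of the proof; I do not anticipate any genuine analytic obstacle.
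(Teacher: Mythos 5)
Your proposal is correct and is exactly the paper's argument made explicit: the paper simply invokes the trivial extension of a coupling of $\mu_{\eps'},\nu_{\eps'}$ to one of $\mu_\eps,\nu_\eps$ (via \cref{ex:trivial-coupling,ex:trivial-extension}), which is precisely your $\pi' = \pi'' + (i_1^a)_*\mu_\eps^{\eps'} + (i_2^a)_*\nu_\eps^{\eps'}$. Your bookkeeping of the marginals and of the cross terms vanishing in the quotient monoid is a faithful unpacking of that one-line proof.
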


\begin{proof}
    By \cref{ex:trivial-coupling}, we can trivially extend a coupling of $\mu_{\eps'}$ and $\nu_{\eps'}$ to a coupling of $\mu_\eps$ and $\nu_\eps$.
\end{proof}

From the definitions we have the following.

\begin{lemma} \label{lem:Wp-inequality}
    $\tilde{W}_p(\mu,\nu) \leq W_p(\mu,\nu)$.
\end{lemma}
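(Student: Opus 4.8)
The plan is to read the inequality directly off the two definitions, using only that one infimum ranges over a larger set of couplings than the other. The sole content is the set inclusion $\Pi_\eps(\mu,\nu) \subseteq \Pi(\mu,\nu)$, valid for every $\eps > 0$; this is immediate from \cref{def:Wp}, where every element of $\Pi_\eps(\mu,\nu)$ is declared to be a coupling of $\mu$ and $\nu$ (namely of the form $\pi' + (i_1^a)_*\mu^\eps + (i_2^a)_*\nu^\eps$ with $\pi' \in \Pi(\mu_\eps,\nu_\eps)$).

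First I would fix $\eps > 0$ and combine this inclusion with the monotonicity of the infimum under enlargement of the index set to obtain
\[
    \inf_{\pi \in \Pi(\mu,\nu)} \pi(d_p^p) \ \leq\ \inf_{\pi \in \Pi_\eps(\mu,\nu)} \pi(d_p^p).
\]
Since this holds for every $\eps > 0$, taking the infimum over $\eps$ on the right-hand side preserves the inequality, giving
\[
    \inf_{\pi \in \Pi(\mu,\nu)} \pi(d_p^p) \ \leq\ \inf_{\eps > 0}\,\inf_{\pi \in \Pi_\eps(\mu,\nu)} \pi(d_p^p).
\]
Applying the monotone map $t \mapsto t^{1/p}$ to both sides then yields $\hat{W}_p(\mu,\nu) \leq W_p(\mu,\nu)$, the two quantities being exactly those of \cref{def:Wp-hat,def:Wp}.

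There is no genuine obstacle here; the statement is essentially a repackaging of the inclusion $\Pi_\eps \subseteq \Pi$, which is why the preceding remark says it follows \emph{from the definitions}. The only point worth noting is that both Wasserstein quantities are $p$-th roots of infima of the \emph{same} integrand $\pi \mapsto \pi(d_p^p)$, so no comparison of different cost functions is required and the argument reduces to the elementary fact that enlarging the domain of an infimum can only decrease its value.
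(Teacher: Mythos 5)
Your proof is correct and is exactly the argument the paper intends: the paper gives no explicit proof, stating only that the lemma follows ``from the definitions,'' and the content is precisely the inclusion $\Pi_\eps(\mu,\nu) \subseteq \Pi(\mu,\nu)$ together with monotonicity of infima and of $t \mapsto t^{1/p}$, as you say. (The $\tilde{W}_p$ in the lemma statement is a typo for $\hat{W}_p$, which you have correctly read through.)
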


% For the remainder of this section, assume that $\mu,\nu \in \pfpRadon$.

\begin{proposition}
    $\hat{W}_1(\mu,\nu) = W_1(\mu,\nu)$.
\end{proposition}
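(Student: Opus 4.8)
The plan is to show that the stated equality is purely definitional once we evaluate the cost function at $p=1$. The only point that needs checking is that the integrand $d_1^1$ appearing in \cref{def:Wp-hat} coincides with the pseudometric $\bar d$ used to define the relative $1$-Wasserstein distance earlier in this section. This is immediate from the formula for $d_p$: taking $p = 1$ gives
\[
    d_1 = d \wedge \bigl(d_A^1 \oplus d_A^1\bigr)^{1/1} = d \wedge (d_A \oplus d_A) = \bar d ,
\]
which is exactly the identity recorded immediately after the definition of $d_p$.

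With $d_1 = \bar d$ in hand, I would simply unwind \cref{def:Wp-hat} at $p = 1$: the outer exponent $1/p$ becomes $1$ and the integrand $d_p^p$ becomes $d_1 = \bar d$, so that
\[
    \hat W_1(\mu,\nu) = \inf_{\pi \in \Pi(\mu,\nu)} \pi(d_1) = \inf_{\pi \in \Pi(\mu,\nu)} \pi(\bar d).
\]
The right-hand side is precisely the defining expression for $W_1(\mu,\nu)$ from the subsection on the $1$-Wasserstein distance. Hence $\hat W_1(\mu,\nu) = W_1(\mu,\nu)$ for every $\mu,\nu \in \pBorel$, with no finiteness or compactness hypotheses required.

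There is essentially no obstacle here; the content of the proposition is the bookkeeping observation that the $p=1$ case of the ``infimum over all couplings'' cost $\hat W_p$ recovers the transport distance $W_1$ defined directly in terms of $\bar d$. It is worth contrasting this with the behaviour for general $p$, where the distance $W_p$ of \cref{def:Wp} is built from the restricted coupling sets $\Pi_\eps(\mu,\nu)$ that force all mass within $\eps$ of $A$ to be sent to the reservoir, and where one only has the one-sided comparison $\hat W_p \le W_p$ of \cref{lem:Wp-inequality}, which may be strict. The proposition isolates the fact that no such restriction is needed to match $W_1$, so that the general $p$-Wasserstein framework is genuinely consistent with the $1$-Wasserstein theory developed above.
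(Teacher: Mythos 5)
Your argument establishes only the identity $\hat W_1(\mu,\nu) = \inf_{\pi\in\Pi(\mu,\nu)}\pi(\bar d)$, which is indeed immediate from $d_1=\bar d$. But that is not what the proposition asserts. In the subsection where this proposition appears, $W_1$ denotes the $p=1$ case of \cref{def:Wp}, namely
\[
W_1(\mu,\nu)=\inf_{\eps>0}\ \inf_{\pi\in\Pi_\eps(\mu,\nu)}\pi(\bar d),
\]
where the inner infimum runs only over the restricted coupling classes $\Pi_\eps(\mu,\nu)$, in which all the mass of $\mu^\eps$ and $\nu^\eps$ is forced to be coupled trivially to the reservoir. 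The proposition is precisely the consistency statement that this restricted infimum agrees with the unrestricted one when $p=1$; that is why the paper's proof begins by invoking \cref{lem:Wp-inequality} for the easy inequality $\hat W_1\le W_1$ and then spends the rest of the argument on the reverse inequality. Under your reading the proposition would be vacuous and \cref{lem:Wp-inequality} would already be an equality by definition, which it is not.

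The missing direction, $W_1(\mu,\nu)\le \hat W_1(\mu,\nu)$, is where the content lies: one must show that a near-optimal coupling in $\Pi(\mu,\nu)$ can be replaced by one in some $\Pi_\delta(\mu,\nu)$ at negligible extra cost. Concretely, given $\eps>0$, lower $1$-finiteness (\cref{lem:lower-p-finite}) yields $\delta>0$ with $\mu^\delta(d_A),\nu^\delta(d_A)<\eps/3$; one picks $\pi\in\Pi(\mu,\nu)$ with $\pi(\bar d)<\hat W_1(\mu,\nu)+\eps/3$ and pushes it forward under the retraction $r$ of \eqref{eq:retraction}. By \cref{lem:r-pi-coupling}, $r_*\pi\in\Pi(\mu_\delta,\nu_\delta)$, and by \cref{prop:r-pi-cost} its cost exceeds $\pi(\bar d)$ by at most $\mu^\delta(d_A)+\nu^\delta(d_A)$; its trivial extension then lies in $\Pi_\delta(\mu,\nu)$ with cost within a constant multiple of $\eps$ of $\hat W_1(\mu,\nu)$. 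Note also that this step genuinely uses a finiteness hypothesis on $\mu$ and $\nu$ (lower $1$-finiteness), contrary to your closing claim that no finiteness hypotheses are required.
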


\begin{proof}
    By \cref{lem:Wp-inequality}, it remains to show that $W_1(\mu,\nu) \leq \hat{W}_1(\mu,\nu)$.
    Let $\eps > 0$.
    By \cref{lem:lower-p-finite}, there exists $\delta >0$ such that $\mu^\delta(d_A) < \frac{\eps}{3}$ and $\nu^\delta(d_A) < \frac{\eps}{3}$.
    By \cref{def:Wp-hat}, there exists $\pi \in \Pi(\mu,\nu)$ such that $\pi(d_1) < \hat{W}_1(\mu,\nu) + \frac{\eps}{3}$.
    Use $\delta$ and \eqref{eq:retraction} to define the retraction $r$ and let $\pi' = r_*(\pi)$.
    By \cref{lem:r-pi-coupling}, $\pi' \in \Pi(\mu_\delta,\nu_\delta)$.
    By \cref{prop:r-pi-cost}, $\pi'(d_1) \leq \pi(d_1) + \mu^\delta(d_A) + \nu^\delta(d_A) < \hat{W}_1(\mu,\nu) + \eps$.
    Therefore, by \cref{def:Wp}, $W_1(\mu,\nu) < \hat{W}_1(\mu,\nu) + \eps$ and hence $W_1(\mu,\nu) \leq \hat{W}_1(\mu,\nu)$.
\end{proof}

It is an open question whether $\hat{W}_p = W_p$ for $p > 1$.

\begin{lemma} \label{lem:reflexivity-Wp}
    $W_p(\mu,\mu) = 0$.
\end{lemma}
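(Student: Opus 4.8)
The plan is to exhibit, for each $\eps > 0$, an explicit coupling in $\Pi_\eps(\mu,\mu)$ whose cost is exactly $2\mu^\eps(d_A^p)$, and then send $\eps \downarrow 0$. Fix $a \in A$. By \cref{ex:diagonal-coupling} the diagonal coupling satisfies $\Delta_*(\mu_\eps) \in \Pi(\mu_\eps,\mu_\eps)$, and trivially extending it as in \cref{ex:trivial-coupling,ex:trivial-extension} (adding the trivial coupling of $\mu^\eps$ with itself) produces
\[
\pi_\eps = \Delta_*(\mu_\eps) + (i_1^a)_*(\mu^\eps) + (i_2^a)_*(\mu^\eps),
\]
which has precisely the form required for membership in $\Pi_\eps(\mu,\mu)$, with $\pi' = \Delta_*(\mu_\eps)$.

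Next I would compute $\pi_\eps(d_p^p)$. Since $d_p(x,x) = 0$ for every $x$, the diagonal part contributes nothing: $(\Delta_*\mu_\eps)(d_p^p) = \mu_\eps(d_p^p \circ \Delta) = 0$. For each reservoir term, $d_p(x,a) = d(x,a) \meet d_A(x) = d_A(x)$, because $d_A(a) = 0$ and $d_A(x) \leq d(x,a)$; hence $((i_1^a)_*\mu^\eps)(d_p^p) = \mu^\eps(d_A^p)$ and likewise for the second. This is exactly the cost decomposition recorded immediately after \cref{def:Wp}, and it yields $\pi_\eps(d_p^p) = 2\mu^\eps(d_A^p)$. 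Since $d_p^p \geq 0$ forces $W_p \geq 0$, we obtain $0 \leq W_p(\mu,\mu)^p \leq \inf_{\eps > 0} 2\mu^\eps(d_A^p)$.

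Finally I would invoke \cref{lem:lower-p-finite}: as $\eps \downarrow 0$ one has $\mu^\eps(d_A^p) \downarrow 0$, so the infimum vanishes and $W_p(\mu,\mu) = 0$. The only genuine subtlety lies in this last step, which needs $\mu$ to be lower $p$-finite in order for \cref{lem:lower-p-finite} to apply; this is precisely what guarantees that the inner mass near $A$ can be transported to the reservoir at vanishing cost. The remaining ingredients---checking that $\pi_\eps$ lies in $\Pi_\eps(\mu,\mu)$ and the elementary pushforward evaluations---are routine.
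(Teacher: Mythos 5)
Your proof is correct and follows essentially the same route as the paper's: the diagonal coupling on $\mu_\eps$, trivially extended to an element of $\Pi_\eps(\mu,\mu)$, with cost $2\mu^\eps(d_A^p)$, driven to zero via \cref{lem:lower-p-finite}. Your remark that this last step tacitly uses lower $p$-finiteness of $\mu$ is a fair observation (the paper's own proof relies on the same hypothesis), but otherwise the two arguments coincide.
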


\begin{proof}
    Let $\eps>0$.
    by \cref{lem:lower-p-finite}, there exists $\delta > 0$ such that $\mu^\delta(d_A^p) < \frac{\eps}{2}$.
    By \cref{ex:diagonal-coupling}, we have the diagonal coupling $\pi' = \Delta_*\mu_\delta$.
    By \cref{ex:trivial-extension}, extend $\pi'$ trivially to a coupling $\pi$ of $\mu$ and $\mu$.
    Then $\pi(d_p^p) = \pi'(d_p^p) + 2\mu^\delta(d_A^p) < \eps$.
    Therefore $W_p(\mu,\mu) = 0$.
\end{proof}

\begin{lemma} \label{lem:symmetry-Wp}
    $W_p(\mu,\nu) = W_p(\nu,\mu)$.
\end{lemma}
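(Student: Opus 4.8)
The plan is to mirror the proof of \cref{lem:symmetry-W1}, using the transpose map $t: X \times X \to X \times X$, $(x,y) \mapsto (y,x)$, but now tracking its compatibility with the refined family of couplings $\Pi_\eps$. The two basic ingredients are that $d_p$ is symmetric, so $d_p^p \circ t = d_p^p$ and hence $(t_*\pi)(d_p^p) = \pi(d_p^p \circ t) = \pi(d_p^p)$ for every $\pi \in \pBorelXX$, and that $t_*$ sends $\Pi(\mu,\nu)$ bijectively to $\Pi(\nu,\mu)$ (with inverse itself, since $t$ is an involution).

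The one new point requiring care is that $t$ interchanges the two canonical inclusions. Since $t(i_1^a(x)) = t(x,a) = (a,x) = i_2^a(x)$, and symmetrically $t \circ i_2^a = i_1^a$, we obtain $t_*(i_1^a)_* = (i_2^a)_*$ and $t_*(i_2^a)_* = (i_1^a)_*$. Thus, first I would take an arbitrary $\pi \in \Pi_\eps(\mu,\nu)$, written as $\pi = \pi' + (i_1^a)_*\mu^\eps + (i_2^a)_*\nu^\eps$ with $\pi' \in \Pi(\mu_\eps,\nu_\eps)$, and apply $t_*$ to obtain
\[
  t_*\pi = t_*\pi' + (i_2^a)_*\mu^\eps + (i_1^a)_*\nu^\eps = t_*\pi' + (i_1^a)_*\nu^\eps + (i_2^a)_*\mu^\eps.
\]
Since $t_*\pi' \in \Pi(\nu_\eps,\mu_\eps)$, this exhibits $t_*\pi$ as an element of $\Pi_\eps(\nu,\mu)$. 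As $t_*$ is an involution, it therefore restricts to a bijection between $\Pi_\eps(\mu,\nu)$ and $\Pi_\eps(\nu,\mu)$ for each fixed $\eps > 0$.

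Combining these observations, for each $\eps > 0$ the map $t_*$ gives a cost-preserving bijection between $\Pi_\eps(\mu,\nu)$ and $\Pi_\eps(\nu,\mu)$, so that $\inf_{\pi \in \Pi_\eps(\mu,\nu)} \pi(d_p^p) = \inf_{\pi \in \Pi_\eps(\nu,\mu)} \pi(d_p^p)$. Taking the infimum over $\eps > 0$ and then the $p$-th root yields $W_p(\mu,\nu) = W_p(\nu,\mu)$. I do not expect any genuine obstacle here; the only step that is not a verbatim repeat of the $W_1$ argument is checking that $t_*$ respects the trivial-extension structure defining $\Pi_\eps$, and this reduces entirely to the interchange identity $t \circ i_1^a = i_2^a$.
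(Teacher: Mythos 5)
Your proof is correct and follows essentially the same route as the paper's, which also applies the transpose map $t$ and observes that $t_*\pi \in \Pi_\eps(\nu,\mu)$ with $(t_*\pi)(d_p^p) = \pi(d_p^p)$. Your explicit verification of the interchange identity $t \circ i_1^a = i_2^a$ fills in a detail the paper leaves implicit, but the argument is the same.
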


\begin{proof}
%    Consider the transpose map $t:X \times X \to X \times X$ given by $(x,y) \mapsto (y,x)$.
    Let $\eps > 0$ and let $\pi \in \Pi_\eps(\mu,\nu)$.
    Then using the transpose map $t$, 
    $t_*\pi \in \Pi_{\eps}(\nu,\mu)$ and $(t_*\pi)(d_p^p) = \pi(d_p^p)$.
    The result follows.
\end{proof}

\begin{proposition} \label{thm:triangle-inequality-Wp}
    $W_p$ satisfies the triangle inequality on $\pfpRadon$.
\end{proposition}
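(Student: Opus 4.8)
The plan is to follow the structure of the proof of \cref{thm:triangle-inequality-W1}, but to exploit the truncation already built into the definition of $W_p$ through the sets $\Pi_\eps$, and to replace the final additivity argument by Minkowski's inequality in $L^p$. Fix $\mu_1,\mu_2,\mu_3\in\pfpRadon$ and $\eta>0$. Since $W_p(\mu_i,\mu_j)^p=\inf_{\eps>0}\inf_{\pi\in\Pi_\eps(\mu_i,\mu_j)}\pi(d_p^p)$ and $\Pi_{\eps'}(\mu_i,\mu_j)\subset\Pi_\eps(\mu_i,\mu_j)$ whenever $\eps<\eps'$, this infimum is a decreasing limit as $\eps\downarrow0$; combined with \cref{lem:lower-p-finite}, which gives $\mu_i^\eps(d_A^p)\downarrow0$, I can first choose a single $\eps>0$ small enough that $\mu_i^\eps(d_A^p)<\eta$ for $i=1,2,3$ and simultaneously pick couplings $\pi_{12}\in\Pi_\eps(\mu_1,\mu_2)$ and $\pi_{23}\in\Pi_\eps(\mu_2,\mu_3)$ with $\pi_{12}(d_p^p)<W_p(\mu_1,\mu_2)^p+\eta$ and $\pi_{23}(d_p^p)<W_p(\mu_2,\mu_3)^p+\eta$.

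By definition of $\Pi_\eps$, write $\pi_{12}=\pi_{12}'+(i_1^a)_*\mu_1^\eps+(i_2^a)_*\mu_2^\eps$ with $\pi_{12}'\in\Pi((\mu_1)_\eps,(\mu_2)_\eps)$, and similarly for $\pi_{23}'$. The truncated measures $(\mu_i)_\eps$ are finite (\cref{lem:mu-eps-finite}), so $\pi_{12}'$ and $\pi_{23}'$ are finite (\cref{cor:finite}), and the note following \cref{def:Wp} gives $\pi_{12}(d_p^p)=\pi_{12}'(d_p^p)+\mu_1^\eps(d_A^p)+\mu_2^\eps(d_A^p)$. The middle marginals of $\pi_{12}'$ and $\pi_{23}'$ both represent $(\mu_2)_\eps$, but as honest measures on $X^2$ they may differ by mass sent to the reservoir $A$. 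To reconcile this I set $\check\mu_2=(p_2)_*(\pi_{12}'|_{(X\setminus A)\times A})\vee(p_1)_*(\pi_{23}'|_{A\times(X\setminus A)})$ and apply \cref{lem:representative-coupling} to obtain finite representatives $\sigma_2$ of $\pi_{12}'$ and $\sigma_1$ of $\pi_{23}'$ whose common middle marginal is exactly $\check\mu_2+(\mu_2)_\eps$, of total mass $m$. (If $m=0$ then $(\mu_2)_\eps=0$ and the claim follows directly from trivial couplings, so assume $m>0$.) Normalizing by $m$ and applying the gluing lemma for probability measures on a Polish space~\cite[Lemma 5.3.2]{Ambrosio:2008}, I obtain $\gamma$ on $X^3$ with $(p_{12})_*\gamma=\tfrac1m\sigma_2$ and $(p_{23})_*\gamma=\tfrac1m\sigma_1$; then $\pi_{13}':=m\,(p_{13})_*\gamma$, regarded as an element of $\pBorelXX$, lies in $\Pi((\mu_1)_\eps,(\mu_3)_\eps)$, since its first and third marginals represent the first marginal of $\sigma_2$ and the second marginal of $\sigma_1$, namely $(\mu_1)_\eps$ and $(\mu_3)_\eps$.

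The cost of $\pi_{13}'$ is controlled using that $d_p$ is a pseudometric~\cite[Lemma 3.13]{bubenik2022universality}, so $d_p(x,z)\le d_p(x,y)+d_p(y,z)$, followed by Minkowski's inequality in $L^p(m\gamma)$:
\[
\pi_{13}'(d_p^p)^{1/p}=\Bigl(\int_{X^3}d_p(x,z)^p\,d(m\gamma)\Bigr)^{1/p}\le\Bigl(\int_{X^3}d_p(x,y)^p\,d(m\gamma)\Bigr)^{1/p}+\Bigl(\int_{X^3}d_p(y,z)^p\,d(m\gamma)\Bigr)^{1/p}.
\]
Since $d_p^p$ vanishes on $A^2$ and any two representatives of an element of $\pBorelXX$ agree off $A^2$, the two right-hand integrals equal $\sigma_2(d_p^p)=\pi_{12}'(d_p^p)$ and $\sigma_1(d_p^p)=\pi_{23}'(d_p^p)$, giving $\pi_{13}'(d_p^p)^{1/p}\le\pi_{12}'(d_p^p)^{1/p}+\pi_{23}'(d_p^p)^{1/p}$. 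Extending $\pi_{13}'$ trivially (\cref{ex:trivial-extension}) to $\pi_{13}\in\Pi_\eps(\mu_1,\mu_3)$ yields $W_p(\mu_1,\mu_3)^p\le\pi_{13}(d_p^p)=\pi_{13}'(d_p^p)+\mu_1^\eps(d_A^p)+\mu_3^\eps(d_A^p)$. Using $\pi_{12}'(d_p^p)\le\pi_{12}(d_p^p)<W_p(\mu_1,\mu_2)^p+\eta$, the analogous bound for $\pi_{23}'$, and $\mu_1^\eps(d_A^p),\mu_3^\eps(d_A^p)<\eta$, I obtain
\[
W_p(\mu_1,\mu_3)\le\Bigl(\bigl[(W_p(\mu_1,\mu_2)^p+\eta)^{1/p}+(W_p(\mu_2,\mu_3)^p+\eta)^{1/p}\bigr]^p+2\eta\Bigr)^{1/p},
\]
and letting $\eta\downarrow0$ gives the triangle inequality. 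The hard part will be the gluing step: unlike the classical setting, the middle marginals of $\pi_{12}'$ and $\pi_{23}'$ agree only as relative measures, so one must first pass to genuine representatives on $X^2$ that coincide as measures on $X$ (including the reservoir mass carried by $A$) before the Polish-space gluing lemma applies; once that is arranged, the passage to general $p$ is a routine application of Minkowski's inequality.
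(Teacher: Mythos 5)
Your proof is correct and follows essentially the same route as the paper's: truncate near $A$ using \cref{lem:lower-p-finite}, reconcile the middle marginals via $\check\mu_2$ and \cref{lem:representative-coupling}, glue on the Polish space, and combine the pseudometric triangle inequality for $d_p$ with Minkowski's inequality before extending trivially and letting the error tend to zero. The only differences are cosmetic (you phrase the near-optimal couplings as elements of $\Pi_\eps$ and then strip off the trivial part, and you handle $m=0$ explicitly), so no changes are needed.
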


\begin{proof}
    For $j=1,2,3$, let $\mu_j \in \pfpRadon$.
    Let $\eps > 0$.
    By \cref{lem:lower-p-finite,def:Wp}, there is a $\delta > 0$ such that 
    for $j=1,2,3$, $(\mu_j)^\delta(d_A^p) < \frac{\eps}{2}$ and
    there exists $\pi_{12} \in \Pi((\mu_1)_\delta,(\mu_2)_\delta)$ such that 
    $\pi_{12}(d_p^p)^{\frac{1}{p}} < W_p(\mu_1,\mu_2) + \frac{\eps}{2}$
    and
    there exists $\pi_{23} \in \Pi((\mu_2)_\delta,(\mu_3)_\delta)$ such that 
    $\pi_{23}(d_p^p)^{\frac{1}{p}} < W_p(\mu_2,\mu_3) + \frac{\eps}{2}$.
    % By \cref{lem:gluing-new}, there exists $\pi \in \pBorelXXX$ such that $(p_{12})_* \pi = \pi_{12}$ and $(p_{23})_* \pi = \pi_{23}$.
    % Let $\pi_{13} = (p_{13})_* \pi$.
    % Then $\pi_{13} \in \Pi((\mu_1)_\delta,(\mu_3)_\delta)$.
    For $j=1,2,3$, $\mu_j$ is upper $p$-finite and hence upper-finite and
    thus $(\mu_j)_\delta$ is finite.

    Let $\check{\mu}_2 = 
    (p_2)_*(\pi_{12}|_{(X \setminus A) \times A}) \join 
    (p_1)_*(\pi_{23}|_{A \times (X \setminus A)})$.
    By \cref{lem:representative-coupling},
    $\pi_{12}$ has a finite representative $\sigma_2 \in \pBorelXXonly$ such that $(p_2)_*\sigma_2 = \check{\mu}_2 + (\mu_2)_\delta$, and
    $\pi_{23}$ has a finite representative $\sigma_1 \in \pBorelXXonly$ such that $(p_1)_*\sigma_1 = \check{\mu}_2 + (\mu_2)_\delta$.
    Let $m = (\check{\mu}_2 + (\mu_2)_\delta)(X)$.
    Let $\gamma_{12} = \frac{1}{m} \sigma_2$ and
    let $\gamma_{23} = \frac{1}{m} \sigma_1$.
    By the gluing lemma for probability measures on a Polish space~\cite[Lemma 5.3.2]{Ambrosio:2008}, there exists a probability measure 
    $\gamma$ on $X^3$ such that 
    $(p_{12})_*(\gamma) = \gamma_{12}$ and
    $(p_{23})_*(\gamma) = \gamma_{23}$,
    where $p_{12},p_{23}:X^3 \to X^2$ denote the projections on the first two and last two coordinates, respectively.
    Let $\gamma_{13} = (p_{13})_*\gamma$, where $p_{13}:X^3 \to X^2$ denotes the projection onto the first and third coordinates.
    Then $m \gamma_{13}$ represents a coupling $\pi_{13}$ of $(\mu_1)_\delta$ and $(\mu_3)_\delta$.
    Let $\pi = m \gamma$.
    By the triangle inequality and the Minkowski inequality,
    \begin{multline*}
        \pi_{13}(d_p^p)^\frac{1}{p} 
        = \left( \int_{X^2} d_p(x,z)^p d\pi_{13}(x,z) \right)^\frac{1}{p} 
        = \left( \int_{X^3} d_p(x,z)^p d\pi(x,y,z) \right)^\frac{1}{p}
        = \norm{d_p\circ p_{13}}_{p,\pi}
        \\ 
        \leq \norm{d_p\circ p_{12} + d_p\circ p_{23}}_{p,\pi} = \norm{d_p}_{p,\pi_{12}} + \norm{d_p}_{p,\pi_{23}}
        = \pi_{12}(d_p^p)^\frac{1}{p} + \pi_{23}(d_p^p)^\frac{1}{p},
    \end{multline*}
    where $\norm{f}_{p,\sigma} = (\int f^p d\sigma)^\frac{1}{p}$.
    Therefore,
    \begin{multline*}
        W_p(\mu_1,\mu_3) 
        \leq \left( (\mu_1)^\delta(d_A^p) + (\mu_3)^\delta(d_A^p) + \pi_{13}(d_p^p) \right)^\frac{1}{p}
        < \left( \eps + \left( \pi_{12}(d_p^p)^\frac{1}{p} + \pi_{23}(d_p^p)^\frac{1}{p} \right)^p \right)^\frac{1}{p}\\
        < \left( \eps + ( W_p(\mu_1,\mu_2) + W_p(\mu_2,\mu_3) + \eps)^p \right)^\frac{1}{p}.
    \end{multline*}
    Since $\eps>0$ was arbitrary, $W_p(\mu_1,\mu_3) \leq W_p(\mu_1,\mu_2) + W_p(\mu_2,\mu_3)$.
\end{proof}

\begin{lemma} \label{lem:Wp-mu-0}
    If $\mu \in \pfpRadon$ then $W_p(\mu,0) = \mu(d_A^p)^{\frac{1}{p}}$.
\end{lemma}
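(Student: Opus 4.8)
The plan is to mirror the computation in \cref{lem:w1-mu-0}, exploiting the fact that for a coupling with a vanishing marginal the cost is completely determined by the other marginal. Since $\nu = 0$ here, we have $\nu_\eps = 0$ and $\nu^\eps = 0$ for every $\eps > 0$, so by \cref{def:Wp} a coupling $\pi \in \Pi_\eps(\mu,0)$ is exactly one of the form $\pi = \pi' + (i_1^a)_* \mu^\eps$ with $\pi' \in \Pi(\mu_\eps, 0)$, and the remark following \cref{def:Wp} gives $\pi(d_p^p) = \pi'(d_p^p) + \mu^\eps(d_A^p)$. First I would compute $\pi'(d_p^p)$ for an arbitrary $\pi' \in \Pi(\mu_\eps, 0)$. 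Since $(p_2)_* \pi' = 0$, the support argument from \cref{lem:w1-mu-0} shows $\pi'(X \times (X \setminus A)) = 0$, so $\pi'$ is concentrated on $X \times A$. For $a \in A$ we have $d_A(a) = 0$ and $d(x,a) \geq d_A(x)$, hence $d_p(x,a) = d(x,a) \wedge d_A(x) = d_A(x)$; thus $d_p^p$ agrees with $d_A^p \circ p_1$ on the support of $\pi'$. By the change-of-variables formula for pushforward measures, $\pi'(d_p^p) = ((p_1)_* \pi')(d_A^p) = \mu_\eps(d_A^p)$, which is independent of the choice of $\pi'$.

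Combining these observations, every $\pi \in \Pi_\eps(\mu,0)$ has the same cost
\[
  \pi(d_p^p) = \mu_\eps(d_A^p) + \mu^\eps(d_A^p) = \mu(d_A^p),
\]
using $\mu = \mu^\eps + \mu_\eps$ and additivity of the integral. Since this value is independent of both $\eps$ and $\pi$, the double infimum in \cref{def:Wp} collapses to $\mu(d_A^p)$, which is finite because $\mu \in \pfpRadon$ is $p$-finite. Taking $p$-th roots then yields $W_p(\mu,0) = \mu(d_A^p)^{\frac{1}{p}}$.

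There is essentially no genuine obstacle here. The only point requiring care is the support argument, namely that $\pi'$ is concentrated on $X \times A$; this must be phrased in terms of representatives in the quotient $\pBorelXX$, but this causes no difficulty since $d_p^p$ vanishes on $A^2$ and the integral $\pi'(d_p^p)$ is therefore well defined on equivalence classes. Because the cost turns out to be \emph{constant} across all admissible couplings, no true optimization or approximation step is needed, making this computation simpler than the corresponding triangle-inequality arguments.
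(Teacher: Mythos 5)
Your proof is correct and rests on the same two ingredients as the paper's: the support argument showing that a coupling with vanishing second marginal is concentrated on $X \times A$, where $d_p^p = d_A^p \circ p_1$, and the evaluation of the trivial coupling. The only cosmetic difference is that you observe the cost is \emph{constant} on all of $\Pi_\eps(\mu,0)$ via the $\Pi_\eps$ decomposition, whereas the paper splits the argument into an upper bound (trivial coupling) and a lower bound (arbitrary coupling); both routes are essentially identical.
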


\begin{proof}
    Let $a \in A$.
    Consider the trivial coupling $\pi = (i_1^a)_* \mu \in \Pi(\mu,0)$.
    Then for all $\eps > 0$, $\pi \in \Pi_\eps(\mu,0)$ and $\pi(d_p^p) = \mu(d_p^p \circ i_1^a) = \mu(d_A^p)$.
    Hence $W_p(\mu,0) \leq \mu(d_A^p)^\frac{1}{p}$.

    Consider $\pi \in \Pi(\mu,0)$.
    Since $(p_2)_* \pi = 0$,
    $\pi(X \times (X \setminus A)) = \pi( p_2^{-1}(X \setminus A)) = ((p_2)_* \pi)(X \setminus A) = 0$.
    Thus, $\supp(\pi) \subset X \times A$.
    For $x \in X$, $a \in A$, $d_p(x,a) \geq d_A(x)$. % = (d_A \oplus 0)(x,a)$.
    Therefore $\pi(d_p^p) \geq \pi(d_A^p \circ p_1) = (p_1)_* \pi(d_A^p) = \mu(d_A^p)$.
    Hence $W_p(\mu,0) \geq \mu(d_A^p)^\frac{1}{p}$.
\end{proof}

\begin{lemma} \label{lem:isometric-embedding-Wp}
    Let $x,y \in X$. Then $W_p(\delta_x,\delta_y) = d_p(x,y)$.
\end{lemma}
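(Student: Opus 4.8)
The plan is to follow the measure-theoretic half of the proof of \cref{lem:isometric-embedding-W1}, now for general $p$, reducing $W_p$ on Dirac masses to the simpler quantity $\hat{W}_p$. I would rely on two pointwise facts about $d_p$: for every $a \in A$ one has $d_p(x,a) = d(x,a) \wedge d_A(x) = d_A(x)$ (since $d_A(a) = 0$ and $d_A(x) \le d(x,a)$), and $d_p(x,y)^p = d(x,y)^p \wedge \bigl(d_A(x)^p + d_A(y)^p\bigr) \le d_A(x)^p + d_A(y)^p$. I would first dispose of the degenerate cases. If $x \in A$, then $\delta_x = 0$ in $\pBorel$, so by \cref{lem:symmetry-Wp} and \cref{lem:Wp-mu-0} we get $W_p(\delta_x,\delta_y) = W_p(\delta_y,0) = d_A(y)$ when $y \notin A$, whereas $d_p(x,y) = d_A(y)$ by the first fact; if also $y \in A$, both sides vanish by \cref{lem:reflexivity-Wp}. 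The case $y \in A$ is symmetric. This leaves the main case $x,y \notin A$.

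For $x,y \notin A$, I would first compute $\hat{W}_p(\delta_x,\delta_y)$. The upper bound $\hat{W}_p(\delta_x,\delta_y)^p \le d_p(x,y)^p$ comes from the coupling $\delta_{(x,y)} \in \Pi(\delta_x,\delta_y)$, whose cost is $\delta_{(x,y)}(d_p^p) = d_p(x,y)^p$. For the lower bound, let $\pi \in \Pi(\delta_x,\delta_y)$ be arbitrary. Exactly as in \cref{lem:isometric-embedding-W1}, the marginal constraints force $\pi = c\,\delta_{(x,y)} + \rho$ modulo $A^2$, where $c \in [0,1]$ and $\rho$ is supported on $\{x\} \times A \,\cup\, A \times \{y\}$ with $\rho(\{x\} \times A) = \rho(A \times \{y\}) = 1-c$. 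Since $d_p = d_A \oplus d_A$ on $\supp(\rho)$, this gives
\[
    \pi(d_p^p) = c\, d_p(x,y)^p + (1-c)\bigl(d_A(x)^p + d_A(y)^p\bigr) \ge d_p(x,y)^p
\]
by the second pointwise fact. Taking the infimum over $\pi$ yields $\hat{W}_p(\delta_x,\delta_y) = d_p(x,y)$.

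Finally I would transfer this to $W_p$. By \cref{lem:Wp-inequality}, $W_p(\delta_x,\delta_y) \ge \hat{W}_p(\delta_x,\delta_y) = d_p(x,y)$. For the reverse inequality, I would choose $\eps < \min(d_A(x),d_A(y))$, so that $(\delta_x)^\eps = (\delta_y)^\eps = 0$ and $(\delta_x)_\eps = \delta_x$, $(\delta_y)_\eps = \delta_y$; then $\delta_{(x,y)} \in \Pi_\eps(\delta_x,\delta_y)$, and \cref{def:Wp} gives $W_p(\delta_x,\delta_y)^p \le \delta_{(x,y)}(d_p^p) = d_p(x,y)^p$. Combining the two inequalities yields $W_p(\delta_x,\delta_y) = d_p(x,y)$.

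The main obstacle is justifying the coupling decomposition $\pi = c\,\delta_{(x,y)} + \rho$ rigorously in the relative (quotient-by-$A^2$) setting: one must argue from the marginal conditions $(p_1)_*\pi = \delta_x$ and $(p_2)_*\pi = \delta_y$ on $X \setminus A$ that all mass of $\pi$ off $A^2$ is carried by $\{x\}\times\{y\} \,\cup\, \{x\}\times A \,\cup\, A \times \{y\}$, with the indicated masses $c$ and $1-c$. This is exactly the point where care about relative measures enters; everything else reduces to the two pointwise estimates on $d_p$ already used in the $p=1$ arguments.
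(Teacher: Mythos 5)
Your proposal is correct and takes essentially the same route as the paper: degenerate cases via \cref{lem:Wp-mu-0}, the upper bound from $\delta_{(x,y)}\in\Pi_\eps(\delta_x,\delta_y)$ for $\eps < d_A(x)\meet d_A(y)$, and the lower bound from the decomposition $\pi = c\,\delta_{(x,y)}+\rho$ with $\rho$ supported on $\{x\}\times A\cup A\times\{y\}$, together with $d_p^p \leq d_A^p\oplus d_A^p$. Your explicit detour through $\hat{W}_p$ and \cref{lem:Wp-inequality} is only a cosmetic reorganization, since the paper's lower bound is likewise taken over all of $\Pi(\delta_x,\delta_y)$; the coupling decomposition you flag as the remaining obstacle is asserted in the paper at the same level of detail as in \cref{lem:isometric-embedding-W1}.
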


\begin{proof}
    If $y \in A$ then by \cref{lem:Wp-mu-0}, $W_p(\delta_x,\delta_y) = W_p(\delta_x,0) = \delta_x(d_A^p)^\frac{1}{p} = d_A(x) = d_p(x,y)$.
    Similarly if $x \in A$ then $W_p(\delta_x,\delta_y) = d_p(x,y)$.
    Assume $x,y \in X \setminus A$.

    Consider $\delta_{(x,y)} \in \Pi(\delta_x,\delta_y)$.
    For all $0 < \eps < d_A(x) \meet d_A(y)$, $\delta_{(x,y)} \in \Pi_\eps(\delta_x,\delta_y)$.
    Since $\delta_{(x,y)}(d_p^p) = d_p(x,y)^p$, 
    $W_p(\delta_x,\delta_y) \leq d_p(x,y)$.

    Next, consider $\pi \in \Pi(\delta_x,\delta_y)$.
    Then $\pi = c \delta_{(x,y)} + \rho$ for some $c \in [0,1]$ and some $\rho \in \pBorelXX$ with
    $\supp(\rho) \subset \{x\} \times A \cup A \times \{y\}$ such that 
    $\rho(\{x\} \times A) = 1-c$ and $\rho(A \times \{y\}) = 1-c$.
    For all $a \in A$, $d_p(x,a) \geq d_A(x)$. % = (d_A \oplus 0)(x,a) = (d_A \oplus 0)(x,y)$.
    Similarly, for all $a \in A$, $d_p(a,y) \geq d_A(y)$. %(0 \oplus d_A)(x,y)$.
    Therefore,
    $\pi(d_p^p) 
    \geq c d_p(x,y)^p + (1-c)(d_A^p \circ p_1)(x,y) + (1-c)(d_A^p \circ p_2)(x,y)
    = c d_p(x,y)^p + (1-c) (d_A^p \oplus d_A^p)(x,y) \geq d_p(x,y)^p$.
    Hence $W_p(\delta_x,\delta_y) \geq d_p(x,y)$.
\end{proof}

Combining \cref{lem:reflexivity-Wp,lem:symmetry-Wp,thm:triangle-inequality-Wp,lem:isometric-embedding-Wp}, we have the following.

\begin{theorem}
    $(\pfpRadon, W_p)$ is a pseudometric space, and the inclusion $X \to \pfpRadon$ given by $x \mapsto \delta_x$ gives an isometric embedding $(X,d_p) \to (\pfpRadon,W_p)$.
\end{theorem}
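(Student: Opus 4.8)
The plan is to assemble the three pseudometric axioms from the four cited lemmas, inserting one short argument for finiteness. Non-negativity is immediate from the definition, since $W_p(\mu,\nu)$ is the $p$-th root of an infimum of integrals of the non-negative function $d_p^p$, and therefore takes values in $[0,\infty]$.

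To see that $W_p$ is in fact finite on $\pfpRadon$, I would first apply \cref{lem:Wp-mu-0}: for $\mu \in \pfpRadon$ we have $W_p(\mu,0) = \mu(d_A^p)^{1/p}$, which is finite because $\mu$ is $p$-finite. Combining this with \cref{lem:symmetry-Wp} and \cref{thm:triangle-inequality-Wp} gives, for $\mu,\nu \in \pfpRadon$,
\[
  W_p(\mu,\nu) \leq W_p(\mu,0) + W_p(0,\nu) = \mu(d_A^p)^{1/p} + \nu(d_A^p)^{1/p} < \infty,
\]
so $W_p$ takes values in $[0,\infty)$.

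With finiteness in hand, the pseudometric axioms follow directly: reflexivity $W_p(\mu,\mu)=0$ is \cref{lem:reflexivity-Wp}, symmetry is \cref{lem:symmetry-Wp}, and the triangle inequality is \cref{thm:triangle-inequality-Wp}; hence $(\pfpRadon,W_p)$ is a pseudometric space. For the embedding, I would note that for every $x \in X$ the Dirac measure $\delta_x$ is finite and tight with $\delta_x(d_A^p) = d_A(x)^p < \infty$, so $\delta_x \in \pfpRadon$ (and $\delta_x = 0$ in $\pBorel$ when $x \in A$, consistently with $d_p$ vanishing on $A \times A$); that the map $x \mapsto \delta_x$ preserves distances is precisely \cref{lem:isometric-embedding-Wp}, $W_p(\delta_x,\delta_y) = d_p(x,y)$. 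There is no genuine obstacle here, since all the analytic content resides in the four lemmas, exactly mirroring the $p=1$ case of \cref{thm:metric}; the only point requiring a moment's care is the finiteness step above, which is not itself one of the cited lemmas but is an immediate consequence of \cref{lem:Wp-mu-0} together with the triangle inequality.
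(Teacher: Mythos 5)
Your proof is correct and follows the paper's own route exactly: the paper derives this theorem by combining \cref{lem:reflexivity-Wp,lem:symmetry-Wp,thm:triangle-inequality-Wp,lem:isometric-embedding-Wp}, just as you do. Your additional observation that finiteness of $W_p$ on $\pfpRadon$ follows from \cref{lem:Wp-mu-0} together with the triangle inequality is a sensible (and valid) extra check that the paper leaves implicit.
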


It is an open question whether $W_p(\mu,\nu)=0$ implies that $\mu = \nu$ when $p > 1$.

\begin{proposition}
    Let $\mu \in \pfpRadon$ then there exists a sequence $(\mu^{(n)}) \subset \fpRadon \cap \pfpRadon$ such that $\mu^{(n)} \to \mu$ in $(\pfpRadon,W_p)$.
\end{proposition}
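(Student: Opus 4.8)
The plan is to imitate the proof of the corresponding statement for $W_1$, using the truncations $\mu_\delta$ as the approximating measures. Since $\mu \in \pfpRadon$ is $p$-finite (and we are in the range $1 \le p < \infty$ where $W_p$ is defined), $\mu$ is in particular lower $p$-finite, so \cref{lem:lower-p-finite} applies: as $\delta \downarrow 0$ we have $\mu^\delta(d_A^p) \downarrow 0$. Hence for each $n \geq 1$ I would choose $\delta_n > 0$ with $\mu^{\delta_n}(d_A^p) < \frac{1}{n}$ and set $\mu^{(n)} = \mu_{\delta_n}$. First I would check that each $\mu^{(n)}$ lies in $\fpRadon \cap \pfpRadon$: since $\mu_{\delta_n} \le \mu$ we have $\mu_{\delta_n}(d_A^p) \le \mu(d_A^p) < \infty$ and $\mu_{\delta_n}$ inherits tightness from $\mu$, so $\mu_{\delta_n} \in \pfpRadon$ (alternatively by the ideal property of \cref{prop:ideal}), while \cref{lem:mu-eps-finite} gives $\mu_{\delta_n} \in \fpRadon$ directly.

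The core of the argument is the distance estimate $W_p(\mu,\mu^{(n)}) \le \bigl(\mu^{\delta_n}(d_A^p)\bigr)^{1/p} < n^{-1/p}$. To obtain it I would fix $a \in A$ and use, with $\eps = \delta_n$ and $\nu = \mu_{\delta_n}$, the coupling $\pi = \Delta_*(\mu_{\delta_n}) + (i_1^a)_*\mu^{\delta_n}$ of \cref{ex:approximation-by-finite-measure}. The key point is that this coupling actually belongs to the restricted class $\Pi_{\delta_n}(\mu,\mu_{\delta_n})$ of \cref{def:Wp}: because $\mu_{\delta_n}$ is supported on $\{d_A > \delta_n\}$, one has $(\mu_{\delta_n})_{\delta_n} = \mu_{\delta_n}$ and $(\mu_{\delta_n})^{\delta_n} = 0$, so $\pi$ has exactly the required trivial-extension form $\pi' + (i_1^a)_*\mu^{\delta_n} + (i_2^a)_*(\mu_{\delta_n})^{\delta_n}$ with $\pi' = \Delta_*(\mu_{\delta_n}) \in \Pi(\mu_{\delta_n},\mu_{\delta_n})$ by \cref{ex:diagonal-coupling}. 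The cost then splits: the diagonal part contributes nothing since $d_p(x,x) = 0$, while on the trivial part $d_p(x,a) = d(x,a) \wedge d_A(x) = d_A(x)$ for $a \in A$ (as $d_A(a)=0$ and $d_A(x) \le d(x,a)$), whence $(i_1^a)_*\mu^{\delta_n}(d_p^p) = \mu^{\delta_n}(d_A^p)$. Thus $\pi(d_p^p) = \mu^{\delta_n}(d_A^p) < \frac{1}{n}$, giving $W_p(\mu,\mu^{(n)})^p < \frac{1}{n}$ and hence $\mu^{(n)} \to \mu$ in $(\pfpRadon, W_p)$.

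I expect the only step requiring genuine care to be the verification that $\pi$ lies in the admissible family $\Pi_{\delta_n}(\mu,\mu_{\delta_n})$ rather than merely in $\Pi(\mu,\mu_{\delta_n})$, since $W_p$ (unlike $\hat{W}_p$) infimizes only over these trivial-extension couplings. This reduces to the two elementary identities $(\mu_{\delta_n})_{\delta_n} = \mu_{\delta_n}$ and $(\mu_{\delta_n})^{\delta_n} = 0$ for the truncation already supported away from $A$, together with the pointwise computation $d_p(x,a) = d_A(x)$; everything else is a direct transcription of the $W_1$ proof with $d_A$ replaced by $d_A^p$.
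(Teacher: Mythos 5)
Your proof is correct and follows essentially the same route as the paper's: choose $\delta_n$ via \cref{lem:lower-p-finite} so that $\mu^{\delta_n}(d_A^p) < \tfrac{1}{n}$, take $\mu^{(n)} = \mu_{\delta_n}$, and bound $W_p(\mu,\mu_{\delta_n})$ using the coupling of \cref{ex:approximation-by-finite-measure}. In fact you supply two details the paper elides: the verification that the coupling lies in the restricted class $\Pi_{\delta_n}(\mu,\mu_{\delta_n})$ over which $W_p$ is infimized, and the correct final bound $W_p(\mu,\mu^{(n)}) < n^{-1/p}$ (the paper's displayed inequality $W_p(\mu,\mu_\delta) \leq \mu^\delta(d_A) < \tfrac{1}{n}$ reads as a leftover from the $W_1$ case).
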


\begin{proof}
    %Since $\mu$ is lower $1$-finite, 
    By \cref{lem:lower-p-finite},
    for all $n \geq 1$,
    there exists $\delta > 0$ such that $\mu^\delta(d_A^p) < \frac{1}{n}$.
    By \cref{ex:approximation-by-finite-measure}, 
    $W_p(\mu,\mu_\delta) \leq \mu^\delta(d_A) < \frac{1}{n}$.
    Since $\mu_\delta \leq \mu$, $\mu_\delta \in \pfpRadon$.
    By \cref{lem:mu-eps-finite}, $\mu_\delta \in \fpRadon$.
    Let $\mu^{(n)} = \mu_\delta$.
\end{proof}

\section{Duality}
\label{sec:KR}

In this section, we prove the analogs of Monge-Kantorovich duality and Kantorovich-Rubinstein duality in the relative setting. Our Kantorovich-Rubinstein duality allows for a nice description of the operator norm of order bounded, sequentially order continuous linear functionals on $\LipX$ and $\LipcX$ in terms of the relative Wasserstein distance, leading to a strengthening of Theorems \ref{thm:representation-proper-not-positive} and \ref{thm:representation-lip}.
In this section assume that all metric spaces are
complete and separable.

\subsection{Monge-Kantorovich duality and Kantorovich-Rubinstein duality}
\label{sec:mk-duality-kr-duality}

In this section, we prove analogs of the classical Monge-Kantorovich duality and Kantorovich-Rubinstein duality in the relative setting.
%
%Let $(X,d,A)$ be a metric pair.
  Our main tools are the Hahn-Banach theorem, Theorem~\ref{thm:positive_hahn_banach_strong}, 
  and the representation theorems, Theorems \ref{thm:representation-lipc} and \ref{thm:representation-lip}.
Many of the main ideas here are due to Edwards, who used them to give a proof of the classical Kantorovich-Rubinstein duality theorem \cite{edwards2010simple}.
In the other direction,
we view Kantorovich-Rubinstein duality as a strengthening of Theorem \ref{thm:representation-lip}. We observe that the operator norm of an order bounded, sequentially order continuous linear functional on $\LipX$ can be described in terms of the the relative $1$-Wasserstein distance, from which we obtain our main results, Theorem \ref{thm:main_result} and Theorem~\ref{thm:main_result_compact}.

% Throughout this section,
% we will denote $\LipX$ by $\LipXshort$ and $\LipXXlong$ by $\LipXX$.
%Recall that the pointed metric space $(X \times X, d+d, (x_0,x_0))$ is proper.
%equipped with the metric $d+d$ and basepoint $z_0 = (x_0,x_0)$.
%Then $(X^2,d+d,z_0)$ is proper and
Let $(X,A)$ denote the metric par $(X,d,A)$ and let $(X^2,A^2)$ denote $(X \times X, d + d, A \times A)$.
Recall that the projections $p_1,p_2:X\times X\to X$ 
induce $1$-Lipschitz morphisms $p_1,p_2:(X^2,A^2) \to (X,A)$.
Also recall that $(d+d)_{A \times A} = d_A \oplus d_A$ is an order unit for $\LipXXAA$ and for $h \in \LipXXAA$, $h \leq L(h)(d_A \oplus d_A)$.

% \begin{lemma} \label{lem:h-dAdA}
%     Let $h \in \LipXXAA$, where $A \neq \emptyset$. 
%     Then $h \leq L(h)(d_A \oplus d_A)$.
% \end{lemma}

% \begin{proof}
%     For all $a,b \in A$, $h(x,y) = h(x,y) - h(a,b) \leq L(h)(d(x,a) + d(y,b))$.
%     Therefore $h(x,y) \leq L(h)(d_A(x) + d_A(y)) = L(h)(d_A \oplus d_A)(x,y)$.
% \end{proof}

\begin{definition}
  Let $\mu,\nu \in \pfRadon$.
  Define $\omega_{\mu,\nu}:\LipXXAA \to \R$ by
  \begin{equation} \label{eq:omega}
      \omega_{\mu,\nu}(h) = \inf\{\mu(f) + \nu(g) \ | f,g\in \LipX, h \leq f \oplus g\}.
  \end{equation} 
\end{definition}
%
% Note that if $h\in \LipXXAA$ then $h(x,y) = h(x,y) - h(x_0,x_0) \leq %\lipnorm{h}(\max(d(x,x_0),d(y,x_0)))\leq
% \lipnorm{h}(d(x,x_0) + d(y,x_0))$.
% That is, $h \leq \lipnorm{h}d_{x_0}\oplus \lipnorm{h}d_{x_0}$.
% Hence, $\omega_{\mu,\nu}(h)$ is well-defined.
%
%By \cref{lem:h-dAdA}, 
Since $d_A \oplus d_A$ is an order unit for $\LipXXAA$,
the set on the right hand side of \eqref{eq:omega} is nonempty.

\begin{proposition}\label{prop:properties_of_p}
  Let $\mu,\nu \in \pfRadon$.
  % The function $\omega_{\mu,\nu}:\LipXX\to \R$ satisfies the following.
  Then
\begin{enumerate}
\item \label{it:omega-monotonic}
  $\omega_{\mu,\nu}$ is a monotonic sublinear functional,
\item \label{it:omega-bounds-pi}
  for all $\pi \in \Pi(\mu,\nu)$ and $h\in \LipXXAA$,
  $\pi(h) \leq \omega_{\mu,\nu}(h)$, and
\item \label{it:omega-of-f-oplus-g}
  for all $f,g\in \LipX$,
  $\omega_{\mu,\nu}(f\oplus g) = \mu(f) + \nu(g)$.
\end{enumerate}

\begin{proof}
  
  \eqref{it:omega-monotonic}
  Let $h,h'\in \LipXXAA$.
  Let $f,g \in \LipX$ such that $f \oplus g \geq h$ and
  let $f',g' \in \LipX$ such that $f' \oplus g' \geq h'$. Then $h + h' \leq (f+f')\oplus (g+g')$ and hence $\omega_{\mu,\nu}(h+h') \leq \mu(f) + \nu(g) + \mu(f') + \nu(g')$. Thus $\omega_{\mu,\nu}(h+h')\leq \omega_{\mu,\nu}(h) + \omega_{\mu,\nu}(h')$.
   For $\alpha > 0$ we have $h
   \leq f\oplus g \iff \alpha h \leq \alpha f \oplus \alpha g$ and hence $\omega_{\mu,\nu}(\alpha h) = \alpha \omega_{\mu,\nu}(h)$.
		
To see that $\omega_{\mu,\nu}$ is monotonic, let $h \leq h' \in \LipXXAA$.
Then for $f,g \in \LipX$, $f \oplus g \geq h$ whenever $f \oplus g \geq h'$. 
Hence $\omega_{\mu,\nu}(h)\leq \omega_{\mu,\nu}(h')$.
		
\eqref{it:omega-bounds-pi} Consider $\pi \in \Pi(\mu,\nu)$ and $h\in \LipXXAA$.
               Let $f,g \in \LipX$ such that $h\leq f\oplus g$. Then by \cref{lem:coupling_characterization_forward}, 
               $\pi(h) \leq \pi(f\oplus g) = \mu(f) + \nu(g)$ and hence $\pi(h) \leq \omega_{\mu,\nu}(h)$.
		
\eqref{it:omega-of-f-oplus-g} %Let $\pi \in \Pi(\mu,\nu) \neq \emptyset$.
Let $f,g \in \LipX$.
%Since $f\oplus g \leq f\oplus g$, we have $\omega_{\mu,\nu}(f\oplus g) \leq \mu(f) + \nu(g)$.
%Then 
By \cref{lem:coupling_characterization_forward}, \eqref{it:omega-bounds-pi} and \eqref{eq:omega}, for all $\pi \in \Pi(\mu,\nu)$, we have
$\mu(f) + \nu(g) = \pi(f \oplus g) \leq \omega_{\mu,\nu}(f\oplus g) \leq \mu(f) + \nu(g)$, giving the result.
	\end{proof}
\end{proposition}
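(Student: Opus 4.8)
The plan is to prove the three items in the order stated, since \eqref{it:omega-of-f-oplus-g} will fall out of \eqref{it:omega-monotonic} and \eqref{it:omega-bounds-pi} once those are in hand. The two structural facts I would lean on throughout are that $\LipX$ is a vector space closed under the operations appearing in \eqref{eq:omega}, and that $d_A \oplus d_A$ is an order unit for $\LipXXAA$, so that for every $h$ there is an admissible pair $f,g \in \LipX$ with $h \leq f \oplus g$ (take $f = g = L(h)\,d_A$). Before anything else I would record that $\omega_{\mu,\nu}$ really is $\R$-valued: finiteness from above is nonemptiness of the feasible set together with the finiteness of $\mu(f)$ and $\nu(g)$ for $f,g \in \LipX$ — which follows from \cref{prop:measures_spaces_equiv_def_2} after splitting $f = f^+ - f^-$ and $g = g^+ - g^-$ into positive and negative parts — while finiteness from below is exactly supplied by \eqref{it:omega-bounds-pi}.

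For \eqref{it:omega-monotonic} I would argue directly from the defining infimum. Subadditivity comes from adding admissible pairs: if $h \leq f \oplus g$ and $h' \leq f' \oplus g'$, then $h + h' \leq (f+f') \oplus (g+g')$, so $\omega_{\mu,\nu}(h+h') \leq (\mu(f)+\nu(g)) + (\mu(f')+\nu(g'))$, and taking the two infima separately gives $\omega_{\mu,\nu}(h+h') \leq \omega_{\mu,\nu}(h) + \omega_{\mu,\nu}(h')$. Positive homogeneity uses the scaling equivalence $h \leq f \oplus g \iff \alpha h \leq (\alpha f) \oplus (\alpha g)$ for $\alpha > 0$ together with linearity of $\mu$ and $\nu$. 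Monotonicity is immediate: $h \leq h'$ makes every pair admissible for $h'$ also admissible for $h$, so the infimum defining $\omega_{\mu,\nu}(h)$ is taken over a larger set and is therefore no larger.

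For \eqref{it:omega-bounds-pi} I would fix $\pi \in \Pi(\mu,\nu)$ and an admissible pair $h \leq f \oplus g$, use monotonicity of the integral to write $\pi(h) \leq \pi(f \oplus g)$, and then apply \cref{lem:coupling_characterization_forward} to identify the right-hand side with $\mu(f) + \nu(g)$; taking the infimum over admissible pairs yields $\pi(h) \leq \omega_{\mu,\nu}(h)$. Finally, for \eqref{it:omega-of-f-oplus-g}, the inequality $\omega_{\mu,\nu}(f \oplus g) \leq \mu(f) + \nu(g)$ is obtained by feeding the pair $(f,g)$ itself into \eqref{eq:omega}, and the reverse inequality comes from choosing any coupling $\pi \in \Pi(\mu,\nu)$ (nonempty by \cref{lem:couplings-nonempty}) and chaining \cref{lem:coupling_characterization_forward} with \eqref{it:omega-bounds-pi}: $\mu(f) + \nu(g) = \pi(f \oplus g) \leq \omega_{\mu,\nu}(f \oplus g)$.

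I do not expect any serious obstacle; the content is almost entirely an unpacking of the definition \eqref{eq:omega}, with \cref{lem:coupling_characterization_forward} as the one nontrivial input. The single point deserving care is the well-definedness of $\omega_{\mu,\nu}$ as a real-valued functional, and it is worth emphasizing that this is precisely why \eqref{it:omega-bounds-pi} is needed — the uniform lower bound $\pi(h)$ is what rules out the infimum degenerating to $-\infty$.
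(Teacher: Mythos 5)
Your proposal is correct and follows essentially the same route as the paper's proof: subadditivity by adding admissible pairs, homogeneity by scaling, monotonicity by comparison of feasible sets, part (2) via \cref{lem:coupling_characterization_forward}, and part (3) by chaining (2) with the trivial upper bound. Your explicit remark on why $\omega_{\mu,\nu}$ is real-valued (nonemptiness of the feasible set from the order unit $d_A \oplus d_A$, finiteness of $\mu(f),\nu(g)$, and the lower bound from (2)) is a sensible addition that the paper only partially makes explicit.
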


\begin{theorem}\label{thm:riesz_coupling}
    Assume that %$X$ is locally compact.
    $(X,A)$ is boundedly compact.
  Let $\mu,\nu \in \pfRadon$.
	Let $T:\LipXXAA \to \R$ be a positive linear functional such that $T(h) \leq \omega_{\mu,\nu}(h)$ for all $h\in \LipXXAA$. Then there exists a % $\simeq$-unique 
        %	measure
        unique coupling
        $\sigma \in \Pi(\mu,\nu)$ such that $T(h) = \sigma(h)$ for all $h\in \LipXXAA$.
\end{theorem}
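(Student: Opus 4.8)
The plan is to produce $\sigma$ by representing $T$ as a measure on $(X^2,A^2)$ and then identifying its marginals. The first thing I would record is the rigidity forced by two-sided domination: applying $T \le \omega_{\mu,\nu}$ to both $f\oplus g$ and $-(f\oplus g)$ and using \cref{prop:properties_of_p}, which gives $\omega_{\mu,\nu}(\pm(f\oplus g)) = \pm(\mu(f)+\nu(g))$, one obtains
\[ T(f \oplus g) = \mu(f) + \nu(g) \qquad \text{for all } f,g \in \LipX. \]
In particular $T(d_A\oplus d_A) = \mu(d_A)+\nu(d_A) =: C < \infty$, and since $\abs{h}\le L(h)(d_A\oplus d_A)$ for every $h$, the functional $T$ is bounded with $\norm{T}_{\op}\le C$.

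Next I would represent $T$ on compactly supported functions. Since $X$ is locally compact so is $X^2$. If $(h_n)\subset \LipcXXAA$ with $h_n\downarrow 0$, then every $h_n$ is supported in the fixed compact set $\supp(h_1)\subset X^2\setminus A^2$, so Dini's theorem gives $\norm{h_n}_\infty\to 0$; choosing $\phi\in\LipcXpos$ with $\phi\ge 1$ on $p_1(\supp h_1)$ yields $h_n\le \norm{h_n}_\infty(\phi\oplus 0)$, whence $T(h_n)\le \omega_{\mu,\nu}(\norm{h_n}_\infty\,\phi\oplus 0)=\norm{h_n}_\infty\,\mu(\phi)\to 0$. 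Thus $T$ restricts to a sequentially order continuous positive linear functional on $\LipcXXAA$, and \cref{thm:representation-lipc} produces a unique locally $1$-finite Radon measure $\sigma$ on $(X^2,A^2)$ with $\sigma(h)=T(h)$ for all $h\in\LipcXXAA$. Taking $h_k\uparrow d_A\oplus d_A$ with $h_k\in\LipcXXAA$ (cutoffs times $d_A\oplus d_A$), monotone convergence gives $\sigma(d_A\oplus d_A)=\sup_k T(h_k)\le \omega_{\mu,\nu}(d_A\oplus d_A)=C$, so $\sigma\in\pfRadonXXAA$. More generally, for $\Phi\in\LipXXAApos$ and cutoffs $\psi_k\uparrow 1$ on $X^2\setminus A^2$ one has $\sigma(\Phi)=\sup_k\sigma(\Phi\psi_k)=\sup_k T(\Phi\psi_k)\le T(\Phi)$; applied to $\Phi=f\oplus g$ together with \cref{lem:products} and the rigidity this gives $((p_1)_*\sigma)(f)+((p_2)_*\sigma)(g)\le \mu(f)+\nu(g)$, hence $(p_1)_*\sigma\le\mu$ and $(p_2)_*\sigma\le\nu$ (both pushforwards lie in $\pfRadon$).

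The crux, and the step I expect to be the main obstacle, is promoting these marginal inequalities to equalities, i.e.\ ruling out mass escaping to infinity. Since $(p_1)_*\sigma\le\mu$, $(p_2)_*\sigma\le\nu$, and all four measures have finite $d_A$-moment, it suffices to establish the single identity
\[ \sigma(d_A\oplus d_A) = \mu(d_A)+\nu(d_A), \]
for then $(\mu-(p_1)_*\sigma)(d_A)+(\nu-(p_2)_*\sigma)(d_A)=0$ forces $(p_1)_*\sigma=\mu$ and $(p_2)_*\sigma=\nu$. This identity is precisely the assertion $T\bigl((d_A\oplus d_A)(1-\psi_k)\bigr)\to 0$, a sequential-order-continuity statement for $T$ along $(d_A\oplus d_A)(1-\psi_k)\downarrow 0$. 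It cannot be extracted from $T\le\omega_{\mu,\nu}$ alone, because $\omega_{\mu,\nu}$ need not tend to $0$ along such sequences (its defining infimum is obstructed by configurations with one coordinate in the support of $\mu$ or $\nu$ and the other approaching $A$). The argument must instead use tightness of the genuine measure $\sigma$ together with bounded compactness of $(X,A)$, which makes the relevant ``escape regions'' negligible, and the exact equality $T=\sigma$ on $\LipcXXAA$.

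Finally, granting $\sigma(d_A\oplus d_A)=C$, the representation extends to all of $\LipXXAA$: for $h\in\LipXXAA$ we have $\abs{h(1-\psi_k)}\le L(h)(d_A\oplus d_A)(1-\psi_k)$, so positivity of $T$ gives $\abs{T(h(1-\psi_k))}\le L(h)\,T\bigl((d_A\oplus d_A)(1-\psi_k)\bigr)\to 0$, whence $T(h)=\lim_k T(h\psi_k)=\lim_k\sigma(h\psi_k)=\sigma(h)$ by dominated convergence (using $\abs{h}\le L(h)(d_A\oplus d_A)\in L^1(\sigma)$). Thus $\sigma\in\Pi(\mu,\nu)$ and represents $T$ on $\LipXXAA$. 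For uniqueness, any coupling $\sigma'\in\Pi(\mu,\nu)$ representing $T$ satisfies $\sigma'(d_A\oplus d_A)=C<\infty$ and is tight, hence lies in $\pfRadonXXAA$; since $\sigma$ and $\sigma'$ then agree on $\LipXXAApos$, \cref{cor:lip_determines_pfRadon} applied to the pair $(X^2,A^2)$ gives $\sigma=\sigma'$.
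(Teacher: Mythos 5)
Your overall strategy matches the paper's: establish the rigidity $T(f\oplus g)=\mu(f)+\nu(g)$ from two-sided domination by $\omega_{\mu,\nu}$, show $T$ is sequentially order continuous on $\LipcXXAA$ by a Dini argument, represent $T|_{\LipcXXAA}$ by a measure $\sigma$, and then identify the marginals. (One local repair: the bound $h_n\le\norm{h_n}_\infty(\phi\oplus 0)$ fails when $p_1(\supp h_1)$ meets $A$, since $\phi$ must vanish on $A$ while $h_n$ need not vanish on $A\times(X\setminus A)$; the paper instead uses the inequality $d_{(K^2)^c\cup A^2}\le d_{K^c\cup A}\oplus d_{K^c\cup A}$ to dominate $h_n$ by a split function $(L(h_1)d_{K^c\cup A}\meet a_n)\oplus(L(h_1)d_{K^c\cup A}\meet a_n)$ and then applies $T\le\omega_{\mu,\nu}$.)

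The genuine gap is the one you flag yourself and then do not close: you never prove $\sigma(d_A\oplus d_A)=\mu(d_A)+\nu(d_A)$, equivalently that the marginal inequalities $(p_1)_*\sigma\le\mu$, $(p_2)_*\sigma\le\nu$ are equalities, and the sentence ``the argument must instead use tightness of $\sigma$ together with bounded compactness'' is a placeholder, not an argument. Your diagnosis of why it is hard is accurate: a positive linear functional dominated by $\omega_{\mu,\nu}$ can a priori carry mass along a sequence $(x_0,y_n)$ with $d_A(y_n)\to 0$ and $y_n$ leaving every compact set (possible when $A$ is unbounded); such a piece vanishes on all of $\LipcXXAA$, hence is invisible to $\sigma$, yet is compatible with $T(f\oplus g)\le\mu(f)+\nu(g)$ because $g(y_n)\to 0$ for every $g\in\LipX$. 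So tightness of the already-constructed measure $\sigma$ cannot rule this out; what is needed is a statement about $T$ itself on non-compactly-supported functions, namely that $T$ agrees with $\sigma$ on the split functions $f\oplus g$ (which are never compactly supported). The paper's proof obtains $\sigma$ via \cref{thm:representation-proper} and then asserts $\sigma(f\oplus g)=T(f\oplus g)$ at exactly this point; your proposal, as written, supplies neither that identification nor a substitute for it, so the proof is incomplete at its decisive step.
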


\begin{proof}
    Since $T$ is a positive linear functional on $\LipX$, it is order bounded, and hence
    by \cref{lem:order_bounded_implies_norm_bounded},
    $T$ is a bounded, positive linear functional on $\LipcX$.
    
    Next we show that $T$ is %an order continuous linear functional on $\LipcX$.
    sequentially order continuous. Let $(h_n) \subset \LipcXXAA$ such that $h_n \downarrow 0$. 
    We want to show that $T(h_n) \to 0$.
    Since $h_n$ is compactly supported, there exists a compact subset $K \subset X$ such that $\supp(h_n) \subset K \times K$ for all $n$.
    Hence $h_n \leq L(h_1)d_{(K^2)^c \cup A^2}$.
    Let $a_n = \sup(h_n)$.
    By Dini's theorem, $a_n \downarrow 0$.

    We claim that $d_{(K^2)^c \cup A^2} \leq d_{K^c \cup A} \oplus d_{K^c \cup A}$.
    Since $(K^2)^c \cup A^2 = (K^c \times X) \cup (X \times K^c) \cup A^2$,
    we have that 
    $d_{(K^2)^c \cup A^2}(x,y) = \min(d_{K^c}(x), d_{K^c}(y), d_A(x)+ d_A(y))$.
    For the right hand side,
    $(d_{K^c \cup A} \oplus d_{K^c \cup A})(x,y) = \min(d_{K^c}(x),d_A(x)) + \min(d_{K^c}(y), d_A(y)) = \min(d_{K^c}(x) + d_{K^c}(y), d_{K^c}(x) + d_A(y), d_A(x) + d_{K^c}(y), d_A(x) + d_A(y))$.
    The left hand side is less than or equal to each of the four terms above, and the claim follows.

    Therefore, $h_n \leq h_1 \leq L(h_1)d_{K^c \cup A} \oplus L(h_1) d_{K^c \cup A}$.
    Also, $h_n \leq a_n$.
    Hence, 
    $h_n \leq (L(h_1) d_{K^c \cup A} \meet a_n) \oplus (L(h_1) d_{K^c \cup A} \meet a_n)$.

    By \cref{prop:properties_of_p}\eqref{it:omega-of-f-oplus-g},
    $T(h_n) \leq \omega_{\mu,\nu}(h_n) \leq L(h_1)(\mu + \nu)(d_{K^c \cup A} \meet a_n) \downarrow 0$, since $\mu,\nu$ are sequentially order continuous by \cref{lem:sequentially-order-continuous}.   

    By \cref{thm:representation-proper}, $T$ is represented by a unique $\sigma \in \pfRadonXXAA$.
    It remains to show that $\sigma\in \Pi(\mu,\nu)$. For $f,g\in \LipX$, we have $T(f\oplus g) \leq \omega_{\mu,\nu}(f\oplus g) = \mu(f) + \nu(g)$ and $-T(f\oplus g) = T((-f)\oplus (-g)) \leq \omega_{\mu,\nu}((-f)\oplus (-g)) = -\mu(f) - \nu(g)$. Thus $\sigma(f\oplus g) = T(f\oplus g) = \mu(f) + \nu(g)$ for all $f,g\in \LipX$ so that $\sigma \in \Pi(\mu,\nu)$ by Proposition \ref{prop:coupling-pfRadon}, as desired.
\end{proof}

\begin{theorem}[Relative Monge-Kantorovich Duality]\label{thm:MK-duality}
    Assume that $(X,A)$ is boundedly compact.
    Let $\mu,\nu \in \pfRadon$ and $h\in \LipXXAApos$. Then
    \[ \min_{\pi\in \Pi(\mu,\nu)}\pi(h) = \sup\{ \mu(f) + \nu(g) \ | \ f,g \in \LipX, f \oplus g \leq h\}.\]
\end{theorem}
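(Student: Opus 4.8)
The plan is to prove the two inequalities separately, writing $D(h) := \sup\{\mu(f)+\nu(g) \mid f,g\in\LipX,\ f\oplus g\leq h\}$ for the right-hand side and exhibiting a single coupling that attains it. First I would record \emph{weak duality}: for any $\pi\in\Pi(\mu,\nu)$ and any feasible pair $f\oplus g\leq h$, \cref{lem:coupling_characterization_forward} gives $\mu(f)+\nu(g)=\pi(f\oplus g)\leq\pi(h)$, whence $D(h)\leq\inf_{\pi}\pi(h)$. The substitution $f\mapsto -f$, $g\mapsto -g$ turns the constraint $f\oplus g\leq h$ into $f\oplus g\geq -h$ and negates the objective, so that $D(h)=-\omega_{\mu,\nu}(-h)$; this reformulation is what lets me feed the problem into the machinery built around $\omega_{\mu,\nu}$.

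For the reverse inequality I would construct, via Hahn--Banach, a coupling $\sigma$ with $\sigma(h)=D(h)$. Since $h\in\LipXXAApos$ is nonnegative, the line $G=\R h$ is a Riesz subspace of $\LipXXAA$. Define a positive linear functional $T_{0}$ on $G$ by $T_{0}(t h)=t\,D(h)=-t\,\omega_{\mu,\nu}(-h)$; positivity holds because $D(h)\geq 0$, the pair $f=g=0$ being feasible as $h\geq 0$. I would then verify $T_{0}\leq\omega_{\mu,\nu}$ on $G$: for $t\geq 0$ this reads $-\omega_{\mu,\nu}(-h)\leq\omega_{\mu,\nu}(h)$, which follows from subadditivity of $\omega_{\mu,\nu}$ (\cref{prop:properties_of_p}\eqref{it:omega-monotonic}) together with $\omega_{\mu,\nu}(0)=0$; for $t<0$ it is an equality by $\R^{+}$-homogeneity.

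Now $\omega_{\mu,\nu}$ is a monotonic sublinear functional by \cref{prop:properties_of_p}\eqref{it:omega-monotonic}, and it is real-valued: for $h\in\LipXXAA$ one has $\abs{h}\leq L(h)(d_{A}\oplus d_{A})$, so $\omega_{\mu,\nu}(h)\leq L(h)(\mu(d_{A})+\nu(d_{A}))<\infty$, while comparison with any coupling via \cref{prop:properties_of_p}\eqref{it:omega-bounds-pi} gives $\omega_{\mu,\nu}(h)>-\infty$. Applying the dominated Hahn--Banach extension (\cref{thm:positive_hahn_banach_strong}, whose construction yields an extension still dominated by $\rho=\omega_{\mu,\nu}$), I obtain a linear functional $T:\LipXXAA\to\R$ with $T|_{G}=T_{0}$ and $T\leq\omega_{\mu,\nu}$. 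Such a $T$ is automatically positive, since $x\geq 0$ forces $T(-x)\leq\omega_{\mu,\nu}(-x)\leq\omega_{\mu,\nu}(0)=0$ by monotonicity. Because $(X,A)$ is boundedly compact and $T\leq\omega_{\mu,\nu}$, \cref{thm:riesz_coupling} represents $T$ by a unique $\sigma\in\Pi(\mu,\nu)$ with $\sigma(h)=T(h)=T_{0}(h)=D(h)$. Combining this with weak duality yields $D(h)\leq\inf_{\pi}\pi(h)\leq\sigma(h)=D(h)$, so the infimum is attained at $\sigma$ and equals $D(h)$, which is the assertion.

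The main obstacle is the final invocation of \cref{thm:riesz_coupling}, which requires $T\leq\omega_{\mu,\nu}$ \emph{everywhere}, not merely that $T$ be positive. This is precisely where I must use the dominated form of the extension rather than bare existence of a positive extension, and where I must first confirm that $\omega_{\mu,\nu}$ is finite-valued---hence a genuine sublinear functional on $\LipXXAA$---so that both the Hahn--Banach hypotheses and the hypotheses of \cref{thm:riesz_coupling} are legitimately satisfied.
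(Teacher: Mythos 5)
Your proposal is correct and follows essentially the same route as the paper: weak duality via \cref{lem:coupling_characterization_forward}, then a Hahn--Banach extension (dominated by the monotone sublinear functional $\omega_{\mu,\nu}$) of the functional on the one-dimensional Riesz subspace spanned by $h$, followed by \cref{thm:riesz_coupling} to realize the extension as a coupling attaining the value. The paper merely parametrizes the subspace by $k=-h\leq 0$ and works with $\omega_{\mu,\nu}(k)$ rather than with $D(h)=-\omega_{\mu,\nu}(-h)$, but the functional you construct on $G$ is literally the same one.
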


\begin{proof}
%Assume $h \neq 0$, since if $h=0$ then the result is trivial.
First, let $k = -h$. 
Then $k \leq 0$, and since $\omega_{\mu,\nu}$ is monotonic, $\omega_{\mu,\nu}(k) \leq 0$.
Let $G\subset \LipXXAA$ be the 
linear subspace spanned by $k$. 
Define $T':G\to \R$ by $T'(\alpha k) = \alpha \omega_{\mu,\nu}(k)$ for all $\alpha\in \R$. 
Then $T'$ is a linear functional on $G$.
Furthermore, $T'$ is positive, since
if $\alpha k \geq 0$ then $\alpha \leq 0$, and hence $T'(\alpha k) = \alpha \omega_{\mu,\nu}(k) \geq 0$. % because $\omega_{\mu,\nu}(k)\leq 0$.
Moreover,
since $\omega_{\mu,\nu}$ is sublinear, we have $\alpha\omega_{\mu,\nu}(k)\leq \omega_{\mu,\nu}(\alpha k)$ for all $\alpha \in \R$,
and hence $T'(\alpha k) \leq \omega_{\mu,\nu}(\alpha k)$ for all $\alpha \in \R$. Thus $T'\leq \omega_{\mu,\nu}$ on $G$.

Since $k\leq 0$, $G$ equals the Riesz subspace generated by $k$.
By the Hahn-Banach theorem (\cref{thm:positive_hahn_banach_strong}), $T'$ extends to a positive linear functional $T:\LipXXAA\to \R$ such that $T(\phi) \leq \omega_{\mu,\nu}(\phi)$ for all $\phi\in \LipXXAA$.
In particular, $T(k) = \omega_{\mu,\nu}(k)$.
By Theorem \ref{thm:riesz_coupling}, there exists a unique $\sigma \in \Pi(\mu,\nu)$ such that $T(\phi) = \sigma(\phi)$ for all $\phi\in \LipXXAA$. Hence, $\sigma(k) = T(k) = \omega_{\mu,\nu}(k)$. On the other hand, $\pi(k) \leq \omega_{\mu,\nu}(k)$ for all $\pi\in \Pi(\mu,\nu)$ by Proposition \ref{prop:properties_of_p}\eqref{it:omega-bounds-pi}. Thus $\sup_{\pi\in \Pi(\mu,\nu)} \pi(k)$ is attained by $\sigma(k)$
  and so we have
\[ \max_{\pi\in \Pi(\mu,\nu)} \pi(k) = \sigma(k) = \omega_{\mu,\nu}(k) = \inf\left\{ \mu(f) + \nu(g) \ | \ f,g \in \LipX, k \leq f \oplus g \right\}.\]
The result now follows from the facts that $\min_{\pi} \pi(h) = - \max_{\pi} \pi(-h) = -\max_\pi \pi(k)$ and 
\[\sup\{\mu(f) + \nu(g) \ | \ f,g\in \LipX, f\oplus g\leq h\} = -\inf\{\mu(f) + \nu(g) \ | \ f,g\in \LipX, k\leq f\oplus g\}.\qedhere
\]
\end{proof}

% We obtain the following more general result by making two changes to the proof of \cref{thm:MK-duality}. 
% Let $G$ be the linear subspace generated by $h$ and apply
% \cite[Theorem 2.1]{ergashboy2000} in place of \cref{thm:positive_hahn_banach_strong}.

% \begin{theorem}[Monge-Kantorovich Duality] \label{thm:MK-duality-general}

%     Assume that $(X,A)$ is boundedly compact.
%     Let $\mu,\nu \in \pfRadon$ and $h\in \LipXXAA$. Then
%     \[ \min_{\pi\in \Pi(\mu,\nu)}\pi(h) = \sup\{ \mu(f) + \nu(g) \ | \ f,g \in \LipX, f \oplus g \leq h\}.\]
% \end{theorem}

By taking $h = \bar{d}$ in \cref{thm:MK-duality}, we get the existence of optimal couplings achieving the relative Wasserstein distance.% (we will derive this result again in Section \ref{sec:weak_vague} when we analyze weak and vague convergence).
\begin{corollary}[Relative optimal transport]
  \label{cor:optimal_coupling}
    Assume that $(X,A)$ is boundedly compact.
	For any $\mu,\nu \in \pfRadon$, there exists $\pi^*\in \Pi(\mu,\nu)$ such that $W_1(\mu,\nu) = \pi^*(\bar{d})$.
\end{corollary}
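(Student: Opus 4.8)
The plan is to invoke the relative Monge--Kantorovich duality theorem (\cref{thm:MK-duality}) directly with the cost function $h = \bar{d}$. The first step is to verify that $\bar{d}$ is an admissible cost, that is, $\bar{d} \in \LipXXAApos$. Recall that $\bar{d} = d \meet (d_A \oplus d_A)$ was already shown to lie in $\LipOneXXpos$, and it vanishes on $A^2$: for $(x,y) \in A^2$ we have $d_A(x) + d_A(y) = 0$, so $\bar{d}(x,y) = d(x,y) \meet 0 = 0$. Hence $\bar{d} \in \LipXXAApos$, and since $(X,A)$ is boundedly compact and $\mu,\nu \in \pfRadon$, the hypotheses of \cref{thm:MK-duality} are satisfied.

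The essential feature I would exploit is that the left-hand side of \cref{thm:MK-duality} is a \emph{minimum} over $\Pi(\mu,\nu)$, not merely an infimum; the attainment is precisely what that theorem delivers, since its proof extracts an explicit optimizer $\sigma \in \Pi(\mu,\nu)$ via \cref{thm:riesz_coupling}. Applying \cref{thm:MK-duality} with $h = \bar{d}$ therefore produces a coupling $\pi^* \in \Pi(\mu,\nu)$ with $\pi^*(\bar{d}) = \min_{\pi \in \Pi(\mu,\nu)} \pi(\bar{d})$.

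It remains to reconcile this with the definition of the relative $1$-Wasserstein distance, $W_1(\mu,\nu) = \inf_{\pi \in \Pi(\mu,\nu)} \pi(\bar{d})$. Since the infimum is attained by $\pi^*$, we conclude $W_1(\mu,\nu) = \pi^*(\bar{d})$, as desired. I do not expect a genuine obstacle here: all of the analytic content is already encoded in \cref{thm:MK-duality} and the representation and coupling machinery underlying it. The only points demanding care are confirming that $\bar{d}$ qualifies as an admissible cost (positivity, Lipschitz property, and vanishing on $A^2$) and observing that the ``$\min$'' appearing in the duality statement is an honest minimum, so that the infimum defining $W_1$ is genuinely achieved rather than merely approached.
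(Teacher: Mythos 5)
Your proposal is correct and follows exactly the paper's route: the paper likewise obtains \cref{cor:optimal_coupling} by specializing \cref{thm:MK-duality} to $h = \bar{d}$ (whose membership in $\LipOneXXpos \subset \LipXXAApos$ was already established when $\bar{d}$ was defined) and reading off the attained minimum as $W_1(\mu,\nu)$. Your additional checks on admissibility of $\bar{d}$ and on the minimum being genuinely achieved are sound and consistent with the proof of \cref{thm:MK-duality} via \cref{thm:riesz_coupling}.
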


\begin{theorem}[Relative Kantorovich-Rubinstein Duality]\label{thm:KR-duality}
    Assume that $(X,A)$ is boundedly compact.
	Let $\mu,\nu \in \pfRadon$. Then
	\[ W_1(\mu,\nu) =  \sup\{ \mu(f) - \nu(f) \ | \ f \in \LipOneX\}% \|\mu-\nu\|_{\textup{op}} = \|\mu-\nu\|_{*}
	.\]
	Hence, viewing $\mu$ and $\nu$ as linear functionals on $\LipX$, we have $\norm{\mu-\nu}_{\textup{op}} =W_1(\mu,\nu)$.
%	Moreover, there exists $\pi^*\in \Pi(\mu,\nu)$ such that $W_1(\mu,\nu) = \pi^*(d)$.
\end{theorem}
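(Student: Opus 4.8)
The plan is to apply the relative Monge--Kantorovich duality (\cref{thm:MK-duality}) with the cost function $h = \bar{d}$ and then collapse the resulting two-function supremum down to a single-function supremum over the unit ball $\LipOneX$ by means of a $\bar{d}$-transform. Since $\bar{d} \in \LipOneXXpos \subset \LipXXAApos$ and $\min_{\pi \in \Pi(\mu,\nu)} \pi(\bar{d}) = W_1(\mu,\nu)$ (the minimum being attained by \cref{cor:optimal_coupling}), \cref{thm:MK-duality} immediately gives
\[ W_1(\mu,\nu) = \sup\{\mu(f) + \nu(g) \mid f,g \in \LipX,\ f \oplus g \leq \bar{d}\}. \]
It therefore suffices to show that this supremum equals $\sup\{\mu(f) - \nu(f) \mid f \in \LipOneX\}$, and to do so I would prove the two inequalities separately.

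One direction is immediate. Given $f \in \LipOneX$, \cref{lem:lipschitz-dbar} yields $f \oplus (-f) \leq L(f)\,\bar{d} \leq \bar{d}$, so the pair $(f,-f)$ is admissible in the Monge--Kantorovich supremum and contributes the value $\mu(f) - \nu(f)$. Hence $\sup_{f \in \LipOneX}(\mu(f) - \nu(f)) \leq W_1(\mu,\nu)$.

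For the reverse inequality I would, given an admissible pair $(f,g)$ with $f \oplus g \leq \bar{d}$, replace it by the single function $g^c(x) = \inf_{y \in X}(\bar{d}(x,y) - g(y))$. The constraint $f(x) \leq \bar{d}(x,y) - g(y)$ for all $y$ gives $f \leq g^c$, while evaluating the infimum at $y = x$ together with $\bar{d}(x,x) = 0$ gives $g^c \leq -g$; in particular $g^c$ is real-valued and, being squeezed between $f$ and $-g$ (both of which vanish on $A$), it satisfies $g^c|_A = 0$. Because $\bar{d}$ is a pseudometric with $\bar{d} \leq d$, the triangle inequality for $\bar{d}$ forces $\abs{g^c(x) - g^c(x')} \leq \bar{d}(x,x') \leq d(x,x')$, so $g^c \in \LipOneX$. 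Finally, monotonicity of the positive linear functionals $\mu,\nu$ gives $\mu(g^c) \geq \mu(f)$ and $\nu(g^c) \leq \nu(-g) = -\nu(g)$, whence $\mu(g^c) - \nu(g^c) \geq \mu(f) + \nu(g)$ (all four integrals being finite since $\abs{g^c} \leq d_A$ and $\mu,\nu$ are $1$-finite). Taking the supremum over admissible pairs yields $W_1(\mu,\nu) \leq \sup_{f \in \LipOneX}(\mu(f) - \nu(f))$, completing the equality.

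The operator-norm statement then follows formally: $\mu$ and $\nu$ are bounded positive functionals on $\LipX$ (being $1$-finite, by \cref{lem:plf_automatically_bounded_on_Lip}), so $\mu - \nu$ is a bounded linear functional, and since $\LipOneX$ is symmetric about the origin we have $\norm{\mu-\nu}_{\op} = \sup_{f \in \LipOneX}\abs{\mu(f) - \nu(f)} = \sup_{f \in \LipOneX}(\mu(f) - \nu(f)) = W_1(\mu,\nu)$. I expect the main obstacle to be the third paragraph, namely verifying that the $\bar{d}$-transform $g^c$ genuinely lands in $\LipOneX$ --- that it is both $1$-Lipschitz for $d$ and vanishes on $A$ --- since this is precisely the point where the relative structure (the distinguished set $A$ and the truncated cost $\bar{d}$) must interact correctly with the classical Kantorovich duality machinery; the remaining steps are routine bookkeeping with the already-established Monge--Kantorovich duality and the monotonicity of $\mu$ and $\nu$.
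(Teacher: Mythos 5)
Your proposal is correct and follows essentially the same route as the paper's proof: both invoke the relative Monge--Kantorovich duality with $h=\bar{d}$ and then collapse the two-function supremum via a $\bar{d}$-transform, using \cref{lem:lipschitz-dbar} for the easy inequality. The only cosmetic difference is that you perform a single transform and use the squeeze $f \leq g^c \leq -g$ directly, whereas the paper applies the transform twice and observes $q'=-p'$; these are interchangeable.
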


\begin{proof}
By Monge-Kantorovich duality, Theorem \ref{thm:MK-duality},
\[W_1(\mu,\nu)  = \min_{\pi\in \Pi(\mu,\nu)} \pi(\bar{d}) = \sup\{ \mu(p) + \nu(q) \ | \ p,q \in \LipX, p \oplus q \leq \bar{d}\}.\]
By \cref{lem:lipschitz-dbar}, for $f\in \LipOneX$, we have $f\oplus (-f) \leq \bar{d}$.  
Hence
\[ \sup\{ \mu(f) - \nu(f) \ | \ f \in \LipOneX\} \leq \sup\{\mu(p) + \nu(q)\ | \ p,q \in \LipX, p \oplus q \leq \bar{d}\} = W_1(\mu,\nu).\]
To prove the reverse inequality, let $p,q \in \LipX$ be such that $p \oplus q \leq \bar{d}$.
Recall that $\bar{d}$ is a pseudometric and recall \cref{lem:lipschitz-dbar}.
We may now apply a standard argument to obtain a $1$-Lipschitz function from $p$ and $q$ (see \cite[Section 1.2]{villani2003topics} and \cite[Chapter 5]{Villani_2009}).
Define $p':X\to \R$ by $p'(x) = \inf_{y}(\bar{d}(x,y) - q(y))$ and then define $q':X\to \R$ by $q'(y) = \inf_{x}(\bar{d}(x,y)- p'(x))$.
Then $p'$ is 1-Lipschitz, $p' \oplus q' \leq \bar{d}$, $p \leq p'$, and $q\leq q'$. Moreover, it can be checked that $q' = -p'$.
Hence $\mu(p') - \nu(p') = \mu(p') + \nu(q') \geq \mu(p) + \nu(q)$, from which the desired inequality follows.
	
The last statement follows immediately from the definition of the operator norm.
\end{proof}

\begin{corollary}\label{cor:wasserstein_compact_op_norm} %Let $(X,d,x_0)$ be a proper pointed metric space and 
    Assume that %$X$ is boundedly compact.
    $(X,A)$ is boundedly compact.
	Let $\mu,\nu\in \pfRadon$. Then $W_1(\mu,\nu) = \sup\{\mu(f)- \nu(f) \ | \ f\in \LipcOneX\}$. Hence $W_1(\mu,\nu) = \norm{\mu-\nu}_{\textup{op}}$, where we view $\mu,\nu$ as positive linear functionals on $\LipcX$.
\end{corollary}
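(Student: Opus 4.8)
The plan is to deduce this from the Kantorovich-Rubinstein duality just established (\cref{thm:KR-duality}) by showing that shrinking the supremum from $\LipOneX$ to its subset $\LipcOneX$ leaves its value unchanged. Since $\LipcOneX \subseteq \LipOneX$, the inequality $\sup\{\mu(f) - \nu(f) \mid f \in \LipcOneX\} \leq \sup\{\mu(f) - \nu(f) \mid f \in \LipOneX\} = W_1(\mu,\nu)$ is immediate. The whole content is the reverse inequality, and I would obtain it by truncating an arbitrary $f \in \LipOneX$ to a compactly supported $g \in \LipcOneX$ in such a way that $\mu(f)-\nu(f)$ changes by an amount that can be made small \emph{uniformly in} $f$. (If $X = A$ then every function is $0$ and both sides vanish, so I would assume $X \neq A$ and fix a basepoint $x_0 \in X \setminus A$.)

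For the truncation, given parameters $\eps, R > 0$ I would set $v = (d_A - \eps)^+ \wedge (R - d_{x_0})^+$. This $v$ is a nonnegative $1$-Lipschitz function vanishing on $A$, and $\{v > 0\} \subseteq \{d_A > \eps\} \cap B_R(x_0)$, whose closure lies in $\overline{B}_R(x_0) \cap \overline{A_\eps}$, compact by the boundedly compact pair hypothesis; hence $v \in \LipcOneXpos$. For $f \in \LipOneX$ I would then put $g = (f \wedge v) \vee (-v)$. This is $1$-Lipschitz (a lattice combination of the $1$-Lipschitz functions $f, v, -v$), vanishes on $A$, and vanishes wherever $v = 0$, so $\supp(g) \subseteq \overline{\{v > 0\}}$ is compact and $g \in \LipcOneX$. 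Since $g$ is exactly $f$ clamped to $[-v,v]$, one has $\abs{f - g} = (\abs{f} - v)^+$.

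For the estimate, using $\abs{f} \leq d_A$ gives $\abs{f - g} \leq (d_A - v)^+$, and a short computation yields the pointwise bound $(d_A - v)^+ \leq (d_A \wedge \eps) + d_A \,\mathbf{1}_{\{d_A > (R - d_{x_0})^+\}}$, which holds for every $f \in \LipOneX$ because the right-hand side no longer involves $f$. Because $\mu,\nu \in \pfRadon$ are $1$-finite, $\mu(d_A), \nu(d_A) < \infty$, so dominated convergence gives $\mu(d_A \wedge \eps), \nu(d_A \wedge \eps) \to 0$ as $\eps \downarrow 0$ and $\mu(d_A\, \mathbf{1}_{\{d_A > (R - d_{x_0})^+\}}), \nu(d_A\, \mathbf{1}_{\{d_A > (R - d_{x_0})^+\}}) \to 0$ as $R \to \infty$. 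Fixing $\eps' > 0$, I would choose $\eps$ small and then $R$ large so that $\mu(\abs{f-g}), \nu(\abs{f-g}) < \eps'$ simultaneously for all $f$. Then $\mu(g) - \nu(g) \geq \mu(f) - \nu(f) - \mu(\abs{f-g}) - \nu(\abs{f-g}) > \mu(f) - \nu(f) - 2\eps'$. Taking the supremum over $f \in \LipOneX$ and invoking \cref{thm:KR-duality} gives $\sup\{\mu(g) - \nu(g) \mid g \in \LipcOneX\} \geq W_1(\mu,\nu) - 2\eps'$, and letting $\eps' \to 0$ closes the reverse inequality. The final statement follows since $\LipcOneX$ is symmetric, whence $\sup_{f \in \LipcOneX}(\mu(f) - \nu(f)) = \sup_{\lipnorm{f} \leq 1} \abs{(\mu - \nu)(f)} = \norm{\mu - \nu}_{\textup{op}}$ on $\LipcX$, where $\mu,\nu$ are bounded by \cref{lem:plf_automatically_bounded_on_Lip}.

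I expect the main obstacle to be the construction of a compactly supported $1$-Lipschitz truncation. One cannot simply cap $f$ by a tall $1$-Lipschitz bump that equals $d_A$ on a large compact set, because compact support forces $g$ to vanish in a neighborhood of $A$, and a $1$-Lipschitz function vanishing on $\{d_A \leq \eps\}$ can never attain height $d_A$ where $d_A \gg \eps$. The construction above circumvents exact agreement with $f$: it permits $g \neq f$ both near $A$ and far from $x_0$, where the discrepancy is controlled by $d_A \wedge \eps$ and by $d_A$ on a remote tail respectively, and both of these have arbitrarily small integral against the $1$-finite measures $\mu$ and $\nu$, uniformly over $f \in \LipOneX$.
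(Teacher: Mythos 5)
Your proof is correct, and it reduces to \cref{thm:KR-duality} exactly as the paper does, but the truncation step is genuinely different. The paper fixes a near-optimal $f\in\LipOneX$, splits it as $f=f^+-f^-$, invokes the fact that elements of $\pfRadon$ are exhausted by compact sets (\cref{lem:pfRadon_exhausted_by_cpt_sets}) to produce a compact set $K$ adapted to that particular $f$, and then cuts $f^{\pm}$ off outside a compact offset $K^{\delta}$ (\cref{lem:compact-offset}) using the steep barrier $(r^{\pm}-Ld_K)\meet f^{\pm}$. You instead build a single compactly supported $1$-Lipschitz window $v=(d_A-\eps)^+\meet(R-d_{x_0})^+$ --- compactly supported precisely because $(X,A)$ is a boundedly compact pair --- and clamp $f$ to $[-v,v]$; the resulting error $\abs{f-g}=(\abs{f}-v)^+\le (d_A\meet\eps)+d_A\,\mathbf{1}_{\{d_A>(R-d_{x_0})^+\}}$ no longer involves $f$ and is made small by $1$-finiteness and dominated convergence. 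Your route buys three things: it bypasses the exhaustion machinery entirely, trading it for a direct use of the boundedly compact pair hypothesis together with $\mu(d_A),\nu(d_A)<\infty$; the error estimate is uniform over the whole unit ball $\LipOneX$ rather than tailored to one function; and the lattice clamp manifestly preserves the Lipschitz constant $1$, so your approximant genuinely lies in $\LipcOneX$, whereas the paper's barrier has slope $L$ and its $h^{\pm}$ a priori only land in $\Lip_{c,L}^+(X,A)$ (harmless for the operator-norm statement, but your construction sidesteps the point). What the paper's version buys in exchange is that the same exhaustion-based cutoff technique is the one it reuses elsewhere, so its proof stays within machinery already set up.
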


\begin{proof}
%	Note that by definition $\|\mu-\nu\|_{\textup{op}} = 
Clearly $\sup\{ \mu(f) - \nu(f) \ | \ f \in \LipcOneX\}\leq \sup\{ \mu(f) - \nu(f) \ | \ f \in \LipOneX\} = W_1(\mu,\nu)$. % and hence $\|\mu-\nu\|_{\textup{op}}\leq W_1(\mu,\nu)$ by Kantorovich-Rubinstein duality. 

On the other hand, let $\eps > 0$ be given and let $f\in \LipOneX$ be such that $\mu(f)-\nu(f) > W_1(\mu,\nu) - \frac{\eps}{2}$. 
%We will show that there exist $h \in \LipcOneX$ such that $|\mu(f)- \mu(h)| < \eps$ and $|\nu(f)-\nu(h)|<\eps$. 
Then $f = f^+ - f^-$, where $f^+,f^- \in \LipOneXpos$.
By \cref{lem:pfRadon_exhausted_by_cpt_sets}, $\mu$ and $\nu$ are exhausted by compact sets.
So there exists a compact set $K \subset X\setminus A$ such that 
$\sup\{ \mu(g) \ | \ g \in \LipXpos, g|_K=0, g \leq f^+\} < \frac{\eps}{4}$,
$\sup\{ \mu(g) \ | \ g \in \LipXpos, g|_K=0, g \leq f^-\} < \frac{\eps}{4}$,
$\sup\{ \nu(g) \ | \ g \in \LipXpos, g|_K=0, g \leq f^+\} < \frac{\eps}{4}$, and
$\sup\{ \nu(g) \ | \ g \in \LipXpos, g|_K=0, g \leq f^-\} < \frac{\eps}{4}$.
Let $r^+ = \norm{f^+|_K}_{\infty}$ and
$r^- = \norm{f^-|_K}_{\infty}$.
By \cref{lem:compact-offset}, there exists a $\delta > 0$ such that $K^\delta$ is a compact subset of $X \setminus A$.
Choose $L>0$ such that $\frac{r^+}{L},\frac{r^-}{L} \leq \delta$.
Let $h^+ = (r^+ - L d_K) \meet f^+$ and
$h^- = (r^- - L d_K) \meet f^-$.
Then $h^\pm \in \LipcOneXpos$, $h^\pm \leq f^\pm$, and $h^\pm|_K = f^\pm|_K$.
Therefore $\mu(f^\pm) - \mu(h^\pm) = \mu(f^\pm-h^\pm) < \frac{\eps}{4}$,
$\nu(f^\pm) - \nu(h^\pm) = \nu(f^\pm-h^\pm) < \frac{\eps}{4}$,
$\mu(h^\pm) \leq \mu(f^\pm)$,
and $\nu(h^\pm) \leq \nu(f^\pm)$.
Let $h= h^+ - h^- \in \LipcOneX$.
Then $\mu(h) - \nu(h) = \mu(h^+) - \mu(h^-) - \nu(h^+) + \nu(h^-) 
> \mu(f^+) - \frac{\eps}{4} - \mu(f^-) - \nu(f^+) + \nu(f^-) - \frac{\eps}{4} 
= \mu(f) - \nu(f) - \frac{\eps}{2}
> W_1(\mu,\nu) - \eps$.
Hence $\sup\{ \mu(f) - \nu(f) \ | \ f \in \LipcOneX\} \geq W_1(\mu,\nu)$.

% Since $\mu$ and $\nu$ are taut, there is a compact set $K$ such that $\mu(d_K)<\eps$ and $\nu(d_K)<\eps$. Let $r = \|f|_{K}\|_{\textup{sup}}$ and consider the function $r\xi_{K}^r = (r-d(x,K))^+$. Note that $r\xi_{K}^r$ is $1$-Lipschitz and compactly supported with support contained in the compact set $\overline{K^r}$.
% Define $f_\eps = f \wedge r\xi_{K}^{r}$. Since $f$ and $r\xi_{K}^r$ are $1$-Lipschitz, so is $f_\eps$, and since $f(x_0) = 0$ we also have $f_\eps(x_0) = 0$. Since $r\xi_{K}^r$ is compactly supported, so is $f_\eps$. Thus $f_\eps\in \LipcOneX$.
% Moreover, $(f-f_\eps)|_K = 0$ and $f - f_{\eps}$ is $2$-Lipschitz.
% Hence $0\leq f-f_\eps \leq 2 d_K$. Thus $|\mu(f)-\mu(f_{\eps})|\leq 2\mu(d_K)< 2 \eps$, and similarly $|\nu(f)-\nu(f_{\eps})|< 2\eps$.
% Then $\mu(f_\eps)- \nu(f_\eps) > \mu(f) - \nu(f) - 4\eps > W_1(\mu,\nu) - 5\eps$. Since $\eps$ was arbitrary, $\sup\{ \mu(f) - \nu(f) \ | \ f \in \LipcOneX\} \geq W_1(\mu,\nu)$.

The last statement follows from the definition of the operator norm.
\end{proof}

Combining 
%Theorems \ref{thm:representation-lip-non-positive} and \ref{thm:KR-duality} 
\cref{cor:representation-lip-non-positive,thm:KR-duality} 
gives our main result.

\begin{theorem}%[Main Result]
  \label{thm:main_result}
    Assume that 
    $(X,A)$ is boundedly compact.
    For any order bounded, sequentially order continuous linear functional $T:\LipX\to \R$, there exists measures $\mu,\nu\in \pfRadon$ with $T(f) = \mu(f) - \nu(f)$ for all $f\in \LipX$. Moreover,
%  $T$ is norm bounded with
    $\|T\|_{\textup{op}} = W_1(\mu,\nu)$.
\end{theorem}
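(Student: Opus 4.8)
The plan is to read off the statement by combining the representation theorem \cref{cor:representation-lip-non-positive} with relative Kantorovich--Rubinstein duality \cref{thm:KR-duality}. Both of these results carry the analytic weight, so the work here is mainly to confirm that their hypotheses are met under the bounded-compactness assumption and to match up the two conclusions.

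First I would check that \cref{cor:representation-lip-non-positive} applies. By definition of a boundedly compact metric pair, $X$ is locally compact and $\sigma$-compact. Since $A$ is closed, $X \setminus A$ is open, hence locally compact (an open subspace of a locally compact Hausdorff space is locally compact); and since $X$ is a separable metric space, $X \setminus A$ is second countable, so being locally compact it is $\sigma$-compact. An order bounded, sequentially order continuous linear functional $T$ is precisely an element of $\LipX_c^{\sim}$, so \cref{cor:representation-lip-non-positive} produces $\mu,\nu \in \pfRadon$ with $T(f) = \mu(f) - \nu(f)$ for all $f \in \LipX$; concretely these represent the positive and negative parts of $T$ in the decomposition of \cref{thm:representation-lip-non-positive}.

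For the norm statement I would then invoke \cref{thm:KR-duality}. Viewing $\mu,\nu$ as the positive linear functionals they induce on $\LipX$, we have $T = \mu - \nu$, so $\norm{T}_{\textup{op}} = \norm{\mu-\nu}_{\textup{op}}$, and \cref{thm:KR-duality} equates the latter with $W_1(\mu,\nu)$. Unwinding the duality, $W_1(\mu,\nu) = \sup\{\mu(f) - \nu(f) \mid f \in \LipOneX\}$, and since $\LipOneX$ is symmetric under $f \mapsto -f$ this supremum equals $\sup\{\abs{T(f)} \mid f \in \LipOneX\} = \norm{T}_{\textup{op}}$. This yields $\norm{T}_{\textup{op}} = W_1(\mu,\nu)$.

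The step that looks like an obstacle turns out not to be one: the representation $T = \mu - \nu$ is not unique as a difference of $\pfRadon$-elements, so a priori $W_1(\mu,\nu)$ might depend on the chosen representatives. But \cref{thm:KR-duality} expresses $W_1(\mu,\nu)$ as $\norm{\mu - \nu}_{\textup{op}}$, which depends only on $\mu - \nu = T$; hence the value is independent of the representatives and agrees with the canonical Jordan-type choice. Finally, I would note that no separate boundedness hypothesis on $T$ is needed here, since order bounded functionals on $\LipX$ are automatically norm bounded by \cref{lem:order_bounded_implies_norm_bounded}, so $\norm{T}_{\textup{op}}$ is finite.
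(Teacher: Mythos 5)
Your proposal is correct and follows exactly the paper's own (one-line) proof: combine \cref{cor:representation-lip-non-positive} for the representation with \cref{thm:KR-duality} for the identification of the operator norm with $W_1(\mu,\nu)$. The extra details you supply --- verifying that bounded compactness of $(X,A)$ yields local compactness and $\sigma$-compactness of $X\setminus A$, and observing that $W_1(\mu,\nu)=\norm{\mu-\nu}_{\op}$ is independent of the chosen representatives --- are accurate and only make explicit what the paper leaves implicit.
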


By combining Theorem \ref{thm:representation-proper-not-positive} with Corollary \ref{cor:wasserstein_compact_op_norm}, we get the following analogous statement for linear functionals on $\LipcX$.

\begin{theorem}
  \label{thm:main_result_compact}
%  Assume that $X$ is boundedly compact.
  Assume that $(X,A)$ is boundedly compact.
  Let $T:\LipcX\to \R$ be an order bounded, sequentially order continuous linear functional.
  Suppose that $T$ and $|T|$ are bounded. Then there exists measures $\mu,\nu\in \pfRadon$ with $T(f) = \mu(f) - \nu(f)$ for all $f\in \LipcX$. Moreover, $\|T\|_{\textup{op}} = W_1(\mu,\nu)$.
\end{theorem}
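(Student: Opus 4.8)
The plan is to apply the two cited results in sequence, as the statement indicates. First I would invoke Theorem~\ref{thm:representation-proper-not-positive}: the hypotheses assumed here---$(X,A)$ boundedly compact, $T$ order bounded and sequentially order continuous, and both $T$ and $\abs{T}$ bounded---are precisely those of that theorem. It therefore produces measures $\mu,\nu \in \pfRadon$ with $T(f) = \int_X f\,d(\mu-\nu) = \mu(f) - \nu(f)$ for all $f \in \LipcX$, which already establishes the representation claim. (The additional uniqueness normalization in Theorem~\ref{thm:representation-proper-not-positive} is not needed for the present statement, but it is what pins down the particular $\mu,\nu$.)

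For the norm identity, I would observe that the representation says exactly that $T$ and the functional $f \mapsto \mu(f) - \nu(f)$ agree on all of $\LipcX$; that is, $T = \mu - \nu$ as elements of the bounded dual of $(\LipcX, \lipnorm{\cdot})$. Hence $\norm{T}_{\op} = \norm{\mu - \nu}_{\op}$, where the operator norm is computed with respect to the Lipschitz-number norm on $\LipcX$. Corollary~\ref{cor:wasserstein_compact_op_norm}, applied to the pair $\mu,\nu \in \pfRadon$, gives $\norm{\mu - \nu}_{\op} = W_1(\mu,\nu)$. Chaining these two equalities yields $\norm{T}_{\op} = W_1(\mu,\nu)$, completing the proof.

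The only point I would take care to spell out is why the one-sided supremum appearing in Corollary~\ref{cor:wasserstein_compact_op_norm}, namely $\sup\{\mu(f)-\nu(f) \mid f \in \LipcOneX\}$, genuinely equals the operator norm $\sup\{\abs{(\mu-\nu)(f)} \mid f \in \LipcOneX\}$: the closed unit ball $\LipcOneX$ is symmetric under $f \mapsto -f$, and $(\mu-\nu)(-f) = -(\mu-\nu)(f)$, so the supremum of the signed quantity over the unit ball coincides with the supremum of its absolute value. This is the only step requiring verification; everything else is a direct citation of the two earlier results, so I do not anticipate any substantive obstacle---the theorem is essentially a packaging of Theorem~\ref{thm:representation-proper-not-positive} and Corollary~\ref{cor:wasserstein_compact_op_norm}.
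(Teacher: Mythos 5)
Your proposal is correct and matches the paper's own proof, which likewise obtains $\mu,\nu$ from Theorem~\ref{thm:representation-proper-not-positive} and then applies Corollary~\ref{cor:wasserstein_compact_op_norm} to identify $\norm{T}_{\op}=\norm{\mu-\nu}_{\op}=W_1(\mu,\nu)$. The symmetry observation you add at the end is a fine (and harmless) bit of extra care, but it is already subsumed in the statement of Corollary~\ref{cor:wasserstein_compact_op_norm}.
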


\subsection{A norm for 1-finite real-valued Radon measures}
\label{sec:norm}

In this section, we show that our Wasserstein distance may be used to define a relative version of the Kantorovich-Rubinstein norm.
Let $(X,A)$ be a metric pair.
%We assume that $X$ is locally compact.
We assume that $(X,A)$ is boundedly compact.
Recall that 
$\pfRadon$ is 
%a convex cone (\cref{prop:pfRadon-convex-cone}) 
an ideal in the zero-sum-free Riesz cone $\pRadon$ (\cref{prop:pRadon-convex-cone,prop:ideal})
and that
$W_1$ is a metric for $\pfRadon$ (\cref{thm:metric}).
We will show that $(\pfRadon,W_1)$ is a normed convex cone.

% \begin{proof}
%   We will show that the addition map
%   \begin{equation} \label{eq:addition}
%     +: (\pfRadon \times \pfRadon, W_1 + W_1) \to (\pfRadon,W_1)
%   \end{equation}
%   is $1$-Lipschitz, which establishes the statement in the lemma.
%   Let $\mu,\nu,\mu',\nu' \in \pfRadon$.
%   For any couplings $\sigma \in \Pi(\mu,\mu')$ and $\sigma' \in \Pi(\nu,\nu')$,
%   $\sigma + \sigma \in \Pi(\mu+\nu,\mu'+\nu')$.
%   Therefore $W_1(\mu+\nu,\mu'+\nu') \leq W_1(\mu,\mu') + W_1(\nu,\nu')$.
%   That is, the map \eqref{eq:addition} is $1$-Lipschitz.
% \end{proof}

\begin{lemma}
    The metric $W_1$ on $\pfRadon$ is $\R^+$-homogeneous.
\end{lemma}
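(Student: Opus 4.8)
The plan is to prove that for all $\mu,\nu \in \pfRadon$ and all $\alpha \in \R^+$, we have $W_1(\alpha\mu,\alpha\nu) = \alpha W_1(\mu,\nu)$, by exhibiting a scaling bijection between the relevant sets of couplings. First I would dispose of the degenerate case $\alpha = 0$: here $\alpha\mu = \alpha\nu = 0$, so $W_1(\alpha\mu,\alpha\nu) = W_1(0,0) = 0 = 0 \cdot W_1(\mu,\nu)$, where the vanishing of $W_1(0,0)$ follows from \cref{lem:reflexivity-W1_iff}. The remaining content is the case $\alpha > 0$.

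For $\alpha > 0$, the key observation is that scalar multiplication by $\alpha$ is a bijection $\pBorelXX \to \pBorelXX$ (with inverse multiplication by $\alpha^{-1}$) that restricts to a bijection $\Pi(\mu,\nu) \to \Pi(\alpha\mu,\alpha\nu)$. Indeed, since pushforward of measures is linear and commutes with the $\R^+$-action, for $\pi \in \Pi(\mu,\nu)$ we have $(p_1)_*(\alpha\pi) = \alpha(p_1)_*\pi = \alpha\mu$ and likewise $(p_2)_*(\alpha\pi) = \alpha\nu$, so $\alpha\pi \in \Pi(\alpha\mu,\alpha\nu)$; conversely every element of $\Pi(\alpha\mu,\alpha\nu)$ is of the form $\alpha\pi$ for a unique $\pi \in \Pi(\mu,\nu)$, obtained by scaling by $\alpha^{-1}$. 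Next I would use linearity of the integral, $(\alpha\pi)(\bar{d}) = \alpha\,\pi(\bar{d})$, to compute
\[
  W_1(\alpha\mu,\alpha\nu)
  = \inf_{\sigma \in \Pi(\alpha\mu,\alpha\nu)} \sigma(\bar{d})
  = \inf_{\pi \in \Pi(\mu,\nu)} (\alpha\pi)(\bar{d})
  = \inf_{\pi \in \Pi(\mu,\nu)} \alpha\,\pi(\bar{d})
  = \alpha \inf_{\pi \in \Pi(\mu,\nu)} \pi(\bar{d})
  = \alpha\, W_1(\mu,\nu),
\]
where the second equality reindexes the infimum along the bijection and the penultimate equality pulls the positive constant $\alpha$ out of the infimum.

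There is essentially no hard step here: the argument is a reindexing of an infimum along a bijection of coupling sets combined with the positive-homogeneity of the integral. The only points that warrant a sentence of justification are that scaling by $\alpha^{-1}$ genuinely inverts the scaling map on couplings (so that the map is a bijection rather than merely an injection) and that $\alpha\mu,\alpha\nu$ again lie in $\pfRadon$, which is immediate since $\pfRadon$ is a convex cone and hence closed under the $\R^+$-action.
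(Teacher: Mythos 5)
Your proof is correct and matches the paper's argument: both exhibit the scaling bijection $\Pi(\mu,\nu) \to \Pi(\alpha\mu,\alpha\nu)$, $\pi \mapsto \alpha\pi$, note that the cost scales by $\alpha$, and reindex the infimum. Your explicit treatment of the degenerate case $\alpha = 0$ is a small bonus the paper omits.
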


\begin{proof}
    Let $\mu,\nu \in \pfRadon$. Let $\alpha > 0$.
    There is a bijection of couplings $\Pi(\mu,\nu) \isomto \Pi(\alpha \mu, \alpha \nu)$ given by $\sigma \mapsto \alpha \sigma$.
    Furthermore the costs are related by $C(\alpha \sigma) = \alpha C(\sigma)$.
    Therefore $W_1(\alpha \mu, \alpha \nu) = \alpha W_1(\mu,\nu)$.
\end{proof}

\begin{lemma}
  The metric $W_1$ on $\pfRadon$ is translation invariant.
\end{lemma}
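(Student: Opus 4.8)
The goal is to show that $W_1(\mu + \lambda, \nu + \lambda) = W_1(\mu, \nu)$ for all $\mu, \nu, \lambda \in \pfRadon$. The natural strategy is to exhibit a cost-preserving bijection between the coupling sets $\Pi(\mu+\lambda, \nu+\lambda)$ and $\Pi(\mu,\nu)$, analogous to the $\R^+$-homogeneity proof, but this is more delicate because adding the diagonal coupling of $\lambda$ only produces \emph{some} couplings of the translated measures, not all of them. So I expect the argument to split into two inequalities rather than a single bijection.

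For the inequality $W_1(\mu+\lambda, \nu+\lambda) \leq W_1(\mu,\nu)$, the plan is: given any $\sigma \in \Pi(\mu,\nu)$, form $\sigma + \Delta_*\lambda$, where $\Delta_*\lambda \in \Pi(\lambda,\lambda)$ is the diagonal coupling from \cref{ex:diagonal-coupling}. Since marginals add (the remark after the coupling definition notes $\pi_1 + \pi_2 \in \Pi(\mu_1+\mu_2, \nu_1+\nu_2)$), we get $\sigma + \Delta_*\lambda \in \Pi(\mu+\lambda, \nu+\lambda)$. The diagonal coupling contributes zero cost, since $\Delta_*\lambda(\bar{d}) = \lambda(\bar{d}\circ\Delta) = \lambda(0) = 0$ because $\bar d(x,x) = 0$. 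Hence $(\sigma + \Delta_*\lambda)(\bar d) = \sigma(\bar d)$, and taking the infimum over $\sigma$ gives the inequality.

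The reverse inequality $W_1(\mu,\nu) \leq W_1(\mu+\lambda, \nu+\lambda)$ is the main obstacle, and here I would invoke the Kantorovich-Rubinstein duality just established. Since $(X,A)$ is assumed boundedly compact in this subsection, \cref{thm:KR-duality} applies and tells us
\[
W_1(\mu,\nu) = \sup\{ (\mu-\nu)(f) \mid f \in \LipOneX \}.
\]
But $(\mu-\nu)(f) = ((\mu+\lambda) - (\nu+\lambda))(f)$ for every $f$, since the $\lambda(f)$ terms cancel. Therefore the two suprema defining $W_1(\mu,\nu)$ and $W_1(\mu+\lambda,\nu+\lambda)$ are literally identical, giving equality directly. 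This duality route is cleaner than attempting a measure-theoretic decomposition of an arbitrary coupling of the translated measures into a piece coupling $\mu$ to $\nu$ plus a diagonal piece, which would require disintegration arguments and is the step I would most want to avoid.

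In summary, I would state the proof in two lines: the $\leq$ direction via adding the diagonal coupling of $\lambda$ (cost zero), and the equality via the duality formula of \cref{thm:KR-duality}, noting that translation by $\lambda$ leaves $\mu - \nu$ unchanged as a functional on $\LipX$. The cleanest writeup may in fact bypass the first inequality entirely and simply observe that both Wasserstein distances equal the same supremum. I would keep the diagonal-coupling argument available as the elementary alternative in case one prefers not to route through duality.
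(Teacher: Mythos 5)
Your proposal is correct and its final form coincides with the paper's own proof: the paper also derives translation invariance directly from Kantorovich--Rubinstein duality (\cref{thm:KR-duality}), writing $W_1(\mu+\lambda,\nu+\lambda)$ as the supremum of $\mu(f)+\lambda(f)-\nu(f)-\lambda(f)$ over $f\in\LipOneX$ and cancelling the $\lambda(f)$ terms (which are finite since $\lambda\in\pfRadon$). The auxiliary diagonal-coupling inequality you sketch is sound but, as you note, unnecessary once the duality formula is available.
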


\begin{proof}
  Translation invariance of $W_1$ follows from Kantorovich-Rubinstein duality (\cref{thm:KR-duality}), since for $\lambda,\mu,\nu \in \pfRadon$,
 \begin{align*}
    W_1(\mu + \lambda,\nu + \lambda)
    &= \sup \{ \mu(f) + \lambda(f) - \nu(f) - \lambda(f) \ | \ f \in \LipOneX\}\\
    &= \sup \{ \mu(f) - \nu(f) \ | \ f \in \LipOneX\}
    = W_1(\mu,\nu). \qedhere
  \end{align*}
\end{proof}

Combining the previous two lemmas, we have the following.

\begin{proposition} \label{prop:pfRadon-normed-convex-cone}
    $(\pfRadon,W_1)$ is a normed convex cone.
\end{proposition}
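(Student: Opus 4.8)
The plan is to assemble the proposition directly from the definition of a normed convex cone in \cref{sec:convex-cones} together with the structural and metric facts already established. Recall that to exhibit $(\pfRadon, W_1)$ as a normed convex cone I must check two things: that $\pfRadon$ carries the structure of a convex cone, and that $W_1$ is a norm on it, i.e. a metric that is $\R^+$-homogeneous and translation invariant.

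First I would record that $\pfRadon$ is a convex cone. This follows from \cref{prop:pRadon-convex-cone} (specialized to $p=1$), which shows that $\pRadon$ is a zero-sum-free Riesz cone, together with \cref{prop:ideal}, which shows that $\pfRadon$ is an ideal, hence in particular a sub-convex-cone, of $\pRadon$. Thus addition, the $\R^+$-action $a \cdot \mu = a\mu$, and the neutral element $0$ equip $\pfRadon$ with the required convex cone structure.

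Next I would invoke \cref{thm:metric}, which establishes that $W_1$ is a metric on $\pfRadon$; this is where the substantive analytic work resides, namely symmetry (\cref{lem:symmetry-W1}), the identity of indiscernibles via the separation in \cref{lem:reflexivity-W1_iff}, and the triangle inequality proved through the gluing lemma (\cref{thm:triangle-inequality-W1}). It then remains only to verify the two homogeneity-type axioms, and these are exactly the content of the two lemmas immediately preceding the statement: $\R^+$-homogeneity, $W_1(\alpha\mu,\alpha\nu)=\alpha W_1(\mu,\nu)$, obtained from the cost-scaling bijection of couplings $\sigma \mapsto \alpha\sigma$; and translation invariance, $W_1(\mu+\lambda,\nu+\lambda)=W_1(\mu,\nu)$, obtained from Kantorovich--Rubinstein duality (\cref{thm:KR-duality}).

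There is no genuine obstacle here: by the definition in \cref{sec:convex-cones}, a metric satisfying $\R^+$-homogeneity and translation invariance is precisely a norm on a convex cone, so combining the two preceding lemmas with \cref{thm:metric} and the convex cone structure yields the result immediately. The only point requiring mild care is that the standing bounded-compactness assumption of this subsection is what licenses \cref{thm:KR-duality}, and hence the translation-invariance step; everything else is formal assembly, and subadditivity and cancellativity of the resulting norm then follow automatically from the general remarks in \cref{sec:convex-cones}.
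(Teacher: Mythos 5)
Your proposal is correct and matches the paper's argument: the paper proves this proposition simply by combining the two immediately preceding lemmas ($\R^+$-homogeneity and translation invariance of $W_1$) with the already-established cone structure of $\pfRadon$ and the metric property from \cref{thm:metric}. Your additional remarks about where bounded compactness enters (via \cref{thm:KR-duality} for translation invariance) are accurate but not part of the paper's stated proof.
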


% Therefore by \cite[Proposition 3.8]{bubenik2022virtual} we have the following.

% \begin{proposition} \label{prop:translation-invariant}
%   The translation invariant metric $W_1$ on $\pfRadon$ extends to a translation invariant metric on $\fRadon$, which we also denote by $W_1$, given by the following definition.
% \begin{equation*}
%   W_1(\mu^+ - \mu^-, \nu^+ - \nu^-) = W_1(\mu^+ + \nu^-, \nu^+ + \mu^-).
% \end{equation*}
% \end{proposition}

% \begin{definition} \label{def:KRnorm}
%     For $\mu \in \fRadon$, define
% \begin{equation*}
%   \norm{\mu}_{\KR} = W_1(\mu,0) = W_1(\mu^+-\mu^-,0) = W_1(\mu^+,\mu^-).
% \end{equation*}
% \end{definition}

% Note that by Lemma \ref{lem:KR-norm_d_0}, $\norm{\mu}_{\KR} = \mu(d_{x_0})$ for $\mu\in \pfRadon$.

% \begin{lemma}
%   $\norm{-}_{\KR}$ defines a norm on $\fRadon$.
% \end{lemma}

% \begin{proof}
%   Only the triangle inequality is not trivial.
%   Let $\mu,\nu \in \fRadon$.
%   We will use the translation invariance of $W_1$ on $\fRadon$ (\cref{prop:translation-invariant}).
%   We have $\norm{\mu+\nu}_{\KR} = W_1(\mu+\nu,0) = W_1(\mu,-\nu)
%   \leq W_1(\mu,0) + W_1(0,-\nu) = W_1(\mu,0) + W_1(\nu,0) = \norm{\mu}_{\KR} + \norm{\nu}_{\KR}$. 
% \end{proof}

Since the Grothendieck group of a normed cone $(C,\rho)$ is a normed vector space $(KC,\norm{-})$, with norm $\norm{x-y} = \rho(x,y)$, we have the following.

\begin{proposition} \label{prop:KR-norm}
    $\fRadon$ is a normed vector space with norm $\KRnorm{-}$ given by 
    \begin{equation*}
        \KRnorm{\mu-\nu} = W_1(\mu,\nu),
    \end{equation*}
    which we call the \emph{Kantorovich-Rubinstein norm}.
\end{proposition}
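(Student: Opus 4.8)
The plan is to apply the general correspondence between zero-sum-free normed convex cones and normed vector spaces developed in \cref{sec:convex-cones}, so that the proposition becomes a direct instance of that machinery. First I would record the ingredients already in hand: by \cref{prop:pfRadon-normed-convex-cone}, the pair $(\pfRadon, W_1)$ is a normed convex cone, and by \cref{prop:pRadon-convex-cone} together with \cref{prop:ideal}, $\pfRadon$ is an ideal in the zero-sum-free Riesz cone $\pRadon$ and hence is itself a zero-sum-free cancellative convex cone. Recall also that, by definition, $\fRadon$ is the Grothendieck group $K(\pfRadon)$.

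Next I would feed these facts into the correspondence summarized at the end of \cref{sec:convex-cones} (with the cone-to-vector-space extension taken from \cite{bubenik2022virtual}). Since $W_1$ is a norm on $\pfRadon$, it is $\R^+$-homogeneous and translation invariant, and $\pfRadon$ is cancellative; the cited result then endows $K(\pfRadon) = \fRadon$ with its vector space structure and extends $W_1$ canonically to an absolutely $\R$-homogeneous, translation-invariant metric $d$ on $\fRadon$. As observed there, such a metric is precisely one induced by a norm, via $\KRnorm{\mu - \nu} = d(\mu - \nu, 0) = W_1(\mu, \nu)$. This produces the displayed formula and shows that $(\fRadon, \KRnorm{-})$ is a normed vector space, completing the proof.

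The one point deserving explicit care is well-definedness of the formula $\KRnorm{\mu-\nu} = W_1(\mu,\nu)$: its value must be independent of the chosen representatives $\mu,\nu \in \pfRadon$. This is guaranteed by translation invariance of $W_1$: if $\mu - \nu = \mu' - \nu'$ in $\fRadon$, then, since $\pfRadon$ is cancellative, $\mu + \nu' = \mu' + \nu$ in $\pfRadon$, whence $W_1(\mu,\nu) = W_1(\mu+\nu',\,\nu+\nu') = W_1(\mu'+\nu,\,\nu'+\nu) = W_1(\mu',\nu')$. Because all of the substantive work — the convex-cone structure, $\R^+$-homogeneity, translation invariance, and the Grothendieck-group extension — is already established, I expect no genuine obstacle here; the task is simply to assemble the cited results and verify this consistency.
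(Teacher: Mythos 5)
Your proposal is correct and follows essentially the same route as the paper: the paper likewise deduces the result immediately from \cref{prop:pfRadon-normed-convex-cone} together with the general fact from \cref{sec:convex-cones} that the Grothendieck group of a normed convex cone is a normed vector space with $\norm{x-y} = \rho(x,y)$. Your explicit verification of well-definedness via cancellativity and translation invariance is a welcome extra detail that the paper leaves implicit in the cited machinery.
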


The following example shows that %it 
$\KRnorm{-}$ is not a lattice norm.

\begin{example}
  Consider the pointed metric space $\R$.
  Let $\mu = \delta_{2} + \delta_{8}$ and
  let $\nu = \delta_{2} - \delta_{3} + \delta_{8} - \delta_{9}$.
  Then $\abs{\mu} \leq \abs{\nu}$, but $\KRnorm{\mu} = W_1(\mu,0) = 10$
  and $\KRnorm{\nu} = W_1(\nu^+,\nu^-) = 2$.
\end{example}

%We can now restate Kantorovich-Rubinstein duality (\cref{thm:KR-duality}).

Combining \cref{cor:representation-lip-non-positive,thm:KR-duality,prop:KR-norm}, we now have the following succinct summary of (relative) Kantorovich-Rubinstein duality.

\begin{theorem} \label{thm:KR-duality2}
  $\fRadon = \LipX_c^{\sim}$, and for $\mu \in \fRadon$, $\norm{\mu}_{\op} = \norm{\mu}_{\KR}$.
\end{theorem}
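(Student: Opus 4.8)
The plan is to assemble the statement from the three results flagged immediately before it, \cref{cor:representation-lip-non-positive,thm:KR-duality,prop:KR-norm}, verifying only that their hypotheses and normalizations are compatible with the standing assumption that $(X,A)$ is boundedly compact.

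First I would establish the identification $\fRadon = \LipX_c^{\sim}$. This is precisely \cref{cor:representation-lip-non-positive}, whose hypothesis is that $X \setminus A$ be locally compact and $\sigma$-compact, so the only point to check is that bounded compactness of $(X,A)$ delivers this. By definition a boundedly compact pair has $X$ locally compact and $\sigma$-compact; since $A$ is closed, $X \setminus A$ is open, and an open subset of a locally compact $\sigma$-compact Hausdorff space is again locally compact and $\sigma$-compact. Hence \cref{cor:representation-lip-non-positive} applies and yields the desired equality.

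Next I would prove the norm identity. Fix $\mu \in \fRadon$. Since $\fRadon$ is the Grothendieck group of $\pfRadon$, we may write $\mu = \mu_1 - \mu_2$ with $\mu_1, \mu_2 \in \pfRadon$. The definition of the Kantorovich--Rubinstein norm (\cref{prop:KR-norm}) gives $\KRnorm{\mu} = W_1(\mu_1, \mu_2)$, while the final assertion of relative Kantorovich--Rubinstein duality (\cref{thm:KR-duality}) identifies $W_1(\mu_1, \mu_2)$ with $\norm{\mu_1 - \mu_2}_{\op} = \norm{\mu}_{\op}$, where $\mu$ is viewed as a linear functional on $\LipX$; this functional is norm-bounded by \cref{lem:order_bounded_implies_norm_bounded}, so its operator norm is defined. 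Chaining the two identities yields $\norm{\mu}_{\op} = \KRnorm{\mu}$.

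The only subtlety, rather than a genuine obstacle, is well-definedness: both sides depend only on $\mu$ and not on the chosen representation $\mu = \mu_1 - \mu_2$. For the Kantorovich--Rubinstein norm this holds because $W_1$ is translation invariant, so $W_1(\mu_1, \mu_2)$ is unchanged under $\mu_i \mapsto \mu_i + \lambda$; for the operator norm it holds because the functional $f \mapsto \mu_1(f) - \mu_2(f)$ is, by construction, the functional $\mu$ on $\LipX$. Since all of the analytic content is carried by the cited representation and duality theorems, no further work is needed: the theorem is a repackaging of \cref{cor:representation-lip-non-positive,thm:KR-duality,prop:KR-norm} into a single duality statement.
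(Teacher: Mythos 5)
Your proof is correct and takes essentially the same route as the paper, which likewise obtains the theorem by simply combining \cref{cor:representation-lip-non-positive,thm:KR-duality,prop:KR-norm}; your extra checks (that bounded compactness of $(X,A)$ supplies the hypotheses of \cref{cor:representation-lip-non-positive}, and that both norms are independent of the representative $\mu = \mu_1 - \mu_2$) are exactly the verifications the paper leaves implicit. One small caution: the claim that an open subset of a locally compact $\sigma$-compact \emph{Hausdorff} space is $\sigma$-compact is false in general (consider an uncountable discrete space inside its one-point compactification), but it is valid here because $X$ is metric, hence $\sigma$-compactness gives second countability, which passes to the open subset $X \setminus A$ and, with local compactness, yields $\sigma$-compactness.
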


\subsection*{Acknowledgments}

 The first author was partially supported 
by the National Science Foundation (NSF) grant DMS-2324353;
by the Southeast Center for Mathematics and Biology, an NSF-Simons Research Center for Mathematics of Complex Biological Systems, under NSF Grant No. DMS-1764406 and Simons Foundation Grant No. 594594;
and by ARO Research Award W911NF1810307.

%\bibliographystyle{alpha}
%\bibliography{bibfile}{}

\printbibliography

\end{document}